\let\bbordermatrix\bordermatrix
\patchcmd{\bbordermatrix}{8.75}{4.75}{}{}
\patchcmd{\bbordermatrix}{\left(}{\left[}{}{}
\patchcmd{\bbordermatrix}{\right)}{\right]}{}{}
\theoremstyle{plain}
\newtheorem{theorem}{Theorem}
\newtheorem{lemma}[theorem]{Lemma}
\newtheorem{corollary}[theorem]{Corollary}
\newtheorem{proposition}[theorem]{Proposition}
\theoremstyle{definition}
\newtheorem{example}[theorem]{Example}
\newtheorem{conjecture}[theorem]{Conjecture}
\newtheorem{problem}[theorem]{Problem}
\theoremstyle{remark}
\newcommand{\set}[1]{\left\{#1\right\}}
\def\tn{\textnormal}
\def\ld{\lambda}
\def\mN{\mathbb{N}}
\def\mR{\mathbb{R}}
\def\mS{\mathbb{S}}
\def\mZ{\mathbb{Z}}
\def\K{\mathcal{K}}
\def\M{\mathcal{M}}
\def\X{\mathcal{X}}
\def\a{\alpha}
\def\b{\beta}
\def\ce{\coloneqq}
\newcommand{\ignore}[1]{}
\DeclareMathOperator{\LS}{LS}
\DeclareMathOperator{\rank}{rank}
\DeclareMathOperator{\FRAC}{FRAC}
\DeclareMathOperator{\STAB}{STAB}
\DeclareMathOperator{\diag}{diag}
\DeclareMathOperator{\Diag}{Diag}
\DeclareMathOperator{\conv}{conv}
\DeclareMathOperator{\cone}{cone}
\DeclareMathOperator{\supp}{supp}
\title[A Computational Search for Minimal Obstruction Graphs for $\LS_+$]{\bf A Computational Search for Minimal Obstruction Graphs for the Lov\'{a}sz--Schrijver SDP Hierarchy}
\author{Yu Hin (Gary) Au}
\thanks{Yu Hin (Gary) Au: Corresponding author. Department of Mathematics and Statistics, University of Saskatchewan, Saskatoon, Saskatchewan, S7N 5E6 Canada. E-mail: gary.au@usask.ca}
\author{Levent Tun\c{c}el}
\thanks{Levent Tun\c{c}el: Research of this author was supported in part by an NSERC Discovery Grant. Department of Combinatorics and Optimization, Faculty of Mathematics, University of Waterloo, Waterloo, Ontario, N2L 3G1 Canada. E-mail: levent.tuncel@uwaterloo.ca}
\date{\today}
\keywords{stable set problem, lift and project, combinatorial optimization, semidefinite programming, integer programming}
\begin{document}

\maketitle 

\begin{abstract}
We study the lift-and-project relaxations of the stable set polytope of graphs generated by $\LS_+$, the SDP lift-and-project operator devised by Lov\'{a}sz and Schrijver. Our focus is on $\ell$-minimal graphs: graphs on $3\ell$ vertices with $\LS_+$-rank $\ell$, i.e., the smallest graphs realizing rank $\ell$. This manuscript makes two complementary contributions. First, we introduce \emph{$\LS_+$ certificate packages}, a modular framework for certifying membership in $\LS_+$-relaxations using only integer arithmetic and simple, concise calculations, thereby making numerical lower-bound proofs more transparent, reliable, and easier to verify. Second, we apply this framework to a computational search for extremal graphs. We prove that there are at least 49 non-isomorphic 3-minimal graphs and at least 4,107 non-isomorphic 4-minimal graphs, improving the previously known counts of 14 and 588, respectively. Beyond the increase in counts, the new examples sharpen the emerging structural picture: stretched cliques remain central but are not exhaustive, clique number is informative but not decisive, and some extremal graphs exhibit previously unseen graph minor and edge density behaviour. We also determine the smallest vertex-transitive graphs of $\LS_+$-rank $\ell$ for every $\ell \leq 4$.
\end{abstract}

\section{Introduction}\label{sec01}

The $\LS_+$ operator introduced by Lov\'asz and Schrijver~\cite{LovaszS91} is one of the most widely studied lift-and-project procedures for strengthening tractable relaxations of 0,1 optimization problems. In the setting of the stable set problem, it yields, for each graph $G$, a hierarchy of progressively tighter relaxations between the fractional stable set polytope $\FRAC(G)$ and the stable set polytope $\STAB(G)$. This gives rise to the associated notion of $\LS_+$-rank, denoted $r_+(G)$, which measures how many iterations $\LS_+$ requires to recover $\STAB(G)$. Recent work of the authors~\cite{AuT25} shows that, for every positive integer $\ell$, the smallest graph of $\LS_+$-rank $\ell$ has exactly $3\ell$ vertices. The next step is therefore structural and computational: determine which graphs attain this extremal bound, understand what they look like, and develop reliable methods for proving rank bounds in concrete instances.

This manuscript addresses those questions from two complementary angles. On the methodological side, we develop a reproducible framework for presenting and verifying $\LS_+$ certificates. On the structural side, we use that framework in a computational search for extremal obstruction graphs. The outcome is a substantial expansion of the known landscape for $\ell$-minimal graphs, especially for $\ell=3$ and $\ell=4$, together with several structural phenomena that were not visible in the previously known examples.

\subsection{Main contributions and changes in the landscape}

The contributions of this manuscript fall into two complementary parts.

\begin{itemize}
\item \textbf{A reproducible certification framework.} We introduce \emph{$\LS_+$ certificate packages}, a modular way to certify membership in $\LS_+$-relaxations using integer data only. The same philosophy is then extended to $\LS_+^2$ and $\LS_+^3$. This separates the conceptual structure of the proof from floating-point SDP output and turns certificate verification into a finite collection of transparent checks over whole-number arithmetic.

\item \textbf{A computational search with structural consequences.} Using these certificate packages together with analytical rank bounds and edge-subgraph arguments, we enlarge the known families of 3-minimal and 4-minimal graphs by a wide margin and isolate several new structural patterns. In particular, the rank-4 search is genuinely large-scale: even after substantial structural reductions, it still required solving roughly 16,000 SDP instances and more than 2,000 CPU hours, starting from a candidate pool of over two million graph--facet pairs.
\end{itemize}

More specifically, the manuscript establishes the following.

\begin{itemize}
\item We prove that there are at least 49 non-isomorphic 3-minimal graphs (Theorem~\ref{thm3minimal2}), improving the previously known count of 14. We also provide strong computational evidence that this list is exhaustive.

\item We prove that there are at least 4,107 non-isomorphic 4-minimal graphs (Theorem~\ref{thm4min}), improving the previously known count of 588.

\item We identify structural features of $\ell$-minimal graphs that go beyond the raw counts. First, the stretched-clique paradigm remains a major source of extremal graphs, but it is not exhaustive. Second, low clique number explains broad classes of examples, yet it is not a necessary condition in full generality. Third, the new data sharpen several conjectural directions by showing where existing heuristics continue to work and where they fail.
\item We determine the smallest vertex-transitive graphs of $\LS_+$-rank 2, 3, and 4 (Propositions~\ref{propVT2}, \ref{propVT3}, and \ref{propVT4}), complementing the extremal picture of the irregular $\ell$-minimal graphs with a symmetry-constrained one.
\end{itemize}

Table~\ref{tab:intro-summary} summarizes the main ways in which the present manuscript changes the current picture.

\begin{table}[htbp]
\centering
\small
\begin{tabular}{|p{2.8cm}|p{6.1cm}|p{6.1cm}|}
\hline
\textbf{Topic} & \textbf{Before this work} & \textbf{This manuscript} \\
\hline
Known 3-minimal graphs & 14 non-isomorphic examples & At least 49 non-isomorphic examples, including new families both inside and outside the stretched-clique framework \\
\hline
Known 4-minimal graphs & 588 non-isomorphic examples & At least 4,107 non-isomorphic examples; many new structures and a much broader sample for formulating conjectures \\
\hline
Certification methodology & Lower-bound proofs relied on ad hoc numerical certificates & Introduces modular $\LS_+$, $\LS_+^2$, and $\LS_+^3$ certificate packages verifiable by integer arithmetic \\
\hline
Structural picture & Stretched cliques were the dominant source of examples & Reveals extremal graphs beyond stretched cliques, including examples with new clique/minor behaviour \\
\hline
Vertex-transitive extremal graphs & Smallest such graphs with $\LS_+$-rank $\ell$ not previously determined explicitly beyond $\ell = 1$ & Determines the smallest vertex-transitive graphs of ranks $2$, $3$, and $4$ \\
\hline
\end{tabular}
\caption{How the present manuscript changes the current picture of $\ell$-minimal graphs.}
\label{tab:intro-summary}
\end{table}

To place these contributions in context, we next recall the relevant definitions and prior results on $\LS_+$, stable set polytopes, and the graph operations that motivate our search.

\subsection{The $\LS_+$ operator}

Given a set $P \subseteq [0,1]^n$, let
\[
\cone(P) \ce \set{ \begin{bmatrix} \lambda \\ \lambda x \end{bmatrix} : \ld \geq 0, x \in P}
\]
denote the \emph{homogenized cone} of $P$. Then $\cone(P) \subseteq \mR^{n+1}$, and we will index the new coordinate by $0$. Next, let $[n]$ denote the set $\set{1,2,\ldots, n}$ for every $n \in \mN$, and $\mS_+^n$ denote the set of real, symmetric $n \times n$ positive semidefinite (PSD) matrices. Alternatively, we will also use the notation $M \succeq 0$ to denote that $M$ is a PSD matrix. We also let $e_i$ denote the $i$-th unit vector (which has entry $1$ at position $i$ and $0$ otherwise), and let $\diag(M)$ denote the vector formed by the diagonal entries of a square matrix $M$. Then we define
\[
\widehat{\LS}_+(P) \ce \set{ Y \in \mS_+^{n+1} : Ye_0 = \diag(Y), Ye_i, Y(e_0-e_i) \in \cone(P)~\forall i \in [n] },
\]
and
\[
\LS_+(P) \ce \set{ x \in \mR^n : \exists Y \in \widehat{\LS}_+(P), Ye_0 = \begin{bmatrix} 1 \\ x \end{bmatrix}}.
\]
Given a set $P \subseteq [0,1]^n$, let $P_I \ce \conv\set{P \cap \set{0,1}^n}$ denote the \emph{integer hull} of $P$ (i.e., $P_I$ is the convex hull of all integral points in $P$). A fundamental property of $\LS_+$ is that $P_I \subseteq \LS_+(P) \subseteq P$ for every $P \subseteq [0,1]^n$ (see, for instance, \cite[Lemma 3]{AuT24} for a simple proof). Moreover, if $P$ is tractable (i.e., polynomial-time separable up to arbitrary precision), then so is $\LS_+(P)$. Next, define $\LS_+^0(P) \ce P$, and then recursively define $\LS_+^{\ell}(P) \ce \LS_+ \left( \LS_+^{\ell-1}(P) \right)$ for every $\ell \geq 1$. Then $\LS_+^n(P) = P_I$ for every $P \subseteq [0,1]^n$ --- i.e., it always takes $\LS_+$ no more than $n$ iterations to tighten a set contained in the $n$-dimensional hypercube to its integer hull. (The reader may refer to~\cite{LovaszS91} for the proofs of these properties and further discussion about $\LS_+$.)

\subsection{The stable set problem and $\LS_+$-rank of graphs}

Given a simple, undirected graph $G \ce (V(G), E(G))$, a set of vertices $S \subseteq V(G)$ is a \emph{stable set} in $G$ if no two vertices in $S$ are joined by an edge in $G$. We then define the \emph{fractional stable set polytope} of $G$ to be
\[
\FRAC(G) \ce \set{ x \in [0,1]^{V(G)} : x_i + x_j \leq 1~\forall \set{i,j} \in E(G)},
\]
and the \emph{stable set polytope} of $G$ to be $\STAB(G) \ce \FRAC(G)_I$. Observe that a $0,1$-vector belongs to $\STAB(G)$ if and only if it is the incidence vector of a stable set in $G$. Also, for convenience, we will write $\LS_+^{\ell}(G)$ instead of $\LS_+^{\ell}(\FRAC(G))$. As discussed above, $\LS_+^{\ell}(G)$ provides successively tighter convex relaxations for $\STAB(G)$ as $\ell$ increases. This naturally leads to the notion of the \emph{$\LS_+$-rank} of $G$, which is defined to be the smallest integer $\ell$ where $\LS_+^{\ell}(G) = \STAB(G)$. For convenience, we also use the notation $r_+(G)$ to represent the $\LS_+$-rank of $G$. The notion of rank also applies to specific inequalities: Given a valid inequality $a^{\top}x \leq \b$ of $\STAB(G)$, its $\LS_+$-rank is the smallest integer $\ell$ for which it is valid for $\LS_+^{\ell}(G)$. 

It is well-known that $\FRAC(G) = \STAB(G)$ if and only if $G$ is bipartite, so these are the only graphs which have $\LS_+$-rank $0$. Many families of valid inequalities of $\STAB(G)$ are already valid after one application of $\LS_+$, including clique inequalities ($\sum_{i \in K} x_i \leq 1$ for a clique $K \subseteq V(G)$). As a result, perfect graphs have $\LS_+$-rank at most $1$~\cite{LovaszS91}. The graphs $G$ where $\LS_+(G) = \STAB(G)$ are known as \emph{$\LS_+$-perfect} graphs in the literature. For progress towards a combinatorial characterization of these graphs, see~\cite{BianchiENT13, BianchiENT17, Wagler22, BianchiENW23}.

Of course, in addition to characterizing graphs whose stable set problem is ``easy'' to solve for $\LS_+$, it is also insightful to study the graphs which serve as the worst-case instances for $\LS_+$. First, let $K_n$ denote the \emph{complete graph} on $n$ vertices (we will often use $[n]$ as the vertex labels for $K_n$). It follows from the results in~\cite{StephenT99} that the line graph of $K_{2\ell+1}$  has $\LS_+$-rank $\ell$, giving the first known family of graphs with unbounded $\LS_+$-rank. Subsequently, Lipt\'{a}k and the second author~\cite{LiptakT03} proved the following fact.

\begin{theorem}\label{thmLiptakT031}
\cite[Theorem 39]{LiptakT03}
For every graph $G$, $r_+(G) \leq \left\lfloor \frac{|V(G)|}{3} \right\rfloor$.
\end{theorem}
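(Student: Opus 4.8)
The plan is to prove the bound by induction on $n = |V(G)|$, showing that $\LS_+^{\ell}(G) = \STAB(G)$ for $\ell = \lfloor n/3 \rfloor$. The central tool is that one application of $\LS_+$ lets one condition on whether a chosen vertex lies in the stable set. Concretely, if $x \in \LS_+^{\ell}(G)$ is certified by $Y \in \widehat{\LS}_+(\LS_+^{\ell-1}(G))$ with $Ye_0 = \begin{bmatrix} 1 \\ x \end{bmatrix}$, then the identity $Ye_0 = Ye_v + Y(e_0-e_v)$ dehomogenizes to a convex decomposition $x = x_v z^{\mathrm{in}} + (1-x_v) z^{\mathrm{out}}$, where $z^{\mathrm{in}}, z^{\mathrm{out}} \in \LS_+^{\ell-1}(G)$, $z^{\mathrm{in}}_v = 1$, and $z^{\mathrm{out}}_v = 0$. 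Since $\LS_+^{\ell-1}(G) \subseteq \FRAC(G)$, forcing $z^{\mathrm{in}}_v = 1$ forces $z^{\mathrm{in}}_u = 0$ for every neighbour $u$ of $v$, so $z^{\mathrm{in}}$ restricts to a point of $\LS_+^{\ell-1}(G - N[v])$ (here $N[v]$ is the closed neighbourhood), while $z^{\mathrm{out}}$ restricts to a point of $\LS_+^{\ell-1}(G - v)$. The first step is to record this conditioning lemma together with two companion facts: that vertex deletion and intersection with a coordinate face are compatible with $\LS_+$, and that whenever some coordinate of $x$ is already integral one may pass to $G - v$ or $G - N[v]$ at the \emph{same} level $\ell$, at no cost in rank (since $\lfloor (n-1)/3 \rfloor \leq \ell$).

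With these tools the induction reduces to the case where $x$ is strictly fractional in every coordinate, and here the goal is to spend one $\LS_+$ round to remove \emph{three} vertices, dropping the level from $\ell$ to $\ell-1$ and the vertex count from $n$ to at most $n-3$, exactly matching $\lfloor n/3 \rfloor$. The mechanism I would try first is to condition not on a single vertex but on a triangle $\set{u,v,w}$: because clique inequalities have $\LS_+$-rank one, the inequality $x_u + x_v + x_w \leq 1$ is valid on $\LS_+^{\ell-1}(G)$, which makes the four coefficients in the telescoping identity
\[
Ye_0 = Ye_u + Ye_v + Ye_w + Y(e_0 - e_u - e_v - e_w)
\]
nonnegative and summing to one. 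The points $z^u, z^v, z^w$ would restrict to $\LS_+^{\ell-1}$ of $G - N[u], G - N[v], G - N[w]$, each on at most $n-3$ vertices, and the ``none of the triangle'' point $z^{\es}$ would restrict to $\LS_+^{\ell-1}(G - \set{u,v,w})$ on exactly $n-3$ vertices; by the inductive hypothesis all four would lie in the respective stable set polytopes, whence $x \in \STAB(G)$.

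The hard part, and the step I expect to require the most care, is justifying that the excluded-triangle point $z^{\es} = Y(e_0 - e_u - e_v - e_w)/(1 - x_u - x_v - x_w)$ really belongs to $\LS_+^{\ell-1}(G)$ --- that is, that $\LS_+$ can condition on an entire clique being excluded in a single round. The definition supplies only $Ye_i, Y(e_0 - e_i) \in \cone(\LS_+^{\ell-1}(G))$ for single vertices, and cones are not closed under the subtractions that appear here, so this does not follow formally. My plan is to extract it from the PSD certificate directly: one first shows $Y_{ij} = 0$ for every edge $\set{i,j}$ (because $z^i_j = 0$ whenever $z^i_i = 1$), so the principal submatrix of $Y$ on $\set{0,u,v,w}$ has a rigid form whose Schur complement reproduces exactly $1 - (x_u+x_v+x_w) \geq 0$; from this I would attempt to build an explicit $\LS_+$-certificate for $z^{\es}$ by a suitable correction of $Y$. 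Should a clean single-round clique conditioning not push through, the fallback is to handle triangle-free graphs separately and to dispose of the residues $n \not\equiv 0 \pmod 3$ by first deleting all vertices of degree at most one for free (a pendant or isolated vertex does not raise the $\LS_+$-rank) and then choosing the conditioning vertex so that its deletion provably lowers the rank; isolating that correct choice is the delicate remaining point.
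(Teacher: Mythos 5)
Your single-vertex conditioning lemma is correct, but the step you yourself flag as ``the hard part'' is a genuine, fatal gap: there is no reason why the residual vector $Y(e_0-e_u-e_v-e_w)$ should lie in $\cone(\LS_+^{\ell-1}(G))$. The definition of $\LS_+$ only supplies $Ye_i,\,Y(e_0-e_i)\in\cone(\LS_+^{\ell-1}(G))$, and convex cones are not closed under the subtraction $Yf_u-Ye_v-Ye_w$; your Schur-complement computation on the principal submatrix indexed by $\set{0,u,v,w}$ recovers only the scalar inequality $x_u+x_v+x_w\leq 1$ (the standard proof that clique inequalities have $\LS_+$-rank $1$), and says nothing about the remaining $n-3$ coordinates of the residual, which is where cone membership would have to be verified. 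Conditioning on several variables simultaneously in a single round is precisely what one application of $\LS_+$ is not known to do --- it is the feature separating one $\LS_+$ round from level-$3$ Sherali--Adams-type conditioning --- and the known proof avoids ever needing it. The fallback does not rescue the argument either: the graph need not contain a triangle at all, and after stripping degree-$\leq 1$ vertices your single-vertex conditioning only yields $r_+(G)\leq\max\set{r_+(G-v),\,r_+(G\ominus v)}+1$ for the chosen $v$; the deletion branch $G-v$ has $n-1$ vertices, so induction gives $\lfloor (n-1)/3\rfloor+1$, which exceeds $\lfloor n/3\rfloor$ whenever $n\not\equiv 0 \pmod 3$. ``Choosing $v$ so that its deletion provably lowers the rank'' is exactly what you have no tool to guarantee.

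The missing idea is the destruction bound, Theorem~\ref{thmDeleteDestroy}(i): $r_+(G)\leq\max\set{r_+(G\ominus i):i\in V(G)}+1$. Its proof is not per-vertex conditioning but a global PSD argument: for a valid inequality $a^{\top}x\leq\beta$ of $\STAB(G)$ with $a\geq 0$, each normalized column $Ye_i/Y_{ii}$ lies (by Lemma~\ref{lemfacet}) in the face $x_i=1$, hence in $\STAB(G)$ once $\max_i r_+(G\ominus i)$ rounds remain, giving $a^{\top}Ye_i\leq\beta Y_{0i}$; summing these with weights $a_i$ and combining with $(\beta e_0-a)^{\top}Y(\beta e_0-a)\geq 0$ yields $a^{\top}x\leq\beta$. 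Positive semidefiniteness is what lets one exploit the destruction of \emph{every} vertex simultaneously in a single round, which is the effect your triangle trick was trying to simulate. With this bound the theorem is short: if $G$ has minimum degree at least $2$, every $G\ominus i$ has at most $n-3$ vertices, so induction gives $r_+(G)\leq\lfloor (n-3)/3\rfloor+1=\lfloor n/3\rfloor$; isolated and pendant vertices are removed at no cost in rank (e.g., via Proposition~\ref{propCliqueCut} with cut clique $\set{u}$ for a pendant edge $\set{u,v}$), after which $r_+(G)=r_+(G-v)\leq\lfloor(n-1)/3\rfloor\leq\lfloor n/3\rfloor$ by induction. So your outer induction and the free removal of low-degree vertices are the right moves, but the three-vertices-per-round engine has to be the destruction bound, not clique conditioning.
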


In other words, if we let $n_+(\ell)$ denote the minimum number of vertices among graphs with $\LS_+$-rank $\ell$, then Theorem~\ref{thmLiptakT031} implies that $n_+(\ell) \geq 3\ell$ for every $\ell \in \mN$. Thus, we say that a graph is \emph{$\ell$-minimal} if $|V(G)| = 3\ell$ and $r_+(G) = \ell$. As the lone non-bipartite graph on three vertices, $K_3$ is the unique 1-minimal graph.

Figure~\ref{figKnownEG} shows several noteworthy known $\ell$-minimal graphs. In particular, Lipt\'{a}k and the second author identified the first 2-minimal graph ($G_{\ref{figKnownEG},1}$) in 2003, while conjecturing that $\ell$-minimal graphs exist for every positive integer $\ell$. Subsequently, Escalante, Montelar, and Nasini~\cite{EscalanteMN06} showed that $G_{\ref{figKnownEG},2}$ is the only other 2-minimal graph, as well as discovered the first known 3-minimal graph $G_{\ref{figKnownEG},3}$. More recently, the authors~\cite{AuT24b} produced the first known 4-minimal graph $G_{\ref{figKnownEG},4}$.

\def\y{0.70}
\def\sc{1.7}
\def\x{180}
\def\z{360/4}

\begin{figure}[htbp]
\centering
\begin{tabular}{cccc}

\begin{tikzpicture}[scale=\sc, thick,main node/.style={circle,  minimum size=1.5mm, fill=black, inner sep=0.1mm,draw,font=\tiny\sffamily}]

\node[main node] at ({cos(\x+(0)*\z)},{sin(\x+(0)*\z)}) (1) {};
\node[main node] at ({cos(\x+(1)*\z)},{sin(\x+(1)*\z)}) (2) {};
\node[main node] at ({cos(\x+(2)*\z)},{sin(\x+(2)*\z)}) (3) {};

\node[main node] at ({ \y* cos(\x+(3)*\z) + (1-\y)*cos(\x+(2)*\z)},{ \y* sin(\x+(3)*\z) + (1-\y)*sin(\x+(2)*\z)}) (4) {};
\node[main node] at ({cos(\x+(3)*\z)},{sin(\x+(3)*\z)}) (5) {};
\node[main node] at ({ \y* cos(\x+(3)*\z) + (1-\y)*cos(\x+(4)*\z)},{ \y* sin(\x+(3)*\z) + (1-\y)*sin(\x+(4)*\z)}) (6) {};

 \path[every node/.style={font=\sffamily}]
(1) edge (2)
(2) edge (3)
(3) edge (1)
(4) edge (5)
(5) edge (6)
(4) edge (3)
(6) edge (1)
(6) edge (2);
\end{tikzpicture}
&
\begin{tikzpicture}[scale=\sc, thick,main node/.style={circle,  minimum size=1.5mm, fill=black, inner sep=0.1mm,draw,font=\tiny\sffamily}]

\node[main node] at ({cos(\x+(0)*\z)},{sin(\x+(0)*\z)}) (1) {};
\node[main node] at ({cos(\x+(1)*\z)},{sin(\x+(1)*\z)}) (2) {};
\node[main node] at ({cos(\x+(2)*\z)},{sin(\x+(2)*\z)}) (3) {};

\node[main node] at ({ \y* cos(\x+(3)*\z) + (1-\y)*cos(\x+(2)*\z)},{ \y* sin(\x+(3)*\z) + (1-\y)*sin(\x+(2)*\z)}) (4) {};
\node[main node] at ({cos(\x+(3)*\z)},{sin(\x+(3)*\z)}) (5) {};
\node[main node] at ({ \y* cos(\x+(3)*\z) + (1-\y)*cos(\x+(4)*\z)},{ \y* sin(\x+(3)*\z) + (1-\y)*sin(\x+(4)*\z)}) (6) {};

 \path[every node/.style={font=\sffamily}]
(1) edge (2)
(2) edge (3)
(3) edge (1)
(4) edge (5)
(5) edge (6)
(4) edge (2)
(4) edge (3)
(6) edge (1)
(6) edge (2);
\end{tikzpicture}

&

\def\x{270 - 360/5}
\def\z{360/5}
\begin{tikzpicture}[scale=\sc, thick,main node/.style={circle,  minimum size=1.5mm, fill=black, inner sep=0.1mm,draw,font=\tiny\sffamily}]
\node[main node] at ({cos(\x+(0)*\z)},{sin(\x+(0)*\z)}) (1) {};
\node[main node] at ({cos(\x+(1)*\z)},{sin(\x+(1)*\z)}) (2) {};
\node[main node] at ({cos(\x+(2)*\z)},{sin(\x+(2)*\z)}) (3) {};

\node[main node] at ({ \y* cos(\x+(3)*\z) + (1-\y)*cos(\x+(2)*\z)},{ \y* sin(\x+(3)*\z) + (1-\y)*sin(\x+(2)*\z)}) (4) {};
\node[main node] at ({cos(\x+(3)*\z)},{sin(\x+(3)*\z)}) (5) {};
\node[main node] at ({ \y* cos(\x+(3)*\z) + (1-\y)*cos(\x+(4)*\z)},{ \y* sin(\x+(3)*\z) + (1-\y)*sin(\x+(4)*\z)}) (6) {};

\node[main node] at ({ \y* cos(\x+(4)*\z) + (1-\y)*cos(\x+(3)*\z)},{ \y* sin(\x+(4)*\z) + (1-\y)*sin(\x+(3)*\z)}) (7) {};
\node[main node] at ({cos(\x+(4)*\z)},{sin(\x+(4)*\z)}) (8) {};
\node[main node] at ({ \y* cos(\x+(4)*\z) + (1-\y)*cos(\x+(5)*\z)},{ \y* sin(\x+(4)*\z) + (1-\y)*sin(\x+(5)*\z)}) (9) {};

 \path[every node/.style={font=\sffamily}]
(1) edge (3)
(1) edge (2)
(2) edge (3)
(4) edge (5)
(5) edge (6)
(7) edge (8)
(8) edge (9)
(4) edge (2)
(6) edge (1)
(6) edge (3)
(7) edge (1)
(7) edge (2)
(9) edge (3)
(6) edge (7);
\end{tikzpicture}

&

\def\x{270 - 360/6}
\def\z{360/6}

\begin{tikzpicture}[scale=\sc, thick,main node/.style={circle,  minimum size=1.5mm, fill=black, inner sep=0.1mm,draw,font=\tiny\sffamily}]

\node[main node] at ({cos(\x+(0)*\z)},{sin(\x+(0)*\z)}) (1) {};
\node[main node] at ({cos(\x+(1)*\z)},{sin(\x+(1)*\z)}) (2) {};
\node[main node] at ({cos(\x+(2)*\z)},{sin(\x+(2)*\z)}) (3) {};

\node[main node] at ({ \y* cos(\x+(3)*\z) + (1-\y)*cos(\x+(2)*\z)},{ \y* sin(\x+(3)*\z) + (1-\y)*sin(\x+(2)*\z)}) (4) {};
\node[main node] at ({cos(\x+(3)*\z)},{sin(\x+(3)*\z)}) (5) {};
\node[main node] at ({ \y* cos(\x+(3)*\z) + (1-\y)*cos(\x+(4)*\z)},{ \y* sin(\x+(3)*\z) + (1-\y)*sin(\x+(4)*\z)}) (6) {};

\node[main node] at ({ \y* cos(\x+(4)*\z) + (1-\y)*cos(\x+(3)*\z)},{ \y* sin(\x+(4)*\z) + (1-\y)*sin(\x+(3)*\z)}) (7) {};
\node[main node] at ({cos(\x+(4)*\z)},{sin(\x+(4)*\z)}) (8) {};
\node[main node] at ({ \y* cos(\x+(4)*\z) + (1-\y)*cos(\x+(5)*\z)},{ \y* sin(\x+(4)*\z) + (1-\y)*sin(\x+(5)*\z)}) (9) {};

\node[main node] at ({ \y* cos(\x+(5)*\z) + (1-\y)*cos(\x+(4)*\z)},{ \y* sin(\x+(5)*\z) + (1-\y)*sin(\x+(4)*\z)}) (10) {};
\node[main node] at ({cos(\x+(5)*\z)},{sin(\x+(5)*\z)}) (11) {};
\node[main node] at ({ \y* cos(\x+(5)*\z) + (1-\y)*cos(\x+(6)*\z)},{ \y* sin(\x+(5)*\z) + (1-\y)*sin(\x+(6)*\z)}) (12) {};

 \path[every node/.style={font=\sffamily}]
(1) edge (2)
(1) edge (3)
(2) edge (3)
(4) edge (5)
(5) edge (6)
(7) edge (8)
(8) edge (9)
(10) edge (11)
(11) edge (12)
(4) edge (9)
(7) edge (12)
(10) edge (6)
(1) edge (4)
(2) edge (4)
(2) edge (6)
(3) edge (6)
(2) edge (7)
(3) edge (7)
(3) edge (9)
(1) edge (9)
(3) edge (10)
(1) edge (10)
(1) edge (12)
(2) edge (12);
\end{tikzpicture}

\\
$G_{\ref{figKnownEG},1}$ & $G_{\ref{figKnownEG},2}$ & $G_{\ref{figKnownEG},3}$ & $G_{\ref{figKnownEG},4}$ 
\end{tabular}
\caption{Several known $\ell$-minimal graphs due to~\cite{LiptakT03, EscalanteMN06, AuT24b}.}
\label{figKnownEG}
\end{figure}
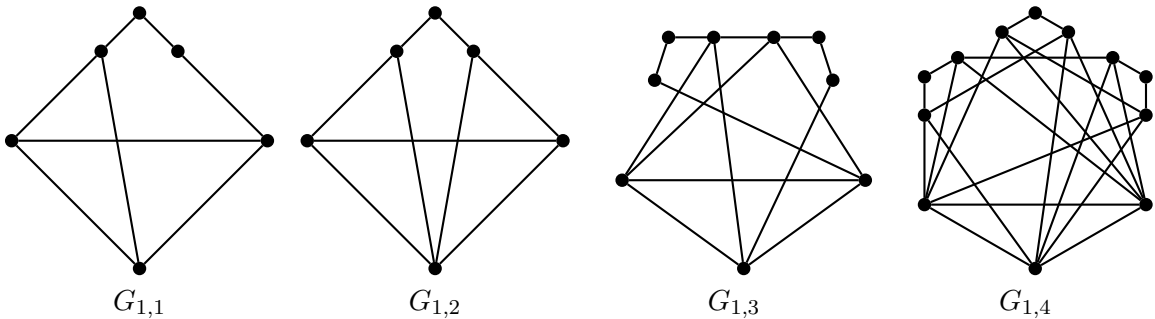

The examples in Figure~\ref{figKnownEG} already suggest vertex-stretching as a recurring mechanism for constructing $\ell$-minimal graphs. Given a graph $G$ and vertex $v \in V(G)$, define $\Gamma_G(v) \ce \set{ u : \set{u,v} \in E(G)}$ to be the \emph{(open) neighborhood} of $v$ in $G$. Then given a vertex $v \in V(G)$ and sets $A_1, \ldots,  A_k \subseteq \Gamma_G(v)$ where $\bigcup_{j=1}^k A_j  = \Gamma_G(v)$, we define the \emph{stretching} of $v$ in $G$ by applying the following sequence of transformations to $G$: 
\begin{itemize}
\item
replace $v$ by $k+1$ vertices: $v_0, v_1, \ldots, v_k$;
\item
for every $j \in [k]$, add an edge between $v_j$  and all vertices in $\set{v_0} \cup A_j$.
\end{itemize}

We say that a vertex-stretching operation is \emph{proper} if $\emptyset \neq A_j \subset \Gamma_G(v)$ for every $j \in [k]$. Also, we will call the operation \emph{$k$-stretching} when we need to specify $k$. For example, in Figure~\ref{figVertexStretch}, $G_1 = K_6$, $G_2$ is obtained from a proper 2-stretching of vertex $5$ in $G_1$, and $G_3$ is obtained from a proper 2-stretching of vertex $6$ in $G_2$. 

\def\y{0.7}
\def\sc{2}
\def\x{270 - 360/6}
\def\z{360/6}

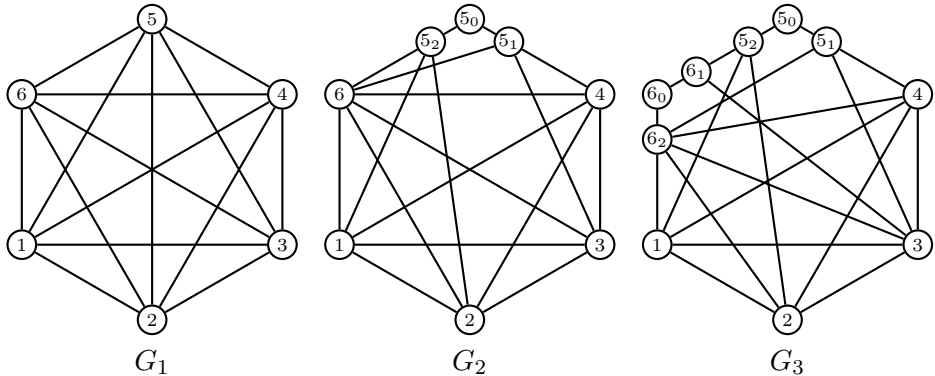
\begin{figure}[htbp]
\centering
\begin{tabular}{ccc}

\begin{tikzpicture}
[scale=\sc, thick,main node/.style={circle, minimum size=3.8mm, inner sep=0.1mm,draw,font=\tiny\sffamily}]
\node[main node] at ({cos(\x+(0)*\z)},{sin(\x+(0)*\z)}) (1) {$1$};
\node[main node] at ({cos(\x+(1)*\z)},{sin(\x+(1)*\z)}) (2) {$2$};
\node[main node] at ({cos(\x+(2)*\z)},{sin(\x+(2)*\z)}) (3) {$3$};
\node[main node] at ({cos(\x+(3)*\z)},{sin(\x+(3)*\z)}) (4) {$4$};
\node[main node] at ({cos(\x+(4)*\z)},{sin(\x+(4)*\z)}) (5) {$5$};
\node[main node] at ({cos(\x+(5)*\z)},{sin(\x+(5)*\z)}) (6) {$6$};

 \path[every node/.style={font=\sffamily}]
(1) edge (2) (1) edge (3) (2) edge (3) (1) edge (4)(2) edge (4)(3) edge (4)(1) edge (5)(2) edge (5)(3) edge (5)(4) edge (5)(1) edge (6)(2) edge (6)(3) edge (6)(4) edge (6)(5) edge (6);
\end{tikzpicture}
&
\begin{tikzpicture}
[scale=\sc, thick,main node/.style={circle, minimum size=3.8mm, inner sep=0.1mm,draw,font=\tiny\sffamily}]
\node[main node] at ({cos(\x+(0)*\z)},{sin(\x+(0)*\z)}) (1) {$1$};
\node[main node] at ({cos(\x+(1)*\z)},{sin(\x+(1)*\z)}) (2) {$2$};
\node[main node] at ({cos(\x+(2)*\z)},{sin(\x+(2)*\z)}) (3) {$3$};
\node[main node] at ({cos(\x+(3)*\z)},{sin(\x+(3)*\z)}) (4) {$4$};

\node[main node] at ({cos(\x+(4)*\z)},{sin(\x+(4)*\z)}) (5) {$5_0$};
\node[main node] at ({  cos(\x+(4)*\z) + (1-\y)*cos(330)},{ sin(\x+(4)*\z) + (1-\y)*sin(330)}) (6) {$5_1$};
\node[main node] at ({ cos(\x+(4)*\z) + (1-\y)*cos(210},{ sin(\x+(4)*\z) + (1-\y)*sin(210)}) (8) {$5_2$};

\node[main node] at ({cos(\x+(5)*\z)},{sin(\x+(5)*\z)}) (9) {$6$};

 \path[every node/.style={font=\sffamily}]
(1) edge (2)(1) edge (3)(2) edge (3)(1) edge (4)(2) edge (4)(3) edge (4)(6) edge (5)(8) edge (5)(1) edge (9)(2) edge (9)(3) edge (9)(4) edge (9)(3) edge (6)(4) edge (6)(9) edge (6)(1) edge (8)(2) edge (8)(9) edge (8);
\end{tikzpicture}
&
\begin{tikzpicture}
[scale=\sc, thick,main node/.style={circle, minimum size=3.8mm, inner sep=0.1mm,draw,font=\tiny\sffamily}]
\node[main node] at ({cos(\x+(0)*\z)},{sin(\x+(0)*\z)}) (1) {$1$};
\node[main node] at ({cos(\x+(1)*\z)},{sin(\x+(1)*\z)}) (2) {$2$};
\node[main node] at ({cos(\x+(2)*\z)},{sin(\x+(2)*\z)}) (3) {$3$};
\node[main node] at ({cos(\x+(3)*\z)},{sin(\x+(3)*\z)}) (4) {$4$};

\node[main node] at ({cos(\x+(4)*\z)},{sin(\x+(4)*\z)}) (5) {$5_0$};
\node[main node] at ({  cos(\x+(4)*\z) + (1-\y)*cos(330)},{ sin(\x+(4)*\z) + (1-\y)*sin(330)}) (6) {$5_1$};
\node[main node] at ({ cos(\x+(4)*\z) + (1-\y)*cos(210},{ sin(\x+(4)*\z) + (1-\y)*sin(210)}) (8) {$5_2$};

\node[main node] at ({cos(\x+(5)*\z)},{sin(\x+(5)*\z)}) (9) {$6_0$};
\node[main node] at ({  cos(\x+(5)*\z) + (1-\y)*cos(30)},{ sin(\x+(5)*\z) + (1-\y)*sin(30)}) (10) {$6_1$};
\node[main node] at ({ cos(\x+(5)*\z) + (1-\y)*cos(270},{  sin(\x+(5)*\z) + (1-\y)*sin(270)}) (11) {$6_2$};

 \path[every node/.style={font=\sffamily}]
(1) edge (2)(1) edge (3)(2) edge (3)(1) edge (4)(2) edge (4)(3) edge (4)(6) edge (5) (8) edge (5)(10) edge (9)(11) edge (9)(3) edge (6)(4) edge (6)(1) edge (8)(2) edge (8)(11) edge (1)(11) edge (2)(11) edge (3)(11) edge (4)(11) edge (6)(10) edge (3)(10) edge (8);
\end{tikzpicture}
\\
$G_1$ & $G_2$ & $G_3$
\end{tabular}
\caption{Illustrating the vertex-stretching operation.}
\label{figVertexStretch}
\end{figure}

An important property of the vertex-stretching operation in relation to the $\LS_+$-rank of a graph is the following.

\begin{theorem}\label{thmAuT250}
\cite[Lemma 2]{AuT25} Let $H$ be obtained from $G$ by stretching a vertex in $G$. Then $r_+(H) \geq r_+(G)$.
\end{theorem}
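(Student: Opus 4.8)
The plan is to construct a single affine lift $\phi : \mR^{V(G)} \to \mR^{V(H)}$ that does two jobs at once: it carries each relaxation $\LS_+^{\ell}(G)$ into $\LS_+^{\ell}(H)$, and it \emph{reflects} the integer hull, meaning $\phi(x) \in \STAB(H)$ forces $x \in \STAB(G)$. Granting both, the theorem follows quickly: writing $\ell \ce r_+(G) - 1$ (the case $r_+(G) = 0$ being trivial), choose $x^* \in \LS_+^{\ell}(G) \setminus \STAB(G)$; then $\phi(x^*) \in \LS_+^{\ell}(H) \setminus \STAB(H)$, so $\LS_+^{\ell}(H) \neq \STAB(H)$ and $r_+(H) \geq \ell + 1 = r_+(G)$. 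Write $W \ce V(G) \setminus \set{v}$, so $V(H) = \set{v_0, v_1, \ldots, v_k} \cup W$, and define $\phi$ by $\phi(x)_u = x_u$ for $u \in W$, $\phi(x)_{v_0} = 1 - x_v$, and $\phi(x)_{v_j} = x_v$ for $j \in [k]$. Checking $\phi$ against the edges of $H$ shows $\phi(\FRAC(G)) \subseteq \FRAC(H)$: the star edges $\set{v_0, v_j}$ give $\phi(x)_{v_0} + \phi(x)_{v_j} = 1$, while an edge $\set{v_j, u\}$ with $u \in A_j \subseteq \Gamma_G(v)$ gives $\phi(x)_{v_j} + \phi(x)_u = x_v + x_u \leq 1$ since $\set{u,v} \in E(G)$.

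To propagate the containment through the operator I would homogenize. Let $\Phi$ be the linear map on the lifted spaces (homogenizing coordinate indexed by $0$) with $\Phi(z)_0 = z_0$, $\Phi(z)_u = z_u$ for $u \in W$, $\Phi(z)_{v_0} = z_0 - z_v$, and $\Phi(z)_{v_j} = z_v$, so that $\Phi$ sends $(1, x)$ to $(1, \phi(x))$ and $\phi(P) \subseteq Q$ upgrades to $\Phi(\cone(P)) \subseteq \cone(Q)$. The key is a one-step lemma: if $\Phi(\cone(P)) \subseteq \cone(Q)$ and $Y \in \widehat{\LS}_+(P)$, then $Y' \ce \Phi Y \Phi^{\top} \in \widehat{\LS}_+(Q)$. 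Positive semidefiniteness is immediate from $Y \succeq 0$; the constraint $Y'e_0 = \diag(Y')$ follows coordinatewise from the shape of the rows of $\Phi$ together with $Ye_0 = \diag(Y)$ (the only nonroutine coordinate is $v_0$, where $Y'_{v_0 v_0} = Y_{00} - 2Y_{0v} + Y_{vv} = Y_{00} - Y_{vv}$ using $Y_{0v} = Y_{vv}$, matching $(Y'e_0)_{v_0} = Y_{00} - Y_{vv}$); and each vector $Y'e_a$ or $Y'(e_0 - e_a)$ equals $\Phi$ applied to one of $Ye_b$ or $Y(e_0 - e_b)$, hence lies in $\Phi(\cone(P)) \subseteq \cone(Q)$. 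Since $Y'e_0 = \Phi(Ye_0)$, this gives $\phi(\LS_+(P)) \subseteq \LS_+(Q)$; taking $P = \LS_+^{\ell-1}(G)$ and $Q = \LS_+^{\ell-1}(H)$ and inducting from $\phi(\FRAC(G)) \subseteq \FRAC(H)$ yields $\phi(\LS_+^{\ell}(G)) \subseteq \LS_+^{\ell}(H)$ for every $\ell$.

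For the reflection step, suppose $y \ce \phi(x) \in \STAB(H)$ and write $y = \sum_T \mu_T \chi_T$ as a convex combination of incidence vectors of stable sets $T$ of $H$. Here the design of $\phi$ pays off: because $y_{v_0} + y_{v_j} = 1$ while $\chi_T(v_0) + \chi_T(v_j) \leq 1$ for every stable set $T$ (as $\set{v_0, v_j} \in E(H)$), every $T$ in the support must satisfy $\chi_T(v_0) + \chi_T(v_j) = 1$ for all $j$. Hence each such $T$ is of exactly one of two types: either $v_0 \in T$ and no $v_j \in T$, or $v_0 \notin T$ and all of $v_1, \ldots, v_k \in T$; in the latter case $T$ avoids $\bigcup_{j} A_j = \Gamma_G(v)$. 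Projecting $T$ to $S(T) \ce T \cap W$ in the first case and $S(T) \ce (T \cap W) \cup \set{v}$ in the second produces stable sets of $G$, and $\sum_T \mu_T \chi_{S(T)}$ agrees with $x$ on $W$ and equals $\sum_T \mu_T [v_0 \notin T] = 1 - y_{v_0} = x_v$ in coordinate $v$. Thus $x \in \STAB(G)$, as required.

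The main obstacle is not any single computation but selecting $\phi$ so that one map serves both purposes simultaneously. Setting $\phi(x)_{v_0} = 1 - x_v$ (rather than $x_v$) is precisely what forces the tight equalities $y_{v_0} + y_{v_j} = 1$, which collapse the support of any $\STAB(H)$-representation onto the two consistent types above; a more naive lift makes the reflection step fail, since stable sets such as a single $v_j$ together with a $G$-neighbor of $v$ lying outside $A_j$ would survive and break the projection. Once $\phi$ is fixed, the $\LS_+$-propagation is a routine congruence computation and the reflection is the genuine combinatorial core of the argument.
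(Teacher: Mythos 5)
The manuscript never proves Theorem~\ref{thmAuT250}; it imports the statement wholesale from \cite[Lemma 2]{AuT25}, so there is no in-paper argument to compare yours against, and your proposal has to stand as a self-contained proof. It does: I checked it and it is correct. The two pillars both hold up. For the lifting step, with $\Phi$ having rows $e_0$, $e_u$ ($u \in W$), $e_0 - e_v$ (row $v_0$), and $e_v$ (rows $v_1,\dots,v_k$), the congruence $Y' = \Phi Y \Phi^{\top}$ indeed satisfies all three conditions defining $\widehat{\LS}_+$: positive semidefiniteness is inherited, the diagonal condition reduces to $Y_{0v}=Y_{vv}$ exactly as you compute, and every column $Y'e_a$ and $Y'(e_0-e_a)$ for $a \in V(H)$ equals $\Phi$ applied to one of $Ye_u$, $Y(e_0-e_u)$, $Ye_v$, $Y(e_0-e_v)$, which lie in $\cone(P)$; together with the base case $\phi(\FRAC(G)) \subseteq \FRAC(H)$ this gives $\phi(\LS_+^{\ell}(G)) \subseteq \LS_+^{\ell}(H)$ for all $\ell$ by induction. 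For the reflection step, the tight equalities $y_{v_0}+y_{v_j}=1$ correctly force every stable set in the support of a convex representation of $\phi(x)$ into your two types, and in the type with $v_0 \notin T$ the set contains all $v_j$ and hence avoids $\bigcup_j A_j = \Gamma_G(v)$, so each projected set $S(T)$ is stable in $G$ and the combination $\sum_T \mu_T \chi_{S(T)}$ reconstructs $x$ (the coordinate-$v$ bookkeeping $\sum_T \mu_T[v_0 \notin T] = 1 - y_{v_0} = x_v$ is right). Your closing remark about why $\phi(x)_{v_0} = 1-x_v$ rather than $x_v$ is also well taken; that choice is what makes the single map serve both purposes. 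This affine-lift-plus-reflection argument is exactly the style of proof the cited lemma rests on and is fully consistent with the certificate-based machinery of Section~\ref{sec02}, so I have nothing to correct.
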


Theorem~\ref{thmAuT250} is noteworthy because the $\LS_+$-rank does not always behave well under the conventional graph minor operations. For instance, removing an edge from a graph could increase its $\LS_+$-rank~\cite[Figure 4]{LiptakT03}. In fact, given a general graph $G$, there are currently only two known ways to construct another graph $H$ with $r_+(H) \geq r_+(G)$: make $G$ an induced subgraph of $H$, or obtain $H$ by (possibly iterated) vertex-stretching. Although the full vertex-stretching operation was formulated only recently in~\cite{AuT25}, it is a natural generalization of several more restrictive variants that appeared earlier in the literature; see, for instance,~\cite{LiptakT03, AguileraEF14, BianchiENT17, AuT24b}.

For the present paper, the most relevant case is proper 2-stretching. The 2-minimal graphs $G_{\ref{figKnownEG},1}$ and $G_{\ref{figKnownEG},2}$ are each obtained by a proper 2-stretching of a vertex of $K_4$, so a single such operation can increase the $\LS_+$-rank from 1 to 2. Likewise, $G_{\ref{figKnownEG},3}$ is obtained from $K_5$ by properly 2-stretching two vertices, increasing the rank from 1 to 3. On the other hand, no example is currently known in which a single vertex-stretching operation raises the $\LS_+$-rank by more than 1. This makes proper 2-stretching especially attractive in the search for $\ell$-minimal graphs, since it may increase rank while adding only two vertices.

These insights motivated the authors' work in~\cite{AuT24, AuT24b}. In~\cite{AuT24b}, we studied the properties of the vertex-stretching operation, focusing on when the stretching is proper. In particular, the following result further highlights an important link between the 2-stretching operation and $\ell$-minimal graphs.

\begin{theorem}\label{thmAuT24b1}
\cite[Theorem 19]{AuT24b}
Let $G$ be an $\ell$-minimal graph where $\ell \geq 2$. Then there is a graph $H$ and a vertex $i \in V(H)$ such that
\begin{itemize}
\item[(i)]
$G$ is obtained from $H$ by a proper 2-stretching of $i$, and
\item[(ii)]
$H - i$ is an $(\ell-1)$-minimal graph.
\end{itemize}
\end{theorem}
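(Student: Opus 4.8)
The plan is to find inside $G$ a single vertex that reverses a proper $2$-stretching, and then to read off the required $(\ell-1)$-minimal graph as the result of deleting that vertex together with its two neighbours. Write $G-S$ for the subgraph induced on $V(G)\setminus S$; since $\LS_+$-rank is monotone under induced subgraphs (recalled above), $r_+(G-S)\le r_+(G)$ for every $S$. The engine of the argument is the vertex-conditioning bound
\[
r_+(G)\le 1+\max\set{\, r_+(G-v),\ r_+\big(G-(\set{v}\cup\Gamma_G(v))\big)\,}\qquad\text{for every } v\in V(G),
\]
a standard consequence of the definition of $\widehat{\LS}_+$: a certificate $Y$ for $x\in\LS_+^{t+1}(G)$ yields, through the vectors $Ye_v$ and $Y(e_0-e_v)$, points of $\cone(\LS_+^{t}(G))$ encoding ``$v$ in'' (which forces the neighbours of $v$ to $0$) and ``$v$ out'', whose convex combination is $x$; if $t$ is at least the ranks of $G-(\set{v}\cup\Gamma_G(v))$ and of $G-v$, then $x\in\STAB(G)$.

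Combining this with Theorem~\ref{thmLiptakT031} pins down where rank can be lost. As $G-v$ has $3\ell-1$ vertices, $r_+(G-v)\le\ell-1$; as $G-(\set{v}\cup\Gamma_G(v))$ has $3\ell-1-|\Gamma_G(v)|$ vertices, its rank is at most $\ell-2$ once $|\Gamma_G(v)|\ge 3$. Since $r_+(G)=\ell$, the displayed bound forces $r_+(G-v)=\ell-1$ at every vertex of degree at least $3$, and shows that $r_+(G-(\set{v}\cup\Gamma_G(v)))=\ell-1$ is possible only when $v$ has degree $2$; call such a $v$ \emph{critical}. Two reduction lemmas dispose of the degenerate configurations: deleting a vertex of degree at most $1$, or a degree-$2$ vertex whose two neighbours are adjacent (a simplicial vertex), leaves $r_+$ unchanged, so minimality $|V(G)|=3\ell=n_+(\ell)$ forces minimum degree $\ge 2$ and forbids degree-$2$ vertices with adjacent neighbours. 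Hence every critical vertex is a degree-$2$ vertex $i_0$ with non-adjacent neighbours $\Gamma_G(i_0)=\set{i_1,i_2}$, i.e.\ exactly a centre for reversing a proper $2$-stretching: setting $A_j:=\Gamma_G(i_j)\setminus\set{i_0}$ and merging $i_0,i_1,i_2$ into one vertex $i$ with $\Gamma_H(i)=A_1\cup A_2$ produces a graph $H$ whose $2$-stretching at $i$ is $G$, and $H-i=G-(\set{i_0}\cup\Gamma_G(i_0))$ has exactly $3(\ell-1)$ vertices.

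The crux, and the step I expect to be hardest, is producing a critical vertex at all. I would take a separating point $x^\ast\in\LS_+^{\ell-1}(G)\setminus\STAB(G)$ and first show it is strictly fractional: a coordinate equal to $0$ or $1$ would restrict to a point of $\LS_+^{\ell-1}$ still separated from $\STAB$ on an induced subgraph with at most $3\ell-1$ vertices, forcing rank $\ge\ell$ there and contradicting Theorem~\ref{thmLiptakT031}. Conditioning $x^\ast$ at a vertex $v$ then writes $x^\ast=x^\ast_v\,z+(1-x^\ast_v)\,z'$ with $z,z'\in\LS_+^{\ell-2}(G)$, and some branch must be separated from $\STAB(G)$; if the ``in'' branch $z$ is the separated one, its restriction to $G-(\set{v}\cup\Gamma_G(v))$ witnesses $r_+(G-(\set{v}\cup\Gamma_G(v)))\ge\ell-1$, making $v$ critical. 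The difficulty is to guarantee that for some vertex it is the ``in'' branch, not the ``out'' branch, that carries the separation; a naive descent through ``out'' branches only exhibits critical vertices in strictly smaller induced subgraphs, not in $G$. I expect this to require an extremal choice of $x^\ast$---for instance an extreme point of $\LS_+^{\ell-1}(G)$ maximising $\sum_i x_i$, or one of minimal support---together with the structural theory of proper stretching developed in~\cite{AuT24b}.

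Granting a critical centre $i_0$, the rest is bookkeeping. The graph $H-i$ has $3(\ell-1)$ vertices, so $r_+(H-i)\le\ell-1$ by Theorem~\ref{thmLiptakT031}, while criticality gives $r_+(H-i)=r_+(G-(\set{i_0}\cup\Gamma_G(i_0)))\ge\ell-1$; thus $r_+(H-i)=\ell-1$ and $H-i$ is $(\ell-1)$-minimal, proving (ii). For (i) it remains to check that the $2$-stretching is \emph{proper}. Nonemptiness $A_1,A_2\ne\emptyset$ is immediate from minimum degree $\ge 2$, and the remaining requirement $A_1,A_2\subsetneq A_1\cup A_2$ is equivalent to $A_1$ and $A_2$ being incomparable. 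If instead $A_1\subseteq A_2$, then $i_1$ is a non-neighbour of $i_2$ with $\Gamma_G(i_1)\subseteq\Gamma_G(i_2)\cup\set{i_0}$, a domination pattern that---by the same kind of reduction lemma used for simplicial vertices---could be removed without lowering $r_+(G)$, again contradicting $|V(G)|=n_+(\ell)$. This forces incomparability, hence properness, and completes the plan.
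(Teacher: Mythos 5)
The manuscript does not prove this theorem; it imports it from \cite{AuT24b}, so there is no internal proof to compare against and your argument must stand on its own. Your overall architecture is the natural one — find a degree-$2$ vertex $i_0$ with non-adjacent neighbours whose destruction is $(\ell-1)$-minimal, un-stretch it, then verify properness — and your reduction lemmas (deleting a vertex of degree at most $1$, or a degree-$2$ vertex with adjacent neighbours, preserves $r_+$, via Proposition~\ref{propCliqueCut} and Theorem~\ref{thmLiptakT031}) are correct, as is the final bookkeeping. But the proposal has two genuine gaps.

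The step you yourself flag as the crux — producing a critical vertex — is left unproven, and the repair you gesture at (an extremal choice of the separating point $x^\ast$) is not shown to work. The irony is that this step is immediate from a tool the paper already quotes: Theorem~\ref{thmDeleteDestroy}(i) states $r_+(G)\le \max\set{r_+(G\ominus v):v\in V(G)}+1$, so some $v$ satisfies $r_+(G\ominus v)\ge \ell-1$; Theorem~\ref{thmLiptakT031} then forces $3\ell-1-\deg(v)\ge 3(\ell-1)$, i.e.\ $\deg(v)\le 2$, your minimum-degree lemma upgrades this to $\deg(v)=2$, and then $G\ominus v$ has exactly $3(\ell-1)$ vertices and rank exactly $\ell-1$. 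The per-vertex conditioning bound $r_+(G)\le 1+\max\set{r_+(G-v),\,r_+(G\ominus v)}$ that you adopted as your ``engine'' is genuinely too weak for this purpose: it is consistent with that bound that every vertex loses rank through the deletion branch, which is exactly the obstruction you ran into. The destruction-only maximum in Theorem~\ref{thmDeleteDestroy}(i) is a different, stronger statement, and it dissolves your crux in three lines.

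The properness step is a second, more serious gap. You assert that the domination pattern $\Gamma_G(i_1)\subseteq\Gamma_G(i_2)$ (arising when $A_1\subseteq A_2$) ``could be removed without lowering $r_+(G)$ by the same kind of reduction lemma used for simplicial vertices.'' No such lemma follows from your tools: the simplicial reduction rests on the clique-cutset proposition, which is inapplicable here precisely because $i_1\not\sim i_2$. Worse, the natural general form of the claim is false: deleting the dominating vertex can strictly decrease the rank. Take the triangle $\set{a,b,w}$ with a pendant vertex $u$ attached to $a$; then $u\not\sim w$ and $\Gamma(u)=\set{a}\subseteq\set{a,b}=\Gamma(w)$, yet $r_+(G)=1$ while $r_+(G-w)=0$. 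One also cannot salvage this by arguing ``an $\ell$-minimal graph has a full-support facet (Lemma~\ref{lemfacet2}), and full-support facets exclude dominated pairs'': the $5$-wheel minus one spoke has the dominated pair (rim vertex opposite the missing spoke, hub) and still carries the full-support facet $\sum_{\textnormal{rim}}x_v+x_{\textnormal{hub}}\le 2$. What is actually needed is that deleting the \emph{dominated} vertex $i_1$ preserves the rank — equivalently, that an improper $2$-stretching (one with $A_1\subseteq A_2$) never increases the $\LS_+$-rank, so that $r_+(G)=r_+(G-\set{i_0,i_1})\le \ell-1$, a contradiction with $r_+(G)=\ell$. That is a substantive lemma of \cite{AuT24b} requiring its own certificate-level argument; it does not follow from counting, cutsets, or conditioning, and until it is supplied your proof of part (i) is incomplete.
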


We also showed the following when it comes to stretching vertices of a complete graph.

\begin{theorem}\label{thmAuT24b2}
\cite[Propositions 21 and 23]{AuT24b}
Let $n \geq 4$ be an integer, and let $G$ be obtained from $K_n$ by a proper 2-stretching of $i \in V(K_n)$. Then
\begin{itemize}
\item[(i)]
$r_+(G) = 2$.
\item[(ii)]
Let $H$ be a graph obtained by a proper 2-stretching of one of $i_0,i_1,i_2 \in V(G)$. Then $r_+(H) = 2$.
\end{itemize}
\end{theorem}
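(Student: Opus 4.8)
The plan is to reduce both parts to a single \emph{one-vertex reduction lemma}, applied at a carefully chosen vertex whose deletion (and whose closed-neighbourhood deletion) leaves a perfect graph. Writing $N[z] \ce \{z\}\cup\Gamma_{G'}(z)$ for the closed neighbourhood, the lemma I would use is: for any graph $G'$ and any $z\in V(G')$, if $r_+(G'-z)\le\ell$ and $r_+(G'-N[z])\le\ell$, then $r_+(G')\le\ell+1$. This follows directly from the defining properties of $\LS_+$: given $x\in\LS_+^{\ell+1}(G')$ with a certificate matrix $Y$, the columns $Ye_z$ and $Y(e_0-e_z)$ lie in $\cone(\LS_+^{\ell}(G'))$, so their normalizations are points of $\LS_+^{\ell}(G')$ with $z$-coordinate $1$ and $0$ respectively. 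Since the $x_z=1$ face of $\LS_+^{\ell}(G')$ projects into $\LS_+^{\ell}(G'-N[z])$ and the $x_z=0$ face into $\LS_+^{\ell}(G'-z)$, the hypotheses give that both points lie in $\STAB(G'-N[z])$ and $\STAB(G'-z)$, each of which embeds into $\STAB(G')$. Then $Ye_0=Ye_z+Y(e_0-e_z)$ exhibits $x$ as the convex combination (with weights $x_z,1-x_z$) of these two integral-hull points, so $x\in\STAB(G')$.

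For part (i), write $C\ce V(K_n)\setminus\{i\}$ for the surviving clique and $i_0,i_1,i_2$ for the stretch of $i$, with $i_1$ adjacent to $A_1\cup\{i_0\}$ and $i_2$ to $A_2\cup\{i_0\}$, where $A_1\cup A_2=C$ and both are nonempty proper subsets. For $r_+(G)\le 2$ I would apply the lemma at $z=i_0$ with $\ell=1$: here $G-N[i_0]=K_{n-1}$ is perfect, and $G-i_0$ consists of the clique $C$ plus the two nonadjacent vertices $i_1,i_2$ attached to $A_1,A_2\subseteq C$, so any induced cycle uses at most two vertices of $C$ (a third would be a chord), giving no hole of length $\ge4$, while the large clique excludes every odd antihole by a clique-number count; hence $G-i_0$ is perfect and $r_+(G-i_0)\le1$. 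For $r_+(G)\ge2$ I would exhibit an induced proper $2$-stretch of $K_4$: properness gives $a\in A_1\setminus A_2$ and $b\in A_2\setminus A_1$, and adjoining a third clique vertex $c$ (from $A_1\cap A_2$, or a second vertex of the larger of $A_1,A_2$ when these are disjoint) makes $\{a,b,c,i_0,i_1,i_2\}$ induce a proper $2$-stretch of $K_4$, which has $\LS_+$-rank $2$ by \cite{LiptakT03,EscalanteMN06}. Monotonicity of $r_+$ under induced subgraphs then yields $r_+(G)\ge2$.

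For part (ii), let $H$ be the proper $2$-stretch of $w\in\{i_0,i_1,i_2\}$, with resulting vertices $w_0,w_1,w_2$ where $w_0$ is the new centre, so $\Gamma_H(w_0)=\{w_1,w_2\}$. The lower bound is immediate: by Theorem~\ref{thmAuT250} and part (i), $r_+(H)\ge r_+(G)=2$. For the upper bound I would again invoke the reduction lemma, now at $z=w_0$. Removing $N[w_0]=\{w_0,w_1,w_2\}$ undoes the stretch, so $H-N[w_0]=G-w$; removing only $w_0$ gives $H-w_0$, which is $G-w$ with $w$ replaced by the two nonadjacent vertices $w_1,w_2$ that cover $\Gamma_G(w)$ between them. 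Thus it suffices to show that both $G-w$ and $H-w_0$ are perfect, after which the lemma gives $r_+(H)\le2$.

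The crux, and the main obstacle, is verifying perfection of these residual graphs, and the difficulty depends on $w$. The case $w=i_0$ is easy, since $G-i_0$ is perfect (shown above) and $H-i_0$ only attaches two pendant vertices to it. The cases $w=i_1,i_2$ are harder, because the stretched vertex has neighbours on both $C$ and the gadget, so $H-w_0$ is less transparent. The saving feature remains the large clique: every residual graph is $K_{n-1}$ together with at most four non-clique vertices, so any induced hole meets $C$ in at most two vertices and hence has bounded length, while any induced odd antihole would force a clique of size $2k-3$ inside $\overline{C_{2k+1}}$ and is excluded for all $k\ge4$ by its clique number $k$. Ruling out the finitely many remaining short configurations ($C_5$ and the antiholes $\overline{C_5},\overline{C_7}$) by direct inspection of the gadget adjacencies completes the perfection checks, and with them the theorem; I expect this bounded-but-careful case analysis to be the only genuinely laborious step.
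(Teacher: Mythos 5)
Your reduction lemma and your part (i) are correct. Two remarks there: the lemma's second hypothesis is redundant, since $G'-N[z]$ is an induced subgraph of $G'-z$ and Lemma~\ref{lemfacet1} gives $r_+(G'-N[z])\le r_+(G'-z)$ automatically, so your lemma is exactly the deletion bound of Theorem~\ref{thmDeleteDestroy}(ii). Also, the paper never proves Theorem~\ref{thmAuT24b2} itself (it is imported from~\cite{AuT24b}); what it does prove is the generalization Proposition~\ref{propStretchedKk}, whose argument is essentially identical to your part (i): an induced proper $2$-stretch of $K_4$ (one of $G_{\ref{figKnownEG},1}$, $G_{\ref{figKnownEG},2}$) for the lower bound, and perfection of $G-i_0$ combined with the deletion bound for the upper bound. (For the perfection of $G-i_0$ you can argue more cleanly that $i_1,i_2$ are simplicial, so $G-i_0$ is chordal.)

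Part (ii), however, has a genuine gap: the perfection claim on which your upper bound rests is \emph{false} when $w\in\{i_1,i_2\}$, so the ``bounded-but-careful case analysis'' you defer to cannot succeed. Concretely, take $n=5$, $C=\{1,2,3,4\}$, $A_1=\{1,2\}$, $A_2=\{3,4\}$, and stretch $w=i_1$ (whose neighbourhood is $\{i_0,1,2\}$) with attachment sets $B_1=\{i_0,1\}$ and $B_2=\{2\}$; this is a proper $2$-stretching. In $H-w_0$ the vertices $i_2,i_0,w_1,1,3$ induce a $5$-cycle: the edges are $\{i_2,i_0\}$, $\{i_0,w_1\}$, $\{w_1,1\}$, $\{1,3\}$, $\{3,i_2\}$, while $i_0$ is adjacent to no clique vertex, $1\notin A_2$, $3\notin B_1$, and $w_1\not\sim i_2$. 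So $H-w_0$ contains an odd hole and is not perfect --- the $C_5$ you planned to rule out by inspection is actually present. The failure is not an artifact of the choice $z=w_0$ either: in the same example $\{w_1,w_0,w_2,2,1\}$ induces a $C_5$ in $H$ itself, and $H-i_0$, $H-i_2$ also contain induced $5$-cycles. Since perfection is sufficient but not necessary for $\LS_+$-rank at most $1$ (odd holes themselves have rank $1$), the theorem is not threatened, but rescuing your proof requires genuinely new work: either a rank-$1$ argument for these non-perfect residual graphs, or a case analysis over $(B_1,B_2)$ that selects a different deletion vertex in each configuration (in the example above $H-w_1$ happens to be perfect, so the deletion bound applies there, but your proof fixes $z=w_0$ and contains no such analysis). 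As written, the upper bound $r_+(H)\le 2$ in part (ii) is not established.
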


That is, while a proper 2-stretching of one of the original vertices $i \in V(K_n)$ is guaranteed to increase the graph's $\LS_+$-rank from 1 to 2, the rank would remain at 2 if we further stretch any of the three new vertices $i_0$, $i_1$, or $i_2$. These results suggest that a promising approach of obtaining relatively small graphs with high $\LS_+$-ranks is to 2-stretch some of the original vertices of $K_n$. Thus, given integers $n \geq 3$ and $d \geq 0$,  let $\K_{n,d}$ denote the set of graphs which can be obtained from $K_n$ by 2-stretching $d$ of its vertices.
Then notice that $G_{\ref{figKnownEG},1}, G_{\ref{figKnownEG},2} \in \K_{4,1}$, $G_{\ref{figKnownEG},3} \in \K_{5,2}$, and $G_{\ref{figKnownEG},4} \in \K_{6,3}$. Furthermore,  the authors exhibited in~\cite{AuT24} stretched-clique families with $r_+(G)=\Theta(|V(G)|)$, which is asymptotically tight by Theorem~\ref{thmLiptakT031}. These examples reinforce the idea that stretched-clique constructions are a natural source of graphs with high $\LS_+$-rank.

More recently, the authors were able to further build on these ideas and prove that $\ell$-minimal graphs indeed exist for every $\ell \in \mN$. To describe these results, we need some more notation related to graphs that are stretched cliques. Given $G \in \K_{n,d}$, we let $D(G) \subseteq V(K_n)$ be the set of original vertices from $K_n$ which were stretched to obtain $G$ (and thus $|D(G)| = d$). Furthermore, for every $i \in V(K_n)$, we define the vertices in $V(G)$ \emph{associated} with $i$ to be $i_0, i_1$, and $i_2$ if $i \in D(G)$, and simply the unstretched vertex $i$ otherwise. Every vertex in $G$ is associated with a unique original vertex in $K_n$. Also, by the definition of the vertex-stretching operation, if $G \in \K_{n,d}$ and $i,j \in [n]$ are distinct, then there is at least one edge in $G$ joining a vertex associated with $i$ to a vertex associated with $j$.

Next, define $\hat{\K}_{n,d} \subseteq \K_{n,d}$ to be the set of stretched cliques $G$ where, for every distinct $i,j \in D(G)$, there is \emph{exactly} one edge in $G$ that joins a vertex associated with $i$ with a vertex associated with $j$. For example, $G_3$ from Figure~\ref{figVertexStretch} does not belong to $\hat{\K}_{6,2}$, since $D(G_3) = \set{5,6}$ and there are two edges --- $\set{5_1, 6_2}$ and $\set{5_2,6_1}$ --- joining vertices associated with $5$ and $6$. On the other hand, each of the four known $\ell$-minimal graphs from Figure~\ref{figKnownEG} belongs to $\hat{\K}_{\ell+2,\ell-1}$.

Given a graph $G$, let $\a(G)$ denote the size of the largest stable set in $G$. We also let $\bar{e}$ denote the vector of all-ones (the dimension of which will be clear from the context). For every $G \in \K_{n,d}$, we have $\a(G) = d+1$~\cite[Lemma 4]{AuT25}, which implies that the inequality $\bar{e}^{\top}x \leq d+1$ is valid for $\STAB(G)$. Also, let $\omega(G)$ denote the size of the largest clique in $G$ (usually known as the \emph{clique number} of $G$). Then, we have the following.

\begin{theorem}\label{thmAuT251}
\cite[Theorem 19]{AuT25} Let $G \in \hat{\K}_{n,d}$ where $n \geq 3$ and $d \geq 0$, and let $k \ce \max \set{3, \omega(G)}$. Then the $\LS_+$-rank of $\bar{e}^{\top}x \leq d+1$ is at least $n-k+1$.
\end{theorem}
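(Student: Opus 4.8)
The plan is to restate the conclusion as a non-validity assertion and then build an explicit certificate. By the definition of the $\LS_+$-rank of an inequality, $\bar{e}^{\top}x \leq d+1$ has rank at least $n-k+1$ precisely when it fails to be valid for $\LS_+^{n-k}(G)$. Writing $m \ce n-k$, it therefore suffices to produce a single point $\bar{x} \in \LS_+^{m}(G)$ with $\bar{e}^{\top}\bar{x} > d+1$. I would establish the existence of such a point by induction on $m = n - \max\{3,\omega(G)\} \geq 0$.

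For the base case $m=0$ I would take $\bar{x} = \tfrac{1}{2}\bar{e}$. Every graph in $\K_{n,d}$ has exactly $n+2d$ vertices, and every edge has both endpoints set to $\tfrac12$, so $\bar{x} \in \FRAC(G) = \LS_+^0(G)$; moreover $\bar{e}^{\top}\bar{x} = \tfrac{n+2d}{2} = d + \tfrac{n}{2} > d+1$ since $n \geq 3$. This records the familiar fact that a non-perfect stretched clique already violates $\bar{e}^{\top}x \leq d+1$ fractionally, and matches $n-k+1 = 1$.

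For the inductive step ($m \geq 1$, so $k<n$, whence $G\neq K_n$ and $d\geq 1$) I would choose an original vertex $v$ of the underlying $K_n$ whose deletion leaves $\max\{3,\omega\}$ unchanged: when $\omega(G)\geq 3$ I take $v$ outside a fixed maximum clique $K$ (possible because the $k$ original vertices met by $K$ leave $n-k=m\geq 1$ others, using that a clique of size at least $3$ meets each original vertex at most once), so that $K$ survives and $\omega$ is preserved; when $\omega(G)\leq 2$ the value $k=3$ is already at its floor and persists under any deletion. Deleting the vertices associated with $v$ then yields a smaller stretched clique $G'$ whose induction parameter is $(n-1)-k=m-1$, and the induction hypothesis supplies a point $\bar{x}'$ at level $m-1$ of $G'$ violating the corresponding inequality. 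I would then lift $\bar{x}'$ across the re-introduction of $v$: extend it to a vector $\bar{x}$ on $V(G)$ preserving $\bar{e}^{\top}\bar{x}>d+1$, and exhibit a symmetric matrix $Y \in \mS_+^{|V(G)|+1}$ with $Ye_0=\diag(Y)=\begin{bmatrix} 1 \\ \bar{x}\end{bmatrix}$ whose columns certify $\bar{x}\in\LS_+^{m}(G)$. Here the force-out direction $Y(e_0-e_u)$ reduces to the vertex-deleted subgraph $G-u$ and is handled by $\bar{x}'$ together with the standard embedding of $\LS_+^{m-1}(G-u)$ into $\LS_+^{m-1}(G)$, while the force-in direction $Ye_u$ reduces to the strictly smaller neighbourhood-deleted subgraph $G-u-\Gamma_G(u)$, whose level-$(m-1)$ relaxation is no harder to populate.

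I expect two points to be the real obstacles. The first and principal one is the positive semidefiniteness of the assembled matrix $Y$: the conditional points coming from the induction hypothesis and from the various neighbourhood deletions must fit together as the columns of a single PSD matrix, and it is precisely the explicit, symmetry-exploiting description of $\bar{x}$ and $Y$ that must be engineered to make this hold. The second is the bookkeeping when every original vertex outside $K$ is stretched: deleting such a vertex changes $d$ (and hence the right-hand side $d+1$), and re-introducing it reinstates an entire stretched triple rather than a single vertex, so one must control how many $\LS_+$ levels that reinstatement costs — a delicate matter, since it is not known in general that a single stretching raises the $\LS_+$-rank by at most one. The hypothesis $k=\max\{3,\omega(G)\}$ is exactly what keeps a triangle anchored throughout the recursion, ensuring that the base case and each reduction remain valid even when repeated stretching has pushed the clique number down to $3$.
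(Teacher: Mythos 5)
Your framing is right (rank $\geq n-k+1$ means some point of $\LS_+^{n-k}(G)$ violates $\bar{e}^{\top}x \leq d+1$), your base case is correct ($\tfrac{1}{2}\bar{e} \in \FRAC(G)$ gives value $d+\tfrac{n}{2} > d+1$), and your parameter bookkeeping for the induction is sound: any clique of size at least $3$ contains at most one vertex associated with each original vertex, so an original vertex $v$ outside a maximum clique can be deleted while preserving $k$, and the deletion lands in $\hat{\K}_{n-1,d'}$ with induction parameter $m-1$. Note, however, that this manuscript never proves Theorem~\ref{thmAuT251}; it is imported from the cited reference \cite{AuT25}. So your attempt must stand on its own, and as a standalone proof it has a genuine gap.

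The gap is the inductive step: you never construct the matrix $Y$, and the induction hypothesis you carry is structurally too weak for any such construction to close. Certifying $\bar{x} \in \LS_+^{m}(G)$ requires a single symmetric PSD matrix $Y$ with $Ye_0 = \diag(Y)$ whose columns $Ye_i$ and $Yf_i$ lie in $\cone(\LS_+^{m-1}(G))$ for \emph{every} $i \in V(G)$, not only for the vertices associated with the reinstated original vertex $v$. By Lemma~\ref{lemfacet}, the normalized column $Ye_i$ is forced to be a point of $\LS_+^{m-1}(G \ominus i)$ and $Yf_i$ a point of $\LS_+^{m-1}(G - i)$; these subgraphs range over many graphs that are not stretched cliques, and the points in question are not free choices --- they are pinned down by the entries of the one matrix $Y$ (for instance $Yf_i = Ye_0 - Ye_i$), simultaneously for all $i$, on top of positive semidefiniteness. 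Your hypothesis supplies exactly one violating point in one smaller stretched clique $G'$, which says nothing about the columns indexed by the vertices of $G'$ inside the larger graph $G$, and only loosely constrains those indexed by the reinstated triple. This is precisely why a correct argument must strengthen the induction to an explicitly parameterized family of points and certificate matrices, over a family of graphs closed under the deletions and destructions that the columns generate, verified to be PSD and mutually consistent. The ``engineering'' you defer to your two closing paragraphs is therefore not a residual technicality; it is the entire content of the theorem, and as written your proposal asserts it rather than proves it.
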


Given $\ell \in \mN$, observe that every $G \in \hat{\K}_{\ell+2, \ell-1}$ contains exactly $3\ell$ vertices, and Theorem~\ref{thmAuT251} assures that $r_+(G) = \ell$ as long as $\omega(G) \leq 3$. Using this, we were able to prove the following.

\begin{theorem}\label{thmAuT252}
\cite[Theorem 23]{AuT25} For every positive integer $\ell$, there are at least $2^{\ell-1}$ non-isomorphic $\ell$-minimal graphs.
\end{theorem}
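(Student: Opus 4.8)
The plan is to split the argument into a clean \emph{reduction} to a combinatorial existence problem, followed by a \emph{construction} that supplies the graphs. For the reduction, fix $\ell \geq 1$ and set $n = \ell+2$, $d = \ell-1$. Every $G \in \hat{\K}_{\ell+2,\ell-1}$ has exactly $3\ell$ vertices, so to certify $\ell$-minimality it suffices to show $r_+(G) = \ell$. If $\omega(G) \leq 3$, then $k := \max\set{3,\omega(G)} = 3$, and Theorem~\ref{thmAuT251} gives that $\bar{e}^{\top}x \leq d+1 = \ell$ has $\LS_+$-rank at least $n-k+1 = \ell$. Since $\alpha(G)=d+1$, this inequality is valid for $\STAB(G)$, yet it fails on $\LS_+^{\ell-1}(G)$, so $\LS_+^{\ell-1}(G) \neq \STAB(G)$ and $r_+(G) \geq \ell$. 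Conversely, Theorem~\ref{thmLiptakT031} yields $r_+(G) \leq \lfloor 3\ell/3 \rfloor = \ell$. Hence $r_+(G)=\ell$ and $G$ is $\ell$-minimal. The whole theorem therefore reduces to exhibiting at least $2^{\ell-1}$ pairwise non-isomorphic graphs in $\hat{\K}_{\ell+2,\ell-1}$ with clique number at most $3$.

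Next I would build such a family indexed by binary strings $t \in \set{0,1}^{\ell-1}$. Start from $K_{\ell+2}$ with three distinguished vertices $a,b,c$ left unstretched (forming the base triangle) and super-vertices $v_1,\dots,v_{\ell-1}$ to be $2$-stretched, each producing a center $z_i$ and two leaves $x_i,y_i$. For each $i$, the bit $t_i$ selects between two prescribed ``safe'' ways of distributing $\Gamma(v_i)$ between $x_i$ and $y_i$, namely which leaf absorbs each of $a,b,c$ and which leaf receives each inter-super-vertex edge. These edges are routed so that $G \in \hat{\K}_{\ell+2,\ell-1}$, i.e.\ exactly one edge joins the vertices associated with $i$ and those associated with $j$ for every distinct $i,j$, with each such edge placed between two leaves.

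I would then verify $\omega(G) \leq 3$ for every $t$. Each center $z_i$ has degree $2$ and its two neighbours $x_i,y_i$ are non-adjacent, so $z_i$ lies in no triangle and enters no clique of size $\geq 3$; moreover later stretchings never touch $z_i$, since $z_i \notin \Gamma(v_j)$ for $j \neq i$. Thus any clique of size $\geq 4$ would consist of leaves and vertices of $\set{a,b,c}$. The governing local conditions are that no leaf is adjacent to all of $a,b,c$, and that every inter-super-vertex edge joins two leaves whose common neighbourhood inside $\set{a,b,c}$ has size at most $1$ (otherwise those two leaves together with two adjacent triangle vertices form a $K_4$). The ``safe'' distributions chosen by the bits are precisely those meeting these constraints, and a short case analysis confirms no $K_4$ appears.

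The hard part will be the last step: showing the family realizes at least $2^{\ell-1}$ isomorphism classes. Here I would exploit the rigidity of the construction to recover the parameters from the abstract graph. The centers are recognizable as the degree-$2$ vertices whose two neighbours are non-adjacent (one must check no leaf shares this property); deleting them exposes each leaf pair $\set{x_i,y_i}$, and the base triangle $\set{a,b,c}$ is recoverable as a distinguished $K_3$. From this one reads off each super-vertex's attachment pattern, hence its bit, defining an isomorphism invariant $\phi(G) \in \set{0,1}^{\ell-1}$. The obstacle is that the residual symmetry of the anchor (relabelings of $a,b,c$ and of $v_1,\dots,v_{\ell-1}$) could identify distinct strings, yet the symmetry-breaking needed to make $\phi$ injective is in tension with the $K_4$-avoidance above, since extra triangle-adjacencies threaten $\omega \leq 3$. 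I would resolve this by arranging the inter-super-vertex edges into an asymmetric backbone that simultaneously fixes a canonical order on $v_1,\dots,v_{\ell-1}$ and keeps every pairwise leaf intersection inside the triangle at most $1$, so that $\phi$ distinguishes all $2^{\ell-1}$ strings while $\omega \leq 3$ is preserved. An alternative route is induction via Theorem~\ref{thmAuT24b1}: from each $(\ell-1)$-minimal graph produce two distinct $\ell$-minimal graphs by two proper $2$-stretchings of a newly added vertex, doubling the count at each stage; the same non-isomorphism bookkeeping remains the delicate point.
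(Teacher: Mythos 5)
Your reduction step is correct and is exactly the route the paper indicates: the paper states this theorem as a result imported from~\cite{AuT25}, and in the paragraph preceding it observes precisely what you observe --- every $G \in \hat{\K}_{\ell+2,\ell-1}$ has $3\ell$ vertices, Theorem~\ref{thmAuT251} (with $k=3$ when $\omega(G)\leq 3$, together with $\a(G)=\ell$) forces $r_+(G)\geq \ell$, and Theorem~\ref{thmLiptakT031} caps the rank at $\ell$. So the theorem indeed reduces to exhibiting $2^{\ell-1}$ pairwise non-isomorphic graphs in $\hat{\K}_{\ell+2,\ell-1}$ with clique number at most $3$.

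The genuine gap is that this combinatorial heart of the theorem is never actually established in your proposal. The ``safe'' bit-indexed distributions are not pinned down, the $K_4$-avoidance is delegated to ``a short case analysis'' that is not given, and --- most importantly --- the injectivity of your invariant $\phi$ on isomorphism classes is explicitly deferred: you acknowledge the tension between the symmetry-breaking needed for $\phi$ to separate all $2^{\ell-1}$ strings and the clique-number constraint, and propose an undefined ``asymmetric backbone'' to resolve it. Until that backbone is constructed and verified, you have proven no lower bound at all on the number of isomorphism classes. Two further specific problems: (i) your recognition of the centers as ``the degree-$2$ vertices whose two neighbours are non-adjacent'' fails in general, because a leaf that absorbs only a single neighbour of its super-vertex also has degree $2$ with non-adjacent neighbours (its center $z_i$ is adjacent only to the two leaves), so the anchor you rebuild $\phi$ from is not canonically recoverable without extra constraints ensuring every leaf has degree at least $3$; (ii) the fallback induction via Theorem~\ref{thmAuT24b1} is unsound, since that theorem is a \emph{necessary} condition on $\ell$-minimal graphs, not a sufficient one --- a 1-join followed by a proper 2-stretching of an $(\ell-1)$-minimal graph need not have rank $\ell$ (Theorem~\ref{thmAuT24b2}(ii) gives exactly such rank-preserving stretchings), so ``doubling the count at each stage'' has no justification.
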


While the bound in Theorem~\ref{thmAuT252} is tight for $\ell = 1$ and $\ell=2$, the number of $\ell$-minimal graphs likely far exceeds $2^{\ell-1}$ for $\ell \geq 3$. For instance, there are 13 non-isomorphic graphs in $\hat{\K}_{5,2}$ with $\omega(G) \leq 3$ (see Figure~\ref{fighatK52}), and it follows from Theorem~\ref{thmAuT251} that they are all 3-minimal. (These 13 graphs do include $G_{\ref{figKnownEG},3}$, as well as the 3-minimal graphs discovered earlier in~\cite{AuT24b}, up to isomorphism.) Moreover, there is at least one graph in $\K_{5,2} \setminus \hat{\K}_{5,2}$ which is also 3-minimal~\cite[Proposition 25]{AuT25}. Likewise, an exhaustive computational search finds that there are 588 non-isomorphic graphs in $\hat{\K}_{6,3}$ with $\omega(G) \leq 3$, which include the first known 4-minimal graphs discovered in~\cite{AuT24b}. Thus, prior to this work, there were a total of 14 known 3-minimal graphs and 588 known 4-minimal graphs.

\def\sc{0.85}
\def\y{0.7}
\def\z{360/ (5+4*\y)}
\def\x{270}
\def\placev{
\node[main node] at ({ \x+(-1 + 0*\y) *\z} : 1) (1) {};
\node[main node] at ({ \x+(0 + 0*\y) *\z} : 1) (2) {};
\node[main node] at ({ \x+(1 + 0*\y) *\z} : 1) (3) {};
\node[main node] at ({ \x+(2 + 0*\y) *\z} : 1) (41) {};
\node[main node] at ({ \x+(2 + 1*\y) *\z} : 1) (40) {};
\node[main node] at ({ \x+(2 + 2*\y) *\z} : 1) (42) {};
\node[main node] at ({ \x+(3 + 2*\y) *\z} : 1) (51) {};
\node[main node] at ({ \x+(3 + 3*\y) *\z} : 1) (50) {};
\node[main node] at ({ \x+(3 + 4*\y) *\z} : 1) (52) {};
}

\def\placecommone{  \path (1) edge (2) (2) edge (3) (3) edge (1) (40) edge (41) (40) edge (42) (50) edge (51) (50) edge (52);}

\begin{figure}[htbp]
\centering
\begin{tabular}{ccccccc}
\begin{tikzpicture}[scale=\sc, thick,main node/.style={circle,  minimum size=1.5mm, fill=black, inner sep=0.1mm,draw,font=\tiny\sffamily}] \placev \placecommone
 \path (1) edge (41) (2) edge (41) (3) edge (42)  (1) edge (51) (2) edge (51) (3) edge (52) (41) edge (52);
\end{tikzpicture}
& 
\begin{tikzpicture}[scale=\sc, thick,main node/.style={circle,  minimum size=1.5mm, fill=black, inner sep=0.1mm,draw,font=\tiny\sffamily}] \placev \placecommone
 \path (1) edge (41) (2) edge (41) (3) edge (42)  (1) edge (52) (2) edge (51) (3) edge (51) (41) edge (51);
\end{tikzpicture}
& 
\begin{tikzpicture}[scale=\sc, thick,main node/.style={circle,  minimum size=1.5mm, fill=black, inner sep=0.1mm,draw,font=\tiny\sffamily}] \placev \placecommone
 \path (1) edge (41) (2) edge (41) (3) edge (42)  (1) edge (52) (2) edge (51) (3) edge (51) (41) edge (52);
\end{tikzpicture}
& 
\begin{tikzpicture}[scale=\sc, thick,main node/.style={circle,  minimum size=1.5mm, fill=black, inner sep=0.1mm,draw,font=\tiny\sffamily}] \placev \placecommone
 \path (1) edge (41) (2) edge (41) (3) edge (42)  (1) edge (51) (2) edge (51) (3) edge (52) (42) edge (52);
\end{tikzpicture}
& 
\begin{tikzpicture}[scale=\sc, thick,main node/.style={circle,  minimum size=1.5mm, fill=black, inner sep=0.1mm,draw,font=\tiny\sffamily}] \placev \placecommone
 \path (1) edge (41) (2) edge (41) (3) edge (42)  (1) edge (52) (2) edge (51) (3) edge (51) (42) edge (52);
\end{tikzpicture}
& 
\begin{tikzpicture}[scale=\sc, thick,main node/.style={circle,  minimum size=1.5mm, fill=black, inner sep=0.1mm,draw,font=\tiny\sffamily}] \placev \placecommone
 \path (1) edge (41) (1) edge (42) (2) edge (41) (3) edge (42)  (1) edge (52) (2) edge (51) (3) edge (52) (41) edge (51);
\end{tikzpicture}
& 
\begin{tikzpicture}[scale=\sc, thick,main node/.style={circle,  minimum size=1.5mm, fill=black, inner sep=0.1mm,draw,font=\tiny\sffamily}] \placev \placecommone
 \path (1) edge (41) (1) edge (42) (2) edge (41) (3) edge (42)  (1) edge (51) (2) edge (52) (3) edge (51) (41) edge (51);
\end{tikzpicture}\\
\begin{tikzpicture}[scale=\sc, thick,main node/.style={circle,  minimum size=1.5mm, fill=black, inner sep=0.1mm,draw,font=\tiny\sffamily}] \placev \placecommone
 \path (1) edge (41) (1) edge (42) (2) edge (41) (3) edge (42)  (1) edge (51) (2) edge (52) (3) edge (51) (42) edge (52);
\end{tikzpicture}
& 
\begin{tikzpicture}[scale=\sc, thick,main node/.style={circle,  minimum size=1.5mm, fill=black, inner sep=0.1mm,draw,font=\tiny\sffamily}] \placev \placecommone
 \path (1) edge (41) (1) edge (42) (2) edge (41) (3) edge (42)  (1) edge (51) (2) edge (52) (3) edge (51) (41) edge (52);
\end{tikzpicture}
& 
\begin{tikzpicture}[scale=\sc, thick,main node/.style={circle,  minimum size=1.5mm, fill=black, inner sep=0.1mm,draw,font=\tiny\sffamily}] \placev \placecommone
 \path (1) edge (41) (1) edge (42) (2) edge (41) (3) edge (42)  (1) edge (52) (2) edge (51) (3) edge (51) (41) edge (52);
\end{tikzpicture}
& 
\begin{tikzpicture}[scale=\sc, thick,main node/.style={circle,  minimum size=1.5mm, fill=black, inner sep=0.1mm,draw,font=\tiny\sffamily}] \placev \placecommone
 \path (1) edge (41) (1) edge (42) (2) edge (41) (3) edge (42)  (1) edge (51) (1) edge (52)  (2) edge (52) (3) edge (51) (41) edge (51);
\end{tikzpicture}
& 
\begin{tikzpicture}[scale=\sc, thick,main node/.style={circle,  minimum size=1.5mm, fill=black, inner sep=0.1mm,draw,font=\tiny\sffamily}] \placev \placecommone
 \path (1) edge (41) (1) edge (42) (2) edge (41) (3) edge (42)  (1) edge (52)   (2) edge (51) (2) edge (52) (3) edge (51) (42) edge (51);
\end{tikzpicture}
& 
 \begin{tikzpicture}[scale=\sc, thick,main node/.style={circle,  minimum size=1.5mm, fill=black, inner sep=0.1mm,draw,font=\tiny\sffamily}] \placev \placecommone
 \path (1) edge (41) (1) edge (42) (2) edge (41) (3) edge (42)  (1) edge (52)   (2) edge (51) (2) edge (52) (3) edge (51) (41) edge (51);
\end{tikzpicture}
\end{tabular}
\caption{The $13$ graphs $G \in \hat{\K}_{5,2}$ with $\omega(G) \leq 3$.}
\label{fighatK52}
\end{figure}
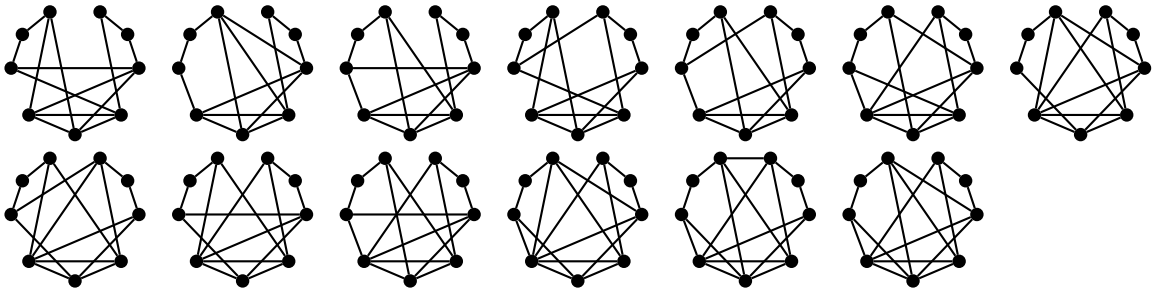

\subsection{Motivation and an outline for this manuscript}

With the extremal identity $n_+(\ell)=3\ell$ now established, the central question is no longer whether $\ell$-minimal graphs exist, but which graphs attain this bound and what this reveals about the $\LS_+$ hierarchy. Our goals are therefore twofold: to enlarge the known families of $\ell$-minimal graphs and extract structural patterns from them, and to develop a reliable certification framework for the semidefinite lower-bound arguments that arise in this setting.

First, extremal graphs are natural obstruction instances for the hierarchy, so understanding them helps clarify which structural features are genuinely responsible for high $\LS_+$-rank. In particular, new examples can test the limits of the stretched-clique paradigm, indicate which facet directions remain difficult after several rounds of $\LS_+$, and provide a controlled source of instances for future studies of related lift-and-project relaxations and polyhedral phenomena. In this regard, highly symmetric graphs offer a complementary perspective: although $\ell$-minimal graphs are necessarily irregular, vertex-transitive graphs provide a natural extremal testbed for understanding how graphs with high $\LS_+$-rank can arise under strong symmetry constraints.

The methodological contribution is equally important. For a graph $G$ on $3\ell$ vertices, the upper bound $r_+(G) \leq \ell$ follows immediately from Theorem~\ref{thmLiptakT031}; the main difficulty is proving the matching lower bound. In practice, this means showing that some point belongs to $\LS_+^{\ell-1}(G)$ while violating a valid inequality of $\STAB(G)$. Historically, such arguments have depended on layered numerical certificates, and this is precisely where reproducibility can become fragile. Our certificate-package framework addresses that issue by encoding the relevant semidefinite and cone-membership checks in integer data that can be verified independently and systematically.

The rest of the paper is organized as follows. Section~\ref{sec02} develops the tools used throughout the manuscript and introduces the certificate-package framework. Sections~\ref{sec03} and~\ref{sec04} contain the main computational results on 3-minimal and 4-minimal graphs, respectively; beyond the headline counts, these sections record the new structural phenomena revealed by the search and formulate several conjectures suggested by the data. Section~\ref{sec05} turns to vertex-transitive graphs and determines the smallest examples of ranks $2$, $3$, and $4$. Section~\ref{sec06} summarizes the resulting structural picture and lists several open problems motivated by the new examples and by the certification framework.

\section{Tools for analyzing $\LS_+$-ranks}\label{sec02}

In this section, we collect a number of tools which are useful in analyzing the $\LS_+$-rank of a graph. We first mention several known relevant results, then describe a framework for presenting numerical certificates for $\LS_+$ relaxations.

\subsection{Some useful known tools}

First, the following is a well-known property of $\LS_+$. (See, for instance,~\cite[Lemma 5]{AuT24} for a proof.)

\begin{lemma}\label{lemfacet}
Let $P \subseteq [0,1]^n$ be a polyhedron, and $F$ be a face of $[0,1]^n$. Then
\[
\LS_+^{\ell}(P \cap F) = \LS_+^{\ell}(P) \cap F,
\]
for every $\ell \in \mN$.
\end{lemma}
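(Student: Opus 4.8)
The plan is to prove the statement by induction on $\ell$, reducing everything to the single-round case $\ell = 1$. The base case $\ell = 0$ is immediate since $\LS_+^0(Q) = Q$ by definition. For the inductive step I would write $\LS_+^{\ell}(P \cap F) = \LS_+(\LS_+^{\ell-1}(P \cap F))$, use the inductive hypothesis to replace the inner set by $\LS_+^{\ell-1}(P) \cap F$, and then invoke the $\ell = 1$ case with $P$ replaced by $Q \ce \LS_+^{\ell-1}(P)$. This is legitimate because $Q \subseteq P \subseteq [0,1]^n$ is bounded and, as the single-round argument below will never use polyhedrality, the $\ell = 1$ identity applies to $Q$ even though $Q$ need not be a polyhedron. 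Thus the whole proof rests on establishing $\LS_+(P \cap F) = \LS_+(P) \cap F$ for an arbitrary bounded $P \subseteq [0,1]^n$.

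Write $F = \set{x \in [0,1]^n : x_i = 0~\forall i \in Z,\ x_j = 1~\forall j \in O}$ for disjoint $Z, O \subseteq [n]$. The inclusion $\LS_+(P \cap F) \subseteq \LS_+(P) \cap F$ is the easy one: any $Y \in \widehat{\LS}_+(P \cap F)$ automatically lies in $\widehat{\LS}_+(P)$ since $\cone(P \cap F) \subseteq \cone(P)$, which gives $\LS_+(P \cap F) \subseteq \LS_+(P)$, while $\LS_+(P \cap F) \subseteq P \cap F \subseteq F$ takes care of the face constraint. For the reverse inclusion I would take $x \in \LS_+(P) \cap F$ with a witness $Y \in \widehat{\LS}_+(P)$ satisfying $Ye_0 = (1,x)^{\top}$, and show that this same $Y$ already certifies $x \in \LS_+(P \cap F)$; that is, I upgrade the memberships $Ye_i, Y(e_0 - e_i) \in \cone(P)$ to memberships in $\cone(P \cap F)$.

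The engine is that boundedness of $P$ forces $\cone(P)$ to contain no nonzero vector whose $0$-th coordinate vanishes, so any element of $\cone(P)$ with vanishing $0$-th entry is the zero vector. Combined with $Ye_0 = \diag(Y)$, symmetry of $Y$, and $x \in F$, this yields two structural facts: for $i \in Z$ the entire $i$-th row and column of $Y$ vanish (since $(Ye_i)_0 = Y_{0i} = x_i = 0$), and for $j \in O$ the $j$-th column of $Y$ coincides with its $0$-th column (since $(Y(e_0 - e_j))_0 = 1 - x_j = 0$). With these in hand, each nonzero $Ye_i$ equals $x_i(1, y^{(i)})^{\top}$ for some $y^{(i)} \in P$, and reading off the entries shows $(y^{(i)})_k = 0$ for $k \in Z$ and $(y^{(i)})_k = 1$ for $k \in O$, i.e. $y^{(i)} \in P \cap F$; the analogous computation applied to $Y(e_0 - e_i)$ handles the complementary vectors. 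Hence $Ye_i, Y(e_0 - e_i) \in \cone(P \cap F)$ for every $i$, so $Y \in \widehat{\LS}_+(P \cap F)$ and $x \in \LS_+(P \cap F)$.

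I expect the main obstacle to be the bookkeeping in this last step: correctly extracting the fixed coordinates of the witness points $y^{(i)}$ (and of their $e_0 - e_i$ counterparts) from the entries of $Y$, and checking that the degenerate cases $x_i = 0$ and $x_i = 1$, where the relevant cone vector is simply the zero vector, are absorbed uniformly. Everything else — the induction, the easy inclusion, and the observation that polyhedrality is never actually invoked — is routine once the single-round identity is in place.
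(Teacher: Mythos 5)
Your proof is correct, and it is worth noting at the outset that the paper itself does not prove Lemma~\ref{lemfacet} at all --- it cites \cite[Lemma 5]{AuT24} --- so there is no in-paper argument to compare against. What you have written is a complete, self-contained proof along the standard lines (going back to Lov\'{a}sz--Schrijver): the induction that reduces everything to the single-round identity, and, for that identity, the two structural facts that do all the work --- for $i \in Z$ the $i$-th row and column of $Y$ vanish, and for $j \in O$ the $j$-th column of $Y$ coincides with the $0$-th column, both because an element of $\cone(P)$ with vanishing $0$-th coordinate must be the zero vector under the paper's definition of $\cone(\cdot)$. Your subsequent bookkeeping (each nonzero $Ye_i$ equals $x_i(1,y^{(i)})^{\top}$ with the fixed coordinates of $y^{(i)}$ read off from these facts, and similarly for $Y(e_0-e_i)$) is accurate, and your observation that the single-round identity must be proved for sets more general than polyhedra --- since $\LS_+^{\ell-1}(P)$ need not be polyhedral --- is exactly the point that makes the induction legitimate.

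One small imprecision deserves mention. You claim the single-round identity holds for \emph{arbitrary} bounded $P \subseteq [0,1]^n$ and that your argument uses nothing beyond boundedness, but your easy inclusion routes through the containment $\LS_+(P \cap F) \subseteq P \cap F$, which genuinely requires convexity: for $n=1$ and $R = \set{0,1}$ one checks directly from the definitions that $\LS_+(R) = [0,1] \not\subseteq R$, so the property $\LS_+(R) \subseteq R$ (stated in the paper for all subsets of the cube, but really a statement about convex sets) can fail without convexity. This costs you nothing in the induction, because $Q = \LS_+^{\ell-1}(P)$ and hence $Q \cap F$ are convex whenever $P$ is a polyhedron; but you should either say this explicitly, or sidestep the containment property altogether by deriving the face constraint directly: for any $i$, the identity $Ye_i + Y(e_0-e_i) = (1,x)^{\top}$ exhibits $x$ (after the harmless degenerate cases $x_i \in \set{0,1}$) as a convex combination of two points of $P \cap F \subseteq F$, and $F$ itself is convex, so $x \in F$. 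With that one-line substitution your single-round lemma really does hold for arbitrary subsets of $[0,1]^n$, and the rest of your argument stands as written.
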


Next, given a graph $G$ and a set of vertices $S \subseteq V(G)$, let $G[S]$ denote the subgraph of $G$ induced by $S$. The following is an immediate consequence of Lemma~\ref{lemfacet}. (See, for instance,~\cite[Lemma 6]{AuT24} for a proof.)

\begin{lemma}\label{lemfacet1}
Let $G$ be a graph. Then $r_+(G[S]) \leq r_+(G)$ for every $S \subseteq V(G)$.
\end{lemma}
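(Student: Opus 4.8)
The plan is to deduce the statement directly from Lemma~\ref{lemfacet} by realizing $G[S]$ as the restriction of $G$ to a coordinate face of the cube. Write $n \ce |V(G)|$ and $T \ce V(G) \sm S$, and let $F \ce \set{x \in [0,1]^n : x_i = 0~\forall i \in T}$, which is a face of $[0,1]^n$. First I would identify $F$ with the smaller cube $[0,1]^S$ via the projection that forgets the (identically zero) coordinates indexed by $T$, and verify two polyhedral identifications under this map: $\FRAC(G) \cap F = \FRAC(G[S])$ and $\STAB(G) \cap F = \STAB(G[S])$. The first holds because every edge constraint of $G$ with an endpoint in $T$ becomes vacuous once that coordinate is set to $0$, leaving exactly the edge constraints of $G[S]$. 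The second holds because $\STAB(G) \cap F$ is a face of $\STAB(G)$, hence the convex hull of those vertices it contains; these are precisely the incidence vectors of stable sets of $G$ contained in $S$, which are exactly the stable sets of $G[S]$.

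With these identifications in hand, set $r \ce r_+(G)$, so that $\LS_+^{r}(\FRAC(G)) = \STAB(G)$. Applying Lemma~\ref{lemfacet} with $P = \FRAC(G)$ and the face $F$ gives
\[
\LS_+^{r}\!\left(\FRAC(G) \cap F\right) = \LS_+^{r}\!\left(\FRAC(G)\right) \cap F = \STAB(G) \cap F.
\]
Substituting the two identifications turns the left-hand side into $\LS_+^{r}(\FRAC(G[S]))$ and the right-hand side into $\STAB(G[S])$, whence $\LS_+^{r}(\FRAC(G[S])) = \STAB(G[S])$ and therefore $r_+(G[S]) \leq r = r_+(G)$.

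The one point that deserves care --- and the only place where the argument is not purely formal --- is the compatibility between the ambient $n$-dimensional operator appearing in Lemma~\ref{lemfacet} and the intrinsic $|S|$-dimensional operator implicit in the definition of $r_+(G[S])$. The hard part is thus bookkeeping rather than a genuine difficulty, and I would settle it as follows. Since $\LS_+$ is a relaxation operator, $\LS_+^{\ell}(Q) \subseteq Q$, so every $Q \subseteq F$ keeps all of its $\LS_+$-iterates inside $F$, with the coordinates indexed by $T$ frozen at $0$. At the matrix level, for any $Y \in \widehat{\LS}_+(Q)$ and any $i \in T$, the membership $Ye_i \in \cone(Q)$ forces the $i$-th entry of $Ye_i$ to vanish, i.e. $Y_{ii} = 0$; positive semidefiniteness of $Y$ then forces the entire $i$-th row and column of $Y$ to be zero, and the surviving principal submatrix on $\set{0} \cup S$ is exactly a feasible $\widehat{\LS}_+$-lift for the problem on $[0,1]^S$. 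Consequently the ambient iterates restricted to $F$ agree, under the coordinate identification, with the iterates of the lower-dimensional operator, which is precisely what licenses reading the displayed equation as a statement about $G[S]$. Once this identification is in place, the result is immediate.
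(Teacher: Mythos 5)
Your overall route is exactly the paper's: the paper derives Lemma~\ref{lemfacet1} as an immediate consequence of Lemma~\ref{lemfacet} by restricting to the face $F = \set{x : x_i = 0~\forall i \in V(G)\sm S}$, and your identifications $\FRAC(G)\cap F = \FRAC(G[S])$ and $\STAB(G)\cap F = \STAB(G[S])$ are both argued correctly. However, there is a genuine gap in the one step you yourself flagged as needing care: the compatibility between the ambient operator and the intrinsic one. Your matrix argument shows that an ambient certificate $Y \in \widehat{\LS}_+(Q)$ has zero rows and columns on $T$ and restricts to an intrinsic certificate on $\set{0}\cup S$. Iterating this gives only the inclusion
\[
\pi\left(\LS_+^{r}(\FRAC(G)\cap F)\right) \subseteq \LS_+^{r}(\FRAC(G[S])),
\]
where $\pi$ is the coordinate projection. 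Substituting Lemma~\ref{lemfacet} into the \emph{left}-hand side of this inclusion yields $\STAB(G[S]) \subseteq \LS_+^{r}(\FRAC(G[S]))$, which is the trivial containment that holds for every set and every $r$; it says nothing about $r_+(G[S])$.

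What the conclusion actually requires is the opposite inclusion: every $y \in \LS_+^{r}(\FRAC(G[S]))$ must have its zero-extension $\bar{y}$ lie in $\LS_+^{r}(\FRAC(G)\cap F)$, so that Lemma~\ref{lemfacet} places $\bar{y}$ in $\STAB(G)\cap F$ and hence $y \in \STAB(G[S])$. This direction is proved by the reverse construction: given an intrinsic certificate $Y'$ for a point of the lower-dimensional iterate, extend it to an $(n+1)\times(n+1)$ matrix by inserting zero rows and columns at the indices in $T$. Positive semidefiniteness is preserved, the diagonal condition is preserved, for $i \in S$ the vectors $\bar{Y}e_i$, $\bar{Y}f_i$ are the zero-extensions of $Y'e_i$, $Y'f_i$ and hence lie in the ambient cone by the inductive identification, and for $i \in T$ one has $\bar{Y}e_i = 0$ and $\bar{Y}f_i = \bar{Y}e_0 \in \cone(\cdot)$. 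This is easy, but it is not the statement you verified, and "consequently the iterates agree" does not follow from the restriction direction alone; equality of the two iterates needs both inclusions, and only the extension direction feeds the final substitution in your displayed equation.
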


Thus, the $\LS_+$-rank of an induced subgraph of a graph cannot exceed that of the graph itself. Next, given a vector $a \in \mR^n$, let $\supp(a)$ denote the \emph{support} of $a$ (i.e., the set of indices $i$ where $a_i \neq 0$). Then another implication of~Lemma~\ref{lemfacet} is the following.

\begin{lemma}\label{lemfacet2}
For every graph $G$,
\[
r_+(G) = \max\set{ r_+(G[\supp(a)]) : \tn{$a^{\top}x \leq \b$ is a facet-inducing inequality of $\STAB(G)$}}.
\]
\end{lemma}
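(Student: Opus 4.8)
The statement is an equality of two quantities, so I would prove the two inequalities separately. The bound
\[
\max\set{ r_+(G[\supp(a)]) : a^{\top}x \leq \b \text{ is facet-inducing for } \STAB(G)} \le r_+(G)
\]
is immediate from Lemma~\ref{lemfacet1}: every facet-inducing inequality has $\supp(a) \subseteq V(G)$, so $r_+(G[\supp(a)]) \le r_+(G)$, and taking the maximum preserves the bound. The substance is the reverse inequality $r_+(G) \le \ell^*$, where $\ell^*$ denotes the right-hand maximum. Since $\STAB(G)$ is the intersection of the halfspaces of its facet-inducing inequalities and $\STAB(G) \subseteq \LS_+^{\ell^*}(G)$ always holds, it suffices to show that every facet-inducing inequality $a^{\top}x \le \b$ is already valid for $\LS_+^{\ell^*}(G)$; this yields $\LS_+^{\ell^*}(G) \subseteq \STAB(G)$ and hence $r_+(G) \le \ell^*$.

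Fix such an inequality and write $S \ce \supp(a)$. The key observation is that, since $a$ is supported on $S$, validity of $a^{\top}x \le \b$ over any set $Q$ depends only on the image $\pi_S(Q)$ of $Q$ under the coordinate projection $\pi_S$ onto $\mR^S$; indeed $a^{\top}x = (a|_S)^{\top}\pi_S(x)$. I would therefore aim to establish the projection containment
\[
\pi_S\big(\LS_+^{\ell^*}(G)\big) \subseteq \LS_+^{\ell^*}(G[S]).
\]
Granting this, the chain closes quickly: by the definition of $\ell^*$ we have $r_+(G[S]) \le \ell^*$, so $\LS_+^{\ell^*}(G[S]) = \STAB(G[S])$; the restricted inequality $(a|_S)^{\top}y \le \b$ is valid for $\STAB(G[S])$ (every stable set of $G[S]$ is a stable set of $G$, so zero-extending its incidence vector lands in $\STAB(G)$, where $a^{\top}x \le \b$ holds); and combining these with the projection containment gives $a^{\top}x \le \b$ throughout $\LS_+^{\ell^*}(G)$.

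The main work, and the step I expect to be the crux, is the projection containment, which I would obtain as a one-coordinate-set analogue of the face identity underlying Lemma~\ref{lemfacet1}. First, $\pi_S(\FRAC(G)) = \FRAC(G[S])$: projecting away a coordinate $j \notin S$ turns each edge constraint $x_i + x_j \le 1$ into the redundant $x_i \le 1$, leaving exactly the edge constraints of $G[S]$. Next I would prove the single-step statement $\pi_S(\LS_+(P)) \subseteq \LS_+(\pi_S(P))$ for any $P \subseteq [0,1]^n$ by a principal-submatrix argument: given $x \in \LS_+(P)$ with certificate $Y \in \mS_+^{n+1}$, the principal submatrix $Y'$ of $Y$ on the index set $\set{0} \cup S$ is PSD, satisfies $Y'e_0 = \diag(Y') = \begin{bmatrix} 1 \\ \pi_S(x)\end{bmatrix}$, and has each $Y'e_i$ and $Y'(e_0 - e_i)$ (for $i \in S$) equal to the restriction of $Ye_i, Y(e_0-e_i) \in \cone(P)$ to the coordinates $\set{0}\cup S$, which lies in $\cone(\pi_S(P))$. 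Thus $Y'$ certifies $\pi_S(x) \in \LS_+(\pi_S(P))$. Iterating this with the monotonicity of $\LS_+$ (if $P \subseteq P'$ then $\LS_+(P) \subseteq \LS_+(P')$) yields $\pi_S(\LS_+^{\ell}(P)) \subseteq \LS_+^{\ell}(\pi_S(P))$ for every $\ell$, and specializing to $P = \FRAC(G)$ produces the desired containment.

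One presentational choice is worth flagging. The containment could instead be packaged through Lemma~\ref{lemfacet} applied to the face $F \ce \set{x : x_i = 0~\forall i \notin S}$, which identifies $\LS_+^{\ell^*}(G) \cap F$ with $\STAB(G[S])$ directly; but to transfer validity from the slice $F$ to all of $\LS_+^{\ell^*}(G)$ one then needs that $\LS_+^{\ell^*}(G)$ is down-monotone (so that zeroing the coordinates outside $S$ keeps a point in the relaxation). The projection argument above avoids this extra ingredient, so I would present it as the primary route and invoke down-monotonicity, if at all, only as an alternative.
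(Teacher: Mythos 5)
Your proof is correct, and it reaches the nontrivial inequality by a genuinely different route from the paper's. The paper argues contrapositively: setting $\ell \ce r_+(G) \geq 1$, it takes a point $\bar{x} \in \LS_+^{\ell-1}(G) \setminus \STAB(G)$ together with a facet $a^{\top}x \leq \beta$ that $\bar{x}$ violates, and then invokes Lemma~\ref{lemfacet} (the face identity) to conclude that the projection of $\bar{x}$ onto $\supp(a)$ lies in $\LS_+^{\ell-1}(G[\supp(a)]) \setminus \STAB(G[\supp(a)])$, so that $r_+(G[\supp(a)]) \geq \ell$. This is exactly the face-based packaging you flag in your final paragraph and deliberately set aside: as you observe, pushing the violating point into the face $F = \set{x : x_i = 0~\forall i \notin \supp(a)}$ without leaving the relaxation implicitly uses that $\LS_+^{\ell-1}(G)$ is lower-comprehensive, a property the paper only records later (in Section~\ref{sec03}). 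Your route instead proves the projection containment $\pi_S\big(\LS_+^{\ell}(G)\big) \subseteq \LS_+^{\ell}(G[S])$ from first principles, by restricting a certificate matrix to the principal submatrix indexed by $\set{0} \cup S$ and iterating with monotonicity of $\LS_+$; combined with $r_+(G[S]) \leq \ell^*$ this transfers validity of every facet to $\LS_+^{\ell^*}(G)$. What your approach buys is self-containment and generality: it needs no down-monotonicity, it works for arbitrary $P \subseteq [0,1]^n$, and the projection containment is a reusable statement in its own right. What the paper's approach buys is brevity: given that Lemma~\ref{lemfacet} is already established, the violating-point argument is a few lines, whereas you re-derive machinery that the face identity already encapsulates. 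The easy direction is handled identically in both proofs via Lemma~\ref{lemfacet1}; only the key technical ingredient for the reverse inequality differs.
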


\begin{proof}
$(\geq)$ follows immediately from Lemma~\ref{lemfacet1}, so it only remains to prove $(\leq)$. Let $\ell \ce r_+(G)$. If $\ell =0$, then $G$ is bipartite and every induced subgraph must also have $\LS_+$-rank $0$. Thus, assume $\ell \geq 1$, and so there exists $\bar{x} \in \LS_+^{\ell-1}(G) \setminus \STAB(G)$, which means that there exists a facet-inducing inequality $a^{\top}x \leq \b$ of $\STAB(G)$ which is violated by $\bar{x}$. Then, it follows from Lemma~\ref{lemfacet} that the projection of $\bar{x}$ onto $\supp(a)$ belongs to $\LS_+^{\ell-1}(G[\supp(a)]) \setminus \STAB(G[\supp(a)])$, and thus $r_+(G[\supp(a)]) \geq \ell$.
\end{proof}

For the sake of brevity, we will slightly abuse terminology and refer to a facet-inducing inequality simply as a facet from here on. With Lemma~\ref{lemfacet2}, we see that if $\STAB(G)$ does not have a full-support facet, then there exists a proper induced subgraph of $G$ which has the same $\LS_+$-rank as $G$. This immediately implies that the stable set polytope of an $\ell$-minimal graph must have a full-support facet.

Another situation where one can conclude that $r_+(G)$ is realized by a proper subgraph of $G$ is when the graph contains a \emph{cut clique} --- a clique whose removal from $G$ results in multiple components. More precisely, we have the following.

\begin{proposition}\label{propCliqueCut}
\cite[Lemma 5]{LiptakT03}
Let $G$ be a graph, and $S_1, S_2, K \subseteq V(G)$ are mutually disjoint subsets such that
\begin{itemize}
\item
$S_1 \cup S_2 \cup K = V(G)$;
\item
$K$ induces a clique in $G$;
\item
there is no edge $\set{i,j} \in E(G)$ where $i\in S_1, j \in S_2$.
\end{itemize}
Then $r_+(G) = \max \set{ r_+( G[S_1 \cup K]), r_+(G[S_2 \cup K])}$.
\end{proposition}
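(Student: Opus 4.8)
The plan is to prove the two inequalities in the claimed identity separately. Writing $G_i \ce G[S_i \cup K]$ for $i = 1,2$, the lower bound $r_+(G) \geq \max\set{r_+(G_1), r_+(G_2)}$ is immediate: both $G_1$ and $G_2$ are induced subgraphs of $G$, so Lemma~\ref{lemfacet1} gives $r_+(G_i) \leq r_+(G)$, and hence the maximum is at most $r_+(G)$. All the content is in the reverse inequality.

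For the upper bound, the key structural claim I would isolate is that \emph{every facet of $\STAB(G)$ has support contained in $V(G_1)$ or in $V(G_2)$}. Granting this claim, Lemma~\ref{lemfacet2} finishes the proof cleanly: for each facet $a^{\top}x \leq \b$ of $\STAB(G)$ we have $\supp(a) \subseteq V(G_1)$ or $\supp(a) \subseteq V(G_2)$, so $G[\supp(a)]$ is an induced subgraph of $G_1$ or of $G_2$, and Lemma~\ref{lemfacet1} bounds $r_+(G[\supp(a)])$ by $\max\set{r_+(G_1), r_+(G_2)}$. Taking the maximum over all facets via Lemma~\ref{lemfacet2} then yields $r_+(G) \leq \max\set{r_+(G_1), r_+(G_2)}$, as required.

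It remains to establish the facet claim, and this is where the clique-cutset hypothesis is used. I would prove the stronger polyhedral decomposition $\STAB(G) = \set{x \in \mR^{V(G)} : x|_{V(G_1)} \in \STAB(G_1),\ x|_{V(G_2)} \in \STAB(G_2)}$, i.e.\ $\STAB(G)$ is the intersection of the two cylinders obtained by lifting $\STAB(G_1)$ and $\STAB(G_2)$ back to $\mR^{V(G)}$. Since $\STAB(G)$ is full-dimensional (it contains the origin and every unit vector $e_v$), each of its facets is induced by one of the facet inequalities of the two cylinders, and those inequalities have support in $V(G_1)$ or $V(G_2)$ respectively; this is exactly the claim. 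The forward inclusion of the decomposition is trivial, since the restriction of any stable set of $G$ to $V(G_i)$ is a stable set of $G_i$. The real content is the reverse inclusion (the ``gluing''): given $x$ whose restrictions lie in $\STAB(G_1)$ and $\STAB(G_2)$, I would write each restriction as a convex combination of incidence vectors of stable sets, group the stable sets of each $G_i$ by which vertex of $K$ they contain (at most one, as $K$ is a clique), and observe that the total weight of the group indexed by $v \in K$ equals $x_v$ in \emph{both} expansions. These coinciding $K$-marginals permit a weight-preserving coupling of the two expansions group by group: within each group the $S_1$-side and $S_2$-side stable sets agree on $K$, so their union is stable in $G$ (here the absence of $S_1$--$S_2$ edges is used), and a product-style reweighting reconstructs $x$ as a convex combination of stable sets of $G$.

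The step I expect to be the main obstacle is precisely this gluing: one must verify that the coupled weights are nonnegative, sum to $1$, and reproduce $x$ on every coordinate, and that each glued union is genuinely stable in $G$. The clean point that makes it go through is that the two convex expansions share the same $K$-marginal $(x_v)_{v \in K}$, which is exactly what allows a consistent coupling; the clique property of $K$ (at most one chosen clique-vertex per stable set) and the lack of $S_1$--$S_2$ edges are what guarantee that the coupled unions remain stable.
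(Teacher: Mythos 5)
Your proof is correct, but there is nothing in the manuscript to compare it against: Proposition~\ref{propCliqueCut} is stated as a known result and cited to \cite[Lemma 5]{LiptakT03}, with no proof given in this paper. Your argument is a legitimate self-contained derivation that stays within the tools of Section~\ref{sec02}: the lower bound via Lemma~\ref{lemfacet1}, and the reduction of the upper bound, via Lemma~\ref{lemfacet2}, to the claim that every facet of $\STAB(G)$ has support inside $V(G_1)$ or $V(G_2)$. Your proof of that claim through the decomposition $\STAB(G) = \set{x : x|_{V(G_1)} \in \STAB(G_1),\ x|_{V(G_2)} \in \STAB(G_2)}$ is Chv\'{a}tal's classical clique-cutset theorem, and the gluing you sketch is sound: the $K$-marginals of the two expansions coincide (the group indexed by $v \in K$ has weight $x_v$ on both sides, and the remaining group has weight $1 - \sum_{v \in K} x_v \geq 0$ by the clique inequality for $K$), the product reweighting within each group is nonnegative, sums to one, and reproduces $x$ coordinatewise, and each glued union is stable because every edge of $G$ lies inside $G[S_1 \cup K]$ or $G[S_2 \cup K]$. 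When writing it up, make explicit that full-dimensionality of $\STAB(G)$ forces each facet of the intersection of the two cylinders to be (a positive multiple of) one of the lifted facet inequalities, which is precisely what pins its support inside one of the two vertex sets. Relative to the paper, which simply imports the result, your route buys a proof using only the polyhedral lemmas already established plus an elementary coupling.
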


Next, given a graph $G$ and $S \subseteq V(G)$, we define 
\[
G - S \ce G[V(G) \setminus S],
\]
and refer to $G-S$ as the graph obtained from $G$ by the \emph{deletion} of $S$. When $S = \set{v}$, we will simply write $G - v$ instead of $G - \set{v}$ for convenience. Given a vertex $v \in V(G)$, we also define
\[
G \ominus v \ce G - ( \set{v} \cup \Gamma_G(v)),
\]
and call $G \ominus v$ the graph obtained from $G$ by the \emph{destruction} of $v$. The following result relates the $\LS_+$-rank of $G$ to that of subgraphs of $G$ obtained via the deletion or destruction of a vertex in $G$.

\begin{theorem}\label{thmDeleteDestroy}
For every graph $G$,
\begin{itemize}
\item[(i)]
\cite[Corollary 2.16]{LovaszS91} $r_+(G) \leq \max \set{ r_+(G \ominus i) : i \in V(G) } + 1$;
\item[(ii)]
\cite[Theorem 36]{LiptakT03} $r_+(G) \leq \min \set{ r_+(G - i) : i \in V(G) } + 1$.
\end{itemize}
\end{theorem}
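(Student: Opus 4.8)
My plan is to extract, from a single application of $\LS_+$, two ``conditional'' points for each vertex and to track which graph each conditioning reduces to. Suppose $x \in \LS_+^{r+1}(G) = \LS_+(\LS_+^r(G))$, and let $Y \in \widehat{\LS}_+(\LS_+^r(G))$ be a certificate with $Ye_0 = \diag(Y) = \begin{bmatrix} 1 \\ x\end{bmatrix}$. For a vertex $i$ with $0 < x_i < 1$, the columns $Ye_i$ and $Y(e_0 - e_i)$ lie in $\cone(\LS_+^r(G))$ and have $0$-coordinates $x_i$ and $1-x_i$, so normalizing yields points $u^i, w^i \in \LS_+^r(G)$ with $u^i_i = 1$, $w^i_i = 0$, together with the convex decomposition $x = x_i u^i + (1-x_i) w^i$. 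The core observation is a reduction via Lemma~\ref{lemfacet}: since $u^i_i = 1$ forces $u^i_j = 0$ for every $j \in \Gamma_G(i)$ (the edge inequalities are valid on $\LS_+^r(G) \subseteq \FRAC(G)$), the point $u^i$ lies in the face of $[0,1]^{V(G)}$ cutting out $G \ominus i$, whence its restriction to $V(G \ominus i)$ lies in $\LS_+^r(G \ominus i)$; similarly $w^i$ restricts to a point of $\LS_+^r(G - i)$. In either case, if the relevant relaxation has already reached its integer hull, the conditional point is a convex combination of incidence vectors of stable sets of $G$ (in the first case we add back $i$, which is non-adjacent to all of $V(G \ominus i)$).

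With this setup, part (ii) is immediate. Choose $i \in V(G)$ minimizing $r_+(G-i)$ and set $r := r_+(G-i)$. Since $G \ominus i$ is an induced subgraph of $G - i$, Lemma~\ref{lemfacet1} gives $r_+(G \ominus i) \le r$, so both $\LS_+^r(G \ominus i) = \STAB(G \ominus i)$ and $\LS_+^r(G-i) = \STAB(G-i)$ hold. Hence for any $x \in \LS_+^{r+1}(G)$ both conditionals $u^i, w^i$ lie in $\STAB(G)$, and the decomposition $x = x_i u^i + (1-x_i) w^i$ exhibits $x \in \STAB(G)$ (the boundary cases $x_i \in \set{0,1}$, where $x$ equals a single conditional, are handled directly). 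This proves $r_+(G) \le r+1$.

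For part (i) I would reuse the same certificate and the same reduction, but now only with $r := \max_i r_+(G \ominus i)$. The selected conditional still behaves well: for every $i$ in the support of $x$, we get $u^i|_{V(G\ominus i)} \in \LS_+^r(G \ominus i) = \STAB(G \ominus i)$, so $u^i \in \STAB(G)$, and equivalently every column $Ye_i$ lies in $\cone(\STAB(G))$. The difficulty is that the \emph{deselected} conditional $w^i$ lives on $G - i$, whose $\LS_+$-rank is not controlled by $r$, so the clean two-way split of part (ii) is unavailable. The main obstacle is therefore the assembly step: one must deduce $x = Ye_0 \in \cone(\STAB(G))$ from the facts that all columns $Ye_1, \ldots, Ye_n$ lie in $\cone(\STAB(G))$ and that $Y \succeq 0$. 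This cannot be done by a naive conic combination of the $u^i$ (already $\sum_i x_i u^i$ need not equal $x$, since $x$ may carry ``mass'' on the empty stable set), so the positive semidefiniteness of $Y$ must be used in an essential, global way to produce the correct convex decomposition of $x$ into incidence vectors. I expect this to be the crux of the argument; it is precisely the content of the original Lov\'asz--Schrijver result cited here.
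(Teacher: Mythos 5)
First, a point of reference: the paper itself gives no proof of Theorem~\ref{thmDeleteDestroy} — both parts are imported from the literature with citations — so your attempt has to stand on its own merits.

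Your proof of part (ii) is correct and complete. The decomposition $x = x_i u^i + (1-x_i)w^i$ extracted from the columns of a certificate matrix, the identification via Lemma~\ref{lemfacet} of the two conditionals (after restriction) with points of $\LS_+^{r}(G\ominus i)$ and $\LS_+^{r}(G-i)$, and the use of Lemma~\ref{lemfacet1} to get $r_+(G\ominus i)\le r_+(G-i)$ so that a \emph{single} well-chosen vertex controls both conditionals, give a self-contained proof of the deletion bound, including the boundary cases $x_i\in\set{0,1}$.

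Part (i), however, contains a genuine gap. You correctly reduce it to the claim: if $Y\succeq 0$ satisfies $Ye_0=\diag(Y)$ with $Y_{00}=1$, the vector $x:=(Y_{10},\ldots,Y_{n0})^{\top}$ lies in $\FRAC(G)$, and $Ye_i\in\cone(\STAB(G))$ for every $i\in[n]$, then $x\in\STAB(G)$. But you then stop, saying this "is precisely the content of the original Lov\'asz--Schrijver result cited here." Since the statement you were asked to prove \emph{is} that Lov\'asz--Schrijver result, deferring to it is circular; part (i) is left unproven. The missing step is real but short, and it uses $Y\succeq 0$ exactly once. Every facet of $\STAB(G)$ is either a nonnegativity facet (satisfied because $x\in\FRAC(G)$) or of the form $a^{\top}x\le\beta$ with $a\ge 0$ and $\beta>0$, since $\STAB(G)$ is full-dimensional and lower-comprehensive. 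Fix such a facet. Because $(-\beta,a^{\top})z\le 0$ holds on $\cone(\STAB(G))$, applying it to the column $Ye_i$ gives $\sum_{j}a_jY_{ij}\le\beta Y_{0i}=\beta x_i$ for every $i$ (both sides vanish when $x_i=0$, since then $Ye_i=0$ by positive semidefiniteness). Now set $v:=\beta e_0-\sum_i a_ie_i$ and expand:
\[
0 \le v^{\top}Yv \;=\; \beta^2 - 2\beta a^{\top}x + \sum_i a_i\Bigl(\sum_j a_j Y_{ij}\Bigr) \;\le\; \beta^2 - 2\beta a^{\top}x + \beta a^{\top}x \;=\; \beta\bigl(\beta - a^{\top}x\bigr),
\]
where the second inequality uses $a_i\ge 0$. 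Since $\beta>0$, this yields $a^{\top}x\le\beta$. Hence $x$ satisfies every facet of $\STAB(G)$, and part (i) follows with $r=\max_i r_+(G\ominus i)$, because your face argument already shows each column $Ye_i$ lies in $\cone(\STAB(G))$ at that value of $r$. This quadratic-form inequality is exactly the "essential, global" use of positive semidefiniteness you anticipated: it replaces the two-way split of part (ii) and never needs any control over the deselected conditionals $w^i$.
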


Finally, given graphs $G$ and $H$ where $V(H) = V(G)$ and $E(H) \subseteq E(G)$, we say that $H$ is an \emph{edge subgraph} of $G$. Then $\FRAC(G) \subseteq \FRAC(H)$ in this case. Also, it follows readily from the definition of $\LS_+$ that the operator preserves containment (i.e., if $P \subseteq P'$, then $\LS_+(P) \subseteq \LS_+(P')$). Hence, we have the following.

\begin{lemma}\label{lem05subgraph}
Let $H$ be an edge subgraph of $G$, and let $\ell$ be a nonnegative integer.
\begin{itemize}
\item[(i)]
If $a^{\top}x \leq \b$ is valid for $\LS_+^{\ell}(H)$, then $a^{\top}x \leq \b$ is valid for $\LS_+^{\ell}(G)$.
\item[(ii)]
If $a^{\top}x \leq \b$ is not valid for $\LS_+^{\ell}(G)$, then $a^{\top}x \leq \b$ is not valid for $\LS_+^{\ell}(H)$.
\end{itemize}
\end{lemma}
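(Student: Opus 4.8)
The plan is to reduce both parts to a single inclusion between the iterated relaxations, namely $\LS_+^{\ell}(G) \subseteq \LS_+^{\ell}(H)$, and then read off (i) and (ii) directly. The two ingredients needed for this inclusion are both recorded in the paragraph immediately preceding the statement: the base containment $\FRAC(G) \subseteq \FRAC(H)$, and the monotonicity of $\LS_+$ (if $P \subseteq P'$ then $\LS_+(P) \subseteq \LS_+(P')$).

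First I would note the base case. Since $H$ is an edge subgraph of $G$, we have $V(H) = V(G)$ and $E(H) \subseteq E(G)$, so every edge inequality $x_i + x_j \leq 1$ defining $\FRAC(H)$ is among those defining $\FRAC(G)$; imposing fewer constraints only enlarges the feasible region, giving $\FRAC(G) \subseteq \FRAC(H)$. Next I would promote this to every level of the hierarchy by induction on $\ell$. The case $\ell = 0$ is exactly the base containment. For the inductive step, assuming $\LS_+^{\ell-1}(G) \subseteq \LS_+^{\ell-1}(H)$, I apply monotonicity to obtain $\LS_+^{\ell}(G) = \LS_+\!\left(\LS_+^{\ell-1}(G)\right) \subseteq \LS_+\!\left(\LS_+^{\ell-1}(H)\right) = \LS_+^{\ell}(H)$.

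With the inclusion $\LS_+^{\ell}(G) \subseteq \LS_+^{\ell}(H)$ established, part (i) is immediate: if $a^{\top}x \leq \b$ holds at every point of $\LS_+^{\ell}(H)$, then in particular it holds at every point of the subset $\LS_+^{\ell}(G)$, so it is valid for $\LS_+^{\ell}(G)$. Part (ii) is simply the contrapositive of part (i) and therefore requires no separate argument.

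There is no genuine obstacle here; the lemma is a direct consequence of the monotonicity of $\LS_+$ together with the containment of fractional stable set polytopes. The only point that warrants care is the direction of the inclusion: deleting edges \emph{enlarges} the polytope, so it is $G$'s relaxation that sits inside $H$'s, and not the reverse. Keeping this orientation straight is what makes the implications in (i) and (ii) come out in the stated directions.
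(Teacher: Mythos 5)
Your proof is correct and follows exactly the argument the paper intends: the containment $\FRAC(G) \subseteq \FRAC(H)$, iterated through the hierarchy via monotonicity of $\LS_+$, gives $\LS_+^{\ell}(G) \subseteq \LS_+^{\ell}(H)$, from which (i) is immediate and (ii) is the contrapositive. The paper presents this same reasoning in the paragraph preceding the lemma rather than in a formal proof environment, so there is nothing to add.
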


A useful implication of Lemma~\ref{lem05subgraph} is that, if we have a graph $G$ and a valid inequality $a^{\top}x \leq \b$ of $\STAB(G)$ with  $\LS_+$-rank $\ell$, then every edge subgraph of $G$ where $a^{\top}x\leq \b$ is valid for its stable set polytope also has $\LS_+$-rank at least $\ell$. 

\subsection{Introducing $\LS_+$ certificate packages}

We now describe our framework of presenting numerical certificates for $\LS_+$ relaxations in this manuscript. In this section, we focus on certifying the membership of a point in $\LS_+^{\ell}(P)$ for the case $\ell=1$, which will help prepare for our discussion of the cases where $\ell \geq 2$ in subsequent sections.

Given a symmetric matrix $Y \in \mZ^{n \times n}$, we say that matrices $U,V,W$ form a \emph{$UVW$-certificate} of $Y$ if
\begin{itemize}
\item
the entries of $U,V$, and $W$ are all integers;
\item
$ W^{\top} \left( U^{\top} U + V \right)W = k Y$ for some positive integer $k$;
\item
$V$ is symmetric and diagonally dominant (i.e., $\sum_{j \neq i} |V_{ij}| \leq V_{ii}$ for all $i \in [n]$).
\end{itemize}

Then we have the following elementary fact.

\begin{lemma}\label{lem:UVW}
Suppose $Y \in \mZ^{n \times n}$ is a symmetric matrix. Then $Y \succeq 0$ if and only if $Y$ has a $UVW$-certificate.
\end{lemma}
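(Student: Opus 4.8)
The plan is to prove the two implications separately. The reverse implication (a $UVW$-certificate forces $Y \succeq 0$) is routine, while the forward implication (producing a certificate from the bare assumption $Y \succeq 0$) carries all of the difficulty, because it demands building genuinely \emph{integer} data out of a spectral condition.

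For the easy direction, suppose $U, V, W$ form a $UVW$-certificate, so $W^{\top}(U^{\top}U + V)W = kY$ for a positive integer $k$. First I would note that $U^{\top}U \succeq 0$ as a Gram matrix. Next, the diagonal-dominance hypothesis $\sum_{j \neq i}|V_{ij}| \leq V_{ii}$ forces $V_{ii} \geq 0$, so $V$ is symmetric and diagonally dominant with nonnegative diagonal; by Gershgorin's circle theorem each (real) eigenvalue $\lambda$ of $V$ satisfies $\lambda \geq V_{ii} - \sum_{j \neq i}|V_{ij}| \geq 0$, whence $V \succeq 0$. Therefore $U^{\top}U + V \succeq 0$, and since congruence preserves positive semidefiniteness ($x^{\top}W^{\top}MWx = (Wx)^{\top}M(Wx) \geq 0$ whenever $M \succeq 0$), we conclude $kY \succeq 0$; dividing by $k > 0$ gives $Y \succeq 0$.

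For the hard direction, assume $Y \in \mZ^{n\times n}$ is symmetric with $Y \succeq 0$, and the goal is to construct integer $U,V,W$. The crux is to obtain a \emph{rational} rank-one decomposition of $Y$ without invoking an eigendecomposition, whose eigenvectors are generally irrational. To this end I would induct on $\rank(Y)$ using a Schur-complement pivot: if $Y \neq 0$ then some diagonal entry $Y_{jj} > 0$ (a PSD matrix with an all-zero diagonal is zero), and subtracting $\tfrac{1}{Y_{jj}}(Ye_j)(Ye_j)^{\top}$ zeroes out row and column $j$, leaves a rational PSD matrix of strictly smaller rank, and keeps every computation inside $\mathbb{Q}$. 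This yields $Y = \sum_i c_i v_i v_i^{\top}$ with each $c_i > 0$ rational and each $v_i$ rational. Clearing denominators in the vectors gives $Y = \sum_i e_i w_i w_i^{\top}$ with $w_i \in \mZ^n$ and $e_i > 0$ rational, and choosing a positive integer $k$ that clears the denominators of all $e_i$ produces $kY = \sum_i n_i w_i w_i^{\top}$ with every $n_i$ a positive integer.

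The final and most delicate manoeuvre is to convert the integer-weighted sum $\sum_i n_i w_i w_i^{\top}$ into an honest Gram matrix without introducing any $\sqrt{n_i}$: I would replace each term $n_i w_i w_i^{\top}$ by $n_i$ identical copies of $w_i w_i^{\top}$, so that $kY = U^{\top}U$, where $U$ is the integer matrix whose rows list each $w_i^{\top}$ with multiplicity $n_i$. Taking $V = 0$ (vacuously symmetric and diagonally dominant) and $W = I$ then gives $W^{\top}(U^{\top}U + V)W = kY$, completing the certificate. I expect this forward direction to be the main obstacle, and within it the two points where integrality could break down---the rational (rather than real-spectral) rank-one decomposition, and the multiplicity trick that avoids square roots---are precisely the steps requiring care. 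It is worth remarking that the construction uses neither $V$ nor $W$ nontrivially, so they are not needed for the existence statement; their presence in the definition serves only to make certificates more compact and easier to locate in practice.
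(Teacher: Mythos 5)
Your proof is correct, but your forward direction takes a genuinely different route from the paper's. The paper first extracts a full-rank integer principal submatrix $Y'$ of $Y$ (so $Y = W_1^{\top} Y' W_1$ with $W_1$ rational), notes that $Y'$ is positive definite with smallest eigenvalue $\lambda > 0$, writes $Y' = U_0^{\top}U_0 + \lambda I_d$ with $U_0$ real, and then replaces $U_0$ by a sufficiently close \emph{rational} $U_1$ so that the error $V_1 = Y' - U_1^{\top}U_1$ is diagonally dominant; scaling everything to integers finishes. In other words, the paper's construction is an approximation argument in which the slack $\lambda I_d$ provided by positive definiteness is what allows the diagonally dominant $V$ to absorb the rounding error---this is exactly why $V$ and $W$ appear in the definition, and why the resulting $U$ stays a compact $d \times d$ matrix. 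Your construction instead produces an \emph{exact} rational Gram decomposition by Schur-complement pivoting (a rational $LDL^{\top}$, essentially), clears denominators, and converts the positive integer weights $n_i$ into row multiplicities so that $kY = U^{\top}U$ on the nose, with $V = 0$ and $W = I$. Each step you flag as delicate (the pivot preserves rationality, PSD-ness, and drops the rank by exactly one; the multiplicity trick avoids $\sqrt{n_i}$) does hold. What your approach buys is full exactness---no eigenvalues, no continuity/approximation step---plus the observation that $V$ and $W$ are logically dispensable for the existence statement; what it costs is that $U$ may have up to $\sum_i n_i$ rows with potentially large entries, whereas the paper's certificates are square of size $\rank(Y)$, which better matches the paper's practical goal of small, verifiable certificate packages.
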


\begin{proof}
Let $Y \succeq 0$ be given, and let $d \ce \rank(Y)$. If $d=0$, then $Y$ is the matrix of all zeros, and in this case $W \ce I_n$ and $U, V$ being the $n \times n$ matrix of all zeros would do. Next, assume that $d$ is positive. Then there exists a symmetric matrix $Y' \in \mZ^{d \times d}$ that is a principal submatrix of $Y$ where $\rank(Y') = d$. Hence, we can write $Y = W_1^{\top} Y' W_1$ for some rational matrix $W_1$. Next, let $\ld$ be the smallest eigenvalue of $Y'$. Since $Y'$ is a principal submatrix of $Y$ and has full rank,  $Y'$ must be positive definite, which implies that $\ld > 0$. Then $Y' - \ld I_d \succeq 0$ and so there exists a real matrix $U_0$ where $Y' = U_0^{\top} U_0 + \ld I_d$. Moreover, $\ld I_d$ is a positive multiple of the identity matrix, and so we can let $U_1$ be a rational approximation sufficiently close to $U_0$ such that $V_1 \ce Y' - U_1^{\top}U_1$ is diagonally dominant and has rational entries. Now, we have $Y = W_1^{\top}( U_1^{\top}U_1 + V_1) W_1$. Multiplying the rational matrices $U_1, V_1, W_1$ by a suitable integer yields integral matrices $U, V, W$ with the desired properties.

Conversely, suppose $Y$ has a $UVW$-certificate. Since $U^{\top}U \succeq 0$ for every $U$ and $V \succeq 0$ (as a property of diagonally dominant matrices). Then $U^{\top}U+V \succeq 0$, which implies that $Y = \frac{1}{k} W^{\top}(U^{\top}U+V)W \succeq 0$.
\end{proof}

The presence of a $UVW$-certificate allows us to easily and reliably verify the positive semidefiniteness of a given matrix by performing only elementary arithmetic operations involving whole numbers.

Next, recall that $e_i$ denotes the $i$-th unit vector. Similarly, we let $f_i \ce e_0 - e_i$ (we will be using this notation exclusively when working in the space of $\cone(P)$ for a set $P \subseteq [0,1]^n$, so it will always be clear what the 0th coordinate is). Given a graph $G$ with $n$ vertices, we define an \emph{$\LS_+$ certificate package} to be
\begin{itemize}
\item
A matrix $Y \in \mZ^{(n+1) \times (n+1)}$ where 
\begin{itemize}
\item
$Y=Y^{\top}$ and $Ye_0 = \diag(Y)$;
\item
$Ye_i, Yf_i \in \cone(\FRAC(G))$ for every $i \in [n]$.
\end{itemize}
\item
A $UVW$-certificate for $Y$.
\end{itemize}

Notice that the presence of an $\LS_+$ certificate package asserts that $Ye_0 \in \cone(\LS_+(G))$. Also, since $\FRAC(G)$ is a rational polytope, the conditions $Ye_i, Yf_i \in \cone(\FRAC(G))$ can be verified using elementary arithmetic operations on whole numbers. $\LS_+$ certificate packages are useful for helping establish that a given graph has $\LS_+$-rank at least 2. 

\begin{proposition}\label{prop:LS_+-certificate}
Let $G$ be a graph. Then  $r_+(G) \geq 2$ if and only if there exist a valid inequality $a^{\top}x \leq \beta$ for $\STAB(G)$
and an $\LS_+$ certificate package $(Y, U, V, W)$ for $G$ such that $\left(-\beta, a^{\top}\right)Ye_0 > 0$.
\end{proposition}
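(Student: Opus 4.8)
Suppose such a valid inequality and certificate package exist. The $UVW$-certificate guarantees $Y \succeq 0$ by Lemma~\ref{lem:UVW}, and the remaining data of the package say exactly that $Y \in \widehat{\LS}_+(\FRAC(G))$. Since $Ye_0 = \diag(Y)$, we have $(-\beta, a^{\top})Ye_0 = -\beta\, Y_{00} + \sum_{i=1}^n a_i Y_{i0}$, and its being positive first forces $Y_{00} > 0$: otherwise $Y_{00} = 0$ together with $Y \succeq 0$ would make the entire $0$-th row and column of $Y$ vanish, rendering the expression $0$. As $\widehat{\LS}_+(\FRAC(G))$ is invariant under positive scaling, $\tfrac{1}{Y_{00}}Y$ lies in it as well and has $e_0$-column $\begin{bmatrix} 1 \\ x\end{bmatrix}$ with $x \ce (Y_{i0}/Y_{00})_{i\in[n]}$, so $x \in \LS_+(G)$. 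Finally $(-\beta, a^{\top})Ye_0 = Y_{00}(a^{\top}x - \beta) > 0$ gives $a^{\top}x > \beta$, so $x \in \LS_+(G)$ violates a valid inequality of $\STAB(G)$. Hence $\LS_+(G) \neq \STAB(G)$, which (as $\STAB(G) \subseteq \LS_+(G)$ always) forces $r_+(G) \geq 2$.

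\textbf{The forward direction and the main obstacle.} Conversely, assume $r_+(G) \geq 2$, i.e. $\LS_+(G) \supsetneq \STAB(G)$. Pick $x^* \in \LS_+(G) \setminus \STAB(G)$ and a facet $a^{\top}x \leq \beta$ of the rational polytope $\STAB(G)$ violated by $x^*$; clearing denominators, I take $a, \beta$ integral. By definition of $\LS_+$ there is a \emph{real} matrix $Y^* \in \widehat{\LS}_+(\FRAC(G))$ with $Y^*e_0 = \begin{bmatrix} 1 \\ x^*\end{bmatrix}$, whence $(-\beta, a^{\top})Y^*e_0 = a^{\top}x^* - \beta > 0$. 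The entire content of the proposition is then to replace this real witness by an \emph{integral} one, after which the $UVW$-certificate is automatic from Lemma~\ref{lem:UVW}. This integrality upgrade is the step I expect to be the main obstacle, since $Y^*$ typically sits on the boundary of $\mS_+^{n+1}$ and on several of the polyhedral faces, so one cannot simply round it.

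\textbf{Rationality of the certificate cone.} Write $\C \ce \widehat{\LS}_+(\FRAC(G))$. This is a closed convex cone cut out entirely by rational data: the PSD cone $\mS_+^{n+1}$, the rational equalities $Ye_0 = \diag(Y)$, and the rational polyhedral membership conditions $Ye_i, Yf_i \in \cone(\FRAC(G))$; moreover $(-\beta, a^{\top})Ye_0$ is a rational linear functional of the entries of $Y$. The key structural fact is that $\C$ contains a positive-definite matrix. Indeed, writing $\chi^S \in \set{0,1}^{V(G)}$ for the incidence vector of a stable set $S$, each rank-one matrix $Y_S \ce \begin{bmatrix} 1 \\ \chi^S\end{bmatrix}\begin{bmatrix} 1 \\ \chi^S\end{bmatrix}^{\top}$ lies in $\C$, and any strictly positive combination $\hat{Y} \ce \sum_S \mu_S Y_S$ is positive definite because the vectors $\begin{bmatrix} 1 \\ \chi^S\end{bmatrix}$ (e.g. for $S = \emptyset$ and the singletons) span $\mR^{n+1}$. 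Since $\C$ therefore meets the interior of $\mS_+^{n+1}$, the PSD constraint is inactive at a full-rank relative-interior point, so $\mathrm{aff}(\C)$ agrees locally with the affine hull of a face of the rational polyhedron defined by the linear conditions; hence $\mathrm{aff}(\C)$ is a \emph{rational} affine subspace.

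\textbf{Producing the integral witness.} The set $\mathcal S \ce \set{Y \in \C : (-\beta, a^{\top})Ye_0 > 0}$ is nonempty, as it contains $Y^*$. Choosing any $Y_r \in \mathrm{relint}(\C)$ and moving a small distance from $Y^*$ toward $Y_r$ produces a point of $\mathrm{relint}(\C)$ on which the functional is still positive; thus $\mathcal S \cap \mathrm{relint}(\C)$ is nonempty, and being the intersection of $\mathrm{relint}(\C)$ with an open halfspace, it is relatively open in the rational subspace $\mathrm{aff}(\C)$. Rational points are dense there, so this set contains a rational matrix; clearing denominators (which preserves both membership in the cone $\C$ and positivity of the functional) yields an integral $Y \in \C$ with $(-\beta, a^{\top})Ye_0 > 0$. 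Since $Y$ is an integral symmetric PSD matrix, Lemma~\ref{lem:UVW} supplies a $UVW$-certificate, and $(Y,U,V,W)$ is the desired $\LS_+$ certificate package. The only genuinely delicate point is the claim that $\mathrm{aff}(\C)$ is rational, which is what licenses the density-of-rationals step and which I would verify carefully; it rests on $\C$ containing a positive-definite matrix to preclude hidden irrational equalities forced by the PSD cone.
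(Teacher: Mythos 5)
Your proof is correct, and structurally it is the same proof as the paper's: your reverse direction extracts from the package a point of $\LS_+(G)$ violating a valid inequality of $\STAB(G)$ (it is the de-homogenized version of the paper's cone argument, with the correct extra observation that positivity of $(-\beta,a^{\top})Ye_0$ forces $Y_{00}>0$, so normalizing by $Y_{00}$ is legitimate), and your forward direction takes a real certificate matrix for a violating point, rationalizes it, scales to integrality, and invokes Lemma~\ref{lem:UVW}. Where you genuinely add something is at the step you flag as the crux: the paper compresses the entire rationalization into the phrase ``by the definition of $\LS_+$ and the density of rationals,'' whereas you prove that this appeal is licensed. Your argument --- that the certificate cone $\C \ce \widehat{\LS}_+(\FRAC(G))$ contains the positive definite matrix $\hat{Y}=\sum_S \mu_S (1,\chi^S)(1,\chi^S)^{\top}$ (summing over the empty set and the singletons, say), hence $\mathrm{aff}(\C)$ is the affine hull of a face of the rational polyhedron cut out by the linear conditions and is therefore rational, hence rational points are dense in the relatively open set of certificates violating the facet --- is sound, and it addresses a real issue: without a full-rank feasible matrix, the PSD constraint could in principle confine all certificates to a set with irrational affine hull, where rational points need not be dense. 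If you formalize this, spell out two routine convexity steps that you currently compress: first, a point that is simultaneously positive definite and in $\mathrm{relint}(\C)$ exists, because any strict convex combination of $\hat{Y}$ with a relative-interior point of $\C$ is in $\mathrm{relint}(\C)$ by the line-segment principle and is positive definite since $\hat{Y}\succ 0$; second, in a small ball around such a point $\C$ coincides with the rational polyhedron, so $\mathrm{aff}(\C)$ equals the affine hull of the minimal face of that polyhedron containing the point, which is what makes ``rational'' precise.
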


\begin{proof}
Suppose $r_+(G) \geq 2$. Then there exists $\bar{x} \in \LS_+(G)\setminus \STAB(G)$ and a facet $a^{\top} x \leq \beta$
of $\STAB(G)$ such that $a^{\top}\bar{x} > \beta$. Then, by the definition of $\LS_+$ and the density of rationals, there exists a positive semidefinite matrix $\tilde{Y}$ with rational entries satisfying the second and third conditions for an $\LS_+$ certificate package and such that $\left(-\beta, a^{\top}\right)\tilde{Y}e_0 > 0$. By a suitable positive integer scaling of $\tilde{Y}$, we arrive at 
a positive semidefinite integral matrix $Y$ satisfying all conditions for the existence of a $\LS_+$ certificate package $(Y,U,V,W)$
for $G$ such that $\left(-\beta, a^{\top}\right)Ye_0 > 0$ (the existence of $(U,V,W)$ satisfying the last condition of the $\LS_+$ certificate package follows from Lemma~\ref{lem:UVW}).

Now, suppose there exist a valid inequality $a^{\top}x \leq \beta$ for $\STAB(G)$
and an $\LS_+$ certificate package $(Y, U, V, W)$ for $G$ such that $\left(-\beta, a^{\top}\right)Ye_0 > 0$.
Then, by the definition of $\LS_+$ certificate package, $Ye_0 \in \cone(\LS_+(G))$. By assumption, $Ye_0$ violates a valid inequality for $\cone(\STAB(G))$. Therefore, $\cone(\STAB(G)) \subset \cone(\LS_+(G))$ which implies $r_+(G) \geq 2$. 

\end{proof}

As our first example, consider the graph $G_{\ref{fig7claw},1}$ in Figure~\ref{fig7claw}.

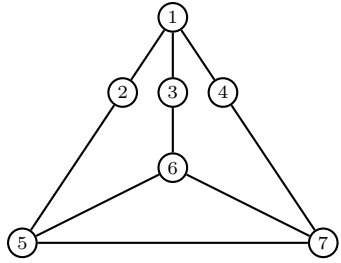
\begin{figure}[htbp]
\centering

\begin{tikzpicture}
[scale=1, thick,main node/.style={circle, minimum size=3.8mm, inner sep=0.1mm,draw,font=\tiny\sffamily}]
\node[main node] at (0,2) (1) {$1$};
\node[main node] at ({-2/3},1) (2) {$2$};
\node[main node] at (0,1) (3) {$3$};
\node[main node] at ({2/3},1) (4) {$4$};
\node[main node] at (-2,-1) (5) {$5$};
\node[main node] at (0,0) (6) {$6$};
\node[main node] at (2,-1) (7) {$7$};

  \path[every node/.style={font=\sffamily}]
(1) edge (2) (1) edge (3) (1) edge (4) (2) edge (5) (3) edge (6) (4) edge (7) (5) edge (6) (6) edge (7) (5) edge (7);
\end{tikzpicture}
\caption{$G_{\ref{fig7claw},1}$, a 7-vertex graph with $\LS_+$-rank $2$.}
\label{fig7claw}
\end{figure}

Then we have the following:

\begin{proposition}\label{prop7claw1}
The graph $G_{\ref{fig7claw},1}$ from Figure~\ref{fig7claw} has $\LS_+$-rank $2$.
\end{proposition}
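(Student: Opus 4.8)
The upper bound is immediate: since $|V(G_{\ref{fig7claw},1})| = 7$, Theorem~\ref{thmLiptakT031} gives $r_+(G_{\ref{fig7claw},1}) \le \lfloor 7/3 \rfloor = 2$. All the content is in the matching lower bound $r_+(G_{\ref{fig7claw},1}) \ge 2$, which by definition amounts to producing a single point $\bar{x} \in \LS_+(G_{\ref{fig7claw},1}) \setminus \STAB(G_{\ref{fig7claw},1})$. The plan is to exhibit such a point together with an $\LS_+$ certificate package and then invoke Proposition~\ref{prop:LS_+-certificate}.

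To see where to look, observe that $G_{\ref{fig7claw},1}$ is a stretched $K_4$ (stretch the vertex adjacent to the triangle $\{5,6,7\}$ into the claw $\{1,2,3,4\}$); in particular it contains a triangle, so $r_+ \ge 1$ is automatic and the whole difficulty is strictness at the second level. A quick scan of the three isomorphism types of vertex-deleted subgraphs suggests that each is $\LS_+$-perfect: deleting the apex $1$ yields the chordal ``net''; deleting a claw-leaf yields a $7$-edge graph; and deleting a triangle vertex yields $C_5$ with a pendant, and none of these is one of the two known $6$-vertex rank-$2$ graphs. By Lemma~\ref{lemfacet2} this would force the rank to be realized on a \emph{full-support} facet $a^{\top}x \le \beta$ of $\STAB(G_{\ref{fig7claw},1})$, which tells me precisely which directions to separate against. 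I would then locate $\bar{x}$ by maximizing such a facet direction over $\LS_+(G_{\ref{fig7claw},1})$ numerically (an SDP in the $8 \times 8$ matrix variable $Y$), confirm that the optimum exceeds $\beta$, and rationalize and clear denominators to obtain an integer matrix $Y$.

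The certificate package then consists of two pieces. First, the integer matrix $Y \in \mZ^{8 \times 8}$ with $Y = Y^{\top}$, $Ye_0 = \diag(Y)$, and $Ye_i, Yf_i \in \cone(\FRAC(G_{\ref{fig7claw},1}))$ for every $i \in [7]$; each of these $14$ cone memberships unfolds into a finite list of inequalities $v_j \ge 0$, $v_j \le v_0$, and $v_j + v_k \le v_0$ ranging over the edges, all checkable in integer arithmetic. Second, a $UVW$-certificate for $Y \succeq 0$, that is, integer matrices with $W^{\top}(U^{\top}U + V)W = kY$ and $V$ symmetric diagonally dominant, whose existence for a genuinely PSD $Y$ is guaranteed by Lemma~\ref{lem:UVW}. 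With $(Y,U,V,W)$ and the facet $a^{\top}x \le \beta$ in hand, the proof reduces to verifying the single scalar inequality $\left(-\beta, a^{\top}\right)Ye_0 > 0$ and quoting Proposition~\ref{prop:LS_+-certificate}.

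The main obstacle is exactly the construction of this PSD witness $Y$ and its integer $UVW$-factorization: finding a clean rational $\bar{x}$ together with a matrix $Y$ that is simultaneously positive semidefinite and satisfies all the cone conditions is the crux of the argument. I expect the naive fully symmetric candidate to be useless here—a direct computation indicates that the $S_3$-symmetric slices of $\LS_+(G_{\ref{fig7claw},1})$ and $\STAB(G_{\ref{fig7claw},1})$ coincide—so $\bar{x}$ and the separating facet must treat the three arms $\{2,5\}$, $\{3,6\}$, $\{4,7\}$ asymmetrically. Rationalizing the numerical SDP solution so that the resulting $V$ is honestly diagonally dominant with small integer entries is the delicate step; once $Y$, $U$, $V$, $W$ are fixed, everything that remains is routine whole-number bookkeeping.
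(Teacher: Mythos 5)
Your overall route is the same as the paper's: the upper bound is Theorem~\ref{thmLiptakT031}, and the lower bound is to exhibit an $\LS_+$ certificate package for a point of $\LS_+(G_{\ref{fig7claw},1})$ violating a valid inequality of $\STAB(G_{\ref{fig7claw},1})$, then invoke Proposition~\ref{prop:LS_+-certificate}. The paper carries this out concretely: it displays integer matrices $Y,U,V,W$ with $6900\,Y = W^{\top}(U^{\top}U+V)W$ satisfying all package conditions, concludes that $\bar{x} = \tfrac{1}{76}(25,40,40,40,20,20,20)^{\top} \in \LS_+(G_{\ref{fig7claw},1})$, and checks that $\bar{x}$ violates the valid inequality $(2,1,1,1,1,1,1)^{\top}x \leq 3$. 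Your write-up stops exactly where this content begins: no point, no inequality, and no matrices are produced, only a (sound and well-motivated) description of how one would search for them. For a statement of this kind the certificate \emph{is} the proof, so what you have is a correct plan rather than a completed argument. Your preprocessing (all vertex-deleted subgraphs have rank at most $1$, hence by Lemma~\ref{lemfacet2} the rank must be witnessed by a full-support inequality) is correct and a reasonable way to narrow the search, though the paper does not need it: Proposition~\ref{prop:LS_+-certificate} only requires a valid inequality, not a facet.

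The one concrete mathematical assertion you add beyond the framework is false, and it would have misdirected your search. You claim a direct computation shows the $S_3$-symmetric slices of $\LS_+(G_{\ref{fig7claw},1})$ and $\STAB(G_{\ref{fig7claw},1})$ coincide, so the witness must treat the three arms $\{2,5\},\{3,6\},\{4,7\}$ asymmetrically. The paper's certificate refutes this directly: both the point $\tfrac{1}{76}(25,40,40,40,20,20,20)^{\top}$ (constant on the leaves $\{2,3,4\}$ and on the triangle $\{5,6,7\}$) and the separating inequality $(2,1,1,1,1,1,1)^{\top}x \leq 3$ are invariant under the full arm-permutation group. Indeed, no asymmetry could ever be needed here: $\LS_+(G)$ is convex and invariant under $\Aut(G)$, so if any point of $\LS_+(G)$ violates an $\Aut(G)$-invariant valid inequality, the average of its $\Aut(G)$-orbit is a symmetric point of $\LS_+(G)$ violating the same inequality. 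So the symmetric slice of $\LS_+(G_{\ref{fig7claw},1})$ is strictly larger than that of $\STAB(G_{\ref{fig7claw},1})$, and restricting the SDP search to the three-parameter symmetric slice ($x_1$; $x_2=x_3=x_4$; $x_5=x_6=x_7$) would have made your search easier, not hopeless. Your method would still have succeeded despite the error (maximizing $(2,1,\ldots,1)^{\top}x$ over $\LS_+(G_{\ref{fig7claw},1})$ exceeds $3$ no matter what you believe about symmetry), but the claimed computation is wrong and should be removed.
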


\begin{proof}
For convenience, let $G \ce G_{\ref{fig7claw},1}$ throughout this proof. First, observe that $|V(G)| = 7$, and so $r_+(G) \leq 2$. Next, consider the matrices
\begin{align*}
Y &\ce
\begin{bmatrix}
76 & 25 & 40 & 40 & 40 & 20 & 20 & 20 \\ 
25 & 25 & 0 & 0 & 0 & 10 & 10 & 10 \\ 
40 & 0 & 40 & 30 & 30 & 0 & 10 & 10 \\ 
40 & 0 & 30 & 40 & 30 & 10 & 0 & 10 \\ 
40 & 0 & 30 & 30 & 40 & 10 & 10 & 0 \\ 
20 & 10 & 0 & 10 & 10 & 20 & 0 & 0 \\ 
20 & 10 & 10 & 0 & 10 & 0 & 20 & 0 \\ 
20 & 10 & 10 & 10 & 0 & 0 & 0 & 20
\end{bmatrix},~
U \ce
\begin{bmatrix}
0 & 0 & 0 & 0 & 0 & 0 & 0 \\ 
-1 & 12 & -4 & -4 & 14 & -12 & -12 \\ 
0 & 0 & -18 & 18 & 0 & -11 & 11 \\ 
10 & 16 & 18 & 18 & -40 & -21 & -21 \\ 
0 & 0 & 31 & -31 & 0 & -50 & 50 \\ 
37 & 74 & -38 & -38 & -29 & 31 & 31 \\ 
139 & 29 & 88 & 88 & 90 & 33 & 33
\end{bmatrix}
\end{align*}
\begin{align*}
V &\ce
\begin{bmatrix}
185 & -17 & 30 & 30 & 17 & -16 & -16 \\ 
-17 & 183 & 20 & 20 & 8 & -11 & -11 \\ 
30 & 20 & 227 & 37 & 34 & -44 & 12 \\ 
30 & 20 & 37 & 227 & 34 & 12 & -44 \\ 
17 & 8 & 34 & 34 & 303 & 17 & 17 \\ 
-16 & -11 & -44 & 12 & 17 & 264 & -14 \\ 
-16 & -11 & 12 & -44 & 17 & -14 & 264 
\end{bmatrix},~W \ce \begin{bmatrix}
5 & 0 & 0 & 0 & 0 & 0 & 0 & 10 \\ 
0 & 5 & 0 & 0 & 0 & 0 & 0 & -4 \\ 
0 & 0 & 5 & 0 & 0 & 0 & 0 & -3 \\ 
0 & 0 & 0 & 5 & 0 & 0 & 0 & -3 \\ 
0 & 0 & 0 & 0 & 5 & 0 & 0 & -3 \\ 
0 & 0 & 0 & 0 & 0 & 5 & 0 & -5 \\ 
0 & 0 & 0 & 0 & 0 & 0 & 5 & -5    
\end{bmatrix}.
\end{align*}
Then one can check that $6900 Y = W^{\top} ( U^{\top}U +V) W$, and the matrices indeed satisfy all conditions for an $\LS_+$ certificate package. Thus,
\[
\bar{x} \ce \frac{1}{76}(25,40,40,40,20,20,20)^{\top} \in \LS_+(G).
\]
On the other hand, $\bar{x}$ violates the inequality $(2,1,1,1,1,1,1)^{\top}x \leq 3$, which is valid for $\STAB(G)$. Thus, we conclude that $r_+(G) = 2$.
\end{proof}

The $\LS_+$ certificate package described in the proof of Proposition~\ref{prop7claw1} and other numerical data in support of the results in this manuscript are made publicly available at~\cite{AuT25data}.

Notably, $G_{\ref{fig7claw},1}$ contains a \emph{claw} (i.e., a stable set $S$ of size 3 with a vertex that is adjacent to all 3 vertices in $S$). In fact, using the characterization of claw-free graphs with $\LS_+$-rank at least 2 due to Bianchi et al.~\cite[Corollary 32]{BianchiENW23}, it follows that if $|V(H)| =7, r_+(H) =2$ and that $H$ does not contain $G_{\ref{figKnownEG},1}$ or $G_{\ref{figKnownEG},2}$ as an induced subgraph, then $H$ must contain a claw. Thus, Proposition~\ref{prop7claw1} shows that such an example indeed exists. The graph $G_{\ref{fig7claw},1}$ was also studied in~\cite{LiptakT03}, and Proposition~\ref{prop7claw1} proves that the subdivision-of-a-star operation mentioned therein can increase the $\LS_+$-rank of a graph. 

Also, with Proposition~\ref{prop7claw1}, we can prove a slight generalization of Theorem~\ref{thmAuT24b2}(i).

\begin{proposition}\label{propStretchedKk}
Let $n \geq 4$, and let $G$ be obtained from $K_n$ by a proper stretching of a vertex. Then $r_+(G) = 2$.
\end{proposition}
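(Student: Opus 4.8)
The plan is to prove $r_+(G) \le 2$ and $r_+(G) \ge 2$ separately. Throughout, write the stretching as replacing the stretched vertex $v$ of $K_n$ by $v_0, v_1, \dots, v_k$, where $v_0$ is adjacent exactly to $v_1, \dots, v_k$, and each $v_j$ ($j \in [k]$) is adjacent to $v_0$ and to a set $A_j$ with $\emptyset \ne A_j \subsetneq Q$; here $Q \ce V(K_n) \setminus \{v\}$ is the residual clique, of size $n-1 \ge 3$, and $\bigcup_{j} A_j = Q$. Note that properness forces $k \ge 2$, since a single $A_1 = Q$ would not be proper.

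For the upper bound I would delete $v_0$. In $G - v_0$ the vertices $v_1, \dots, v_k$ become pairwise non-adjacent, as their only shared neighbour $v_0$ is gone, while $Q$ still induces a clique; hence $V(G-v_0)$ partitions into a clique and a stable set, so $G - v_0$ is a \emph{split graph} and in particular perfect. Thus $r_+(G - v_0) \le 1$, and Theorem~\ref{thmDeleteDestroy}(ii) gives $r_+(G) \le \min_{i} r_+(G - i) + 1 \le r_+(G - v_0) + 1 \le 2$.

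For the lower bound I would exhibit an induced subgraph of $G$ whose $\LS_+$-rank is already known to be $2$ and invoke Lemma~\ref{lemfacet1}. The engine is a dichotomy on the sizes of the $A_j$. If some $A_i$ has $|A_i| \ge 2$, choose one with $|A_i|$ maximum, pick distinct $p, q \in A_i$, pick $r \in Q \setminus A_i$ (nonempty by properness), and take a leaf $v_j$ with $r \in A_j$ (one exists since the $A_l$ cover $Q$, and necessarily $j \ne i$). Then $\set{v_0, v_i, v_j, p, q, r}$ induces a proper $2$-stretching of $K_4$ unless $A_j \supseteq \set{p,q\}$; and were $A_j \supseteq \set{p,q}$ to hold for every choice of $p, q \in A_i$, we would get $A_j \supseteq A_i \cup \set{r} \supsetneq A_i$, contradicting the maximality of $|A_i|$. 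Hence $G$ contains an induced proper $2$-stretching of $K_4$, which has rank $2$ by Theorem~\ref{thmAuT24b2}(i). If instead every $A_j$ is a singleton, then since these singletons cover the $\ge 3$ vertices of $Q$ there are three leaves $v_i, v_j, v_l$ whose neighbours are three distinct vertices $p, q, r \in Q$, and $\set{v_0, v_i, v_j, v_l, p, q, r}$ induces a copy of $G_{\ref{fig7claw},1}$, which has rank $2$ by Proposition~\ref{prop7claw1}. In either case $r_+(G) \ge 2$.

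The main obstacle is the lower bound, and specifically guaranteeing a rank-$2$ induced subgraph in \emph{every} configuration. The delicate point is that the all-singletons configuration, of which $G_{\ref{fig7claw},1}$ is the smallest instance, contains no induced proper $2$-stretching of $K_4$ at all, so Theorem~\ref{thmAuT24b2}(i) by itself cannot close that case; this is exactly where Proposition~\ref{prop7claw1} is indispensable. The maximality argument above is the device that cleanly isolates this degenerate case from all others, and checking that the six (respectively seven) selected vertices induce precisely the claimed graph is the step that demands the most care.
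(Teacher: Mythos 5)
Your proof is correct and follows essentially the same route as the paper's: the same dichotomy on the sizes of the $A_j$ (an induced proper $2$-stretching of $K_4$ when some $|A_i|\ge 2$, an induced copy of $G_{\ref{fig7claw},1}$ when all $A_j$ are singletons) for the lower bound, and the same upper bound via perfectness of $G-v_0$ together with Theorem~\ref{thmDeleteDestroy}(ii). The only difference is one of detail: the paper simply asserts that some six vertices induce $G_{\ref{figKnownEG},1}$ or $G_{\ref{figKnownEG},2}$ in the first case, whereas your maximality argument on $|A_i|$ supplies the verification (ruling out the bad configuration $A_j\supseteq\set{p,q}$) that the paper leaves to the reader.
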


\begin{proof}
Suppose $G$ is obtained from $K_n$ by a proper $k$-stretching of $v \in V(K_n)$ for some integer $k \geq 2$. If $|A_i| \geq 2$ for at least one $i \in [k]$, then $G$ must contain $G_{\ref{figKnownEG},1}$ or $G_{\ref{figKnownEG},2}$ as an induced subgraph. Otherwise, $|A_i| = 1$ for all $i \in [k]$, which implies that $k \geq 3$ (since $n \geq 4$). In this case, $G$ must contain the graph $G_{\ref{fig7claw},1}$ as an induced subgraph. In either case, we have that $r_+(G) \geq 2$.

Also, notice that $G- v_0$ must be a perfect graph, and thus $r_+(G- v_0) \leq 1$, showing that $r_+(G) \leq 2$.
\end{proof}

Next, given a graph $G$, an integer $\ell \geq 1$ and a non-negative and non-zero vector $a \in \mR^{V(G)}$, define
\[
\gamma_{\ell}(G,a) \ce \frac{ \max \set{ a^{\top}x : x \in \LS_+^{\ell}(G)} }{ \max \set{ a^{\top}x : x \in \STAB(G)}}.
\]
In other words, $\gamma_{\ell}(G,a)$ is the \emph{integrality ratio} of $\LS_+^{\ell}(G)$ in the direction of the vector $a$. By imposing that $a \geq 0$ and $a \neq 0$, we ensure that $\max\set{a^{\top}x : x \in \STAB(G)} > 0$, and so $\gamma_{\ell}(G,a)$ is well-defined. 

It is apparent that if $\gamma_{\ell}(G,a) > 1$ for some integer $\ell$ and vector $a$, then $r_+(G) > \ell$. Throughout this paper, we will establish $\LS_+$-rank lower bounds of a graph using one of the following two approaches:
\begin{itemize}
\item
Provide an analytical proof for this rank lower bound (e.g., using results stated earlier in this section);
\item
Present a point $\bar{x} \not\in \STAB(G)$ with a $\LS_+^{\ell-1}$ certificate package showing that $\bar{x} \in \LS_+^{\ell-1}(G)$.
\end{itemize}

In both cases, we will  write $r_+(G) \geq \ell$. Likewise, we write $r_+(G) \leq \ell$ if there is an analytical proof for this bound. On the other hand, there are situations when a self-contained argument using the existing theoretical tools is not available. In this case, we will try to obtain a ``softer'' upper bound using CVX+SeDuMi~\cite{CVX,Sturm99}, a MATLAB-based modelling system for convex optimization. In our experience --- and especially in moderate to large size problem instances --- it is not uncommon for CVX+SeDuMi to return an integrality ratio between $1+10^{-7}$ and $1+10^{-6}$ when there is an analytical proof that the true value is $1$. Thus, given a graph $G$, we write that $r_+(G) \lesssim \ell$ if, for every facet $a^{\top}x \leq \b$ of $\STAB(G)$, either we have an analytical proof that $\gamma_{\ell}(G,a) = 1$, or 
\[
\gamma_{\ell} (G,a) \leq 1 + 10^{-6}
\]
according to CVX+SeDuMi. In such cases, it is conceivable that the true value of $\gamma_{\ell} (G,a)$ is $1$ for all facets of $\STAB(G)$, which would imply that $r_+(G) \leq \ell$. Furthermore, due to Theorem~\ref{thmLiptakT031}, $\gamma_{\ell}(G,a) = 1$ for all facets $a^{\top}x \leq \b$ of $\STAB(G)$ where $|\supp(a)| < 3\ell$. Thus, to conclude that $r_+(G) \lesssim \ell$, it suffices to compute $\max\set{ a^{\top}x : x \in \LS_+^{\ell}(G)}$ with CVX+SeDuMi only for the facets where $|\supp(a)| \geq 3\ell$.

For ease of reference, we will use the following evidence convention in the remainder of the paper. Statements of the form $r_+(G)=\ell$ or $r_+(G)\ge \ell$ are backed either by analytical arguments or by explicit certificate packages verifiable by exact integer arithmetic. Statements written as $r_+(G)\lesssim \ell$ are numerical upper-bound evidence obtained from CVX+SeDuMi and should be interpreted accordingly. Likewise, when we summarize search outcomes, phrases such as \emph{computations indicate}, \emph{computational evidence}, and \emph{heuristic} refer to conclusions that organize or motivate the search but are not themselves certified exact statements. In contrast, theorem-, proposition-, and corollary-level claims record only certified facts.

We remark that the computations for this work were mostly performed in MATLAB (R2023a) \cite{MATLAB} on a laptop computer equipped with an Intel Core i9-11950H processor (8 cores, 2.6 GHz clock speed) and 64 GB of RAM, running on the operating system Microsoft Windows 11 Education. 

\section{3-minimal graphs}\label{sec03}

In this section, we focus on studying 3-minimal graphs (i.e., graphs $G$ where $|V(G)| = 9$ and $r_+(G) = 3$). Again, at the time of this writing, the list of known 3-minimal graphs consists of the 13 in $\hat{\K}_{5,2}$ without a $K_4$ as an induced subgraph (Figure~\ref{fighatK52}), as well as one other graph in $\K_{5,2} \setminus \hat{\K}_{5,2}$~\cite[Proposition 25]{AuT25}.

Herein, we prove that there are at least 49 non-isomorphic 3-minimal graphs in total, including 18 in $\K_{5,2} \setminus \hat{\K}_{5,2}$, and another 18 graphs which are not in $\K_{5,2}$. We also record numerical patterns that may be useful in future searches for $\ell$-minimal graphs with $\ell \geq 4$.

\subsection{Certified lower bounds and new 3-minimal graphs}

To do so, we need to extend the notion of $\LS_+$ certificate packages to consider an analogous framework for verifying the membership of points in $\LS_+^2(G)$. First, we prove a simple lemma that helps explain one of the conditions in our $\LS_+^2$ certificate packages. Given a set $P \subseteq [0,1]^n$, we say that $P$ is \emph{lower-comprehensive} if, for every $x \in P$, $0 \leq y \leq x$ implies $y \in P$. Observe that $\FRAC(G)$ is lower-comprehensive for every graph $G$. It also follows readily from the definition of $\LS_+$ that if $P$ is lower-comprehensive, then so is $\LS_+(P)$. Thus, we know that $\LS_+^{\ell}(G)$ is lower-comprehensive for every graph $G$ and every non-negative integer $\ell$. Also, given $x^{(1)}, x^{(2)} \in \mR^{n+1}$, we say that $x^{(1)}$ \emph{dominates} $x^{(2)}$ if
\begin{itemize}
\item
$x^{(1)} = x^{(2)} = 0$, or
\item
$[x^{(1)}]_0 > 0$, $[x^{(2)}]_0 \geq 0$, and $[x^{(2)}]_0 \cdot x^{(1)} \geq [x^{(1)}]_0 \cdot x^{(2)}$.
\end{itemize}
Then we have the following.

\begin{lemma}\label{lemConeDominance}
Let $P \subseteq [0,1]^n$ be a lower-comprehensive set, and let $x^{(1)}, x^{(2)} \in \mR_+^{n+1}$. If $x^{(1)} \in \cone(P)$ and $x^{(1)}$ dominates $x^{(2)}$, then $x^{(2)} \in \cone(P)$.
\end{lemma}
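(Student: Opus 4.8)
The plan is to unpack both defining clauses of the dominance relation and to reduce everything to the lower-comprehensiveness of $P$. First I would dispose of the degenerate clause: if $x^{(1)} = x^{(2)} = 0$, then $x^{(2)} = 0 \in \cone(P)$ by taking $\lambda = 0$ in the definition of $\cone(P)$, and we are done. So henceforth I would assume the second clause holds, namely $[x^{(1)}]_0 > 0$, $[x^{(2)}]_0 \geq 0$, and $[x^{(2)}]_0 \, x^{(1)} \geq [x^{(1)}]_0 \, x^{(2)}$ coordinate-wise.

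Next I would extract an explicit witness for $x^{(1)} \in \cone(P)$. Writing $\lambda_1 \ce [x^{(1)}]_0 > 0$, the definition of $\cone(P)$ supplies some $p \in P$ with $x^{(1)} = (\lambda_1, \lambda_1 p^{\top})^{\top}$, so that the non-homogenized part of $x^{(1)}$ equals $\lambda_1 p$. I would then split on the value of $[x^{(2)}]_0$. If $[x^{(2)}]_0 = 0$, the dominance inequality reads $0 \geq \lambda_1 x^{(2)}$; since $\lambda_1 > 0$ and $x^{(2)} \in \mR_+^{n+1}$, this sandwiches $x^{(2)}$ between $0$ and $0$, forcing $x^{(2)} = 0 \in \cone(P)$.

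The remaining subcase is $\lambda_2 \ce [x^{(2)}]_0 > 0$, which is where lower-comprehensiveness does the actual work. I would set $q \ce \frac{1}{\lambda_2}(x^{(2)}_1, \ldots, x^{(2)}_n)^{\top}$, so that $x^{(2)} = (\lambda_2, \lambda_2 q^{\top})^{\top}$, and it then suffices to prove $q \in P$. Reading the dominance inequality $\lambda_2 x^{(1)} \geq \lambda_1 x^{(2)}$ in coordinates $1, \ldots, n$ yields $\lambda_2 \lambda_1 p_i \geq \lambda_1 \lambda_2 q_i$, and dividing by $\lambda_1 \lambda_2 > 0$ gives $q_i \leq p_i$ for every $i$. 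Since $x^{(2)} \geq 0$ and $\lambda_2 > 0$ we also have $q \geq 0$, so $0 \leq q \leq p$ with $p \in P$; lower-comprehensiveness then yields $q \in P$, and hence $x^{(2)} = (\lambda_2, \lambda_2 q^{\top})^{\top} \in \cone(P)$.

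I do not expect a genuine obstacle here: the argument is essentially a careful scaling computation, and the only point requiring attention is the case split on whether $[x^{(2)}]_0$ vanishes, which is what keeps the normalization by $\lambda_2$ legitimate. The one conceptually substantive step — the invocation of the hypothesis on $P$ — is confined to the final subcase, and it is precisely the inequality $q \leq p$ obtained from dominance that lets lower-comprehensiveness close the argument.
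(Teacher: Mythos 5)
Your proof is correct, and it takes a somewhat more concrete route than the paper's own argument. The paper never unpacks a witness from $P$: it works entirely at the cone level, first scaling to get $[x^{(2)}]_0 \cdot x^{(1)} \in \cone(P)$ (closure under non-negative scalar multiplication), then invoking the claim that $\cone(P)$ inherits lower-comprehensiveness from $P$ to conclude $[x^{(1)}]_0 \cdot x^{(2)} \in \cone(P)$ from the dominance inequality, and finally rescaling by $1/[x^{(1)}]_0$. You instead extract $p \in P$ with $x^{(1)} = (\lambda_1, \lambda_1 p^{\top})^{\top}$, split on whether $[x^{(2)}]_0$ vanishes, and apply lower-comprehensiveness to $P$ itself. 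The paper's argument is shorter, but its intermediate claim is delicate if read literally: the homogenized cone of a lower-comprehensive set is \emph{not} lower-comprehensive at vectors whose $0$th coordinate vanishes (e.g.\ $(0, \tfrac{1}{2}p^{\top})^{\top} \leq (1, p^{\top})^{\top}$ componentwise, yet the former lies in $\cone(P)$ only if it is zero). The paper's use of that claim is nonetheless sound because the two vectors it compares, $[x^{(1)}]_0 \cdot x^{(2)}$ and $[x^{(2)}]_0 \cdot x^{(1)}$, share the same $0$th coordinate; your explicit case split on $[x^{(2)}]_0 = 0$ — where dominance forces $x^{(2)} = 0$ — is exactly what makes this edge case transparent, so your write-up is slightly more self-contained at the cost of a few extra lines.
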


\begin{proof}
The claim obviously holds if $x^{(1)} = x^{(2)} = 0$, so we may assume that $[x^{(1)}]_0 > 0$, $[x^{(2)}]_0 \geq 0$, and $[x^{(2)}]_0 \cdot x^{(1)} \geq [x^{(1)}]_0 \cdot x^{(2)}$. Since $x^{(1)} \in \cone(P)$, we have $[x^{(2)}]_0 \cdot x^{(1)} \in \cone(P)$ (as $\cone(P)$ is closed under non-negative scalar multiplication). Also, given that $P$ is lower-comprehensive, so is $\cone(P)$, and so it follows that  $[x^{(1)}]_0 \cdot x^{(2)} \in \cone(P)$. Using again the fact that $\cone(P)$ is closed under non-negative scalar multiplication (and that $[x^{(1)}]_0 > 0$), we conclude that $x^{(2)} \in \cone(P)$.
\end{proof}

Next, given a graph $G$ with $n$ vertices, we define an \emph{$\LS_+^2$ certificate package} to be as follows:
\begin{itemize}
\item
A set of matrices $\M_1 \ce \set{Y_{e_i}, Y_{f_i} : i \in [n]} \subseteq \mZ^{(n+1) \times (n+1)}$ such that, for every $M \in \M_1$,
\begin{itemize}
\item
$M = M^{\top}$ and $Me_0 = \diag(M)$;
\item
$Me_i, Mf_i \in \cone(\FRAC(G))$ for every $i \in [n]$.
\end{itemize}
\item
A matrix $Y \in \mZ^{(n+1) \times (n+1)}$ where
\begin{itemize}
\item
$Y = Y^{\top}$ and $Ye_0 = \diag(Y)$;
\item
for every $i \in [n]$,
\begin{itemize}
\item
$Y_{e_i}e_0$ dominates $ Ye_i$;
\item
$Y_{f_i}e_0$ dominates $Yf_i$.
\end{itemize}
\end{itemize}
\item
A $UVW$-certificate for every $M \in \M_1$ and $Y$.
\end{itemize}

Notice the conditions on the matrices in $\M_1$ certify that $Y_{e_i}e_0, Y_{f_i}e_0 \in \cone(\LS_+(G))$ for all $i \in [n]$. Next, using Lemma~\ref{lemConeDominance}, the domination conditions assure that, for every $i \in [n]$,
\begin{align*}
Y_{e_i}e_0 \in \cone(\LS_+(G)) & \Rightarrow Ye_i \in \cone(\LS_+(G)),\\
Y_{f_i}e_0 \in \cone(\LS_+(G)) & \Rightarrow Yf_i \in \cone(\LS_+(G)).
\end{align*}
Thus, together with other conditions on $Y$, $Ye_0 \in \cone(\LS_+^2(G))$. Generally, a $\LS_+^2$ certificate package for a vector in $\mR^n$ consists of $4(1+2n)$ matrices (the certificate matrices $\M_1 \cup \set{Y}$, plus a $UVW$-certificate of each of these matrices). Due to the above arguments and following a similar proof to that of Proposition~\ref{prop:LS_+-certificate}, we have the following fact.

\begin{proposition}\label{prop:LS_+^2-certificate}
Let $G$ be a graph. Then  $r_+(G) \geq 3$ if and only if there exist a valid inequality $a^{\top}x \leq \beta$ for $\STAB(G)$
and an $\LS_+^2$ certificate package $(Y, \mathcal{M}_1, \textup{ and } \textup{$UVW$-certificates})$ for $G$ such that $\left(-\beta, a^{\top}\right)Ye_0 > 0$.
\end{proposition}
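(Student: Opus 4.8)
The plan is to mirror the structure of the proof of Proposition~\ref{prop:LS_+-certificate}, now one level up in the hierarchy, using the $\LS_+^2$ certificate package machinery that was just assembled. The statement is an ``if and only if,'' so I would split it into the two implications and handle them separately. Throughout, the key structural facts are already in place: by Theorem~\ref{thmLiptakT031} rank bounds are finite, $\LS_+^\ell(G)$ is lower-comprehensive (noted just before Lemma~\ref{lemConeDominance}), and the certificate-package conditions were engineered precisely so that their validity implies $Ye_0 \in \cone(\LS_+^2(G))$, as spelled out in the paragraph immediately preceding the statement.

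For the ``only if'' direction, suppose $r_+(G) \geq 3$. Then $\LS_+^2(G) \neq \STAB(G)$, so there exists a point $\bar{x} \in \LS_+^2(G) \setminus \STAB(G)$, and hence a facet $a^\top x \leq \beta$ of $\STAB(G)$ with $a^\top \bar{x} > \beta$. I would then unwind the definition of $\LS_+^2(G) = \LS_+(\LS_+(G))$: membership of $\bar{x}$ in $\LS_+^2(G)$ yields a PSD matrix $Y$ with $Ye_0 = \begin{bmatrix} 1 \\ \bar{x} \end{bmatrix}$ and $Ye_i, Yf_i \in \cone(\LS_+(G))$ for all $i$. In turn, each $Ye_i$ and $Yf_i$ lies in $\cone(\LS_+(G))$, so each is dominated by (indeed, is a positive multiple of) some $Y_{e_i}e_0$, $Y_{f_i}e_0$ coming from an inner-level PSD matrix $Y_{e_i}, Y_{f_i} \in \widehat{\LS}_+(\FRAC(G))$. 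By the density of the rationals one may take all these matrices to have rational entries while preserving the strict inequality $(-\beta, a^\top)Ye_0 > 0$ and the (weak) domination and cone-membership conditions; a common positive integer scaling then clears denominators. Finally, Lemma~\ref{lem:UVW} furnishes $UVW$-certificates for $Y$ and for each $M \in \M_1$, completing the $\LS_+^2$ certificate package.

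For the ``if'' direction, suppose we are given such a valid inequality and certificate package. Here the work has essentially been front-loaded into the definition: the $UVW$-certificates together with Lemma~\ref{lem:UVW} verify that all the matrices in $\M_1 \cup \set{Y}$ are PSD, the conditions on $\M_1$ certify that $Y_{e_i}e_0, Y_{f_i}e_0 \in \cone(\LS_+(G))$, and then Lemma~\ref{lemConeDominance} applied with the domination conditions upgrades these to $Ye_i, Yf_i \in \cone(\LS_+(G))$. Combined with $Y \succeq 0$ and $Ye_0 = \diag(Y)$, this gives $Ye_0 \in \cone(\LS_+^2(G))$. Since $(-\beta, a^\top)Ye_0 > 0$, the point $Ye_0$ violates a valid inequality of $\cone(\STAB(G))$, so $\cone(\STAB(G)) \subsetneq \cone(\LS_+^2(G))$, whence $\LS_+^2(G) \neq \STAB(G)$ and $r_+(G) \geq 3$.

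I expect the only genuinely delicate point to be the rationality-and-scaling argument in the ``only if'' direction: one must confirm that the several cone-membership and domination conditions, which are inequalities rather than equalities, can all be met simultaneously by rational matrices sufficiently close to the real optimizers without destroying the strict violation $(-\beta, a^\top)Ye_0 > 0$. This is the same subtlety encountered in Proposition~\ref{prop:LS_+-certificate}, and since $\STAB(G)$ and $\FRAC(G)$ are rational polytopes with $\cone(\FRAC(G))$ finitely generated, a small perturbation suffices; a strict inequality is open, so it survives perturbation, while the domination conditions are closed and can be arranged by the same limiting procedure used in the proof of Lemma~\ref{lem:UVW}. Everything else is a routine transcription of the single-round argument to two rounds, so I would keep the write-up brief and refer back to the proof of Proposition~\ref{prop:LS_+-certificate} rather than repeating it.
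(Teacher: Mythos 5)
Your proposal is correct and takes essentially the same route as the paper: the paper proves this proposition by combining the discussion immediately preceding it (the conditions on $\M_1$ plus Lemma~\ref{lemConeDominance} force $Ye_0 \in \cone(\LS_+^2(G))$) with the remark that the argument of Proposition~\ref{prop:LS_+-certificate} carries over verbatim, which is exactly the two-directional argument you give. The rationalization-and-scaling subtlety you flag in the ``only if'' direction is handled at the same informal level in the paper (it simply invokes the density of the rationals and integer scaling), so your treatment matches its level of rigor.
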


We next prove that there are at least 49 non-isomorphic 3-minimal graphs. First, Figure~\ref{fig3minimal} gives eight of these graphs. Each graph has the property that $\deg(8) = 2$, and the vertices $\set{1,2,3,4,5,6}$ induce either $G_{\ref{figKnownEG},1}$ or $G_{\ref{figKnownEG},2}$.

\def\y{0.7}
\def\sc{1.6}
\def\z{360/ (5+4*\y)}
\def\x{270}
\def\w{1 / cos(\z*\y)}

\def\placev{
\node[main node] at ({ \x+(-1 + 0*\y) *\z} : 1) (1) {$1$};
\node[main node] at ({ \x+(0 + 0*\y) *\z} : 1) (2) {$2$};
\node[main node] at ({ \x+(1 + 0*\y) *\z} : 1) (3) {$3$};
\node[main node] at ({ \x+(2 + 0*\y) *\z} : 1) (4) {$4$};
\node[main node] at ({ \x+(2 + 1*\y) *\z} : 1) (5) {$5$};
\node[main node] at ({ \x+(2 + 2*\y) *\z} : 1) (6) {$6$};
\node[main node] at ({ \x+(3 + 2*\y) *\z} : 1) (7) {$7$};
\node[main node] at ({ \x+(3 + 3*\y) *\z} : 1) (8) {$8$};
\node[main node] at ({ \x+(3 + 4*\y) *\z} : 1) (9) {$9$};
}

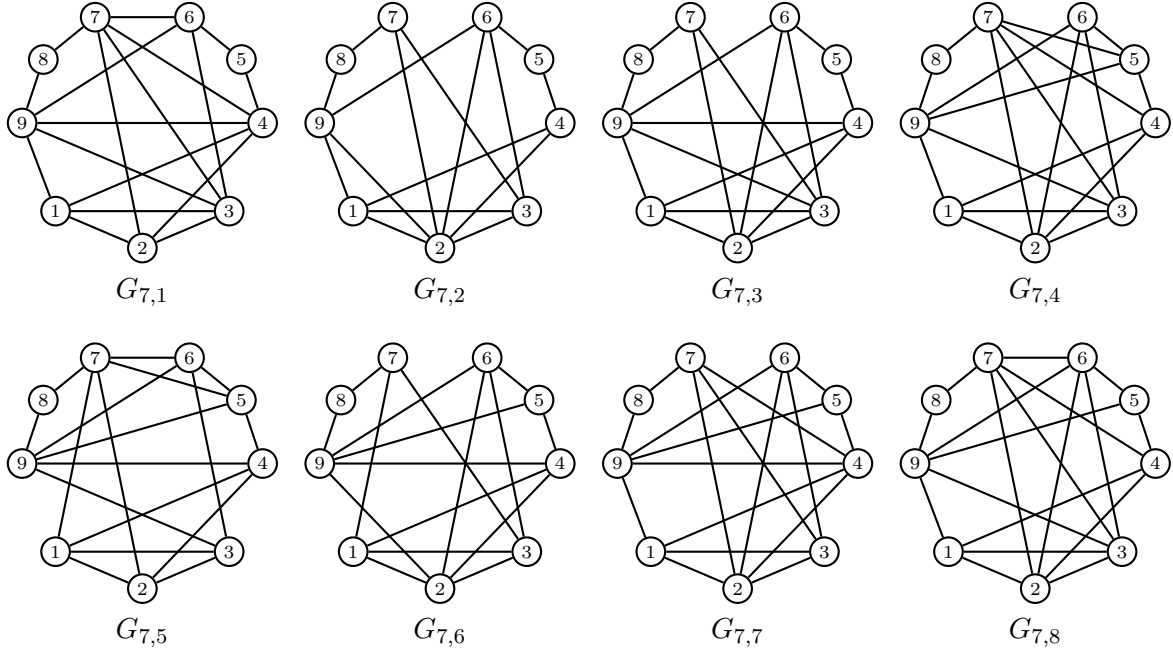
\begin{figure}[htbp]
\centering
\begin{tabular}{cccc}
\begin{tikzpicture}
[scale=\sc, thick,main node/.style={circle, minimum size=3.8mm, inner sep=0.1mm,draw,font=\tiny\sffamily}]
\placev

\path[every node/.style={font=\sffamily}]
(1) edge(2) (1) edge(3) (1) edge(4) (1) edge(9) (2) edge(3) (2) edge(4) (2) edge(7) (3) edge(6) (3) edge(7) (3) edge(9) (4) edge(5) (4) edge(7) (4) edge(9) (5) edge(6) (6) edge(7) (6) edge(9) (7) edge(8) (8) edge(9) ;
\end{tikzpicture}
&
\begin{tikzpicture}
[scale=\sc, thick,main node/.style={circle, minimum size=3.8mm, inner sep=0.1mm,draw,font=\tiny\sffamily}]
\placev
\path[every node/.style={font=\sffamily}]
(1) edge(2) (1) edge(3) (1) edge(4) (1) edge(9) (2) edge(3) (2) edge(4) (2) edge(6) (2) edge(7) (2) edge(9) (3) edge(6) (3) edge(7) (4) edge(5) (5) edge(6) (6) edge(9) (7) edge(8) (8) edge(9) ;
\end{tikzpicture}
&
\begin{tikzpicture}
[scale=\sc, thick,main node/.style={circle, minimum size=3.8mm, inner sep=0.1mm,draw,font=\tiny\sffamily}]
\placev
\path[every node/.style={font=\sffamily}]
(1) edge(2) (1) edge(3) (1) edge(4) (1) edge(9) (2) edge(3) (2) edge(4) (2) edge(6) (2) edge(7) (3) edge(6) (3) edge(7) (3) edge(9) (4) edge(5) (4) edge(9) (5) edge(6) (6) edge(9) (7) edge(8) (8) edge(9) ;
\end{tikzpicture}
&
\begin{tikzpicture}
[scale=\sc, thick,main node/.style={circle, minimum size=3.8mm, inner sep=0.1mm,draw,font=\tiny\sffamily}]
\placev
\path[every node/.style={font=\sffamily}]
(1) edge(2) (1) edge(3) (1) edge(4) (1) edge(9) (2) edge(3) (2) edge(4) (2) edge(6) (2) edge(7) (3) edge(6) (3) edge(7) (3) edge(9) (4) edge(5) (4) edge(7) (5) edge(6) (5) edge(7) (5) edge(9) (6) edge(9) (7) edge(8) (8) edge(9) ;
\end{tikzpicture}
\\
$G_{\ref{fig3minimal},1}$ & $G_{\ref{fig3minimal},2}$ & $G_{\ref{fig3minimal},3}$ & $G_{\ref{fig3minimal},4}$ \\ 
\\
\begin{tikzpicture}
[scale=\sc, thick,main node/.style={circle, minimum size=3.8mm, inner sep=0.1mm,draw,font=\tiny\sffamily}]
\placev
\path[every node/.style={font=\sffamily}]
(1) edge(2) (1) edge(3) (1) edge(4) (1) edge(7) (2) edge(3) (2) edge(4) (2) edge(7) (3) edge(6) (3) edge(9) (4) edge(5) (4) edge(9) (5) edge(6) (5) edge(7) (5) edge(9) (6) edge(7) (6) edge(9) (7) edge(8) (8) edge(9) ;
\end{tikzpicture}
&
\begin{tikzpicture}
[scale=\sc, thick,main node/.style={circle, minimum size=3.8mm, inner sep=0.1mm,draw,font=\tiny\sffamily}]
\placev
\path[every node/.style={font=\sffamily}]
(1) edge(2) (1) edge(3) (1) edge(4) (1) edge(7) (2) edge(3) (2) edge(4) (2) edge(6) (2) edge(9) (3) edge(6) (3) edge(7) (4) edge(5) (4) edge(9) (5) edge(6) (5) edge(9) (6) edge(9) (7) edge(8) (8) edge(9) ;
\end{tikzpicture}
&
\begin{tikzpicture}
[scale=\sc, thick,main node/.style={circle, minimum size=3.8mm, inner sep=0.1mm,draw,font=\tiny\sffamily}]
\placev
\path[every node/.style={font=\sffamily}]
(1) edge(2) (1) edge(3) (1) edge(4) (1) edge(9) (2) edge(3) (2) edge(4) (2) edge(6) (2) edge(7) (3) edge(6) (3) edge(7) (4) edge(5) (4) edge(7) (4) edge(9) (5) edge(6) (5) edge(9) (6) edge(9) (7) edge(8) (8) edge(9) ;
\end{tikzpicture}
&
\begin{tikzpicture}
[scale=\sc, thick,main node/.style={circle, minimum size=3.8mm, inner sep=0.1mm,draw,font=\tiny\sffamily}]
\placev
\path[every node/.style={font=\sffamily}]
(1) edge(2) (1) edge(3) (1) edge(4) (1) edge(9) (2) edge(3) (2) edge(4) (2) edge(6) (2) edge(7) (3) edge(6) (3) edge(7) (3) edge(9) (4) edge(5) (4) edge(7) (5) edge(6) (5) edge(9) (6) edge(7) (6) edge(9) (7) edge(8) (8) edge(9) ;
\end{tikzpicture}

\\
$G_{\ref{fig3minimal},5}$ & $G_{\ref{fig3minimal},6}$ & $G_{\ref{fig3minimal},7}$ & $G_{\ref{fig3minimal},8}$
\end{tabular}
\caption{Illustrating the eight edge-maximal 3-minimal graphs.}\label{fig3minimal}

\end{figure}

\begin{proposition}\label{prop3minimal1}
Each of the eight graphs in Figure~\ref{fig3minimal} is 3-minimal.
\end{proposition}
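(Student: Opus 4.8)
The plan is to prove $r_+(G) = 3$ for each of the eight graphs $G = G_{\ref{fig3minimal},k}$ by separating the two bounds. The upper bound is immediate: each graph has $|V(G)| = 9$, so Theorem~\ref{thmLiptakT031} gives $r_+(G) \le \lfloor 9/3 \rfloor = 3$. All the work is therefore in the matching lower bound $r_+(G) \ge 3$, which I would establish through the $\LS_+^2$ certificate-package machinery, invoking Proposition~\ref{prop:LS_+^2-certificate}.

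First I would fix, for each graph, a full-support facet $a^{\top}x \le \b$ of $\STAB(G)$ to act as the witness. Since an $\ell$-minimal graph must admit a full-support facet (as noted after Lemma~\ref{lemfacet2}), such a facet exists; and because each graph has $\a(G) = 3$ (the induced $2$-minimal subgraph on $\set{1,\dots,6}$ already contains a stable set of size $3$, and a direct check shows no stable set of size $4$ appears), the natural candidate is the rank inequality $\bar{e}^{\top}x \le 3$, or a weighting of it in the spirit of the inequality $(2,1,1,1,1,1,1)^{\top}x \le 3$ used in Proposition~\ref{prop7claw1}. With the facet fixed, the goal is to exhibit an $\LS_+^2$ certificate package $(Y, \M_1, \text{and } UVW\text{-certificates})$ for $G$ satisfying $\left(-\b, a^{\top}\right)Ye_0 > 0$; by Proposition~\ref{prop:LS_+^2-certificate} this is exactly equivalent to $r_+(G) \ge 3$ and so completes the proof.

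Constructing each package proceeds in three steps. First, I would solve the semidefinite program $\max\set{ a^{\top}x : x \in \LS_+^2(G)}$ numerically to locate a candidate point $\bar{x}$ with $a^{\top}\bar{x} > \b$ together with its associated moment matrices. Second, I would round these to the integer matrices $Y$ and $\M_1 = \set{ Y_{e_i}, Y_{f_i} : i \in [n]}$ while exactly preserving every defining condition: symmetry and $Me_0 = \diag(M)$; the cone memberships $Me_i, Mf_i \in \cone(\FRAC(G))$ for $M \in \M_1$; and the domination relations, namely that $Y_{e_i}e_0$ dominates $Ye_i$ and $Y_{f_i}e_0$ dominates $Yf_i$ for each $i$, in the sense of Lemma~\ref{lemConeDominance}. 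Third, I would supply a $UVW$-certificate for each of the $1 + 2n$ matrices, which Lemma~\ref{lem:UVW} guarantees is possible once the matrix is genuinely positive semidefinite. Every resulting check is then a rational linear (in)equality or an integer PSD decomposition, so verification reduces entirely to whole-number arithmetic; I would record the explicit packages in the data repository~\cite{AuT25data} rather than print all $4(1 + 2 \cdot 9) = 76$ matrices of each package inline.

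The hard part will be the integer rounding of the second and third steps: clearing denominators perturbs the numerical moment matrices, and this perturbation must not destroy positive semidefiniteness or push any $Me_i, Mf_i$ outside $\cone(\FRAC(G))$. This is where the domination device becomes essential—because the last block of conditions only requires $Y_{e_i}e_0$ to \emph{dominate} $Ye_i$ rather than forcing $Ye_i$ itself into the cone, the inequalities carry slack that can absorb rounding error, and the genuine cone memberships are confined to the cleaner matrices of $\M_1$. Finally, I would exploit the shared skeleton of the eight graphs—vertices $\set{1,\dots,6}$ inducing a fixed $2$-minimal graph and $\deg(8) = 2$—to take the witnessing point and facet in a nearly uniform form across the eight cases; and since these graphs are edge-maximal, each witnessing facet remains valid for $\STAB(H)$ of every edge subgraph $H$ with $\a(H) = 3$, so Lemma~\ref{lem05subgraph} propagates the bound $r_+(H) \ge 3$ to those subgraphs, which is the mechanism feeding the larger count in Theorem~\ref{thm3minimal2}.
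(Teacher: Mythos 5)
Your proposal matches the paper's proof: the upper bound comes from Theorem~\ref{thmLiptakT031} since $|V(G)|=9$, and the lower bound is certified via $\LS_+^2$ certificate packages stored in~\cite{AuT25data} and invoked through Proposition~\ref{prop:LS_+^2-certificate}, with the paper using $\bar{e}^{\top}x \leq 3$ for $G_{\ref{fig3minimal},1},G_{\ref{fig3minimal},2},G_{\ref{fig3minimal},3}$ and $(1,1,1,1,1,1,1,1,2)^{\top}x \leq 3$ for the remaining five, exactly the kind of weighted rank inequality you anticipated. Your additional detail on numerically solving the SDP and rounding to integer data is a faithful elaboration of how such packages are produced, not a different route.
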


\begin{proof}
First, for $i \in \set{1,2,3}$, $\bar{e}^{\top}x \leq 3$ is valid for $\STAB(G_{\ref{fig3minimal},i})$, and for each of these graphs we provide an $\LS_+^2$ certificate package~\cite{AuT25data} for a point that violates this inequality, thus by Proposition~\ref{prop:LS_+^2-certificate}, $r_+(G_{\ref{fig3minimal},i}) \geq 3$.

Similarly, for $i \in \set{4,5,6,7,8}$, $(1,1,1,1,1,1,1,1,2)^{\top}x \leq 3$ is valid for $\STAB(G_{\ref{fig3minimal},i})$, and we also provide $\LS_+^2$ certificate packages for points violating this inequality.

Since $|V(G)|= 9$ readily implies $r_+(G) \leq 3$ (by Theorem~\ref{thmLiptakT031}), we have $r_+(G) = 3$ in all eight cases.
\end{proof}

Next, it follows from Lemma~\ref{lem05subgraph} that if the inequality $a^{\top}x \leq \b$ has $\LS_+$-rank $3$ for $\STAB(G)$ and is also valid for $\STAB(H)$ where $H$ is an edge subgraph of $G$, then $r_+(H) \geq 3$. Thus, many edge subgraphs of $G_{\ref{fig3minimal},1}, \ldots, G_{\ref{fig3minimal},8}$ are also 3-minimal. After removing isomorphic graphs, this results in a list of 49 3-minimal graphs, which we list in Figure~\ref{fig3minimalall}. (Note that, to reduce cluttering and with every vertex having a single-digit vertex label, we use $ij$ to denote the edge $\set{i,j}$.) 

\def\sc{1.2}
\def\y{0.7}
\def\z{360/ (5+4*\y)}
\def\x{270}
\def\placev{
\node[main node] at ({ \x+(-1 + 0*\y) *\z} : 1) (1) {};
\node[main node] at ({ \x+(0 + 0*\y) *\z} : 1) (2) {};
\node[main node] at ({ \x+(1 + 0*\y) *\z} : 1) (3) {};
\node[main node] at ({ \x+(2 + 0*\y) *\z} : 1) (4) {};
\node[main node] at ({ \x+(2 + 1*\y) *\z} : 1) (5) {};
\node[main node] at ({ \x+(2 + 2*\y) *\z} : 1) (6) {};
\node[main node] at ({ \x+(3 + 2*\y) *\z} : 1) (7) {};
\node[main node] at ({ \x+(3 + 3*\y) *\z} : 1) (8) {};
\node[main node] at ({ \x+(3 + 4*\y) *\z} : 1) (9) {};
}

\begin{figure}[htbp]
\centering
\scalebox{0.75}{
\tiny{
\begin{tabular}{ccccccc}
\begin{tikzpicture} 
[scale=\sc, thick,main node/.style={circle, minimum size=1.5mm, fill=black, inner sep=0.1mm,draw,font=\tiny\sffamily}] \placev \path[every node/.style={font=\sffamily}]
(1) edge(2) (1) edge(3) (1) edge(4) (1) edge(9) (2) edge(3) (2) edge(4) (2) edge(7) (3) edge(6) (3) edge(7) (3) edge(9) (4) edge(5) (4) edge(7) (4) edge(9) (5) edge(6) (6) edge(7) (6) edge(9) (7) edge(8) (8) edge(9) ;
\end{tikzpicture}
&
\begin{tikzpicture} 
[scale=\sc, thick,main node/.style={circle, minimum size=1.5mm, fill=black, inner sep=0.1mm,draw,font=\tiny\sffamily}] \placev \path[every node/.style={font=\sffamily}]
(1) edge(2) (1) edge(3) (1) edge(4) (1) edge(9) (2) edge(3) (2) edge(4) (2) edge(7) (3) edge(6) (3) edge(9) (4) edge(5) (4) edge(7) (4) edge(9) (5) edge(6) (6) edge(7) (6) edge(9) (7) edge(8) (8) edge(9) ;
\end{tikzpicture}
&
\begin{tikzpicture} 
[scale=\sc, thick,main node/.style={circle, minimum size=1.5mm, fill=black, inner sep=0.1mm,draw,font=\tiny\sffamily}] \placev \path[every node/.style={font=\sffamily}]
(1) edge(2) (1) edge(3) (1) edge(4) (1) edge(9) (2) edge(3) (2) edge(4) (2) edge(7) (3) edge(6) (3) edge(7) (3) edge(9) (4) edge(5) (4) edge(9) (5) edge(6) (6) edge(7) (6) edge(9) (7) edge(8) (8) edge(9) ;
\end{tikzpicture}
&
\begin{tikzpicture} 
[scale=\sc, thick,main node/.style={circle, minimum size=1.5mm, fill=black, inner sep=0.1mm,draw,font=\tiny\sffamily}] \placev \path[every node/.style={font=\sffamily}]
(1) edge(2) (1) edge(3) (1) edge(4) (1) edge(9) (2) edge(3) (2) edge(4) (2) edge(7) (3) edge(6) (3) edge(7) (3) edge(9) (4) edge(5) (4) edge(7) (4) edge(9) (5) edge(6) (6) edge(9) (7) edge(8) (8) edge(9) ;
\end{tikzpicture}
&
\begin{tikzpicture} 
[scale=\sc, thick,main node/.style={circle, minimum size=1.5mm, fill=black, inner sep=0.1mm,draw,font=\tiny\sffamily}] \placev \path[every node/.style={font=\sffamily}]
(1) edge(2) (1) edge(3) (1) edge(4) (1) edge(9) (2) edge(3) (2) edge(4) (2) edge(7) (3) edge(6) (3) edge(9) (4) edge(5) (4) edge(9) (5) edge(6) (6) edge(7) (6) edge(9) (7) edge(8) (8) edge(9) ;
\end{tikzpicture}
&
\begin{tikzpicture} 
[scale=\sc, thick,main node/.style={circle, minimum size=1.5mm, fill=black, inner sep=0.1mm,draw,font=\tiny\sffamily}] \placev \path[every node/.style={font=\sffamily}]
(1) edge(2) (1) edge(3) (1) edge(4) (1) edge(9) (2) edge(3) (2) edge(4) (2) edge(7) (3) edge(6) (3) edge(9) (4) edge(5) (4) edge(7) (4) edge(9) (5) edge(6) (6) edge(9) (7) edge(8) (8) edge(9) ;
\end{tikzpicture}
&
\begin{tikzpicture} 
[scale=\sc, thick,main node/.style={circle, minimum size=1.5mm, fill=black, inner sep=0.1mm,draw,font=\tiny\sffamily}] \placev \path[every node/.style={font=\sffamily}]
(1) edge(2) (1) edge(3) (1) edge(4) (1) edge(9) (2) edge(3) (2) edge(4) (2) edge(7) (3) edge(6) (3) edge(7) (4) edge(5) (4) edge(9) (5) edge(6) (6) edge(7) (6) edge(9) (7) edge(8) (8) edge(9) ;
\end{tikzpicture}
\\

$G_{\ref{fig3minimal},1}$&
$G_{\ref{fig3minimal},1} - \set{37} $&
$G_{\ref{fig3minimal},1} - \set{47}$&
$G_{\ref{fig3minimal},1} -\set{67} $&
$G_{\ref{fig3minimal},1} - \set{37,47} $&
$G_{\ref{fig3minimal},1} - \set{37,67}$&
$G_{\ref{fig3minimal},1} - \set{39,47}$\\
\\
\begin{tikzpicture} 
[scale=\sc, thick,main node/.style={circle, minimum size=1.5mm, fill=black, inner sep=0.1mm,draw,font=\tiny\sffamily}] \placev \path[every node/.style={font=\sffamily}]
(1) edge(2) (1) edge(3) (1) edge(4) (1) edge(9) (2) edge(3) (2) edge(4) (2) edge(7) (3) edge(6) (3) edge(7) (3) edge(9) (4) edge(5) (5) edge(6) (6) edge(7) (6) edge(9) (7) edge(8) (8) edge(9) ;
\end{tikzpicture}
&
\begin{tikzpicture} 
[scale=\sc, thick,main node/.style={circle, minimum size=1.5mm, fill=black, inner sep=0.1mm,draw,font=\tiny\sffamily}] \placev \path[every node/.style={font=\sffamily}]
(1) edge(2) (1) edge(3) (1) edge(4) (1) edge(9) (2) edge(3) (2) edge(4) (2) edge(7) (3) edge(6) (3) edge(7) (3) edge(9) (4) edge(5) (4) edge(9) (5) edge(6) (6) edge(7)  (7) edge(8) (8) edge(9) ;
\end{tikzpicture}
&
\begin{tikzpicture} 
[scale=\sc, thick,main node/.style={circle, minimum size=1.5mm, fill=black, inner sep=0.1mm,draw,font=\tiny\sffamily}] \placev \path[every node/.style={font=\sffamily}]
(1) edge(2) (1) edge(3) (1) edge(4) (1) edge(9) (2) edge(3) (2) edge(4) (2) edge(7) (3) edge(6)(3) edge(9) (4) edge(5) (4) edge(9) (5) edge(6) (6) edge(7) (7) edge(8) (8) edge(9) ;
\end{tikzpicture}
&
\begin{tikzpicture} 
[scale=\sc, thick,main node/.style={circle, minimum size=1.5mm, fill=black, inner sep=0.1mm,draw,font=\tiny\sffamily}] \placev \path[every node/.style={font=\sffamily}]
(1) edge(2) (1) edge(3) (1) edge(4) (1) edge(9) (2) edge(3) (2) edge(4) (2) edge(7) (3) edge(6) (3) edge(7) (4) edge(5)  (4) edge(9) (5) edge(6) (6) edge(7)  (7) edge(8) (8) edge(9) ;
\end{tikzpicture}
&
\begin{tikzpicture}  
[scale=\sc, thick,main node/.style={circle, minimum size=1.5mm, fill=black, inner sep=0.1mm,draw,font=\tiny\sffamily}] \placev \path[every node/.style={font=\sffamily}]
(1) edge(2) (1) edge(3) (1) edge(4) (1) edge(9) (2) edge(3) (2) edge(4) (2) edge(6) (2) edge(7) (2) edge(9) (3) edge(6) (3) edge(7) (4) edge(5) (5) edge(6) (6) edge(9) (7) edge(8) (8) edge(9) ;
\end{tikzpicture}
&
\begin{tikzpicture} 
[scale=\sc, thick,main node/.style={circle, minimum size=1.5mm, fill=black, inner sep=0.1mm,draw,font=\tiny\sffamily}] \placev \path[every node/.style={font=\sffamily}]
(1) edge(2) (1) edge(3) (1) edge(4) (1) edge(9) (2) edge(3) (2) edge(4) (2) edge(6) (2) edge(7) (3) edge(6) (3) edge(7) (3) edge(9) (4) edge(5) (4) edge(9) (5) edge(6) (6) edge(9) (7) edge(8) (8) edge(9) ;
\end{tikzpicture}
&
\begin{tikzpicture} 
[scale=\sc, thick,main node/.style={circle, minimum size=1.5mm, fill=black, inner sep=0.1mm,draw,font=\tiny\sffamily}] \placev \path[every node/.style={font=\sffamily}]
(1) edge(2) (1) edge(3) (1) edge(4) (1) edge(9) (2) edge(3)  (2) edge(6) (2) edge(7) (3) edge(6) (3) edge(7) (3) edge(9) (4) edge(5) (4) edge(9) (5) edge(6) (6) edge(9) (7) edge(8) (8) edge(9) ;
\end{tikzpicture}
\\

$G_{\ref{fig3minimal},1} - \set{47,49}$&
$G_{\ref{fig3minimal},1} - \set{47,69}$&
$G_{\ref{fig3minimal},1} - \set{37,47,69}$&
$G_{\ref{fig3minimal},1} - \set{39,47,69}$&
$G_{\ref{fig3minimal},2}$&
$G_{\ref{fig3minimal},3}$&
$G_{\ref{fig3minimal},3} - \set{24}$\\
\\
\begin{tikzpicture} 
[scale=\sc, thick,main node/.style={circle, minimum size=1.5mm, fill=black, inner sep=0.1mm,draw,font=\tiny\sffamily}] \placev \path[every node/.style={font=\sffamily}]
(1) edge(2) (1) edge(3) (1) edge(4) (1) edge(9) (2) edge(3) (2) edge(4) (2) edge(7) (3) edge(6) (3) edge(7) (3) edge(9) (4) edge(5) (4) edge(9) (5) edge(6) (6) edge(9) (7) edge(8) (8) edge(9) ;
\end{tikzpicture}
&
\begin{tikzpicture} 
[scale=\sc, thick,main node/.style={circle, minimum size=1.5mm, fill=black, inner sep=0.1mm,draw,font=\tiny\sffamily}] \placev \path[every node/.style={font=\sffamily}]
(1) edge(2) (1) edge(3) (1) edge(4) (1) edge(9) (2) edge(3) (2) edge(4) (2) edge(6) (2) edge(7) (3) edge(6) (3) edge(9) (4) edge(5) (4) edge(9) (5) edge(6) (6) edge(9) (7) edge(8) (8) edge(9) ;
\end{tikzpicture}
&
\begin{tikzpicture} 
[scale=\sc, thick,main node/.style={circle, minimum size=1.5mm, fill=black, inner sep=0.1mm,draw,font=\tiny\sffamily}] \placev \path[every node/.style={font=\sffamily}]
(1) edge(2) (1) edge(3) (1) edge(4) (1) edge(9) (2) edge(3) (2) edge(4) (2) edge(6) (2) edge(7) (3) edge(6) (3) edge(7) (4) edge(5) (4) edge(9) (5) edge(6) (6) edge(9) (7) edge(8) (8) edge(9) ;
\end{tikzpicture}
&
\begin{tikzpicture} 
[scale=\sc, thick,main node/.style={circle, minimum size=1.5mm, fill=black, inner sep=0.1mm,draw,font=\tiny\sffamily}] \placev \path[every node/.style={font=\sffamily}]
(1) edge(2) (1) edge(3) (1) edge(4) (1) edge(9) (2) edge(3) (2) edge(4) (2) edge(6) (2) edge(7) (3) edge(6) (3) edge(7) (3) edge(9) (4) edge(5) (5) edge(6) (6) edge(9) (7) edge(8) (8) edge(9) ;
\end{tikzpicture}
&
\begin{tikzpicture} 
[scale=\sc, thick,main node/.style={circle, minimum size=1.5mm, fill=black, inner sep=0.1mm,draw,font=\tiny\sffamily}] \placev \path[every node/.style={font=\sffamily}]
(1) edge(2) (1) edge(3) (1) edge(4) (1) edge(9) (2) edge(3) (2) edge(4) (2) edge(6) (2) edge(7) (3) edge(6) (3) edge(7) (3) edge(9) (4) edge(5) (4) edge(9) (5) edge(6) (7) edge(8) (8) edge(9) ;
\end{tikzpicture}
&
\begin{tikzpicture} 
[scale=\sc, thick,main node/.style={circle, minimum size=1.5mm, fill=black, inner sep=0.1mm,draw,font=\tiny\sffamily}] \placev \path[every node/.style={font=\sffamily}]
(1) edge(2) (1) edge(3) (1) edge(4) (1) edge(9) (2) edge(3) (2) edge(6) (2) edge(7) (3) edge(6) (3) edge(7) (4) edge(5) (4) edge(9) (5) edge(6) (6) edge(9) (7) edge(8) (8) edge(9) ;
\end{tikzpicture}
&
\begin{tikzpicture} 
[scale=\sc, thick,main node/.style={circle, minimum size=1.5mm, fill=black, inner sep=0.1mm,draw,font=\tiny\sffamily}] \placev \path[every node/.style={font=\sffamily}]
(1) edge(2) (1) edge(3) (1) edge(4) (1) edge(9) (2) edge(3) (2) edge(6) (2) edge(7) (3) edge(6) (3) edge(7) (3) edge(9) (4) edge(5) (5) edge(6) (6) edge(9) (7) edge(8) (8) edge(9) ;
\end{tikzpicture}
\\

$G_{\ref{fig3minimal},3} - \set{26}$&
$G_{\ref{fig3minimal},3} - \set{37}$&
$G_{\ref{fig3minimal},3} - \set{39}$&
$G_{\ref{fig3minimal},3} - \set{49}$&
$G_{\ref{fig3minimal},3} - \set{69}$&
$G_{\ref{fig3minimal},3} - \set{24,39}$&
$G_{\ref{fig3minimal},3} - \set{24,49}$\\
\\
\begin{tikzpicture} 
[scale=\sc, thick,main node/.style={circle, minimum size=1.5mm, fill=black, inner sep=0.1mm,draw,font=\tiny\sffamily}] \placev \path[every node/.style={font=\sffamily}]
(1) edge(2) (1) edge(3) (1) edge(4) (1) edge(9) (2) edge(3) (2) edge(6) (2) edge(7) (3) edge(6) (3) edge(7) (3) edge(9) (4) edge(5) (4) edge(9) (5) edge(6)  (7) edge(8) (8) edge(9) ;
\end{tikzpicture}
&
\begin{tikzpicture} 
[scale=\sc, thick,main node/.style={circle, minimum size=1.5mm, fill=black, inner sep=0.1mm,draw,font=\tiny\sffamily}] \placev \path[every node/.style={font=\sffamily}]
(1) edge(2) (1) edge(3) (1) edge(4) (1) edge(9) (2) edge(3) (2) edge(4) (2) edge(7) (3) edge(6) (3) edge(7) (4) edge(5) (4) edge(9) (5) edge(6) (6) edge(9) (7) edge(8) (8) edge(9) ;
\end{tikzpicture}
&
\begin{tikzpicture} 
[scale=\sc, thick,main node/.style={circle, minimum size=1.5mm, fill=black, inner sep=0.1mm,draw,font=\tiny\sffamily}] \placev \path[every node/.style={font=\sffamily}]
(1) edge(2) (1) edge(3) (1) edge(4) (1) edge(9) (2) edge(3) (2) edge(4) (2) edge(7) (3) edge(6) (3) edge(7) (3) edge(9) (4) edge(5) (5) edge(6) (6) edge(9) (7) edge(8) (8) edge(9) ;
\end{tikzpicture}
&
\begin{tikzpicture} 
[scale=\sc, thick,main node/.style={circle, minimum size=1.5mm, fill=black, inner sep=0.1mm,draw,font=\tiny\sffamily}] \placev \path[every node/.style={font=\sffamily}]
(1) edge(2) (1) edge(3) (1) edge(4) (1) edge(9) (2) edge(3) (2) edge(4) (2) edge(6) (2) edge(7) (3) edge(6) (3) edge(9) (4) edge(5) (5) edge(6) (6) edge(9) (7) edge(8) (8) edge(9) ;
\end{tikzpicture}
&
\begin{tikzpicture} 
[scale=\sc, thick,main node/.style={circle, minimum size=1.5mm, fill=black, inner sep=0.1mm,draw,font=\tiny\sffamily}] \placev \path[every node/.style={font=\sffamily}]
(1) edge(2) (1) edge(3) (1) edge(4) (1) edge(9) (2) edge(3) (2) edge(4) (2) edge(6) (2) edge(7) (3) edge(6) (3) edge(7) (4) edge(5) (5) edge(6) (6) edge(9) (7) edge(8) (8) edge(9) ;
\end{tikzpicture}
&
\begin{tikzpicture} 
[scale=\sc, thick,main node/.style={circle, minimum size=1.5mm, fill=black, inner sep=0.1mm,draw,font=\tiny\sffamily}] \placev \path[every node/.style={font=\sffamily}]
(1) edge(2) (1) edge(3) (1) edge(4) (1) edge(9) (2) edge(3) (2) edge(6) (2) edge(7) (3) edge(6) (3) edge(9) (4) edge(5)  (5) edge(6) (6) edge(9) (7) edge(8) (8) edge(9) ;
\end{tikzpicture}
&
\begin{tikzpicture} 
[scale=\sc, thick,main node/.style={circle, minimum size=1.5mm, fill=black, inner sep=0.1mm,draw,font=\tiny\sffamily}] \placev \path[every node/.style={font=\sffamily}]
(1) edge(2) (1) edge(3) (1) edge(4) (1) edge(9) (2) edge(3) (2) edge(6) (2) edge(7) (3) edge(6) (3) edge(7) (4) edge(5) (5) edge(6) (6) edge(9) (7) edge(8) (8) edge(9) ;
\end{tikzpicture}
\\

$G_{\ref{fig3minimal},3} -\set{24,69} $&
$G_{\ref{fig3minimal},3} - \set{26,39}$&
$G_{\ref{fig3minimal},3} - \set{26,49}$&
$G_{\ref{fig3minimal},3} - \set{37,49}$&
$G_{\ref{fig3minimal},3} - \set{39,49}$&
$G_{\ref{fig3minimal},3} - \set{24,37,49}$&
$G_{\ref{fig3minimal},3} - \set{24,39,49}$\\
\\
\begin{tikzpicture} 
[scale=\sc, thick,main node/.style={circle, minimum size=1.5mm, fill=black, inner sep=0.1mm,draw,font=\tiny\sffamily}] \placev \path[every node/.style={font=\sffamily}]
(1) edge(2) (1) edge(3) (1) edge(4) (1) edge(9) (2) edge(3)  (2) edge(6) (2) edge(7) (3) edge(6) (3) edge(7) (4) edge(5) (4) edge(9) (5) edge(6) (7) edge(8) (8) edge(9) ;
\end{tikzpicture}
&
\begin{tikzpicture} 
[scale=\sc, thick,main node/.style={circle, minimum size=1.5mm, fill=black, inner sep=0.1mm,draw,font=\tiny\sffamily}] \placev \path[every node/.style={font=\sffamily}]
(1) edge(2) (1) edge(3) (1) edge(4) (1) edge(9) (2) edge(3) (2) edge(4) (2) edge(7) (3) edge(6)(3) edge(9) (4) edge(5) (5) edge(6) (6) edge(9) (7) edge(8) (8) edge(9) ;
\end{tikzpicture}
&
\begin{tikzpicture} 
[scale=\sc, thick,main node/.style={circle, minimum size=1.5mm, fill=black, inner sep=0.1mm,draw,font=\tiny\sffamily}] \placev \path[every node/.style={font=\sffamily}]
(1) edge(2) (1) edge(3) (1) edge(4) (1) edge(9) (2) edge(3) (2) edge(4) (2) edge(7) (3) edge(6) (3) edge(7)  (4) edge(5) (5) edge(6) (6) edge(9) (7) edge(8) (8) edge(9) ;
\end{tikzpicture}
&
\begin{tikzpicture} 
[scale=\sc, thick,main node/.style={circle, minimum size=1.5mm, fill=black, inner sep=0.1mm,draw,font=\tiny\sffamily}] \placev \path[every node/.style={font=\sffamily}]
(1) edge(2) (1) edge(3) (1) edge(4) (1) edge(9) (2) edge(3) (2) edge(4) (2) edge(6) (2) edge(7) (3) edge(6) (3) edge(7) (3) edge(9) (4) edge(5) (4) edge(7) (5) edge(6) (5) edge(7) (5) edge(9) (6) edge(9) (7) edge(8) (8) edge(9) ;
\end{tikzpicture}
&
\begin{tikzpicture} 
[scale=\sc, thick,main node/.style={circle, minimum size=1.5mm, fill=black, inner sep=0.1mm,draw,font=\tiny\sffamily}] \placev \path[every node/.style={font=\sffamily}]
(1) edge(2) (1) edge(3) (1) edge(4) (1) edge(9) (2) edge(3) (2) edge(4)  (2) edge(7) (3) edge(6) (3) edge(7) (3) edge(9) (4) edge(5) (4) edge(7) (5) edge(6) (5) edge(7) (5) edge(9) (6) edge(9) (7) edge(8) (8) edge(9) ;
\end{tikzpicture}
&
\begin{tikzpicture} 
[scale=\sc, thick,main node/.style={circle, minimum size=1.5mm, fill=black, inner sep=0.1mm,draw,font=\tiny\sffamily}] \placev \path[every node/.style={font=\sffamily}]
(1) edge(2) (1) edge(3) (1) edge(4) (1) edge(9) (2) edge(3) (2) edge(4) (2) edge(6) (2) edge(7) (3) edge(6) (3) edge(9) (4) edge(5) (4) edge(7) (5) edge(6) (5) edge(7) (5) edge(9) (6) edge(9) (7) edge(8) (8) edge(9) ;
\end{tikzpicture}
&
\begin{tikzpicture} 
[scale=\sc, thick,main node/.style={circle, minimum size=1.5mm, fill=black, inner sep=0.1mm,draw,font=\tiny\sffamily}] \placev \path[every node/.style={font=\sffamily}]
(1) edge(2) (1) edge(3) (1) edge(4) (1) edge(9) (2) edge(3) (2) edge(4) (2) edge(7) (3) edge(6) (3) edge(9) (4) edge(5) (4) edge(7) (5) edge(6) (5) edge(7) (5) edge(9) (6) edge(9) (7) edge(8) (8) edge(9) ;
\end{tikzpicture}
\\

$G_{\ref{fig3minimal},3} - \set{24,39,69}$&
$G_{\ref{fig3minimal},3} - \set{26,37,49}$&
$G_{\ref{fig3minimal},3} - \set{26,39,49}$&
$G_{\ref{fig3minimal},4}$ &
$G_{\ref{fig3minimal},4} - \set{26}$ &
$G_{\ref{fig3minimal},4} -\set{37} $ &
$G_{\ref{fig3minimal},4} - \set{26,37}$ \\
\\
\begin{tikzpicture} 
[scale=\sc, thick,main node/.style={circle, minimum size=1.5mm, fill=black, inner sep=0.1mm,draw,font=\tiny\sffamily}] \placev \path[every node/.style={font=\sffamily}]
(1) edge(2) (1) edge(3) (1) edge(4) (1) edge(7) (2) edge(3) (2) edge(4) (2) edge(7) (3) edge(6) (3) edge(9) (4) edge(5) (4) edge(9) (5) edge(6) (5) edge(7) (5) edge(9) (6) edge(7) (6) edge(9) (7) edge(8) (8) edge(9) ;
\end{tikzpicture}
&
\begin{tikzpicture} 
[scale=\sc, thick,main node/.style={circle, minimum size=1.5mm, fill=black, inner sep=0.1mm,draw,font=\tiny\sffamily}]  \placev \path[every node/.style={font=\sffamily}]
(1) edge(2) (1) edge(3) (1) edge(4) (1) edge(7) (2) edge(3) (2) edge(4) (2) edge(6) (2) edge(9) (3) edge(6) (3) edge(7) (4) edge(5) (4) edge(9) (5) edge(6) (5) edge(9) (6) edge(9) (7) edge(8) (8) edge(9) ;
\end{tikzpicture}
&
\begin{tikzpicture} 
[scale=\sc, thick,main node/.style={circle, minimum size=1.5mm, fill=black, inner sep=0.1mm,draw,font=\tiny\sffamily}]  \placev \path[every node/.style={font=\sffamily}]
(1) edge(2) (1) edge(3) (1) edge(4) (1) edge(7) (2) edge(3) (2) edge(4) (2) edge(9) (3) edge(6) (3) edge(7) (4) edge(5) (4) edge(9) (5) edge(6) (5) edge(9) (6) edge(9) (7) edge(8) (8) edge(9) ;
\end{tikzpicture}
&
\begin{tikzpicture} 
[scale=\sc, thick,main node/.style={circle, minimum size=1.5mm, fill=black, inner sep=0.1mm,draw,font=\tiny\sffamily}] \placev \path[every node/.style={font=\sffamily}]
(1) edge(2) (1) edge(3) (1) edge(4) (1) edge(9) (2) edge(3) (2) edge(4) (2) edge(6) (2) edge(7) (3) edge(6) (3) edge(7) (4) edge(5) (4) edge(7) (4) edge(9) (5) edge(6) (5) edge(9) (6) edge(9) (7) edge(8) (8) edge(9) ;
\end{tikzpicture}
&
\begin{tikzpicture} 
[scale=\sc, thick,main node/.style={circle, minimum size=1.5mm, fill=black, inner sep=0.1mm,draw,font=\tiny\sffamily}] \placev \path[every node/.style={font=\sffamily}]
(1) edge(2) (1) edge(3) (1) edge(4) (1) edge(9) (2) edge(3) (2) edge(6) (2) edge(7) (3) edge(6) (3) edge(7) (4) edge(5) (4) edge(7) (4) edge(9) (5) edge(6) (5) edge(9) (6) edge(9) (7) edge(8) (8) edge(9) ;
\end{tikzpicture}
&
\begin{tikzpicture} 
[scale=\sc, thick,main node/.style={circle, minimum size=1.5mm, fill=black, inner sep=0.1mm,draw,font=\tiny\sffamily}] \placev \path[every node/.style={font=\sffamily}]
(1) edge(2) (1) edge(3) (1) edge(4) (1) edge(9) (2) edge(3) (2) edge(4) (2) edge(7) (3) edge(6) (3) edge(7) (4) edge(5) (4) edge(7) (4) edge(9) (5) edge(6) (5) edge(9) (6) edge(9) (7) edge(8) (8) edge(9) ;
\end{tikzpicture}
&
\begin{tikzpicture} 
[scale=\sc, thick,main node/.style={circle, minimum size=1.5mm, fill=black, inner sep=0.1mm,draw,font=\tiny\sffamily}] \placev \path[every node/.style={font=\sffamily}]
(1) edge(2) (1) edge(3) (1) edge(4) (1) edge(9) (2) edge(3) (2) edge(4) (2) edge(6) (2) edge(7) (3) edge(6) (3) edge(7) (4) edge(5) (4) edge(9) (5) edge(6) (5) edge(9) (6) edge(9) (7) edge(8) (8) edge(9) ;
\end{tikzpicture}
\\

$G_{\ref{fig3minimal},5}$&
$G_{\ref{fig3minimal},6}$&
$G_{\ref{fig3minimal},6} - \set{26}$&
$G_{\ref{fig3minimal},7}$&
$G_{\ref{fig3minimal},7} - \set{24}$&
$G_{\ref{fig3minimal},7} - \set{26}$&
$G_{\ref{fig3minimal},7} - \set{47}$\\
\\
\begin{tikzpicture} 
[scale=\sc, thick,main node/.style={circle, minimum size=1.5mm, fill=black, inner sep=0.1mm,draw,font=\tiny\sffamily}] \placev \path[every node/.style={font=\sffamily}]
(1) edge(2) (1) edge(3) (1) edge(4) (1) edge(9) (2) edge(3) (2) edge(6) (2) edge(7) (3) edge(6) (3) edge(7) (4) edge(5) (4) edge(9) (5) edge(6) (5) edge(9) (6) edge(9) (7) edge(8) (8) edge(9) ;
\end{tikzpicture}
&
\begin{tikzpicture} 
[scale=\sc, thick,main node/.style={circle, minimum size=1.5mm, fill=black, inner sep=0.1mm,draw,font=\tiny\sffamily}] \placev \path[every node/.style={font=\sffamily}]
(1) edge(2) (1) edge(3) (1) edge(4) (1) edge(9) (2) edge(3) (2) edge(4) (2) edge(6) (2) edge(7) (3) edge(6) (3) edge(7) (3) edge(9) (4) edge(5) (4) edge(7) (5) edge(6) (5) edge(9) (6) edge(7) (6) edge(9) (7) edge(8) (8) edge(9) ;
\end{tikzpicture}&
\begin{tikzpicture} 
[scale=\sc, thick,main node/.style={circle, minimum size=1.5mm, fill=black, inner sep=0.1mm,draw,font=\tiny\sffamily}] \placev \path[every node/.style={font=\sffamily}]
(1) edge(2) (1) edge(3) (1) edge(4) (1) edge(9) (2) edge(3) (2) edge(4) (2) edge(7) (3) edge(6) (3) edge(7) (3) edge(9) (4) edge(5) (4) edge(7) (5) edge(6) (5) edge(9) (6) edge(7) (6) edge(9) (7) edge(8) (8) edge(9) ;
\end{tikzpicture}&
\begin{tikzpicture} 
[scale=\sc, thick,main node/.style={circle, minimum size=1.5mm, fill=black, inner sep=0.1mm,draw,font=\tiny\sffamily}] \placev \path[every node/.style={font=\sffamily}]
(1) edge(2) (1) edge(3) (1) edge(4) (1) edge(9) (2) edge(3) (2) edge(4) (2) edge(6) (2) edge(7) (3) edge(6) (3) edge(9) (4) edge(5) (4) edge(7) (5) edge(6) (5) edge(9) (6) edge(7) (6) edge(9) (7) edge(8) (8) edge(9) ;
\end{tikzpicture}&
\begin{tikzpicture} 
[scale=\sc, thick,main node/.style={circle, minimum size=1.5mm, fill=black, inner sep=0.1mm,draw,font=\tiny\sffamily}] \placev \path[every node/.style={font=\sffamily}]
(1) edge(2) (1) edge(3) (1) edge(4) (1) edge(9) (2) edge(3) (2) edge(4) (2) edge(6) (2) edge(7) (3) edge(6) (3) edge(7) (3) edge(9) (4) edge(5) (4) edge(7) (5) edge(6) (5) edge(9) (6) edge(9) (7) edge(8) (8) edge(9) ;
\end{tikzpicture}&
\begin{tikzpicture} 
[scale=\sc, thick,main node/.style={circle, minimum size=1.5mm, fill=black, inner sep=0.1mm,draw,font=\tiny\sffamily}] \placev \path[every node/.style={font=\sffamily}]
(1) edge(2) (1) edge(3) (1) edge(4) (1) edge(9) (2) edge(3) (2) edge(4) (2) edge(7) (3) edge(6) (3) edge(9) (4) edge(5) (4) edge(7) (5) edge(6) (5) edge(9) (6) edge(7) (6) edge(9) (7) edge(8) (8) edge(9) ;
\end{tikzpicture}&
 \begin{tikzpicture} 
[scale=\sc, thick,main node/.style={circle, minimum size=1.5mm, fill=black, inner sep=0.1mm,draw,font=\tiny\sffamily}] \placev \path[every node/.style={font=\sffamily}]
(1) edge(2) (1) edge(3) (1) edge(4) (1) edge(9) (2) edge(3) (2) edge(4) (2) edge(6) (2) edge(7) (3) edge(6) (3) edge(9) (4) edge(5) (4) edge(7) (5) edge(6) (5) edge(9) (6) edge(9) (7) edge(8) (8) edge(9) ;
\end{tikzpicture}
\\
$G_{\ref{fig3minimal},7} - \set{24,47}$&
$G_{\ref{fig3minimal},8}$ &
$G_{\ref{fig3minimal},8} - \set{26}$ &
$G_{\ref{fig3minimal},8} - \set{37}$ &
$G_{\ref{fig3minimal},8} - \set{67}$ &
$G_{\ref{fig3minimal},8} - \set{26,37}$ &
$G_{\ref{fig3minimal},8} - \set{37,67}$ \\
\end{tabular}
}}
\caption{The 49 non-isomorphic 3-minimal graphs identified in this work.}
\label{fig3minimalall}
\end{figure}

In particular, this proves the following.

\begin{theorem}\label{thm3minimal2}
There are at least 49 non-isomorphic 3-minimal graphs.
\end{theorem}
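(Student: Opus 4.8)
The plan is to use the eight edge-maximal graphs of Figure~\ref{fig3minimal} as ``anchors'' and then harvest further 3-minimal graphs from among their edge subgraphs via Lemma~\ref{lem05subgraph}. The starting point is Proposition~\ref{prop3minimal1}, which I would invoke directly: for each $i \in [8]$ it supplies a specific valid inequality $a_i^{\top}x \leq 3$ of $\STAB(G_{\ref{fig3minimal},i})$ (namely $\bar{e}^{\top}x \leq 3$ for $i \in \{1,2,3\}$ and $(1,1,1,1,1,1,1,1,2)^{\top}x \leq 3$ for $i \in \{4,\dots,8\}$) together with an $\LS_+^2$ certificate package exhibiting a point of $\LS_+^2(G_{\ref{fig3minimal},i})$ that violates it. By Proposition~\ref{prop:LS_+^2-certificate} this certifies that $a_i^{\top}x \leq 3$ has $\LS_+$-rank exactly $3$ for $\STAB(G_{\ref{fig3minimal},i})$; equivalently, it fails to be valid for $\LS_+^2(G_{\ref{fig3minimal},i})$.

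The key reduction step is then the following. For an edge subgraph $H$ of some anchor $G_{\ref{fig3minimal},i}$, Lemma~\ref{lem05subgraph}(ii) tells us that, since $a_i^{\top}x \leq 3$ is not valid for $\LS_+^2(G_{\ref{fig3minimal},i})$, it is also not valid for $\LS_+^2(H)$. Hence, provided $a_i^{\top}x \leq 3$ remains valid for $\STAB(H)$, we obtain $\LS_+^2(H) \neq \STAB(H)$ and therefore $r_+(H) \geq 3$. Since $|V(H)| = 9$, Theorem~\ref{thmLiptakT031} gives $r_+(H) \leq 3$, so $H$ is 3-minimal. Thus for each edge-deletion displayed in Figure~\ref{fig3minimalall} I would carry out one finite, purely combinatorial check: enumerate the stable sets of $H$ and confirm that the relevant inequality $a_i^{\top}x \leq 3$ still holds at every stable-set incidence vector. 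This is an exact integer computation with no SDP involved, and the edge-deletions recorded in the figure are precisely those that survive it.

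The remaining task is to confirm that the graphs collected in this way are genuinely distinct. Concretely, I would compute a canonical form for each of the eight anchors and each surviving edge subgraph, discard duplicates, and verify that exactly $49$ isomorphism classes remain. I expect the main obstacle to lie not in any single deep step but in the bookkeeping around this last stage: the eight anchors have overlapping edge-subgraph posets, so the same 3-minimal graph can arise as an edge subgraph of several anchors and via several distinct edge-deletion sequences, and one must ensure that the labelled deletions listed in Figure~\ref{fig3minimalall} really do yield pairwise non-isomorphic graphs after unlabelling. A secondary subtlety is applying the validity test of the previous paragraph with the correct inequality for each anchor, since deleting an edge can in principle destroy the facet that underlies the lower-bound argument for a particular subgraph. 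Once the count of $49$ non-isomorphic classes is confirmed, the theorem follows immediately.
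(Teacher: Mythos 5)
Your proposal is correct and follows essentially the same route as the paper: certify the eight anchors of Figure~\ref{fig3minimal} via Proposition~\ref{prop3minimal1} and the $\LS_+^2$ certificate packages, propagate the rank-$3$ inequality to edge subgraphs by Lemma~\ref{lem05subgraph} (checking it remains valid for the subgraph's stable set polytope), close with the upper bound from Theorem~\ref{thmLiptakT031}, and deduplicate up to isomorphism to reach the count of $49$. The stable-set validity check and canonical-form bookkeeping you spell out are exactly the implementation details the paper leaves implicit.
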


We remark that the 49 graphs described in Figure~\ref{fig3minimalall} contain (up to isomorphism) every previously known 3-minimal graph described in~\cite{EscalanteMN06, AuT24, AuT25}. Thus, for some of these graphs, we now have multiple independent proofs of their 3-minimality.

\subsection{Structural consequences of the new 3-minimal graphs}

Next, let us relate Theorem~\ref{thm3minimal2} to the existing findings about 3-minimal graphs. First, notice that $G_{\ref{fig3minimal},1}$, $G_{\ref{fig3minimal},2}$, $G_{\ref{fig3minimal},3}$ and their edge subgraphs described in Figure~\ref{fig3minimalall} all belong to $\K_{5,2}$. 13 of these graphs further belong to $\hat{\K}_{5,2}$ and are exactly those shown in Figure~\ref{fighatK52}. Our list also contains 18 3-minimal graphs which belong to $\K_{5,2} \setminus \hat{\K}_{5,2}$.

On the other hand, $G_{\ref{fig3minimal},4}, \ldots, G_{\ref{fig3minimal},8}$, as well as their edge subgraphs described in Figure~\ref{fig3minimalall}, do not belong to $\K_{5,2}$. (One way to see this is that every graph $\K_{n,d}$ has at least $d$ vertices of degree $2$.) Thus, these graphs provide the first known instances of $\ell$-minimal graphs which cannot be obtained by stretching the vertices of a clique. Also, with 19 edges, $G_{\ref{fig3minimal},4}$ and $G_{\ref{fig3minimal},5}$ are the densest known 3-minimal graphs yet. The sparsest possible 3-minimal graphs contain 14 edges~\cite[Proposition 28]{AuT24b}, which is attained by several graphs in $\hat{\K}_{5,2}$.

Now recall Theorem~\ref{thmAuT24b1}, which assures that every $\ell$-minimal graph can be obtained from an $(\ell-1)$-minimal graph by applying the following two graph operations:
\begin{itemize}
\item
\emph{1-Join}: Adding a new vertex and joining it to some (or all) vertices of the existing graph;
\item
\emph{2-stretch}: Applying a proper 2-stretching operation to one of the existing vertices of the graph.
\end{itemize}
Applying this observation iteratively, we obtain the following:

\begin{corollary}\label{corJoinStretch}
Let $G$ be an $\ell$-minimal graph for some positive integer $\ell$. Then there exists graphs $G_1, \ldots, G_{\ell}$ and $H_1, \ldots, H_{\ell-1}$ such that
\begin{itemize}
\item
$G_1 = K_3$ and $G_{\ell} = G$;
\item
for every $i \in [\ell-1]$, $H_i$ can be obtained from $G_i$ by adding a new vertex and joining it to some (or all) vertices of $G_i$;
\item
for every $i \in \set{2, \ldots, \ell}$, $G_i$ can be obtained from $H_{i-1}$ by a proper 2-stretching of a vertex.
\end{itemize}
\end{corollary}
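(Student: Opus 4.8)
The plan is to prove the statement by induction on $\ell$, using Theorem~\ref{thmAuT24b1} as the engine that steps the induction down by one. For the base case $\ell = 1$, recall that $K_3$ is the unique $1$-minimal graph, so $G = K_3$; I would simply set $G_1 = K_3$. Here the index set $[\ell-1]$ is empty, and so is $\set{2, \ldots, \ell}$, so there are no graphs $H_i$ to produce and no join or stretch relations to verify. Thus the base case holds vacuously.

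For the inductive step, I would assume the statement holds for every $(\ell-1)$-minimal graph and let $G$ be an $\ell$-minimal graph with $\ell \geq 2$. Applying Theorem~\ref{thmAuT24b1} to $G$ yields a graph $H$ and a vertex $i \in V(H)$ such that $G$ is obtained from $H$ by a proper $2$-stretching of $i$, and $H - i$ is $(\ell-1)$-minimal. Since $H - i$ is $(\ell-1)$-minimal, the induction hypothesis supplies graphs $G_1, \ldots, G_{\ell-1}$ and $H_1, \ldots, H_{\ell-2}$ with $G_1 = K_3$, $G_{\ell-1} = H - i$, and the required join and stretch relations over the ranges $[\ell-2]$ and $\set{2, \ldots, \ell-1}$. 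I would then extend this sequence by setting $G_{\ell} = G$ and $H_{\ell-1} = H$, and check that the two newly introduced relations are exactly the ones guaranteed by Theorem~\ref{thmAuT24b1}: the stretch relation for index $\ell$ (that $G_{\ell}$ arises from $H_{\ell-1}$ by a proper $2$-stretching) is condition (i) verbatim, while the join relation for index $\ell-1$ (that $H_{\ell-1}$ arises from $G_{\ell-1}$ by adding a vertex) is the observation that $H$ is recovered from $G_{\ell-1} = H - i$ by reinstating the vertex $i$ together with its incident edges in $H$.

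The only point requiring care -- and it is a mild one -- is verifying that re-inserting the deleted vertex $i$ into $H - i$ is a legitimate $1$-join in the sense defined before the corollary (adding a new vertex and joining it to some or all existing vertices). This holds because $H - i$ is by definition the induced subgraph $H[V(H) \setminus \set{i}]$, so $H$ is obtained from $H - i$ precisely by adding back $i$ and the edges of $H$ incident to $i$; the set of these neighbours is some subset of $V(H - i)$, which matches the $1$-join description exactly. Once this identification is made, the two new index values fill the gaps in the ranges, so that the join relation holds over $[\ell-2] \cup \set{\ell-1} = [\ell-1]$ and the stretch relation over $\set{2, \ldots, \ell-1} \cup \set{\ell} = \set{2, \ldots, \ell}$, completing the induction.
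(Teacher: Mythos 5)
Your proof is correct and matches the paper's intended argument exactly: the paper derives this corollary by ``applying [Theorem~\ref{thmAuT24b1}] iteratively,'' which is precisely your induction on $\ell$, with the base case $G_1 = K_3$ and the inductive step re-inserting the deleted vertex as the $1$-join and invoking condition (i) of the theorem for the $2$-stretch. Your extra care in checking that restoring $i$ to $H-i$ is a legitimate $1$-join is a sound formalization of what the paper leaves implicit.
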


Thus, every $\ell$-minimal graph can be constructed by starting with $K_3$ (the unique 1-minimal graph), and then applying 1-join and 2-stretch operations alternatively. For some $\ell$-minimal graphs, there is some flexibility in the sequencing of these 1-join and 2-stretch operations --- for example, an $\ell$-minimal graph in $\hat{\K}_{\ell+2, \ell-1}$ can be constructed by applying $\ell-1$ 1-join operations in a row to $K_3$ to obtain $K_{\ell+2}$, and then applying $\ell-1$ 2-stretch operations. However, the discovery of the 3-minimal graphs outside of $\K_{5,2}$ shows that, for some $\ell$-minimal graphs, there is no flexibility in the order of these operations. For instance, Figure~\ref{figJoinStretch3min} shows the unique sequence of 1-join and 2-stretch operations with which we can construct $G_{\ref{fig3minimal},4}$ from $K_3$.

\def\sc{1.4}
\def\y{0.7}
\def\z{360/ (5+4*\y)}
\def\x{270}

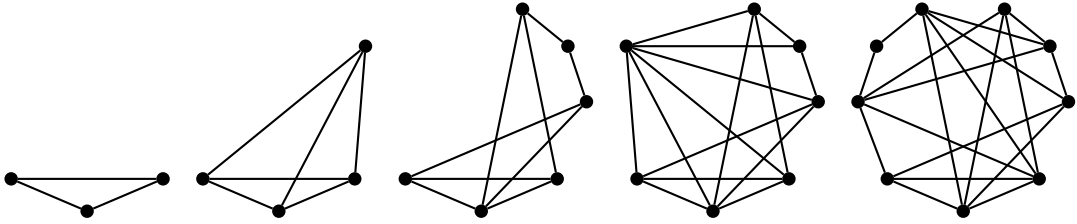
\begin{figure}[htbp]
\centering
\begin{tabular}{ccccc}

\begin{tikzpicture}[scale=\sc, thick,main node/.style={circle,  minimum size=1.5mm, fill=black, inner sep=0.1mm,draw,font=\tiny\sffamily}]
\node[main node] at ({ \x+(-1 + 0*\y) *\z} : 1) (1) {};
\node[main node] at ({ \x+(0 + 0*\y) *\z} : 1) (2) {};
\node[main node] at ({ \x+(1 + 0*\y) *\z} : 1) (3) {};

 \path (1) edge (2) (1) edge (3) (2) edge (3);
\end{tikzpicture}
&

\begin{tikzpicture}[scale=\sc, thick,main node/.style={circle,  minimum size=1.5mm, fill=black, inner sep=0.1mm,draw,font=\tiny\sffamily}]
\node[main node] at ({ \x+(-1 + 0*\y) *\z} : 1) (1) {};
\node[main node] at ({ \x+(0 + 0*\y) *\z} : 1) (2) {};
\node[main node] at ({ \x+(1 + 0*\y) *\z} : 1) (3) {};
\node[main node] at ({ \x+(2 + 1*\y) *\z} : 1) (40) {};

 \path (1) edge (2) (1) edge (3) (2) edge (3) (40) edge (1) (40) edge (2) (40) edge (3) ;
\end{tikzpicture}
&

\begin{tikzpicture}[scale=\sc, thick,main node/.style={circle,  minimum size=1.5mm, fill=black, inner sep=0.1mm,draw,font=\tiny\sffamily}]
\node[main node] at ({ \x+(-1 + 0*\y) *\z} : 1) (1) {};
\node[main node] at ({ \x+(0 + 0*\y) *\z} : 1) (2) {};
\node[main node] at ({ \x+(1 + 0*\y) *\z} : 1) (3) {};
\node[main node] at ({ \x+(2 + 0*\y) *\z} : 1) (41) {};
\node[main node] at ({ \x+(2 + 1*\y) *\z} : 1) (40) {};
\node[main node] at ({ \x+(2 + 2*\y) *\z} : 1) (42) {};

 \path (1) edge (2) (1) edge (3) (2) edge (3) (40) edge (41) (40) edge (42) (41) edge (1) (41) edge (2) (42) edge (2) (42) edge (3) ;
\end{tikzpicture}
&

\begin{tikzpicture}[scale=\sc, thick,main node/.style={circle,  minimum size=1.5mm, fill=black, inner sep=0.1mm,draw,font=\tiny\sffamily}]
\node[main node] at ({ \x+(-1 + 0*\y) *\z} : 1) (1) {};
\node[main node] at ({ \x+(0 + 0*\y) *\z} : 1) (2) {};
\node[main node] at ({ \x+(1 + 0*\y) *\z} : 1) (3) {};
\node[main node] at ({ \x+(2 + 0*\y) *\z} : 1) (41) {};
\node[main node] at ({ \x+(2 + 1*\y) *\z} : 1) (40) {};
\node[main node] at ({ \x+(2 + 2*\y) *\z} : 1) (42) {};
\node[main node] at ({ \x+(3 + 3*\y) *\z} : 1) (50) {};

 \path (1) edge (2) (1) edge (3) (2) edge (3) (40) edge (41) (40) edge (42) (41) edge (1) (41) edge (2) (42) edge (2) (42) edge (3) (50) edge (1) (50) edge (2) (50) edge (3) (50) edge (40) (50) edge (41) (50) edge (42);
\end{tikzpicture}
&

\begin{tikzpicture}[scale=\sc, thick,main node/.style={circle,  minimum size=1.5mm, fill=black, inner sep=0.1mm,draw,font=\tiny\sffamily}]
\node[main node] at ({ \x+(-1 + 0*\y) *\z} : 1) (1) {};
\node[main node] at ({ \x+(0 + 0*\y) *\z} : 1) (2) {};
\node[main node] at ({ \x+(1 + 0*\y) *\z} : 1) (3) {};
\node[main node] at ({ \x+(2 + 0*\y) *\z} : 1) (41) {};
\node[main node] at ({ \x+(2 + 1*\y) *\z} : 1) (40) {};
\node[main node] at ({ \x+(2 + 2*\y) *\z} : 1) (42) {};
\node[main node] at ({ \x+(3 + 2*\y) *\z} : 1) (51) {};
\node[main node] at ({ \x+(3 + 3*\y) *\z} : 1) (50) {};
\node[main node] at ({ \x+(3 + 4*\y) *\z} : 1) (52) {};

 \path (1) edge (2) (1) edge (3) (2) edge (3) (40) edge (41) (40) edge (42) (41) edge (1) (41) edge (2) (42) edge (2) (42) edge (3) (50) edge (51) (50) edge (52) (51) edge (2) (51) edge (3) (51) edge (41) (51) edge (40) (52) edge (1) (52) edge (3) (52) edge (40) (52) edge (42);
\end{tikzpicture}
\end{tabular}
\caption{Obtaining $G_{\ref{fig3minimal}, 4}$ from $K_3$ via 1-join and 2-stretch operations.}
\label{figJoinStretch3min}
\end{figure}

Also, observe that a common feature of all 3-minimal graphs shown in Figure~\ref{fig3minimalall} is that they all contain  $K_5$ as a graph minor. In particular, for each graph, contracting the edges $\set{4,5}$, $\set{5,6}$, $\set{7,8}$, and $\set{8,9}$ (and then removing parallel edges) would result in $K_5$. This shows that each of the 49 graphs in Figure~\ref{fig3minimalall} contains a stretched clique in $\K_{5,2}$ as an edge subgraph. As we shall see in the next section, this pattern no longer holds for 4-minimal graphs.

Next, we turn our attention to the clique number of a graph, which has been shown in~\cite{AuT25} to be relevant in determining whether a graph is $\ell$-minimal under some circumstances. First, we see that among the 49 3-minimal graphs shown in Figure~\ref{fig3minimalall}, 48 of them have $\omega(G) =3$, and one has $\omega(G) = 4$ ($G_{\ref{fig3minimal}, 8}$, with the vertices $\set{2,3,6,7}$ inducing a $K_4$). Thus, we see that $\omega(G) \leq 3$ is not a necessary condition for a graph to be $\ell$-minimal in general.

The situation seems to be more interesting if we restrict our discussion to stretched cliques. Before we go further, the following lemma will be helpful.

\begin{lemma}\label{lemSparseStretchedClique}
Given $G \in \K_{n,d}$ where $n \geq 3$ and $d \geq 0$, we have
\begin{itemize}
\item[(i)]
$|E(G)| \geq \frac{n(n-1)}{2} + 2d$.
\item[(ii)]
If $\omega(G) \geq 3$, then $G$ contains an edge subgraph $H \in \K_{n,d}$ where $|E(H)| = \frac{n(n-1)}{2} + 2d$ and $\omega(H) = \omega(G)$.
\end{itemize}
\end{lemma}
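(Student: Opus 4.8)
The plan is to separate the edges of any $G \in \K_{n,d}$ into two disjoint classes and count each. Call an edge \emph{internal} if it joins two vertices associated with the same original vertex of $K_n$, and \emph{cross} otherwise. For each stretched original $i \in D(G)$, the stretching creates exactly the two internal edges $\set{i_0,i_1}$ and $\set{i_0,i_2}$ (and never $\set{i_1,i_2}$); moreover $i_0$ acquires no further edges, since it is adjacent only to $i_1,i_2$ and therefore never belongs to the neighbourhood split by any later stretching. Unstretched originals contribute no internal edges, so there are exactly $2d$ internal edges in total. For the cross edges, the fact noted earlier guarantees at least one edge between the classes associated with any two distinct originals $i,j \in [n]$, giving at least $\frac{n(n-1)}{2}$ cross edges. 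As the two classes are disjoint, $|E(G)| \geq \frac{n(n-1)}{2} + 2d$, which is (i).

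For (ii) I would first pin down the cliques. Since $\Gamma_G(i_0) = \set{i_1,i_2}$ and $i_1 \not\sim i_2$, any clique containing some $i_0$ has size at most $2$; hence a maximum clique $Q$ with $|Q| = \omega(G) \geq 3$ uses no vertex $i_0$, and because $i_1 \not\sim i_2$ it contains at most one vertex from each original's class. Thus $Q$ picks one \emph{representative} from the class of each original in some $S \subseteq [n]$, and its edges are precisely the cross edges joining these representatives. The target is to build $H$ by keeping, for every pair of originals, exactly one cross edge — taken to be the edge of $Q$ whenever both originals lie in $S$, and an arbitrary existing cross edge otherwise — together with all $2d$ internal edges. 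By construction $E(H) \subseteq E(G)$ and $|E(H)| = \frac{n(n-1)}{2} + 2d$; furthermore $Q$ survives in $H$, so $\omega(H) \geq |Q| = \omega(G)$, while $\omega(H) \leq \omega(G)$ because $H$ is an edge subgraph of $G$.

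The crux is to certify that this single-edge-per-pair configuration genuinely lies in $\K_{n,d}$. I would prove more generally that \emph{any} assignment choosing, for each pair of originals, one endpoint in each of their classes is realizable. Fix any ordering $i^{(1)}, \ldots, i^{(d)}$ of $D(G)$ and stretch them in turn, each time using a \emph{disjoint} partition $A_1,A_2$ of the current neighbourhood (legal for $\K_{n,d}$, since $2$-stretching requires only $A_1 \cup A_2 = \Gamma$ and permits empty parts). The key invariant is that, at the moment $i^{(t)}$ is stretched, its neighbourhood contains exactly one current vertex for each other original: an already-processed original contributes the single representative selected when it was stretched (disjointness forced the corresponding edge onto exactly one of its two vertices), while a not-yet-processed original still contributes its single vertex, with the edge $\set{i^{(t)},j}$ untouched by earlier stretchings. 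Consequently I may route each such neighbour into $A_1$ or $A_2$ freely, fixing the $i^{(t)}$-side endpoint of every cross edge at $i^{(t)}$ to the value prescribed by the target.

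The main obstacle is precisely this realizability step. Because disjoint partitions keep every neighbourhood ``clean'' (one vertex per original) throughout the whole sequence, the two endpoints of each cross edge are fixed independently at the two distinct steps processing its two stretched originals — or at the single such step when only one endpoint is stretched, or not at all when neither is. Hence no conflict can arise and the prescribed configuration, including all edges of $Q$, is produced exactly, yielding the desired $H \in \K_{n,d}$. Once this is in hand, the edge count and the clique comparison from the previous paragraph finish (ii) immediately.
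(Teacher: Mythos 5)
Your proof is correct, and its skeleton matches the paper's: for (i) you count the $2d$ internal edges and the at least $\binom{n}{2}$ cross edges, and for (ii) you observe that a clique of size at least $3$ avoids every $i_0$ and meets each class at most once, then keep exactly one cross edge per pair of originals, chosen to be a clique edge whenever both classes meet the clique. The genuine difference lies in what you call the crux. The paper's proof of (ii) deletes the redundant cross edges and simply asserts that the result is ``the desired graph $H$''; it never verifies that the resulting one-edge-per-pair configuration is actually obtainable from $K_n$ by $d$ 2-stretchings, i.e., that $H \in \K_{n,d}$, even though that membership is part of the statement. Your realizability lemma supplies exactly this step: stretching the elements of $D(G)$ one at a time with disjoint partitions preserves the invariant that every current neighbourhood contains exactly one vertex per other class, so the endpoint of each cross edge on each stretched side can be prescribed freely and independently, in particular so as to reproduce precisely the chosen edges of $G$. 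This buys a self-contained verification of a claim the paper leaves implicit, and the verification is not entirely innocent: naively shrinking the $A_j$ sets within $G$'s own stretching sequence can violate the covering requirement $A_1 \cup A_2 = \Gamma(v)$ (a deleted neighbour may still lie in $\Gamma(v)$ if earlier stretchings are kept as they were), so the earlier stretchings must be re-chosen consistently, which is exactly what your induction does. One small wording caveat: as stated, your ``any assignment'' claim would allow a prescribed endpoint to be some $i_0$, which is never realizable since $i_0$ acquires no cross edges; the claim should be restricted to endpoints in $\set{i_1,i_2}$ for stretched originals, a restriction that holds automatically in your application because the prescribed edges are existing edges of $G$.
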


\begin{proof}
We first prove (i). Given distinct $i, j \in [n]$, it follows from the definition of the vertex-stretching operation that there exists at least one edge in $G$ which joins a vertex associated with $i$ and a vertex associated with $j$, giving a total of at least $\frac{n(n-1)}{2}$ distinct edges. Also, for every $i \in D(G)$, we have the edges $\set{i_0, i_1}$ and $\set{i_0,i_2}$, which yields an additional total of $2  |D(G)| = 2d$ edges. Thus, $|E(G)| \geq \frac{n(n-1)}{2} + 2d$.

For (ii), suppose $K \subseteq V(G)$ induces a clique of size at least three in $G$. Then observe that no two vertices in $K$ can be associated with the same index in $[n]$. Thus, if there are multiple edges joining vertices associated with distinct $i,j \in [n]$, we can remove all but one of them without deleting edges that join vertices in $K$. Doing this for all distinct $i,j \in [n]$ would yield the desired graph $H$, which is an edge subgraph of $G$ with $|E(H)| = \frac{n(n-1)}{2} + 2d$ and $\omega(H) = \omega(G)$ (since $K$ still induces a clique in $H$).
\end{proof}

Therefore, given integers $n, d$ where $n \geq 3$ and $d \geq 0$, we say that $G \in \K_{n,d}$ is a \emph{sparse stretched clique} if $|E(G)| = \frac{n(n-1)}{2} + 2d$. Observe that a sparse stretched clique necessarily belongs to $\hat{\K}_{n,d}$.

Next, recall Theorem~\ref{thmAuT251}, which states that given $G \in \hat{\K}_{5,2}$, $\omega(G) \leq 3$ is sufficient for $G$ to be 3-minimal. On the other hand, numerical evidence suggests that $\omega(G) \leq 3$ is not sufficient for $G \in \K_{5,2}$ to be 3-minimal. In Figure~\ref{figK52}, we list the seven graphs for which $\bar{e}^{\top}x \leq 3$ is the lone full-support facet, but CVX+SeDuMi computations indicate that $r_+(G) \lesssim 2$ in all seven cases.

\def\sc{0.85}
\def\y{0.7}
\def\z{360/ (5+4*\y)}
\def\x{270}
\def\placev{
\node[main node] at ({ \x+(-1 + 0*\y) *\z} : 1) (1) {};
\node[main node] at ({ \x+(0 + 0*\y) *\z} : 1) (2) {};
\node[main node] at ({ \x+(1 + 0*\y) *\z} : 1) (3) {};
\node[main node] at ({ \x+(2 + 0*\y) *\z} : 1) (41) {};
\node[main node] at ({ \x+(2 + 1*\y) *\z} : 1) (40) {};
\node[main node] at ({ \x+(2 + 2*\y) *\z} : 1) (42) {};
\node[main node] at ({ \x+(3 + 2*\y) *\z} : 1) (51) {};
\node[main node] at ({ \x+(3 + 3*\y) *\z} : 1) (50) {};
\node[main node] at ({ \x+(3 + 4*\y) *\z} : 1) (52) {};
}

\def\placecommone{  \path (1) edge (2) (2) edge (3) (3) edge (1) (40) edge (41) (40) edge (42) (50) edge (51) (50) edge (52);}

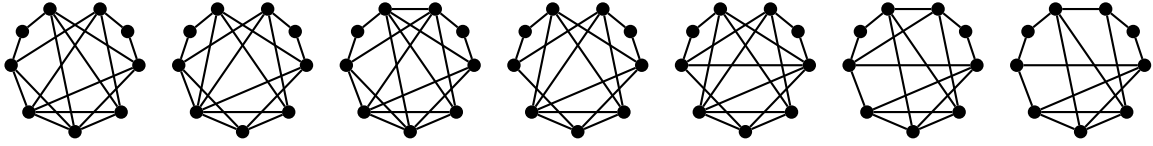
\begin{figure}[htbp]
\centering
\begin{tabular}{ccccccc}
\begin{tikzpicture}[scale=\sc, thick,main node/.style={circle,  minimum size=1.5mm, fill=black, inner sep=0.1mm,draw,font=\tiny\sffamily}] \placev \placecommone
 \path (1) edge (41) (1) edge (42) (2) edge (41) (3) edge (42)  (1) edge (52)   (2) edge (51) (2) edge (52) (3) edge (51) (41) edge (51) (42) edge (52);
\end{tikzpicture}
& 
\begin{tikzpicture}[scale=\sc, thick,main node/.style={circle,  minimum size=1.5mm, fill=black, inner sep=0.1mm,draw,font=\tiny\sffamily}] \placev \placecommone
 \path (1) edge (41) (1) edge (42) (2) edge (41) (3) edge (42)  (1) edge (51) (1) edge (52)  (2) edge (52) (3) edge (51) (41) edge (51) (42) edge (52);
\end{tikzpicture}
& 
\begin{tikzpicture}[scale=\sc, thick,main node/.style={circle,  minimum size=1.5mm, fill=black, inner sep=0.1mm,draw,font=\tiny\sffamily}] \placev \placecommone
 \path (1) edge (41) (1) edge (42) (2) edge (41) (3) edge (42)  (1) edge (52)   (2) edge (51) (2) edge (52) (3) edge (51) (41) edge (51) (42) edge (51) (42) edge (52);
\end{tikzpicture}
& 
\begin{tikzpicture}[scale=\sc, thick,main node/.style={circle,  minimum size=1.5mm, fill=black, inner sep=0.1mm,draw,font=\tiny\sffamily}] \placev \placecommone
 \path (1) edge (41) (1) edge (42) (2) edge (41) (3) edge (42)  (1) edge (51) (2) edge (52) (3) edge (51) (41) edge (51) (42) edge (52);
\end{tikzpicture}
& 
\begin{tikzpicture}[scale=\sc, thick,main node/.style={circle,  minimum size=1.5mm, fill=black, inner sep=0.1mm,draw,font=\tiny\sffamily}] \placev \placecommone
 \path (1) edge (41) (1) edge (42) (2) edge (41) (3) edge (42)  (1) edge (51) (2) edge (52) (3) edge (51) (41) edge (51) (41) edge (52) (42) edge (52);
\end{tikzpicture}
& 
 \begin{tikzpicture}[scale=\sc, thick,main node/.style={circle,  minimum size=1.5mm, fill=black, inner sep=0.1mm,draw,font=\tiny\sffamily}] \placev \placecommone
 \path (1) edge (41) (2) edge (41) (3) edge (42)  (1) edge (52)   (2) edge (51)  (3) edge (51) (41) edge (52) (42) edge (51)(42) edge (52);
\end{tikzpicture}
& 
 \begin{tikzpicture}[scale=\sc, thick,main node/.style={circle,  minimum size=1.5mm, fill=black, inner sep=0.1mm,draw,font=\tiny\sffamily}] \placev \placecommone
 \path (1) edge (41) (2) edge (41) (3) edge (42)  (1) edge (52)   (2) edge (51)  (3) edge (51) (41) edge (52) (42) edge (51);
\end{tikzpicture}

\end{tabular}
\caption{The seven graphs $G \in \K_{5,2}$ with $\omega(G) \leq 3$ and $r_+(G) \lesssim 2$.}
\label{figK52}
\end{figure}

Thus, given $G \in \K_{5,2}$, the inequality $\bar{e}^{\top}x \leq 3$ may seem to have $\LS_+$-rank 2 or 3 when $\omega(G)=3$. On the other hand, we show that this inequality cannot have $\LS_+$-rank 3 if $\omega(G) \geq 4$.

\begin{proposition}\label{propK52K4}
Let $G \in \K_{5,2}$. If $\omega(G) \geq 4$, then the inequality $\bar{e}^{\top}x \leq 3$ has $\LS_+$-rank at most 2.
\end{proposition}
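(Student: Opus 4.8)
The plan is to reduce to a single, structurally rigid representative graph and then invoke the vertex-deletion bound of Theorem~\ref{thmDeleteDestroy}(ii). Since $G \in \K_{5,2}$ with $\omega(G) \geq 4$, Lemma~\ref{lemSparseStretchedClique}(ii) supplies an edge subgraph $H \in \K_{5,2}$ that is a \emph{sparse} stretched clique (so $|E(H)| = \frac{5\cdot 4}{2} + 2\cdot 2 = 14$ and $H \in \hat{\K}_{5,2}$) with $\omega(H) = \omega(G) \geq 4$. As $H \in \K_{5,2}$ we have $\alpha(H) = 3$, so $\bar{e}^{\top}x \leq 3$ is valid for $\STAB(H)$; and since $H$ is an edge subgraph of $G$, Lemma~\ref{lem05subgraph}(i) tells us that it suffices to prove $\bar{e}^{\top}x \leq 3$ is valid for $\LS_+^2(H)$. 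In this way the dense, unstructured graph $G$ is replaced by the far more rigid $H$.

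For $H$ I would prove the stronger statement $r_+(H) \leq 2$. By Theorem~\ref{thmDeleteDestroy}(ii), $r_+(H) \leq r_+(H - v) + 1$ for every vertex $v$, so it is enough to exhibit a single vertex $v$ for which $H - v$ is perfect: perfect graphs have $\LS_+$-rank at most $1$, giving $r_+(H) \leq 2$ and hence $\LS_+^2(H) = \STAB(H)$, which contains $\bar{e}^{\top}x \leq 3$ among its valid inequalities. Combined with the previous paragraph, this yields $\bar{e}^{\top}x \leq 3$ valid for $\LS_+^2(G)$, i.e.\ $\LS_+$-rank at most $2$.

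It remains to produce the perfect-making vertex $v$, and this is where the real work lies. A sparse $H \in \hat{\K}_{5,2}$ has a transparent shape: writing $4,5$ for the two stretched original vertices, each becomes a cherry $i_1 - i_0 - i_2$, and each of the ten edges of $K_5$ contributes exactly one edge between the relevant representative classes. The only odd holes of such an $H$ are short $5$-cycles threading the cherries at $4_0$ and/or $5_0$. The hypothesis $\omega(H) \geq 4$ pins down how the neighbours of the stretched reps are distributed: either some representative is adjacent to a full triangle among the unstretched vertices (``type A''), or the single $4$--$5$ edge joins two reps sharing a common pair of unstretched neighbours (``type B''). In each configuration I would locate the vertex lying on every odd $5$-hole, delete it, and check directly that $H - v$ becomes chordal, hence perfect.

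The main obstacle is precisely this hole-hitting step. The $5$-holes can run through both cherries at once and may share only a single vertex, so the choice of $v$ is delicate and is \emph{not} uniform across the cases; for instance, deleting a vertex of the given $4$-clique can leave a surviving $5$-hole through the other cherry. Establishing that a common hitting vertex always exists, and that its removal eliminates all odd holes simultaneously, is a finite but case-sensitive verification driven by the sparse structure above, and it is the crux on which the whole argument turns. Should any case resist a single perfect deletion, the same reduction can instead be completed through the conditioning identity: choosing $v$ so that $\bar{e}^{\top}x \leq 2$ is valid for $\LS_+(H \ominus v)$ and $\bar{e}^{\top}x \leq 3$ is valid for $\LS_+(H - v)$, noting that $H \ominus v$ has at most five vertices (hence $\LS_+$-rank at most $1$) whenever $\deg(v) \geq 3$.
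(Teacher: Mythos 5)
Your opening reduction (Lemma~\ref{lemSparseStretchedClique}(ii) plus Lemma~\ref{lem05subgraph}(i), replacing $G$ by a sparse stretched clique $H$ with the same clique number) and your intended endgame (delete one vertex so that the rest is perfect, then apply Theorem~\ref{thmDeleteDestroy}(ii)) are exactly the paper's. The gap is that everything in between is missing. Your plan requires verifying that \emph{every} sparse stretched clique $H\in\hat{\K}_{5,2}$ with $\omega(H)\geq 4$ admits a perfect-making deletion, and you explicitly leave this ``finite but case-sensitive verification'' undone while calling it the crux on which the whole argument turns; a proof that defers its crux is not a proof. The paper never attempts your stronger claim ($r_+(H)\leq 2$) over this whole family. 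Instead it inserts a step you are missing: if $\bar{e}^{\top}x \leq 3$ is \emph{not} a facet of $\STAB(H)$, then (by the proof of \cite[Lemma 7]{AuT25}) it is a sum of facets with non-full support, each of $\LS_+$-rank at most $2$ by Theorem~\ref{thmLiptakT031}, so the inequality has rank at most $2$ outright. This facet dichotomy shrinks the case analysis to the graphs where $\bar{e}^{\top}x \leq 3$ \emph{is} a facet, and an exhaustive search shows there are exactly three of them (Figure~\ref{figSparsePath}), each dispatched by a single perfect deletion. Without it, your enumeration ranges over a substantially larger family, and you are proving more than the statement needs. Your fallback ``conditioning identity'' is likewise not a lemma available in the paper; it would require its own proof before use.

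Moreover, the structural premise guiding your case analysis is false: it is not true that the only odd holes of such an $H$ are $5$-cycles. Take unstretched vertices $1,2,3$, cherries $4_1$--$4_0$--$4_2$ and $5_1$--$5_0$--$5_2$, and cross edges $\{1,4_1\},\{2,4_1\},\{3,4_2\},\{1,5_1\},\{2,5_1\},\{3,5_2\},\{4_1,5_1\}$. This is a sparse stretched clique in $\hat{\K}_{5,2}$, the set $\{1,2,4_1,5_1\}$ induces $K_4$ (your ``type B''), and
\[
4_1 - 4_0 - 4_2 - 3 - 5_2 - 5_0 - 5_1 - 4_1
\]
is an induced $7$-cycle: the unique $4$--$5$ edge is $\{4_1,5_1\}$, and no chord exists because each pair of classes is joined by exactly one edge and the cherry centres have no neighbours outside their cherries. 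So a correct completion of your plan must locate and destroy $7$-holes as well as $5$-holes, which your type-A/type-B classification, as written, does not address. (In this particular example deleting vertex $3$ happens to work, but establishing that such a vertex always exists is precisely the graph-by-graph verification that the paper's three-graph exhaustive check accomplishes and your sketch does not.)
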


\begin{proof}
Given $G \in \K_{5,2}$ with $\omega(G) \geq 4$, we know from Lemma~\ref{lemSparseStretchedClique} that $G$ has an edge subgraph $H \in \K_{5,2}$ where $|E(H)| = 14$ and $\omega(H) = \omega(G)$. By Lemma~\ref{lem05subgraph}, it suffices to show that $\bar{e}^{\top}x \leq 3$ has $\LS_+$-rank at most $2$ for $\STAB(H)$.

Next, let us focus on the graph $H$. If $\bar{e}^{\top} x \leq 3$ is not a facet of $\STAB(H)$, then it follows from the proof of~\cite[Lemma 7]{AuT25} that $\bar{e}^{\top}x \leq 3$ can be expressed as the sum of facets of $\STAB(H)$ that do not have full support, in which case Lemma~\ref{lemfacet2} implies that $r_+(H) \leq 2$. Thus, we may assume that $\bar{e}^{\top}x \leq 3$ is indeed a facet of $\STAB(H)$. (For a complete characterization of when $\bar{e}^{\top}x \leq d+1$ is a facet of the stable set polytope of a graph in $\K_{n,d}$, see~\cite[Lemma 6]{AuT25}.)

Now, an exhaustive search shows that there are exactly three non-isomorphic graphs $H \in \K_{5,2}$ where $|E(H)| = 14$, $\omega(H) \geq 4$, and $\bar{e}^{\top}x \leq 3$ is a facet of $\STAB(H)$,  which are shown in Figure~\ref{figSparsePath}. We now prove that these three graphs all have $\LS_+$-rank at most 2. First, notice that $G_1 - 4$ is a perfect graph. Thus, $r_+(G_1-4) \leq 1$, which implies that $r_+(G_1) \leq 2$. Likewise, $G_2-3$ and $G_3-4$ are also perfect. Therefore, $G_1$, $G_2$, and $G_3$ (and thus, in particular, the inequality $\bar{e}^{\top}x \leq 3$ of their stable set polytopes) all have $\LS_+$-rank at most $2$. This proves our claim.
\end{proof}

\def\y{0.7}
\def\sc{1.5}
\def\z{360/ (5+4*\y)}
\def\x{270}
\def\w{1 / cos(\z*\y)}

\def\placev{
\node[main node] at ({ \x+(-1 + 0*\y) *\z} : 1) (1) {$1$};
\node[main node] at ({ \x+(0 + 0*\y) *\z} : 1) (2) {$2$};
\node[main node] at ({ \x+(1 + 0*\y) *\z} : 1) (3) {$3$};
\node[main node] at ({ \x+(2 + 0*\y) *\z} : 1) (4) {$4$};
\node[main node] at ({ \x+(2 + 1*\y) *\z} : 1) (5) {$5$};
\node[main node] at ({ \x+(2 + 2*\y) *\z} : 1) (6) {$6$};
\node[main node] at ({ \x+(3 + 2*\y) *\z} : 1) (7) {$7$};
\node[main node] at ({ \x+(3 + 3*\y) *\z} : 1) (8) {$8$};
\node[main node] at ({ \x+(3 + 4*\y) *\z} : 1) (9) {$9$};
}

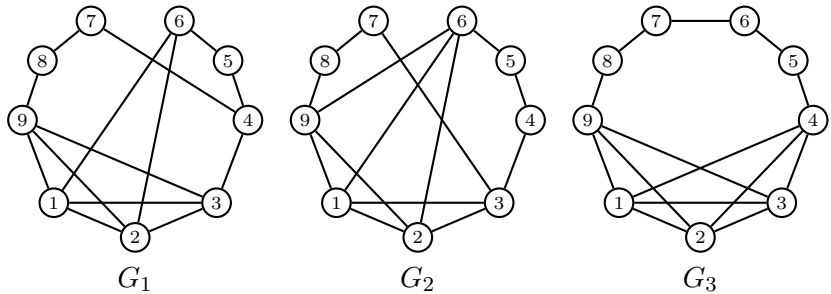
\begin{figure}[htbp]
\centering
\begin{tabular}{ccc}
\begin{tikzpicture}
[scale=\sc, thick,main node/.style={circle, minimum size=3.8mm, inner sep=0.1mm,draw,font=\tiny\sffamily}]
\placev
 \path[every node/.style={font=\sffamily}]
(1) edge (2) (2) edge (3) (1) edge (3) (4) edge (5) (5) edge (6) (7) edge (8) (8) edge (9) (4) edge (3) (6) edge (1) (6) edge (2) (1) edge (9) (2) edge (9) (3) edge (9) (4) edge (7);
\end{tikzpicture}
&
\begin{tikzpicture}
[scale=\sc, thick,main node/.style={circle, minimum size=3.8mm, inner sep=0.1mm,draw,font=\tiny\sffamily}]
\placev
 \path[every node/.style={font=\sffamily}]
(6) edge (2) (2) edge (3) (3) edge (4) (4) edge (5) (5) edge (6) (7) edge (8) (8) edge (9) (6) edge (1) (1) edge (2) (1) edge (3) (7) edge (3) (9) edge (1) (9) edge (2) (9) edge (6);
\end{tikzpicture}
&
\begin{tikzpicture}
[scale=\sc, thick,main node/.style={circle, minimum size=3.8mm, inner sep=0.1mm,draw,font=\tiny\sffamily}]
\placev
 \path[every node/.style={font=\sffamily}]
(1) edge (2) (2) edge (3) (3) edge (1) (4) edge (5) (5) edge (6) (7) edge (8) (8) edge (9) (4) edge (1) (4) edge (2) (4) edge (3) (9) edge (3) (9) edge (1) (9) edge (2) (7) edge (6);
\end{tikzpicture}
\\

$G_1$ & $G_2$  & $G_3$
\end{tabular}
\caption{The three non-isomorphic graphs $H \in \K_{5,2}$ where $|E(H)| =14$, $\omega(H) \geq 4$, and $\bar{e}^{\top}x \leq 3$ is a facet of $\STAB(H)$.}
\label{figSparsePath}
\end{figure}

\subsection{Numerical evidence and heuristic observations from the 3-minimal search}
Next, we provide some details about our computational search that turned up the 49 3-minimal graphs we listed in Figure~\ref{fig3minimalall}. First, given a 3-minimal graph $G$, we may assume (due to Theorem~\ref{thmAuT24b1}) that $G$ can be obtained from a proper 2-stretching of $i \in V(H)$ for some 7-vertex graph $H$ where $H-i$ is isomorphic to $G_{\ref{figKnownEG},1}$ or $G_{\ref{figKnownEG},2}$ (which, again, are the only 2-minimal graphs). An exhaustive search found 1,115 non-isomorphic graphs which satisfy these conditions. Among them, there are 540 instances of $(G,a)$ where $a \in \mR^9$ defines a full-support facet for $\STAB(G)$. For convenience, let $\X_3$ denote the set consisting of these 540 ordered pairs $(G,a)$. Note that there are 9 graphs which appear in two elements in $\X_3$ for having two distinct full-support facets, and none of them belong to $\K_{5,2}$. If it is indeed true that no graph in $\K_{5,2}$ has a full-support facet that is different from $\bar{e}^{\top}x \leq 3$, then it would follow from Proposition~\ref{propK52K4} that $r_+(G) \leq 2$ for every $G \in \K_{5,2}$ where $\omega(G) \geq 4$.

\begin{figure}[htbp]
\centering
\includegraphics[width=15cm]{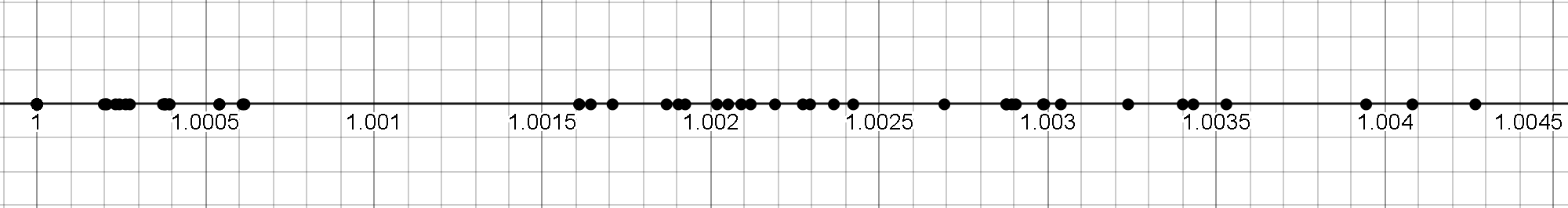}

\caption{Plotting $\gamma_2(G,a)$ for all $(G,a) \in \X_3$.}
\label{figX3gamma2}
\end{figure}

Using CVX+SeDuMi, we computed the values of $\gamma_2(G,a)$ for all 540 $(G,a) \in \X_3$, and plotted them on the number line as shown in Figure~\ref{figX3gamma2}. The $540$ data points can be categorized into the following three groups:
\begin{itemize}
\item
31 elements with $\gamma_2(G,a) > 1.0016$. They correspond exactly to $G_{\ref{fig3minimal},1}$, $G_{\ref{fig3minimal},2}$, $G_{\ref{fig3minimal},3}$, and their edge subgraphs listed in Figure~\ref{fig3minimalall}. These 31 graphs share the common property that they are all in $\K_{5,2}$ and have $\bar{e}^{\top}x \leq 3$ as the facet with $\LS_+$-rank three.
\item
18 elements with $1.00019 < \gamma_2(G,a) < 1.00062$. These correspond to $G_{\ref{fig3minimal},4}, \ldots ,G_{\ref{fig3minimal},8}$ and their edge subgraphs listed in Figure~\ref{fig3minimalall}. These 18 graphs do not belong to $\K_{5,2}$, and their facet with $\LS_+$-rank three is $(1,1,1,1,1,1,1,1,2)^{\top}x \leq 3$.
\item
491 elements with $\gamma_2(G,a) \leq 1 + 4\cdot 10^{-7}$. They all visually correspond to the dot at $1$ in Figure~\ref{figX3gamma2}. 
\end{itemize}

Thus, we see that aside from the 49 3-minimal graphs we previously described, every graph in our search satisfies $r_+(G) \lesssim 2$. This provides strong computational evidence that the list in Figure~\ref{fig3minimalall} is complete, which motivates the following conjecture.

\begin{conjecture}\label{conj3minimal}
There are exactly 49 non-isomorphic 3-minimal graphs.
\end{conjecture}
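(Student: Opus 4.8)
The plan is to prove the conjecture in two halves. The lower bound---that there are \emph{at least} $49$ non-isomorphic $3$-minimal graphs---is exactly Theorem~\ref{thm3minimal2}, certified by the $\LS_+^2$ certificate packages already described. The entire difficulty therefore lies in the matching upper bound: that no $3$-minimal graph exists outside the list of Figure~\ref{fig3minimalall}. I would attack this by first reducing to a finite, explicit pool of candidate graphs, and then certifying $r_+(G) \leq 2$ for every candidate not already known to be $3$-minimal.

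For the reduction, I would invoke Theorem~\ref{thmAuT24b1}: every $3$-minimal $G$ is obtained by a proper $2$-stretching of some vertex $i$ of a $7$-vertex graph $H$ for which $H - i$ is $2$-minimal. Since $G_{\ref{figKnownEG},1}$ and $G_{\ref{figKnownEG},2}$ are the only $2$-minimal graphs~\cite{EscalanteMN06}, each candidate $H$ is obtained by adding a single vertex $i$, joined to an arbitrary subset of vertices, to one of these two fixed $6$-vertex graphs, and $G$ is then a proper $2$-stretch of $i$. Enumerating up to isomorphism yields the finite candidate set (the 1{,}115 graphs found in the search). By Lemma~\ref{lemfacet2} together with Theorem~\ref{thmLiptakT031}, a $3$-minimal graph on $9$ vertices must possess a \emph{full-support} facet of $\LS_+$-rank $3$, so it suffices to examine the $540$ pairs $(G,a)$ in which $a$ has full support. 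The task then collapses to a dichotomy on $\gamma_2(G,a)$: the $49$ graphs are precisely those admitting a pair with $\gamma_2(G,a) > 1$, and completeness is equivalent to proving $\gamma_2(G,a) = 1$---equivalently, that $a^{\top}x \leq \beta$ is valid for $\LS_+^2(G)$---for each of the remaining $491$ pairs.

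To certify these $491$ upper bounds rigorously (rather than numerically), I would combine two tools. First, wherever possible, I would use the analytical machinery: if some vertex $i$ gives $r_+(G-i) \leq 1$, then $r_+(G) \leq 2$ by Theorem~\ref{thmDeleteDestroy}(ii); a cut clique lets one split $G$ via Proposition~\ref{propCliqueCut}; and the classification of $\LS_+$-perfect claw-free graphs~\cite{BianchiENW23} should dispatch large subfamilies in one stroke, much as Proposition~\ref{propK52K4} handles $\K_{5,2}$ graphs with $\omega \geq 4$. For the candidates these structural shortcuts miss, I would develop a \emph{dual} analogue of the $\LS_+^2$ certificate packages: encode the statement that $a^{\top}x \leq \beta$ is valid for $\LS_+^2(G)$ as a single nested semidefinite program, extract the floating-point dual solution from CVX+SeDuMi, round it to rationals, and verify exact dual feasibility together with objective value $\beta$ using only integer arithmetic---the upper-bound counterpart of the $UVW$-based verification of Lemma~\ref{lem:UVW}.

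The main obstacle is this last certification step. The present framework certifies \emph{membership} (a primal object, $Ye_0 \in \cone(\LS_+^2(G))$), which packages cleanly; certifying \emph{validity} of an inequality for $\LS_+^2(G)$ is a dual statement, and the two-level nesting of $\LS_+^2$ makes the associated dual program considerably larger and more delicate. The crux is rounding: a numerically optimal dual solution with value $\approx \beta$ need not round to an exactly feasible rational point when the optimum is attained with near-degenerate or near-rank-deficient positive semidefinite blocks, which is precisely the regime flagged by the $\lesssim$ values clustered just above $1$ in Figure~\ref{figX3gamma2}. I expect the decisive work to be designing a rounding-robust dual certificate format---for instance one carrying explicit slack absorbed through a diagonally dominant correction, as in the $UVW$ construction---so that exact rational feasibility can be restored uniformly across all $491$ residual pairs.
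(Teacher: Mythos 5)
The statement you are trying to prove is not proved in the paper at all: it is stated as a conjecture precisely because the step you defer to the end --- certifying $\gamma_2(G,a)=1$ exactly for the $491$ residual pairs --- is currently out of reach. Your reduction half is sound and is in fact identical to the paper's search pipeline: Theorem~\ref{thmAuT24b1} plus the Escalante--Montelar--Nasini classification of $2$-minimal graphs yields the $1{,}115$ candidates; Lemma~\ref{lemfacet2} with Theorem~\ref{thmLiptakT031} shows a rank-$3$ facet of a $9$-vertex graph must have full support, giving the $540$ pairs of $\X_3$; and the dichotomy on $\gamma_2(G,a)$ is exactly how the paper organizes its evidence. But everything after that in your proposal is a research program, not a proof. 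The paper's Section~\ref{sec06} explicitly lists ``better tools for proving rank upper bounds'' (including the dual-certificate idea you describe) as an open problem, so your plan amounts to: the conjecture would follow if the open problem were solved.

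The gap is not merely that the dual certificates have not been written down; there are concrete reasons the rounding scheme you sketch may fail. Validity of $a^{\top}x \leq \beta$ for $\LS_+^2(G)$ is the statement that a nested SDP has optimal value exactly $\beta$, attained at a vertex of $\STAB(G)$; in that boundary regime the dual optimum can be non-unique, attained only in the limit, or irrational, and there is no guarantee that a floating-point dual solution within $10^{-7}$ of optimal rounds to an \emph{exactly} feasible rational point --- a diagonally dominant slack correction in the style of Lemma~\ref{lem:UVW} absorbs perturbations of a strictly feasible (positive definite) certificate, not of a degenerate one. Your analytical fallbacks also do less than you suggest: every one of the $1{,}115$ candidates contains a $2$-minimal induced subgraph by construction, so $r_+(G-i)\leq 1$ (Theorem~\ref{thmDeleteDestroy}(ii)) can only apply when deleting a vertex of that subgraph destroys every embedded copy, and the paper never claims that delete/destroy arguments, cut cliques (Proposition~\ref{propCliqueCut}), or the claw-free classification cover the residual $491$ pairs --- if they did, the authors would have stated a theorem rather than Conjecture~\ref{conj3minimal}. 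Until the exact upper-bound certification is actually carried out for each remaining pair, what you have is the same thing the paper has: a certified lower bound of $49$ and strong numerical evidence, not a proof of exactness.
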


We conclude this section by presenting more computational findings for the graphs and facets in $\X_3$. Recall that we generated the collection $\X_3$ by exhaustively checking among a pool of candidate graphs for full-support facets. However, the approach of casting a wide net and searching exhaustively within may not be viable for 4-minimal graphs and beyond when both the number of candidate graphs and the time demand for each optimization problem increase substantially. Therefore, it is worthwhile to examine the numerical data from the 3-minimal search for patterns that may remain informative in higher-rank searches.

Thus, in addition to $\gamma_2(G,a)$, we also computed $\gamma_1(G,a)$ for every $(G,a) \in \X_3$, as we are interested in whether $\gamma_1(G,a)$ has some predictive value for $\gamma_p(G,a)$ for $p\geq 2$. If so, then $\gamma_1(G,a)$ (which is much less computationally costly to obtain) could serve as a valuable heuristic for identifying $\ell$-minimal graphs.

Figure~\ref{fig_3minX31} (left) gives the scatterplot of $\gamma_1(G,a)$ ($x$-axis) versus $\gamma_2(G,a)$  ($y$-axis) for each of the $540$ elements in $\X_3$. All $(G,a) \in \X_3$ satisfy  $1.0036 \leq \gamma_1(G,a) \leq 1.0608$. It is not surprising that these integrality ratios are all comfortably above $1$ since every graph analyzed here contains either $G_{\ref{figKnownEG},1}$ or $G_{\ref{figKnownEG},2}$ as an induced graph, and thus must have $\LS_+$-rank at least 2. A simple linear regression shows a very weak ($r \approx 0.0947$) positive correlation between $\gamma_1(G,a)$ and $\gamma_2(G,a)$.

\begin{figure}[htbp]
\centering
\begin{tabular}{cc}
\includegraphics[width=7cm]{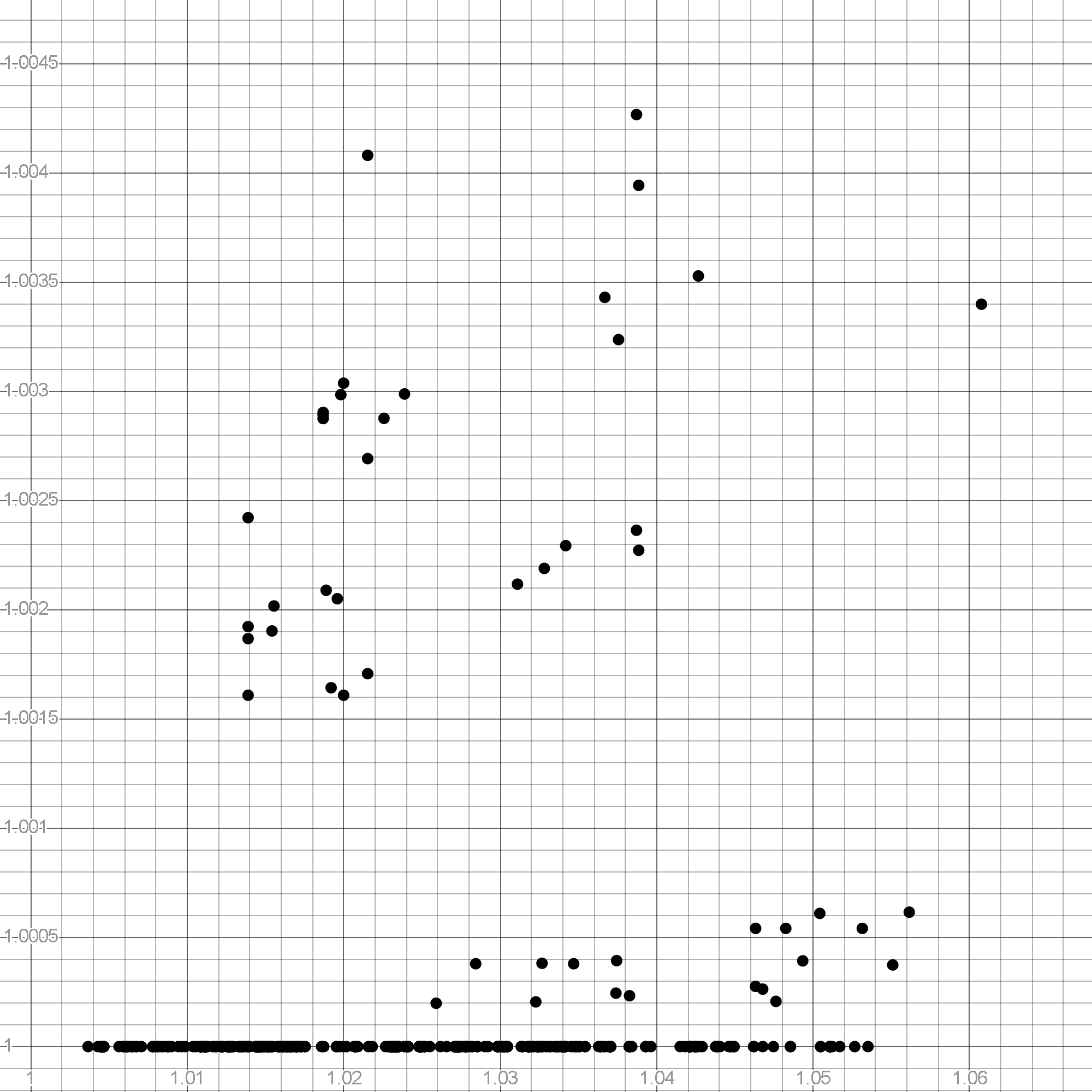} 
&
\includegraphics[width=7cm]{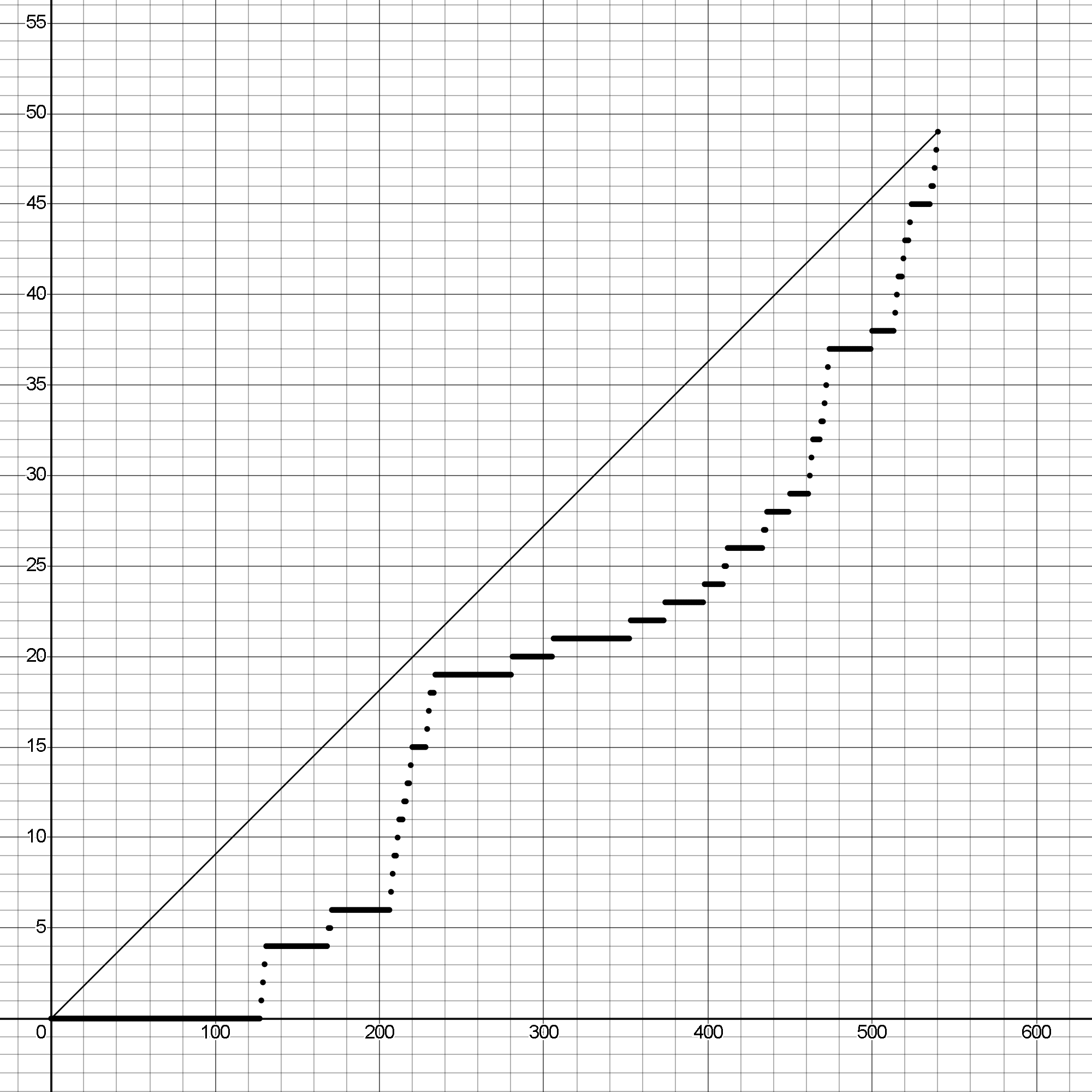} 
\end{tabular}
\caption{Relating $\gamma_1(G,a)$ and $\gamma_2(G,a)$ for elements in $\X_3$.}\label{fig_3minX31}

\end{figure}

Next, we analyze the relation between $\gamma_1(G,a)$ and $\gamma_2(G,a)$ from a different perspective. For every $n$ where $0 \leq n \leq 540$, we define $g(n)$ to be the number of the 49 3-minimal graphs listed in Figure~\ref{fig3minimalall} we would find among the $n$ facets with the lowest $\gamma_1(G,a)$. Obviously, $g(0) = 0$ and $g(540) = 49$, and Figure~\ref{fig_3minX31} (right) gives the plot of $g(n)$ for $0\leq n \leq 540$. Notice that the points $(n, g(n))$ all stay below the line $y = \frac{49}{540}x$, which indicates that the facets with higher $\gamma_1(G,a)$ indeed contain a higher concentration of facets with $\LS_+$-rank 3. In particular, notice that $g(n) = 0$ for all $n \leq 127$, which means that the 127 facets with the lowest values of $\gamma_1(G,a)$ all basically have $\gamma_2(G,a)=1$. 

\section{4-minimal graphs}\label{sec04}

We now turn to the case of 4-minimal graphs.

\subsection{Certified lower bounds and new 4-minimal graphs}

As with $\LS_+$ and $\LS_+^2$ certificate packages, here is an analogous framework for verifying the membership of points in $\LS_+^3(G)$. Given a graph $G$ with $n$ vertices, we define an \emph{$\LS_+^3$ certificate package} to be
\begin{itemize}
\item
A set of matrices $\M_2 \ce \set{Y_{e_ie_j}, Y_{e_if_j}, Y_{f_ie_j}, Y_{f_if_j} : i,j \in [n]}  \subseteq \mZ^{(n+1) \times (n+1)}$ such that, for every $M \in \M_2$,
\begin{itemize}
\item
$M = M^{\top}$ and $Me_0 = \diag(M)$;
\item
$Me_i, Mf_i \in \cone(\FRAC(G))$ for every $i \in [n]$.
\end{itemize}
\item
A set of matrices $\M_1 \ce \set{Y_{e_i}, Y_{f_i} : i \in [n] } \subseteq \mZ^{(n+1) \times (n+1)}$ such that, for every $M \in \M_1$,
\begin{itemize}
\item
$M = M^{\top}$ and $Me_0 = \diag(M)$;
\item
for every $i,j \in [n]$, 
\begin{itemize}
\item
$Y_{e_ie_j}e_0$ dominates $Y_{e_i}e_j$;
\item
$Y_{e_if_j}e_0$ dominates $Y_{e_i}f_j$;
\item
$Y_{f_ie_j}e_0$ dominates $Y_{f_i}e_j$;
\item
$Y_{f_if_j}e_0$ dominates $Y_{f_i}f_j$.
\end{itemize}
\end{itemize}
\item
A matrix $Y \in \mZ^{(n+1) \times (n+1)}$ where
\begin{itemize}
\item
$Y = Y^{\top}$ and $Ye_0 = \diag(Y)$;
\item
for every $i \in [n]$, 
\begin{itemize}
\item
$Y_{e_i}e_0$ dominates $Y e_i$;
\item
$Y_{f_i}e_0$ dominates $Y f_i$.
\end{itemize}
\end{itemize}
\item
A $UVW$-certificate for every non-zero matrix in $\M_2, \M_1$, and $Y$.
\end{itemize}

The conditions on $\M_2$ establish that $Me_0 \in \cone(\LS_+(G))$ for every $M \in \M_2$. Then the conditions imposed on $\M_1$ assure that $Me_0 \in \cone(\LS_+^2(G))$ for every $M \in \M_1$. Finally, the additional constraints on $Y$ ensure that $Ye_0 \in \cone(\LS_+^3(G))$. Now observe that $\M_2$ inherently contains many matrices of all zeros --- for instance, for every edge $\set{i,j} \in E(G)$, $\LS_+$ imposes the constraint  $[Ye_i]_j = 0$. Hence, $Y_{e_i}e_j$ is the zero vector, and consequently $Y_{e_ie_j}$ must be a matrix of all zeros. Matrices of all zeros are trivially PSD, and not including $UVW$-certificates for them helps somewhat reduce the size of these certificate packages. Thus, a $\LS_+^3$ certificate package for a vector in $\mR^n$ consists of up to $4(1+2n+4n^2)$ matrices (the certificate matrices $\M_2, \M_1$, and $Y$, as well as a $UVW$-certificate for each of these matrices which are non-zero). While verifying the validity of an $\LS_+^3$ certificate package generally requires checking a much greater number of matrices and conditions compared to $\LS_+$ and $\LS_+^2$ certificate packages, each condition can still be checked reliably as, again, they only depend on elementary arithmetic operations on integers. Using these ideas and following a similar proof to that of Proposition~\ref{prop:LS_+-certificate}, we have the following fact.

\begin{proposition}\label{prop:LS_+^3-certificate}
Let $G$ be a graph. Then  $r_+(G) \geq 4$ if and only if there exist a valid inequality $a^{\top}x \leq \beta$ for $\STAB(G)$
and an $\LS_+^3$ certificate package $(Y, \mathcal{M}_1, \mathcal{M}_2, \textup{ and } \textup{$UVW$-certificates})$ for $G$ such that $\left(-\beta, a^{\top}\right)Ye_0 > 0$.
\end{proposition}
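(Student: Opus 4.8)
The plan is to follow the template established in the proof of Proposition~\ref{prop:LS_+-certificate}, now unwinding three nested applications of $\LS_+$ instead of one. The conceptual content is entirely carried by the two supporting lemmas already available: Lemma~\ref{lem:UVW} translates between positive semidefiniteness and the existence of a $UVW$-certificate, and Lemma~\ref{lemConeDominance} passes cone membership downward along the domination relation, using that $\LS_+^k(G)$ is lower-comprehensive for every $k$ (as observed just before Lemma~\ref{lemConeDominance}).

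For the ``if'' direction I would argue the chain of membership claims from the bottom up. Each $M \in \M_2$ is symmetric with $Me_0 = \diag(M)$ and satisfies $Me_i, Mf_i \in \cone(\FRAC(G))$, and its $UVW$-certificate forces $M \succeq 0$ by Lemma~\ref{lem:UVW}; hence $Me_0 \in \cone(\LS_+(G))$ for every $M \in \M_2$. The domination conditions linking $\M_2$ to $\M_1$, together with Lemma~\ref{lemConeDominance}, then yield $Y_{e_i}e_j, Y_{e_i}f_j \in \cone(\LS_+(G))$ (and likewise for $Y_{f_i}$), so each matrix in $\M_1$ is a valid $\LS_+$-witness over $\LS_+(G)$ and, being PSD, satisfies $Y_{e_i}e_0, Y_{f_i}e_0 \in \cone(\LS_+^2(G))$. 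Applying the domination conditions relating $\M_1$ to $Y$ one more time gives $Ye_i, Yf_i \in \cone(\LS_+^2(G))$, and the PSD-ness of $Y$ then forces $Ye_0 \in \cone(\LS_+^3(G))$. Since $(-\beta, a^{\top})Ye_0 > 0$ while $a^{\top}x \leq \beta$ is valid for $\STAB(G)$, the point $Ye_0$ lies in $\cone(\LS_+^3(G)) \setminus \cone(\STAB(G))$, so $\LS_+^3(G) \neq \STAB(G)$ and therefore $r_+(G) \geq 4$.

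For the ``only if'' direction I would start from a point $\bar{x} \in \LS_+^3(G) \setminus \STAB(G)$ (which exists precisely when $r_+(G) \geq 4$) and a facet $a^{\top}x \leq \beta$ of $\STAB(G)$ violated by $\bar{x}$. Unwinding the definition of $\LS_+$ three times produces a PSD witness $Y$ for $\bar{x}$ whose columns $Ye_i, Yf_i$ lie in $\cone(\LS_+^2(G))$, then PSD witnesses for each of those columns lying in $\cone(\LS_+(G))$, and finally PSD witnesses over $\FRAC(G)$; taking these as $Y$, $\M_1$, and $\M_2$ respectively makes the required domination relations hold (indeed with each dominating vector a scalar multiple of the dominated one, so with equality in direction). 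Invoking the density of the rationals I can replace the real witnesses by rational ones still satisfying the strict inequality $(-\beta, a^{\top})Ye_0 > 0$, and Lemma~\ref{lem:UVW} supplies a $UVW$-certificate for each PSD matrix.

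The step I expect to require the most care is the integral-scaling and simultaneous-rationalization bookkeeping at the end of the ``only if'' direction: the certificate package demands that all of $\M_2$, $\M_1$, $Y$, and their $UVW$-certificates have integer entries at once, while every domination relation between consecutive layers is preserved. Because the domination relation is homogeneous of degree one in each of the two vectors it compares, I can clear denominators layer by layer and rescale $\M_2$, $\M_1$, and $Y$ by separate positive integers without disturbing any domination inequality or the sign of $(-\beta, a^{\top})Ye_0$; the only genuine subtlety is choosing the rational approximations in Lemma~\ref{lem:UVW} finely enough that diagonal dominance of each $V$-block survives, which is exactly the estimate already performed in the proof of Lemma~\ref{lem:UVW}. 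No new ideas beyond those two lemmas and the $\LS_+^2$ argument are needed; the proof is essentially one further layer of the same recursion.
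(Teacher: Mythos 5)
Your proposal is correct and follows essentially the same route as the paper: the ``if'' direction is exactly the level-by-level cone-membership chain (via Lemma~\ref{lem:UVW} and Lemma~\ref{lemConeDominance}) that the paper records in the paragraph preceding the proposition, and the ``only if'' direction is the same unwinding-plus-rationalization-plus-integer-scaling argument the paper invokes by reference to the proof of Proposition~\ref{prop:LS_+-certificate}. Your closing remarks on the homogeneity of the domination relation and on preserving diagonal dominance under rational approximation simply make explicit the bookkeeping that the paper leaves implicit.
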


Our computational search, combined with explicit $\LS_+^3$ certificate packages and edge-subgraph arguments, yields the following certified lower bound.

\begin{theorem}\label{thm4min}
There are at least 4,107 non-isomorphic 4-minimal graphs.
\end{theorem}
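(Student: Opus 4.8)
The plan is to mirror the strategy used for Theorem~\ref{thm3minimal2}, now one level higher in the hierarchy, combining certified $\LS_+^3$ lower bounds with the edge-subgraph propagation of Lemma~\ref{lem05subgraph}. Since every candidate graph here has exactly $12$ vertices, Theorem~\ref{thmLiptakT031} gives $r_+(G)\le 4$ for free, so the entire content of the theorem lies in producing enough graphs carrying a certified matching lower bound $r_+(G)\ge 4$.

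First I would assemble a manageable pool of \emph{seed} graphs. By Theorem~\ref{thmAuT24b1}, every $4$-minimal graph arises from a $3$-minimal graph by a $1$-join followed by a proper $2$-stretching, so one can restrict attention to $12$-vertex graphs obtained in this way (starting from the $3$-minimal graphs of Figure~\ref{fig3minimalall}), and among those isolate the graph–facet pairs $(G,\,a^{\top}x\le\beta)$ in which $a$ has full support. For each seed $G$ in this reduced pool, the goal is to exhibit a point of $\LS_+^3(G)\setminus\STAB(G)$ violating its full-support facet. Concretely, I would solve the relevant semidefinite program numerically, then round the resulting matrices to an exact integer $\LS_+^3$ certificate package $(Y,\M_1,\M_2,\textup{ and }UVW\textup{-certificates})$ satisfying the hypotheses of Proposition~\ref{prop:LS_+^3-certificate}; the existence of such a package with $\left(-\beta,a^{\top}\right)Ye_0>0$ certifies $r_+(G)\ge 4$, hence $r_+(G)=4$.

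Next I would propagate each certified seed downward. For a seed $G$ whose facet $a^{\top}x\le\beta$ has $\LS_+$-rank $4$, Lemma~\ref{lem05subgraph}(ii) shows that the same inequality is not valid for $\LS_+^3(H)$ for every edge subgraph $H$ of $G$; whenever $a^{\top}x\le\beta$ remains valid for $\STAB(H)$ --- a condition I would verify by a maximum-weight-stable-set computation on the small graph $H$ --- we conclude $\LS_+^3(H)\ne\STAB(H)$ and therefore $r_+(H)\ge 4$, so $H$ is again $4$-minimal. Enumerating all such edge subgraphs across all seeds, then reducing modulo graph isomorphism via a canonical-form routine, produces the final catalogue; checking that its cardinality is at least 4,107 completes the proof.

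The main obstacle is the exact certification step at scale. Unlike the $\LS_+^2$ case, an $\LS_+^3$ certificate package for a $12$-vertex graph involves on the order of $4\left(1+2n+4n^2\right)$ matrices with $n=12$, each requiring a $UVW$-certificate, so the floating-point SDP output must be converted to integer data in a way that \emph{simultaneously} preserves every domination relation and the positive semidefiniteness encoded by the $UVW$-certificates --- a delicate numerical-to-exact rounding that can fail when the solution sits near the boundary of feasibility. The secondary difficulty is purely combinatorial: the edge-subgraph enumeration and isomorphism reduction must be organized so that the candidate pool (on the order of millions of graph–facet pairs) and the thousands of SDP solves remain tractable. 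This is precisely why restricting to seeds via Theorem~\ref{thmAuT24b1} and then propagating with Lemma~\ref{lem05subgraph}, rather than certifying every graph individually, is essential to making the search feasible.
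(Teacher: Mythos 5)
Your proposal is correct and follows essentially the same route as the paper: restrict the candidate pool via Theorem~\ref{thmAuT24b1}, certify seed graphs with exact integer $\LS_+^3$ certificate packages (Proposition~\ref{prop:LS_+^3-certificate}), propagate $4$-minimality to edge subgraphs on which the rank-$4$ facet remains valid for the stable set polytope via Lemma~\ref{lem05subgraph}, and reduce modulo isomorphism, with Theorem~\ref{thmLiptakT031} supplying the upper bound $r_+(G)\le 4$. This matches the paper's proof, which certifies 570 seed graphs and harvests 4{,}107 non-isomorphic edge subgraphs in exactly this manner.
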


\begin{proof}
First, we are able to generate $\LS_+^3$ certificate packages for 570 graphs $G$, showing that there exists $\bar{x} \in \LS_+^3(G) \setminus \STAB(G)$ for these graphs. Next, Lemma~\ref{lem05subgraph} implies that many edge subgraphs of these 570 graphs are also 4-minimal. Collecting these edge subgraphs and checking for isomorphisms among them resulted in a total of 4,107 distinct 4-minimal graphs.
\end{proof}

The $\LS_+^3$ certificate packages, as well as a full list of 4-minimal graphs we found, are available at~\cite{AuT25data}. Our dataset also includes MATLAB code which can be used to verify all $\LS_+$, $\LS_+^2$, and $\LS_+^3$ certificate packages mentioned in this manuscript. Furthermore, all matrices in our data are stored as widely-supported CSV (comma-separated values) files, so one can also verify and analyze these certificate packages in other programming languages, such as Python and R.

\subsection{Structural consequences of the new 4-minimal graphs}

Next, we highlight a few certified 4-minimal graphs with notable features in Figure~\ref{fig4minimal}. For each graph, we list its graph6 encoding, the facet of its stable set polytope with $\LS_+$-rank four, and the integrality ratio of this inequality according to CVX+SeDuMi. (The reader may refer to~\cite{McKay} for a detailed description of graph6, an encoding of undirected graphs as strings of printable ASCII characters.)
\def\y{0.7}
\def\sc{1.6}
\def\z{360/ (6+6*\y)}
\def\x{270}

\def\placev{
\node[main node] at ({ \x+(-1 + 0*\y) *\z} : 1) (1) {$1$};
\node[main node] at ({ \x+(0 + 0*\y) *\z} : 1) (2) {$2$};
\node[main node] at ({ \x+(1 + 0*\y) *\z} : 1) (3) {$3$};
\node[main node] at ({ \x+(2 + 0*\y) *\z} : 1) (4) {$4$};
\node[main node] at ({ \x+(2 + 1*\y) *\z} : 1) (5) {$5$};
\node[main node] at ({ \x+(2 + 2*\y) *\z} : 1) (6) {$6$};
\node[main node] at ({ \x+(3 + 2*\y) *\z} : 1) (7) {$7$};
\node[main node] at ({ \x+(3 + 3*\y) *\z} : 1) (8) {$8$};
\node[main node] at ({ \x+(3 + 4*\y) *\z} : 1) (9) {$9$};
\node[main node] at ({ \x+(4 + 4*\y) *\z} : 1) (10) {$10$};
\node[main node] at ({ \x+(4 + 5*\y) *\z} : 1) (11) {$11$};
\node[main node] at ({ \x+(4 + 6*\y) *\z} : 1) (12) {$12$};
}

\begin{figure}[htbp]
\centering
\scriptsize{
\begin{tabular}{ccc}

\begin{tikzpicture}
[scale=\sc, thick,main node/.style={circle, minimum size=3.8mm, inner sep=0.1mm,draw,font=\tiny\sffamily}]
\placev
\path[every node/.style={font=\sffamily}]
(1) edge (2) (1) edge (3) (1) edge (6) (1) edge (7) (1) edge (10) (2) edge (3) (2) edge (6) (2) edge (9) (2) edge (12) (3) edge (4) (3) edge (7) (3) edge (9) (3) edge (10) (3) edge (12) (4) edge (5) (5) edge (6) (5) edge (7) (5) edge (9) (5) edge (10) (5) edge (12) (6) edge (7) (6) edge (9) (6) edge (12) (7) edge (8) (8) edge (9) (9) edge (10) (10) edge (11) (11) edge (12) ;
\end{tikzpicture}
&
\begin{tikzpicture}
[scale=\sc, thick,main node/.style={circle, minimum size=3.8mm, inner sep=0.1mm,draw,font=\tiny\sffamily}]
\placev
\path[every node/.style={font=\sffamily}]
(1) edge (2) (1) edge (3) (1) edge (4) (1) edge (6) (1) edge (9) (1) edge (10) (2) edge (3) (2) edge (6) (2) edge (7) (2) edge (12) (3) edge (4) (3) edge (7) (3) edge (9) (3) edge (12) (4) edge (5) (4) edge (7) (4) edge (10) (4) edge (12) (5) edge (6) (6) edge (7) (6) edge (10) (6) edge (12) (7) edge (8) (8) edge (9) (9) edge (10) (9) edge (12) (10) edge (11) (11) edge (12) ;
\end{tikzpicture}
&
\begin{tikzpicture}
[scale=\sc, thick,main node/.style={circle, minimum size=3.8mm, inner sep=0.1mm,draw,font=\tiny\sffamily}]
\placev
\path[every node/.style={font=\sffamily}]
(1) edge (2) (1) edge (3) (1) edge (4) (1) edge (9) (1) edge (10) (2) edge (3) (2) edge (6) (2) edge (7) (2) edge (12) (3) edge (4) (3) edge (9) (3) edge (12) (4) edge (5) (4) edge (7) (4) edge (12) (5) edge (6) (7) edge (8) (8) edge (9) (9) edge (12) (10) edge (11) (11) edge (12) ;
\end{tikzpicture}
\\
$G_{\ref{fig4minimal},1} \ce \verb+KxFLWDliG?l`+ $&
$G_{\ref{fig4minimal},2} \ce \verb+K|FJgE`dG?md+$ &
$G_{\ref{fig4minimal},3} \ce \verb+K|DI_E`_??mD+ $\\
$(1,1,1,1,1,1,1,1,1,1,1,1)^{\top}x \leq 4$ &
$(1,1,1,1,1,1,1,1,1,1,1,1)^{\top}x \leq 4$ &
$(1,1,1,1,1,1,1,1,1,1,1,1)^{\top}x \leq 4$ \\
$\gamma_3( G_{\ref{fig4minimal},1}, a) =1.0008772$ &
$\gamma_3( G_{\ref{fig4minimal},2}, a) =1.0007099$ &
$\gamma_3( G_{\ref{fig4minimal},3}, a) =1.0016810$ \\
\\
\begin{tikzpicture}
[scale=\sc, thick,main node/.style={circle, minimum size=3.8mm, inner sep=0.1mm,draw,font=\tiny\sffamily}]
\placev
\path[every node/.style={font=\sffamily}]
(1) edge (2) (1) edge (3) (1) edge (4) (1) edge (6) (1) edge (9) (1) edge (10) (2) edge (3) (2) edge (6) (2) edge (7) (2) edge (12) (3) edge (4) (3) edge (7) (3) edge (9) (3) edge (12) (4) edge (5) (4) edge (7) (4) edge (10) (4) edge (12) (5) edge (6) (5) edge (7) (5) edge (9) (6) edge (9) (6) edge (10) (6) edge (12) (7) edge (8) (8) edge (9) (9) edge (12) (10) edge (11) (11) edge (12) ;
\end{tikzpicture}
&
\begin{tikzpicture}
[scale=\sc, thick,main node/.style={circle, minimum size=3.8mm, inner sep=0.1mm,draw,font=\tiny\sffamily}]
\placev
\path[every node/.style={font=\sffamily}]
(1) edge (2) (1) edge (3) (1) edge (6) (1) edge (7) (1) edge (10) (2) edge (3) (2) edge (6) (2) edge (9) (2) edge (10) (2) edge (12) (3) edge (4) (3) edge (7) (3) edge (9) (3) edge (12) (4) edge (5) (5) edge (6) (5) edge (7) (5) edge (9) (6) edge (7) (6) edge (9) (6) edge (12) (7) edge (8) (7) edge (12) (8) edge (9) (8) edge (10) (8) edge (12) (9) edge (10) (10) edge (11) (11) edge (12) ;
\end{tikzpicture}
&
\begin{tikzpicture}
[scale=\sc, thick,main node/.style={circle, minimum size=3.8mm, inner sep=0.1mm,draw,font=\tiny\sffamily}]
\placev
\path[every node/.style={font=\sffamily}]
(1) edge (2) (1) edge (3) (1) edge (6) (1) edge (9) (1) edge (10) (2) edge (3) (2) edge (6) (2) edge (7) (2) edge (10) (3) edge (4) (3) edge (7) (3) edge (9) (3) edge (12) (4) edge (5) (4) edge (7) (5) edge (6) (5) edge (7) (6) edge (9) (6) edge (12) (7) edge (8) (7) edge (10) (8) edge (9) (8) edge (10) (9) edge (12) (10) edge (11) (11) edge (12) ;
\end{tikzpicture}
\\
$G_{\ref{fig4minimal},4} \ce \verb+K|FJoEld??md+$ &
$G_{\ref{fig4minimal},5} \ce \verb+KxFLWDloW?kx+$ &
$G_{\ref{fig4minimal},6} \ce \verb+KxFJoEdoo?cd+ $\\
$(1,1,1,1,1,1,2,1,1,1,1,1)^{\top}x \leq 4$ &
$(1,1,1,1,1,1,1,1,1,2,1,1)^{\top}x \leq 4$ &
$(1,1,1,1,1,1,2,1,1,2,1,1)^{\top}x \leq 4$ \\
$\gamma_3( G_{\ref{fig4minimal},4}, a) =1.0001388$ &
$\gamma_3( G_{\ref{fig4minimal},5}, a) =1.0001323$ &
$\gamma_3( G_{\ref{fig4minimal},6}, a) =1.0000859$ 
 \\
\\

\begin{tikzpicture}
[scale=\sc, thick,main node/.style={circle, minimum size=3.8mm, inner sep=0.1mm,draw,font=\tiny\sffamily}]
\placev
\path[every node/.style={font=\sffamily}]
(1) edge (2) (1) edge (3) (1) edge (6) (1) edge (7) (1) edge (10) (2) edge (3) (2) edge (6) (2) edge (9) (2) edge (10) (3) edge (4) (3) edge (9) (3) edge (10) (4) edge (5) (4) edge (10) (4) edge (12) (5) edge (6) (5) edge (7) (5) edge (9) (5) edge (10) (6) edge (9) (6) edge (12) (7) edge (8) (7) edge (10) (8) edge (9) (8) edge (12) (9) edge (10) (10) edge (11) (11) edge (12) ;
\end{tikzpicture}
&
\begin{tikzpicture}
[scale=\sc, thick,main node/.style={circle, minimum size=3.8mm, inner sep=0.1mm,draw,font=\tiny\sffamily}]
\placev
\path[every node/.style={font=\sffamily}]
(1) edge (2) (1) edge (3) (1) edge (6) (1) edge (7) (2) edge (3) (2) edge (6) (2) edge (9) (2) edge (10) (2) edge (12) (3) edge (4) (3) edge (7) (3) edge (9) (3) edge (12) (4) edge (5) (4) edge (10) (5) edge (6) (5) edge (9) (5) edge (12) (6) edge (9) (7) edge (8) (7) edge (12) (8) edge (9) (8) edge (10) (9) edge (12) (10) edge (11) (11) edge (12) ;
\end{tikzpicture}
&
\begin{tikzpicture}
[scale=\sc, thick,main node/.style={circle, minimum size=3.8mm, inner sep=0.1mm,draw,font=\tiny\sffamily}]
\placev
\path[every node/.style={font=\sffamily}]
(1) edge (2) (1) edge (3) (1) edge (6) (1) edge (9) (1) edge (10) (2) edge (3) (2) edge (6) (2) edge (7) (2) edge (12) (3) edge (4) (3) edge (9) (3) edge (10) (4) edge (5) (4) edge (7) (4) edge (10) (5) edge (6) (5) edge (7) (5) edge (10) (6) edge (7) (6) edge (12) (7) edge (8) (7) edge (10) (8) edge (9) (10) edge (11) (11) edge (12) ;
\end{tikzpicture}
\\
$G_{\ref{fig4minimal},7} \ce \verb+KxFKODl}g?ah+$ &
$G_{\ref{fig4minimal},8} \ce \verb+KxFL?DlSO?lT+$ &
$G_{\ref{fig4minimal},9} \ce \verb+KxFIwE`m_?g`+$ \\
$(1,1,1,1,1,1,1,1,1,2,1,2)^{\top}x \leq 4$ &
$(1,1,1,1,1,1,1,1,1,2,1,1)^{\top}x \leq 4$ &
$(1,1,1,1,1,1,2,1,1,2,1,1)^{\top}x \leq 4$ \\
$\gamma_3( G_{\ref{fig4minimal},7}, a) =1.0001203$ &
$\gamma_3( G_{\ref{fig4minimal},8}, a) =1.0003666$ &
$\gamma_3( G_{\ref{fig4minimal},9}, a) =1.0000789$ 
\\
\\

\begin{tikzpicture}
[scale=\sc, thick,main node/.style={circle, minimum size=3.8mm, inner sep=0.1mm,draw,font=\tiny\sffamily}]
\placev
\path[every node/.style={font=\sffamily}]
(1) edge (2) (1) edge (3) (1) edge (6) (1) edge (7) (1) edge (10) (2) edge (3) (2) edge (6) (2) edge (9) (3) edge (4) (3) edge (9) (4) edge (5) (4) edge (12) (5) edge (6) (5) edge (7) (5) edge (9) (5) edge (10) (6) edge (9) (6) edge (12) (7) edge (8) (7) edge (10) (8) edge (9) (8) edge (12) (10) edge (11) (11) edge (12) ;
\end{tikzpicture}
&
\begin{tikzpicture}
[scale=\sc, thick,main node/.style={circle, minimum size=3.8mm, inner sep=0.1mm,draw,font=\tiny\sffamily}]
\placev
\path[every node/.style={font=\sffamily}]
(1) edge (2) (1) edge (3) (1) edge (6) (1) edge (7) (2) edge (3) (2) edge (6) (2) edge (9) (2) edge (12) (3) edge (4) (3) edge (7) (3) edge (10) (4) edge (5) (4) edge (12) (5) edge (6) (5) edge (9) (5) edge (10) (6) edge (9) (7) edge (8) (7) edge (10) (8) edge (9) (8) edge (12) (9) edge (10) (10) edge (11) (11) edge (12) ;
\end{tikzpicture}
&
\begin{tikzpicture}
[scale=\sc, thick,main node/.style={circle, minimum size=3.8mm, inner sep=0.1mm,draw,font=\tiny\sffamily}]
\placev
\path[every node/.style={font=\sffamily}]
(1) edge (2) (1) edge (3) (1) edge (6) (1) edge (7) (1) edge (10) (2) edge (3) (2) edge (6) (2) edge (9) (3) edge (4) (3) edge (9) (3) edge (12) (4) edge (5) (4) edge (10) (5) edge (6) (5) edge (7) (5) edge (12) (6) edge (7) (6) edge (9) (7) edge (8) (7) edge (12) (8) edge (9) (8) edge (10) (10) edge (11) (11) edge (12) ;
\end{tikzpicture}
\\
$G_{\ref{fig4minimal},10} \ce \verb+KxFKODla_?ah+$ &
$G_{\ref{fig4minimal},11} \ce \verb+KxFL?DLIg?iH+$ &
$G_{\ref{fig4minimal},12} \ce \verb+KxFKWDdcO?dP+$ \\
$(1,1,1,1,1,1,1,1,1,1,1,2)^{\top}x \leq 4$ &
$(1,1,1,1,1,1,1,1,1,1,1,2)^{\top}x \leq 4$ &
$(1,1,1,1,1,1,1,1,1,2,1,1)^{\top}x \leq 4$ \\
$\gamma_3( G_{\ref{fig4minimal},10}, a) =1.0003558$ &
$\gamma_3( G_{\ref{fig4minimal},11}, a) =1.0003679$ &
$\gamma_3( G_{\ref{fig4minimal},12}, a) =1.0002711$ 
\\
\\

\end{tabular}}
\caption{A sample of twelve 4-minimal graphs with notable features.}\label{fig4minimal}
\end{figure}
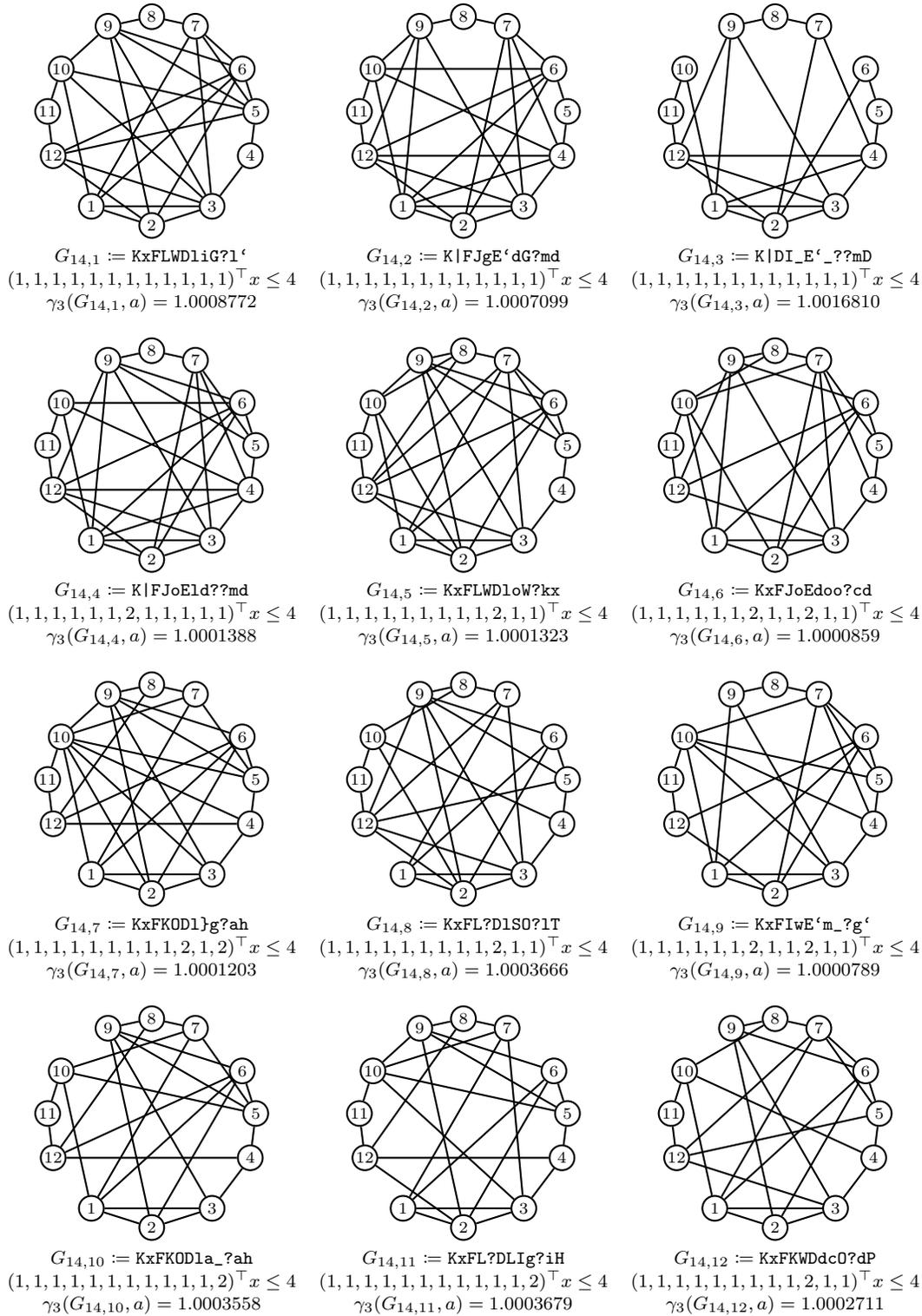

Among the 4,107 4-minimal graphs we found, 2,318 belong to $\K_{6,3}$, including the aforementioned 588 graphs in $\hat{\K}_{6,3}$ with clique number at most three. Thus, in addition to the analytical proof from~\cite{AuT25}, we now have an independent certificate-based proof that every graph $G \in \hat{\K}_{6,3}$ with $\omega(G) \leq 3$ is indeed 4-minimal. The densest 4-minimal stretched clique we found has 28 edges --- $G_{\ref{fig4minimal},1}$ and  $G_{\ref{fig4minimal},2}$ are two such examples. 

The densest 4-minimal graphs we discovered overall have 29 edges --- $G_{\ref{fig4minimal},4}$ and $G_{\ref{fig4minimal},5}$ give two such instances. The sparsest 4-minimal graphs we found have 21 edges; there are 40 such graphs, all sparse stretched cliques in $\hat{\K}_{6,3}$. One of these graphs is $G_{\ref{fig4minimal},3}$, which has the notable feature that its facet of $\LS_+$-rank four has the largest value of $\gamma_3(G,a)$  among all 12-vertex graphs we tested. These recurring structural patterns motivate the following conjectures.

\begin{conjecture}\label{conj02}
For every positive integer $\ell$, a sparsest $\ell$-minimal graph is a sparse stretched clique in $\hat{\K}_{\ell+2, \ell-1}$.
\end{conjecture}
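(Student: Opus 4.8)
The plan is to prove the sharper quantitative statement that every $\ell$-minimal graph $G$ satisfies $|E(G)| \geq \binom{\ell+2}{2} + 2(\ell-1)$, with equality exactly for the sparse stretched cliques in $\hat{\K}_{\ell+2,\ell-1}$, and to induct on $\ell$ via the stretching decomposition of Theorem~\ref{thmAuT24b1}. Write $B_\ell \ce \binom{\ell+2}{2} + 2(\ell-1)$, so that $B_\ell - B_{\ell-1} = (\ell+1) + 2 = \ell+3$. The base case $\ell=1$ is immediate: $K_3$ is the unique $1$-minimal graph, $|E(K_3)| = 3 = B_1$, and $K_3$ is the degenerate sparse stretched clique in $\hat{\K}_{3,0}$.

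For the inductive step, let $G$ be $\ell$-minimal with $\ell \geq 2$. By Theorem~\ref{thmAuT24b1} there is a graph $H$ and a vertex $i$ such that $G$ arises from a proper $2$-stretching of $i$ with split $\Gamma_H(i) = A_1 \cup A_2$, and $H-i$ is $(\ell-1)$-minimal. Tracking edges through the stretch yields the exact identity
\[
|E(G)| = |E(H-i)| + \deg_H(i) + |A_1 \cap A_2| + 2,
\]
because stretching deletes the $\deg_H(i)$ edges at $i$ and introduces the $|A_1| + |A_2| + 2 = \deg_H(i) + |A_1 \cap A_2| + 2$ edges incident to $i_0,i_1,i_2$. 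Applying the inductive hypothesis $|E(H-i)| \geq B_{\ell-1}$, the target bound $|E(G)| \geq B_\ell$ reduces to the single inequality
\[
\deg_H(i) + |A_1 \cap A_2| \geq \ell + 1,
\]
which I denote $(\star)$.

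The crux, and the step I expect to be the main obstacle, is establishing $(\star)$: intuitively, a vertex whose stretching lifts the rank from $\ell-1$ to $\ell$ must be richly attached to the $(\ell-1)$-minimal core $H-i$. I would first collect the structural constraints forced by $\ell$-minimality and try to convert a violation of $(\star)$ into a violation of one of them. Three tools look most promising. First, Proposition~\ref{propCliqueCut}: an $\ell$-minimal graph admits no cut clique, since any clique cut would split $G$ into proper induced subgraphs, each on at most $3\ell-1$ vertices and hence of rank at most $\ell-1$ by Theorem~\ref{thmLiptakT031}, contradicting $r_+(G)=\ell$; a small neighborhood $\Gamma_H(i)$ threatens to make the gadget $\{i_0,i_1,i_2\}$ separable by a clique. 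Second, the destruction bound Theorem~\ref{thmDeleteDestroy}(i): one checks $G \ominus i_0 = H-i$, so $i_0$ already witnesses $r_+(G \ominus i_0) = \ell-1$, and the hope is to show that too small a $\deg_H(i)$ forces some other destruction or deletion to drop the rank below $\ell$. Third, the full-support requirement from Lemma~\ref{lemfacet2}: an $\ell$-minimal graph must carry a facet of $\STAB(G)$ with support all of $V(G)$, and a poorly connected stretched vertex tends to let this facet factor through a lower-dimensional face. I expect $(\star)$ to emerge from combining a neighborhood lower bound with the no-cut-clique property, but isolating a clean, dimension-independent argument is precisely where the difficulty lies.

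Finally, for the equality characterization I would rerun the induction in the tight regime. If $|E(G)| = B_\ell$ then $|E(H-i)| = B_{\ell-1}$ and $(\star)$ holds with equality, so by the inductive hypothesis $H-i$ is a sparse stretched clique in $\hat{\K}_{\ell+1,\ell-2}$, while $\deg_H(i) = \ell+1$ and $A_1 \cap A_2 = \emptyset$. It then remains to show that $i$ attaches with exactly one neighbor in each of the $\ell+1$ groups of $H-i$ -- ruling out two edges into one group or a missed group, again through the absence of cut cliques -- and that the disjoint split $A_1 \cap A_2 = \emptyset$ makes $G$ land in $\hat{\K}_{\ell+2,\ell-1}$ with exactly $\tfrac{(\ell+2)(\ell+1)}{2} + 2(\ell-1)$ edges, i.e., a sparse stretched clique. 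This closing bookkeeping is routine once $(\star)$ and its equality case are in hand.
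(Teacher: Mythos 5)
The statement you are trying to prove is a \emph{conjecture} in the paper (Conjecture~\ref{conj02}): the authors offer no proof, only computational evidence --- the sparsest known 3-minimal graphs have $14 = \binom{5}{2}+4$ edges and all lie in $\hat{\K}_{5,2}$, and the sparsest known 4-minimal graphs have $21 = \binom{6}{2}+6$ edges and are exactly 40 sparse stretched cliques in $\hat{\K}_{6,3}$. So there is no proof in the paper to compare against, and any complete argument here would be a new result, not a reconstruction.

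Your proposal is not that complete argument: it is a correct reduction plus an explicit admission that the reduced statement is open. The bookkeeping you do is sound --- the identity $|E(G)| = |E(H-i)| + \deg_H(i) + |A_1 \cap A_2| + 2$ is right, $B_\ell - B_{\ell-1} = \ell+3$ is right, and the base case is fine --- but the inequality $(\star)$, $\deg_H(i) + |A_1\cap A_2| \geq \ell+1$, is precisely the hard content of the conjecture, and none of the three tools you cite is shown to yield it. In fact nothing in the paper gives \emph{any} lower bound on how a rank-raising stretch must attach to the core; the only proven edge lower bound mentioned is the $\ell=3$ case imported from~\cite{AuT24b}. There is also a structural flaw in the induction beyond the missing lemma: Theorem~\ref{thmAuT24b1} only asserts the \emph{existence} of some decomposition $(H,i,A_1,A_2)$ with $H-i$ being $(\ell-1)$-minimal, and $(\star)$ need not hold for every such decomposition even if the conjecture is true. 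For example, a hypothetical 14-edge 3-minimal graph whose core $H-i$ is the 9-edge 2-minimal graph $G_{\ref{figKnownEG},2}$ would have $\deg_H(i)+|A_1\cap A_2| = 3 < \ell+1$, violating $(\star)$ while still attaining the conjectured minimum $B_3 = 14$; ruling out such configurations (equivalently, showing attachment cost can be traded against excess edges in the core) is an additional argument your plan does not contain, and it also breaks your equality analysis, which assumes tightness propagates to both $|E(H-i)| = B_{\ell-1}$ and $(\star)$ simultaneously. So the gap is genuine: the proposal is a reasonable research plan whose crux --- a quantitative lower bound on the attachment of the stretched vertex, robust over all decompositions --- remains entirely unproven.
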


\begin{conjecture}\label{conj02b}
For every positive integer $\ell$, the maximum value of $\gamma_{\ell-1} (G,a)$ among $\ell$-minimal graphs is attained by a sparse stretched clique $G \in \hat{\K}_{\ell+2, \ell-1}$ and $a \ce \bar{e}$.
\end{conjecture}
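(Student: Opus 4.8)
The plan is to separate the conjecture into a tight value computation for the proposed extremizer and a matching universal upper bound. Fix a sparse stretched clique $G^\ast\in\hat{\K}_{\ell+2,\ell-1}$ with $\omega(G^\ast)\le 3$; such graphs exist and, by Theorem~\ref{thmAuT251}, satisfy $r_+(G^\ast)=\ell$, so they are genuinely $\ell$-minimal. Since $\a(G^\ast)=d+1=\ell$, the denominator $\max\{\bar e^\top x:x\in\STAB(G^\ast)\}$ of $\gamma_{\ell-1}(G^\ast,\bar e)$ equals $\ell$, and the task reduces to evaluating $V_\ell\ce\max\{\bar e^\top x:x\in\LS_+^{\ell-1}(G^\ast)\}$. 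I would compute $V_\ell$ by symmetry reduction: the construction carries a large automorphism group permuting the $\ell-1$ stretched triples, the objective $\bar e$ is invariant under it, and by convexity both the optimal point and a witnessing $\LS_+^{\ell-1}$ moment matrix may be taken invariant. This collapses the governing semidefinite program to a quotient program whose size is independent of $\ell$, from which I expect to extract a rational formula for $V_\ell$ and hence the exact extremal ratio $V_\ell/\ell$.

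For the upper bound I would first reduce the search. Because $G$ is $\ell$-minimal, the rank-$\ell$ facet realizing $\gamma_{\ell-1}(G,a)>1$ has full support (the remark following Lemma~\ref{lemfacet2}), so it suffices to bound $\gamma_{\ell-1}(G,a)$ over full-support facets of graphs on $3\ell$ vertices. I would then attempt an induction on $\ell$ organized by Theorem~\ref{thmAuT24b1}: every $\ell$-minimal $G$ is a proper $2$-stretching of a vertex $i$ of some $H$ for which $H-i$ is $(\ell-1)$-minimal. The base case $\ell=1$ is immediate, since the unique $1$-minimal graph $K_3=\hat{\K}_{3,0}$ is a sparse stretched clique with $\gamma_0(K_3,\bar e)=\tfrac32$. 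The inductive goal is a transfer inequality asserting that the combined $1$-join and $2$-stretching step cannot raise the normalized ratio above the value produced by the sparse stretched-clique recursion, so that the extremizer stays within $\hat{\K}_{\ell+2,\ell-1}$ at every step.

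As a complementary lever I would invoke edge-subgraph monotonicity (Lemma~\ref{lem05subgraph}): deleting edges enlarges every relaxation $\LS_+^{\ell-1}$, hence can only increase the numerator, while it simultaneously enlarges the stable sets controlling the denominator. Read correctly, this should push the ratio toward the sparsest members of each family, which is exactly the regime $\hat{\K}_{\ell+2,\ell-1}$ isolated by Lemma~\ref{lemSparseStretchedClique}; I would also use it to rewrite the weighted full-support facets that appear (for instance the $(1,\dots,1,2,\dots)$ facets in Figure~\ref{fig4minimal}) in terms of all-ones facets on a closely related graph, so that the eventual reduction to $a=\bar e$ is legitimate.

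The hard part will be the upper bound, for two reasons. First, no classification of $\ell$-minimal graphs is available and their number grows at least like $2^{\ell-1}$ (Theorem~\ref{thmAuT252}), so the argument must be genuinely structural rather than a finite case check. Second, and decisively, $\max\{a^\top x:x\in\LS_+^{\ell-1}(G)\}$ is the optimum of an $(\ell-1)$-fold semidefinite relaxation, and there is no known closed-form description of how this optimum behaves under $2$-stretching and $1$-join for an arbitrary weighted facet $a$. Establishing the transfer inequality uniformly in $a$ and in the unclassified intermediate graph $H$ is precisely the missing ingredient, and it is why the statement is posed as a conjecture rather than proved.
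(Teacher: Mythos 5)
You were asked to prove Conjecture~\ref{conj02b}, and the first thing to say is that the paper contains no proof of it: the statement is posed as a conjecture precisely because only numerical evidence exists. The support in the paper is the search data --- the cluster of 40 sparse stretched cliques at the top of Figure~\ref{figX4gamma3}, with $G_{\ref{fig4minimal},3}$ attaining the largest value $\gamma_3(G,a)\approx 1.00168$ among all tested 12-vertex graphs, together with the analogous pattern for $\ell=3$ in Figure~\ref{figX3gamma2} --- and nothing more. Your proposal correctly recognizes this status: you assemble the right levers from the paper's toolkit (Theorem~\ref{thmAuT251} to certify that the proposed extremizer is genuinely $\ell$-minimal with $\a(G^\ast)=\ell$, Theorem~\ref{thmAuT24b1} to organize an induction through 1-join and 2-stretch steps, Lemma~\ref{lem05subgraph} and Lemma~\ref{lemSparseStretchedClique} to push toward sparse members, and Theorem~\ref{thmLiptakT031} for the full-support reduction), and you explicitly name the missing transfer inequality as the reason the statement is open. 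That assessment matches the paper: there is no proof on either side to compare, and your write-up is a research plan, honestly labelled as such, rather than a proof.

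Two steps in your plan would fail as written, and you should not rely on them even as scaffolding. First, the symmetry reduction: members of $\hat{\K}_{\ell+2,\ell-1}$ are irregular, and the stretching data $(A_1,A_2)$ differs from one stretched vertex to the next --- this asymmetry is exactly why there are at least $2^{\ell-1}$ non-isomorphic $\ell$-minimal graphs (Theorem~\ref{thmAuT252}) --- so there is no large automorphism group ``permuting the $\ell-1$ stretched triples''; moreover, even for an invariant objective, the recursively defined level-$(\ell-1)$ $\LS_+$ program does not collapse to a quotient of size independent of $\ell$, since the nested conditioning on the vectors $e_i$, $f_i$ produces orbit structures that grow with $\ell$. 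A closed rational formula for $V_\ell$ is therefore not within reach by this route. Second, your monotonicity lever is stated too strongly: deleting an edge enlarges $\LS_+^{\ell-1}$ and hence the numerator, but it simultaneously enlarges $\STAB$, so the ratio increases only under the hypothesis that $a^{\top}x\leq\beta$ remains valid for the edge subgraph --- which is the precise form in which the paper applies Lemma~\ref{lem05subgraph}; without that validity the ratio can move either way, so ``pushes toward the sparsest members'' is heuristic, not a lemma. A smaller caution: the full-support reduction is sound for any direction with $\gamma_{\ell-1}(G,a)>1$ (if $|\supp(a)|<3\ell$ then Theorem~\ref{thmLiptakT031} forces $\gamma_{\ell-1}(G,a)=1$), but the maximum of the ratio over directions need not be attained at a facet normal of $\STAB(G)$, so your further restriction to facets, and the rewriting of the weighted facets $(1,\ldots,1,2,\ldots)^{\top}x\leq\ell$ in terms of $\bar{e}$ on a related graph, would each need their own argument.
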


Next, $G_{\ref{fig4minimal},7}$, $G_{\ref{fig4minimal},8}$, and $G_{\ref{fig4minimal},9}$ give examples of 4-minimal graphs which contain $K_4$ as an induced subgraph. In particular, both $\set{1,2,3,10}$ and $\set{2,3,9,10}$ induce a $K_4$ in $G_{\ref{fig4minimal},7}$, which is the only 4-minimal graph we found with multiple induced $K_4$ subgraphs. Also, notice that these graphs do not belong to $\K_{6,3}$.

In other words, as with 3-minimal graphs, we did not find any 4-minimal graphs in $\K_{6,3}$ where $\omega(G) \geq 4$. In fact, one can exhaustively search and find that there are exactly 121 non-isomorphic sparse stretched cliques $G \in \hat{\K}_{6,3}$ where $\bar{e}^{\top}x \leq 4$ is a facet of $\STAB(G)$. 40 of them have $\omega(G) = 3$, all of which are among graphs which have been shown to be 4-minimal in Theorem~\ref{thm4min}. The remaining 81 all have $\omega(G) \geq 4$, and one can use similar ideas as in the proof of Proposition~\ref{propK52K4} to show that all 81 graphs have $\LS_+$-rank at most 3. Thus, we can use the same argument for Proposition~\ref{propK52K4} to show that $\bar{e}^{\top}x \leq 4$ has $\LS_+$-rank at most 3 for every graph $G \in \K_{6,3}$ with $\omega(G) \geq 4$. These observations motivate the following conjecture.

\begin{conjecture}\label{conj03}
For every positive integer $\ell$, if $G \in \K_{\ell+2,\ell-1}$ and $\omega(G) \geq 4$, then $r_+(G) \leq \ell-1$.
\end{conjecture}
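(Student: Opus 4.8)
The plan is to argue by induction on $\ell$, taking the cases $\ell \le 4$ as the base: the cases $\ell \le 2$ are vacuous, since a proper $2$-stretching of a vertex of $K_4$ always yields $\omega = 3$, while $\ell = 3$ is Proposition~\ref{propK52K4} and $\ell = 4$ is the exhaustive verification recorded above for $\K_{6,3}$. Fix $G \in \K_{\ell+2,\ell-1}$ with $\omega(G) \ge 4$. By Lemma~\ref{lemfacet2} it suffices to bound $r_+(G[\supp(a)])$ over all facets $a^{\top}x \le \beta$ of $\STAB(G)$, and by Theorem~\ref{thmLiptakT031} any facet with $|\supp(a)| < 3\ell$ already satisfies $r_+(G[\supp(a)]) \le \ell - 1$. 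Hence the entire problem reduces to showing that every full-support facet of $\STAB(G)$ has $\LS_+$-rank at most $\ell - 1$.

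Next I would pass to a sparse representative exactly as in Proposition~\ref{propK52K4}: by Lemma~\ref{lemSparseStretchedClique} the graph $G$ has an edge subgraph $H \in \K_{\ell+2,\ell-1}$ with $|E(H)| = \binom{\ell+2}{2} + 2(\ell-1)$ and $\omega(H) = \omega(G) \ge 4$, and by Lemma~\ref{lem05subgraph} it is enough to bound the rank of the relevant facets on $H$. For the distinguished inequality $\bar{e}^{\top}x \le \ell$ (the expected full-support facet, since $\alpha(G) = \ell$) I would split into two cases as in the cited argument: if $\bar{e}^{\top}x \le \ell$ fails to be a facet of $\STAB(H)$, then it decomposes as a sum of lower-support facets and Lemma~\ref{lemfacet2} gives $r_+(H) \le \ell - 1$ directly; otherwise $\bar{e}^{\top}x \le \ell$ is a facet and one proceeds to a deletion step.

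The intended inductive core is to locate a single vertex $v \in V(H)$ with $r_+(H - v) \le \ell - 2$, whence Theorem~\ref{thmDeleteDestroy}(ii) yields $r_+(H) \le (\ell - 2) + 1 = \ell - 1$. The natural candidate is a stretched-leaf $i_1$ of some stretched slot $i$ that participates in a $K_4$: deleting it turns the degree-$2$ center $i_0$ into a pendant on $i_2$, so Proposition~\ref{propCliqueCut} (with cut clique $\{i_2\}$) peels off the edge $\{i_0,i_2\}$ and leaves $H - \{i_0,i_1\}$ as the rank-determining part. The hope is that, after recognizing the now-unstretched slot and peeling further clique cuts, this residual graph inherits a certified $\LS_+$-rank of at most $\ell - 2$ from the previous case, closing the induction.

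The hard part is twofold, and is exactly why the statement is recorded as a conjecture. First, the reduction via Lemma~\ref{lemfacet2} leaves us needing to control all full-support facets, yet it is not known --- even for $\ell = 3$ --- that $\bar{e}^{\top}x \le \ell$ is the \emph{only} full-support facet of a graph in $\K_{\ell+2,\ell-1}$; this is precisely the gap flagged after the definition of $\X_3$, and without it one cannot pass from a rank bound on $\bar{e}^{\top}x \le \ell$ to a bound on $r_+(G)$. Second, the deletion step resists a uniform treatment: for $\ell \le 4$ one verifies by hand that a single deletion produces a perfect (for $\ell = 3$) or $\LS_+$-rank-$\le 2$ (for $\ell = 4$) graph, but a generic single deletion \emph{unstretches} a slot rather than removing one, landing in $\K_{\ell+2,\ell-2}$ (with four unstretched vertices and $3\ell - 2$ vertices total) rather than in the extremal family $\K_{\ell+1,\ell-2}$. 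In that non-extremal regime only the trivial bound $r_+ \le \ell - 1$ from Theorem~\ref{thmLiptakT031} is available, whereas the argument needs $r_+ \le \ell - 2$. Supplying the missing structural input --- a deletion or clique-cut decomposition that provably drops a full slot while preserving $\omega \ge 4$ and the vertex count at $3(\ell-1)$ --- is the central obstacle, and replacing the fortuitous perfection used in the small cases by such a uniform mechanism is where the real work lies.
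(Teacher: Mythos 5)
The statement you set out to prove is Conjecture~\ref{conj03}: the paper does not prove it, so there is no proof of the authors to compare yours against, and any complete ``proof'' would in fact settle an open problem. What the paper actually establishes is weaker than the base cases you cite. Proposition~\ref{propK52K4} (for $\ell=3$) and the verification of the $81$ sparse stretched cliques in $\hat{\K}_{6,3}$ (for $\ell=4$) yield full rank bounds $r_+\le\ell-1$ only for the \emph{sparse representatives}; for a general $G\in\K_{\ell+2,\ell-1}$ with $\omega(G)\ge 4$ they bound only the $\LS_+$-rank of the single inequality $\bar{e}^{\top}x\le\ell$, because the transfer to edge subgraphs via Lemma~\ref{lem05subgraph} moves one valid inequality at a time. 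The paper is explicit that upgrading this to $r_+(G)\le\ell-1$ would require knowing that $\bar{e}^{\top}x\le\ell$ is the only full-support facet, which is open even for $\ell=3$. So your phrase ``$\ell=3$ is Proposition~\ref{propK52K4}'' overstates the base case, although your closing paragraph shows you understand exactly this distinction. A smaller slip: the claimed vacuity of $\ell\le 2$ presumes the stretchings defining $\K_{n,d}$ are proper, but the paper's definition allows improper ones, so $\K_{4,1}$ does contain graphs with $\omega=4$; these are perfect, hence of rank at most $1$, so that case is easy rather than vacuous.

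Beyond that, your diagnosis of why the induction cannot currently be closed is accurate and matches the paper's own partial arguments. First, Lemma~\ref{lemfacet2} forces control of \emph{every} full-support facet, while the sparse-representative reduction (Lemma~\ref{lemSparseStretchedClique} together with Lemma~\ref{lem05subgraph}) only handles inequalities that stay valid after edge deletion, i.e.\ essentially only $\bar{e}^{\top}x\le\ell$. Second, the deletion step fails for dimensional reasons exactly as you say: Theorem~\ref{thmDeleteDestroy}(ii) needs some vertex $v$ with $r_+(H-v)\le\ell-2$, but deleting a leaf $i_1$ and peeling the pendant $i_0$ with Proposition~\ref{propCliqueCut} leaves a graph on $3\ell-2$ vertices, for which Theorem~\ref{thmLiptakT031} gives only $\lfloor(3\ell-2)/3\rfloor=\ell-1$; the perfection (or rank-$\le 2$ property) of the residual graphs in the $\ell\in\{3,4\}$ verifications is a small-case accident with no known uniform replacement. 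In short, what you have written is a correct map of the obstructions rather than a proof, you do not claim otherwise, and given that the statement is an open conjecture this is the honest outcome; the two gaps you name are precisely the ones the authors flag.
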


If Conjecture~\ref{conj03} holds, then combining it with Theorem~\ref{thmAuT251} gives that a graph $G \in \hat{\K}_{\ell+2,\ell-1}$ is $\ell$-minimal if and only if $\omega(G) \leq 3$.

Next, recall our discussion around Corollary~\ref{corJoinStretch} about 3-minimal graphs where the only way to obtain these graphs from $K_3$ using 1-join and 2-stretch operations is to alternate between these two operations. We found that there are also many such examples for 4-minimal graphs --- $G_{\ref{fig4minimal},6}$ and $G_{\ref{fig4minimal},9}$ are two such instances.

Finally, recall that we mentioned earlier that each of the 49 3-minimal graphs in Figure~\ref{fig3minimalall} contains a stretched clique in $\K_{5,2}$ as an edge subgraph. This is no longer true for 4-minimal graphs, as $G_{\ref{fig4minimal},10}$, $G_{\ref{fig4minimal},11}$, and $G_{\ref{fig4minimal},12}$ provide instances which do not contain a stretched clique in $\K_{6,3}$ as an edge subgraph. In particular, $G_{\ref{fig4minimal},12}$ does not even contain $K_6$ as a graph minor, giving the first example of an $\ell$-minimal graph which does not contain $K_{\ell+2}$ as a graph minor.

\subsection{Numerical evidence and heuristic observations from the 4-minimal search}

Next, we go into more detail about our approach for searching for 4-minimal graphs. First, we used the criteria described in Theorem~\ref{thmAuT24b1} to construct a pool of viable candidates for being 4-minimal graphs. Let $\X_4$ denote the set of ordered pairs $(G,a)$ with the following properties:

\begin{itemize}
\item
$V(G) = [12]$, and the vertices in $[9] \subset V(G)$ induce one of the $49$ 3-minimal graphs listed in Figure~\ref{fig3minimalall}.
\item
$\deg(11) = 2$, with $\Gamma_G(11) = \set{10,12}$. Also, $\Gamma_G(10)$ and $\Gamma_G(12)$ are not subsets of each other. (This is to ensure that $G$ can be obtained from a proper 2-stretching of another graph.)
\item
$a$ is the direction of a full-support facet of $G$ using the criterion described in~\cite[Corollary 15]{AuT24b}.
\end{itemize}

An exhaustive search found that $|\X_4| = 2{,}038{,}174$ after eliminating redundant isomorphic graphs. A straightforward approach would then be to compute $\gamma_3(G,a)$ for each of these elements, and then proceed to generate $\LS_+^3$ certificate packages for graphs with integrality ratios comfortably above $1$. However, optimizing over (a straightforward formulation of) $\LS_+^3(G)$ with CVX+SeDuMi for a 12-vertex graph $G$ takes 8--10 minutes per instance on our machine, which means that computing $\gamma_3(G,a)$ for all elements in $\X_4$ this way would take more than 30 years. Thus, we need additional insights to narrow down our search.

Therefore, let us focus on a particular subset of $\X_4$ by imposing the edge subgraph partial order on this set: Given $(G,a), (G',a') \in \X_4$, we define $(G,a) \leq (G',a')$ if $a =a'$ and $G$ is an edge subgraph of $G'$. In this case, it follows from Lemma~\ref{lem05subgraph} that $r_+(G') = 4$ implies $r_+(G) = 4$. Thus, let $\bar{\X}_4$ be the set of elements $(G,a) \in \X_4$ which are minimal with respect to this partial order. Then we know that every 4-minimal graph contains at least one graph in $\bar{\X}_4$ as an edge subgraph. Also, we have $|\bar{\X}_4| = 6{,}822$, which is a much more manageable set to work with. Figure~\ref{figX4gamma3} shows the plot of the values of $\gamma_3(G,a)$ on the real number line for all $(G,a) \in \bar{\X}_4$.

\begin{figure}[htbp]
\centering
\includegraphics[width=15cm]{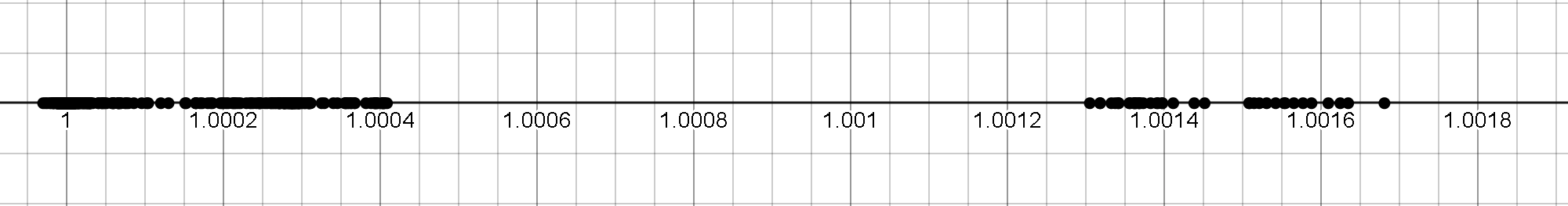}

\caption{Plotting $\gamma_3(G,a)$ for all $(G,a) \in \bar{\X}_4$.}
\label{figX4gamma3}
\end{figure}

Next, observe that for every $(G,a) \in \X_4$,
\begin{equation}\label{eqgammabound}
\gamma_3(G,a) \leq \max_{ (G',a) \in \bar{\X}_4} \set{ \gamma_3(G',a) : (G',a) \leq (G,a)}.
\end{equation}
Using~\eqref{eqgammabound}, we obtain an upper bound on $\gamma_3(G,a)$ for every $(G,a) \in \X_4$. Then we selectively computed the actual value of $\gamma_3(G,a)$ for elements which are promising, resulting in the collection of 4-minimal graphs described in Theorem~\ref{thm4min}. With this reduction, we ended up computing $\gamma_3(G,a)$ for around 16,000 graphs in $\X_4$.  At 8--10 minutes per graph, solving these 16,000 SDPs alone took CVX+SeDuMi more than 2,000 hours of computation time.

We next examine the numerical distribution in Figure~\ref{figX4gamma3}. First, notice that there is a cluster of values above $1.001$ --- they consist of 40 elements in $\bar{\X}_4$, which are exactly the aforementioned 40 sparse stretched cliques in $\K_{6,3}$ with clique number at most 3. In particular, the rightmost dot in Figure~\ref{figX4gamma3} represents the integrality ratio of $G_{\ref{fig4minimal},3}$.

In addition to these 40 sparse stretched cliques, there are another 134 graphs in $\bar{\X}_4$ which are certified to be 4-minimal in Theorem~\ref{thm4min}, with integrality ratios between $1.00004$ and $1.0005$. Unlike the case in Figure~\ref{figX3gamma2}, there is not a clean visual break indicating which graphs ``apparently'' have integrality ratios above $1$, and so we suspect that there could be more than 174 4-minimal graphs among elements of $\bar{\X}_4$.

Recall that in our analysis on $\X_3$, we examined the possible predictive value of $\gamma_1(G,a)$  (computationally easier to obtain) for $\gamma_2(G,a)$ (the quantity directly relevant to whether a graph has $\LS_+$-rank $3$). 
Likewise, it is natural to wonder if $\gamma_1(G,a)$ and/or $\gamma_2(G,a)$ can serve as a heuristic in our search for 4-minimal graphs. In particular, between $\gamma_1(G,a)$ and $\gamma_2(G,a)$, which has better predictive value for $\gamma_3(G,a)$? To that end, we plotted $\gamma_1(G,a)$ against $\gamma_3(G,a)$ (Figure~\ref{fig_4minX41}, left), as well as $\gamma_2(G,a)$ against $\gamma_3(G,a)$ (Figure~\ref{fig_4minX41}, right). A simple linear regression shows that $\gamma_2(G,a)$ ($r \approx 0.3158$) indeed has a stronger correlation with $\gamma_3(G,a)$ compared to $\gamma_1(G,a)$ ($r \approx 0.03976$) for this particular set of graphs and facets. This is not surprising, as $\gamma_2(G,a)$ (3--4 seconds per instance) is more computationally costly to obtain than $\gamma_1(G,a)$ (roughly 0.25 seconds per instance) and should reveal more information about the underlying graph. In particular, as the clusters of points on the lower-left corner of both scatterplot suggest, graphs with the lowest $\gamma_1(G,a)$ and $\gamma_2(G,a)$ values all seem to have $\gamma_3(G,a)$ very close to $1$.

\begin{figure}[htbp]
\centering
\begin{tabular}{cc}
\includegraphics[width=7cm]{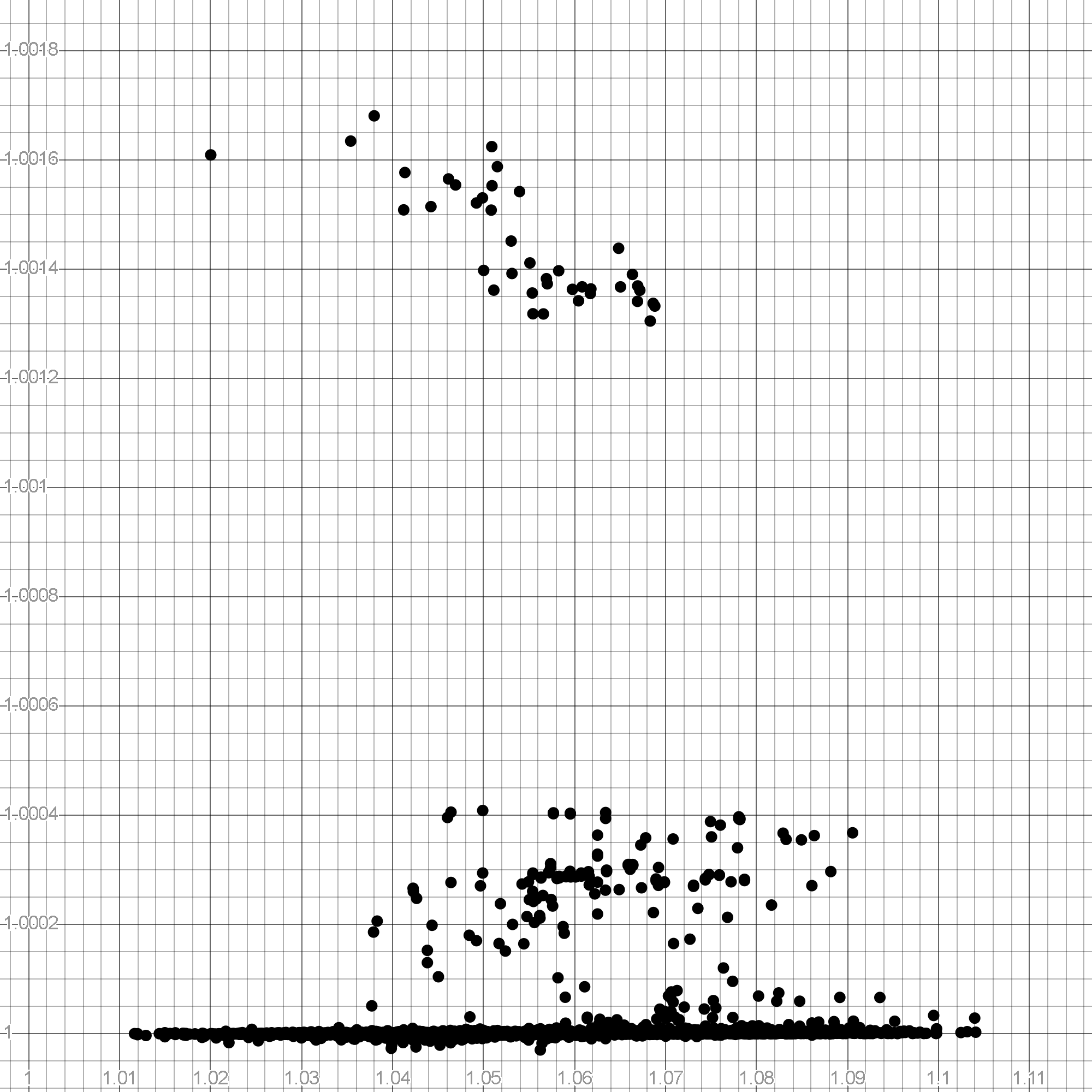} 
&
\includegraphics[width=7cm]{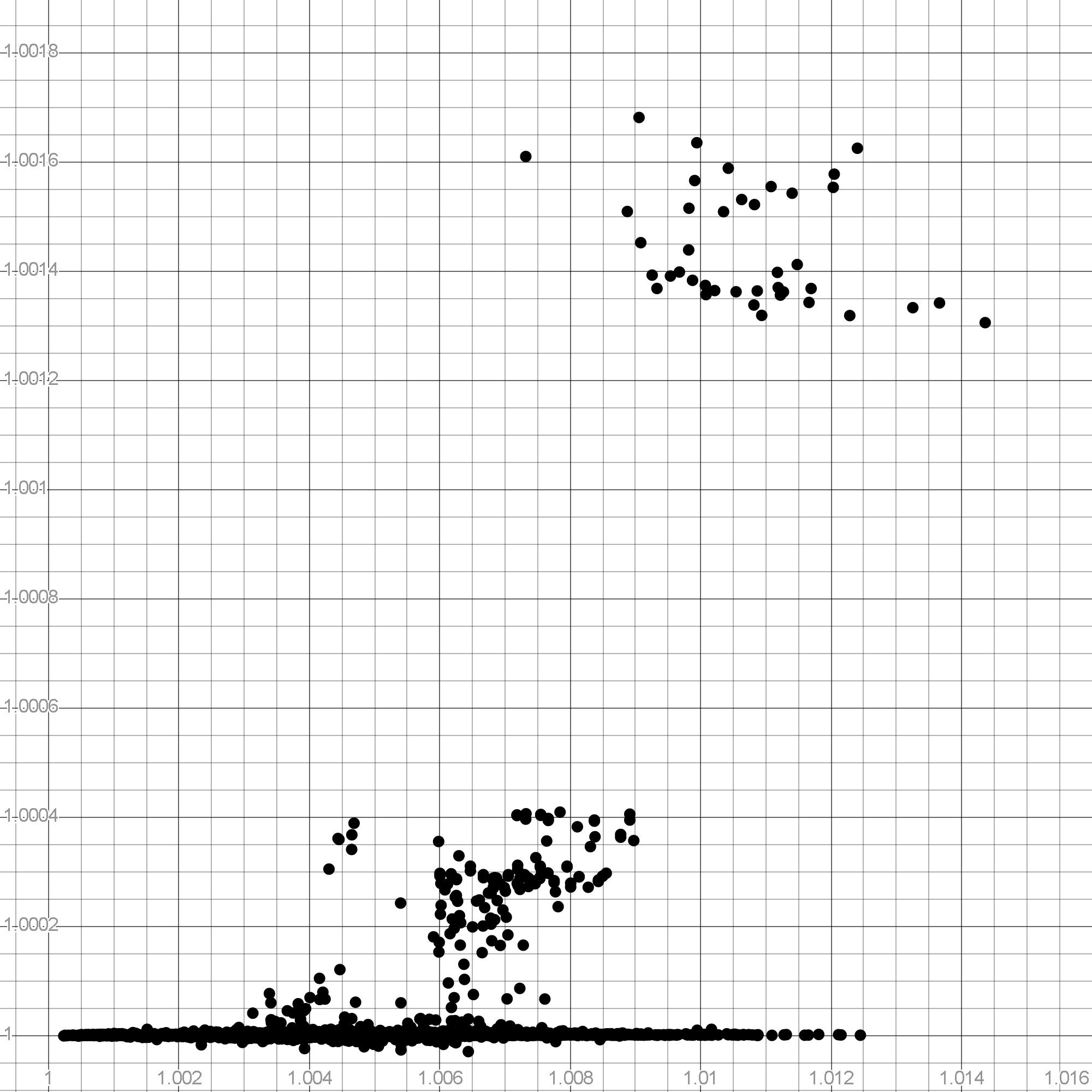} 
\end{tabular}
\caption{Scatterplots of $\gamma_1(G,a)$ versus $\gamma_3(G,a)$ (left) and $\gamma_2(G,a)$ versus $\gamma_3(G,a)$ (right) for all $(G,a) \in \bar{\X}_4$.}\label{fig_4minX41}

\end{figure}

\section{Vertex-transitive graphs}\label{sec05}

In this section, we look into the smallest vertex-transitive graphs with a given $\LS_+$-rank. As we saw, $\ell$-minimal graphs must contain some (but not all) vertices with degree 2, making them necessarily irregular. Thus, restricting ourselves to highly-symmetric graphs may expose new structures that make instances of the stable set problem challenging for $\LS_+$.

\subsection{General setup and preliminary bounds}

Given $\ell \in \mN$, let $\overline{n}_+(\ell)$ denote the minimum number of vertices among vertex-transitive graphs with $\LS_+$-rank exactly $\ell$. Theorem~\ref{thmLiptakT031} implies that $\overline{n}_+(\ell) \geq 3\ell$ for every $\ell \in \mN$, and the aforementioned result by Stephen and the second author~\cite{StephenT99} on the line graphs of odd cliques show that $\overline{n}_+(\ell) \leq \binom{2\ell+1}{2} = 2\ell^2+\ell$ for every $\ell \in \mN$. Both bounds are tight for $\ell =1$, as $\overline{n}_+(1) = 3$ is attained by $K_3$.

Applying Theorem~\ref{thmAuT251} and then adding vertices to symmetrize stretched cliques yields the following existence result for vertex-transitive graphs of high $\LS_+$-rank.

\begin{theorem}\label{thmAuT253}
\cite[Proposition 26]{AuT25} For every odd integer $\ell \geq 1$, there exists a vertex-transitive graph $G$ where $|V(G)| \leq 4\ell+8$ and $r_+(G) \geq \ell$.
\end{theorem}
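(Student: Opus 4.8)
The plan is to reduce the statement to the rank lower bound already available for stretched cliques and then supply an explicit symmetrization. First I would fix a stretched clique $G_0 \in \hat{\K}_{\ell+2,\ell-1}$ with $\omega(G_0) \le 3$; such a graph exists for every $\ell$, since this is precisely the regime underlying Theorem~\ref{thmAuT252}. For this $G_0$ we have $n = \ell+2$, $d = \ell-1$, and $k \ce \max\set{3,\omega(G_0)} = 3$, so Theorem~\ref{thmAuT251} guarantees that the inequality $\bar{e}^{\top}x \le d+1$ already has $\LS_+$-rank at least $n-k+1 = (\ell+2)-3+1 = \ell$. In particular $r_+(G_0) \ge \ell$, and $G_0$ has exactly $3\ell$ vertices. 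The remaining task is purely structural: realize $G_0$ as an induced subgraph of a vertex-transitive graph $G$ on at most $4\ell+8$ vertices. Once this is done, taking $S \ce V(G_0)$, Lemma~\ref{lemfacet1} yields $r_+(G) \ge r_+(G[S]) = r_+(G_0) \ge \ell$, which is exactly the desired conclusion.

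The heart of the argument is therefore the construction of $G$, and here the vertex budget is tight: since $G_0$ already uses $3\ell$ vertices, the symmetrization may add only about $\ell+8$ new vertices, i.e.\ a bounded overhead of roughly one new vertex per three old ones. This rules out naive symmetrizations such as disjoint unions of automorphic copies or wreath-type blow-ups, and points toward a translation-invariant host. I would therefore aim to build $G$ as a Cayley/circulant graph on a cyclic group $\mZ_N$ with $N \le 4\ell+8$, choosing a symmetric connection set $S = -S$ so that (i) the circulant is vertex-transitive by construction, and (ii) the subgraph induced on a suitable contiguous arc of $3\ell$ consecutive vertices is isomorphic to $G_0$. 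The identity $4\ell+8 = 2(2\ell+4)$ suggests an equivalent viewpoint, namely a prism- or double-cover-type graph assembled from two blocks of $2\ell+4$ vertices glued by a symmetric matching; I would develop both descriptions and keep whichever makes the induced-arc verification cleanest.

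The main obstacle is realizing the highly irregular $G_0$ inside a regular, vertex-transitive host while respecting the vertex budget. The difficulty is that $G_0$ contains a triangle of unstretched vertices of large degree together with $\ell-1$ stretched gadgets whose apex vertices $v_0$ have degree $2$, and reproducing this degree spectrum inside a translation-invariant graph forces the connection set to be engineered so that exactly the right adjacencies appear on the chosen arc and no spurious edges are introduced (spurious edges would raise $\omega$ above $3$ or destroy membership in $\hat{\K}_{\ell+2,\ell-1}$, breaking the hypothesis of Theorem~\ref{thmAuT251}). I expect the parity hypothesis to enter precisely here: with $\ell$ odd, the modulus $N$ has the divisibility needed for the connection set to close up consistently around the cycle, so that every vertex sees the same local pattern and the arc inducing $G_0$ can be translated freely. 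Accordingly, I would finish by carrying out three checks in turn: that the circulant (equivalently, the two-block graph) is vertex-transitive, which is automatic from the Cayley description; that one arc induces a copy of a stretched clique in $\hat{\K}_{\ell+2,\ell-1}$ with clique number at most $3$, which is the genuinely delicate step of matching an irregular adjacency pattern to a translation-invariant rule; and that $N \le 4\ell+8$, which is a routine bookkeeping count. The rank conclusion then transfers back to $G$ through Lemma~\ref{lemfacet1} as described above.
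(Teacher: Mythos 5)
Your reduction is the right one, and it matches the route the paper attributes to the cited work: Theorem~\ref{thmAuT253} is quoted from [AuT25, Proposition~26], and the paper's only indication of its proof is precisely ``applying Theorem~\ref{thmAuT251} and then adding vertices to symmetrize stretched cliques.'' Your first paragraph reproduces that skeleton correctly: a graph $G_0 \in \hat{\K}_{\ell+2,\ell-1}$ with $\omega(G_0)\leq 3$ exists, Theorem~\ref{thmAuT251} gives $r_+(G_0)\geq (\ell+2)-3+1=\ell$, $|V(G_0)|=3\ell$, and Lemma~\ref{lemfacet1} transfers the lower bound to any host containing $G_0$ as an induced subgraph. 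The problem is that the proposal stops exactly where the content of the theorem begins. The existence of a vertex-transitive graph on at most $4\ell+8$ vertices containing a rank-$\geq\ell$ stretched clique as an induced subgraph is the entire assertion being proved, and you never produce one: every step after the reduction is stated as intention (``I would aim to build,'' ``I would develop both descriptions,'' ``I expect the parity hypothesis to enter''). In particular, the hypothesis that $\ell$ is odd --- which is part of the statement and must be used somewhere --- is only conjectured to be a divisibility condition on a circulant modulus; no argument is given. As written, this is a plan for a proof, not a proof, with the gap sitting at its only non-routine step.

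Moreover, the primary construction you sketch appears to be structurally unworkable, so the gap is not merely one of missing detail. If $G_0$ is induced on a contiguous arc of $m=3\ell$ consecutive vertices of a circulant $C_N^S$, then two arc positions $i<j$ are adjacent iff $j-i\in S$ or $N-(j-i)\in S$; setting $D\ce\set{t\in[m-1] : t\in S \textnormal{ or } N-t\in S}$, the induced subgraph is a Toeplitz graph in which adjacency depends only on the difference of positions, and the degree of position $p$ is $h(p-1)+h(m-p)$ where $h(x)\ce|D\cap[1,x]|$. Now $G_0$ has three unstretched vertices of degree at least $\ell+1$ (forcing $|D|\geq(\ell+1)/2$) and $\ell-1$ apex vertices of degree exactly $2$. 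For an apex at position $p$ one needs $h(p-1)\leq 2$ and $h(m-p)\leq 2$, i.e.\ both $p-1$ and $m-p$ must be smaller than the third-smallest element $d_3$ of $D$; hence all $\ell-1$ apexes lie in the window $(m-d_3,\,d_3]$, which is nonempty only if $d_3>m/2$. But any unstretched vertex lying in that same window would have degree at most $4$, contradicting degree $\geq\ell+1$ once $\ell\geq 5$, so the triangle must sit outside the window, and its three pairwise differences are then forced into the two smallest elements of $D$ --- the pattern is massively over-constrained, and your proposal never engages with this tension. The paper's own Section~\ref{sec05} shows what the successful symmetrization actually looks like: in $G_{\ref{figVertexTransitive234},2}$ and $G_{\ref{figVertexTransitive234},3}$ (Propositions~\ref{propVT3} and~\ref{propVT4}), the stretched clique arises as $G\ominus v$, and the host is recovered by 1-joining a new vertex to the entire stretched clique and then properly stretching that new vertex --- that is, the symmetrizing vertices are attached by a further vertex-stretching in the spirit of Corollary~\ref{corJoinStretch}, not by placing $G_0$ on an arc of a translation-invariant graph. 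Your fallback description (two blocks glued by a matching) is closer in spirit to $G_{\ref{figVertexTransitive234},3}$, but it too comes with no construction, no verification of vertex-transitivity, and no accounting for the odd-$\ell$ hypothesis.
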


Thus,
\[
\min \set{ \overline{n}_+(k) : k \geq \ell } \leq 4\ell + 8
\]
for every odd $\ell \in \mN$. Since the underlying construction only gives a lower bound on the rank of those graphs, this does not by itself determine $\overline{n}_+(\ell)$ for a specific $\ell$.

Next, given a graph $G$ and $\ell \in \mN$, define
\[
\a_{\LS_+^{\ell}}(G) \ce \max\set{ \bar{e}^{\top}x : x \in \LS_+^{\ell}(G)}.
\]
To show that $r_+(G) > \ell$, it is sufficient to show that $\a_{\LS_+^{\ell}}(G) > \a(G)$. The following result will also be helpful.

\begin{lemma}\label{lemnkl}
Suppose $G$ is a $k$-regular vertex-transitive graph with $n$ vertices and $r_+(G) = \ell \geq 2$. Then $3 \leq k \leq n +2 - 3\ell$.
\end{lemma}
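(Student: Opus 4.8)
The plan is to prove the two inequalities separately. The upper bound $k \le n+2-3\ell$ comes from combining the destruction bound of Theorem~\ref{thmDeleteDestroy}(i) with the universal rank bound of Theorem~\ref{thmLiptakT031}, while the lower bound $k \ge 3$ follows by ruling out $k \in \set{0,1,2}$ through a short structural classification.

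For the upper bound, I would first record that $k$-regularity forces $|V(G \ominus i)| = n-k-1$ for every vertex $i$, since $|\set{i} \cup \Gamma_G(i)| = k+1$. Theorem~\ref{thmDeleteDestroy}(i) gives $\ell = r_+(G) \le \max_i r_+(G \ominus i) + 1$, so there is a vertex $i^\ast$ with $r_+(G \ominus i^\ast) \ge \ell - 1$. (Here vertex-transitivity makes all the graphs $G \ominus i$ isomorphic, so the maximum is attained at every vertex, but only regularity is actually needed for the count.) Applying Theorem~\ref{thmLiptakT031} to $G \ominus i^\ast$ then yields
\[
\ell - 1 \le r_+(G \ominus i^\ast) \le \left\lfloor \frac{n-k-1}{3} \right\rfloor \le \frac{n-k-1}{3},
\]
and rearranging gives $3(\ell-1) \le n-k-1$, that is, $k \le n+2-3\ell$.

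For the lower bound I would argue by contradiction, showing that any graph with $k \le 2$ has $\LS_+$-rank at most $1$, which contradicts $\ell \ge 2$. If $k \in \set{0,1}$, then $G$ is edgeless or a perfect matching, hence bipartite, so $r_+(G) = 0$. If $k=2$, then $G$ is a disjoint union of cycles. Each even cycle is bipartite (rank $0$), and each odd cycle $C_m$ satisfies $C_m \ominus v \cong P_{m-3}$, a path and therefore bipartite, so Theorem~\ref{thmDeleteDestroy}(i) gives $r_+(C_m) \le 1$ (with the degenerate case $C_3 \ominus v$ empty). Finally, the rank of a disjoint union equals the maximum of the ranks of its components: by Lemma~\ref{lemfacet2} every facet of $\STAB(G)$ has support contained in a single component, so $r_+(G) = \max_C r_+(C) \le 1$ using Lemma~\ref{lemfacet1}. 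This contradicts $r_+(G) = \ell \ge 2$, so $k \ge 3$.

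The genuinely load-bearing step is the coupling in the upper bound: once one observes that destroying a vertex of a $k$-regular graph removes exactly $k+1$ vertices while dropping the rank by at most $1$, the inequality is forced by Theorem~\ref{thmLiptakT031}, and this is short. The only point requiring care is the $k=2$ case of the lower bound, where one must justify that the rank of a disconnected graph is the maximum over its components (via the facet-support reduction of Lemma~\ref{lemfacet2}) and handle odd cycles; the cleanest route for the latter is the one-step destruction to a path rather than any appeal to perfection, since odd cycles of length at least $5$ are not perfect. I would also remark that the entire argument uses only $k$-regularity and not the full strength of vertex-transitivity.
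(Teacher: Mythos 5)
Your proof is correct and follows essentially the same route as the paper: the upper bound comes from the destruction bound of Theorem~\ref{thmDeleteDestroy}(i) combined with Theorem~\ref{thmLiptakT031} applied to $G \ominus i$ (which has exactly $n-k-1$ vertices by regularity), and the lower bound comes from classifying the components when $k \leq 2$. The only differences are that you fill in details the paper leaves implicit (the facet-support argument for disjoint unions and the odd-cycle case) and you correctly observe that regularity alone suffices where the paper invokes vertex-transitivity to make all the graphs $G \ominus i$ isomorphic.
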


\begin{proof}
If $k \leq 2$, then each component in $G$ is either a single vertex ($k=0$), an edge ($k=1$), or a cycle ($k=2$), and $r_+(G) \leq 1$ in all of these cases. Thus, it follows that $k \geq 3$. Next, since $G$ is vertex-transitive, $G \ominus i$ is isomorphic for all $i \in V(G)$, and so $r_+(G \ominus i) \geq \ell -1$ (due to Theorem~\ref{thmDeleteDestroy}), and thus $|V(G \ominus i)| \geq 3\ell -3$. Since the destruction of $i$ removes $k+1$ vertices from $G$, this shows that $n \geq 3\ell + k -2$, and the claim follows.
\end{proof}

A well-studied family of vertex-transitive graphs which we will frequently refer to is the circulant graphs. Given $S \subseteq [n]$, we define the \emph{circulant graph} $C_{n}^S$ where $V(C_{n}^S) \ce [n]$ and
\[
E(C_n^S) \ce \set{ \set{i,j} :~\tn{$(j-i)$ mod $n \in S$ or $(i-j)$ mod $n \in S$}}.
\]

\subsection{Exact results for ranks $2$, $3$, and $4$}

Next, we will show that the graphs $G_{\ref{figVertexTransitive234},1}$, $G_{\ref{figVertexTransitive234},2}$, and $G_{\ref{figVertexTransitive234},3}$ from Figure~\ref{figVertexTransitive234} are the smallest vertex-transitive graphs with $\LS_+$-rank 2, 3, and 4, respectively. This shows that $\overline{n}_+(2) = 8$, $\overline{n}_+(3) = 13$, and $\overline{n}_+(4) = 16$. Notice that $G_{\ref{figVertexTransitive234},1} = C_8^{\set{1,2}}$ and $G_{\ref{figVertexTransitive234},2} = C_{13}^{\set{1,5}}$ are both circulant graphs. For $G_{\ref{figVertexTransitive234},3}$, one way to make sense of the graph is to observe that it consists of a copy of $C_8^{\set{1,4}}$ (induced by vertices $\set{1,\ldots,8}$) and a copy of $C_8^{\set{3,4}}$ (induced by vertices $\set{9,\ldots,16}$) joined together by a perfect matching.

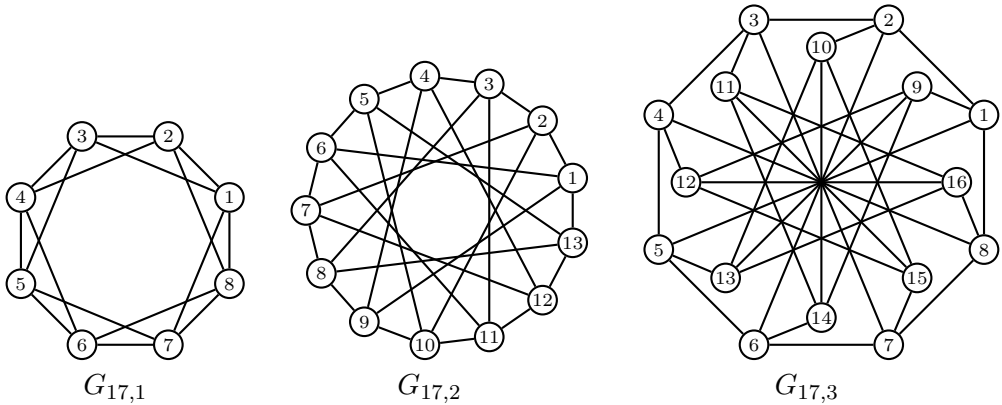
\begin{figure}[htbp]
\centering
\begin{tabular}{ccc}
\def\z{-360/16}
\def\x{360/8}
\begin{tikzpicture}
[scale=1.5, thick,main node/.style={circle, minimum size=3.8mm, inner sep=0.1mm,draw,font=\tiny\sffamily}]
\node[main node] at ({cos((-1)*\x+\z)},{sin((-1)*\x+\z)}) (7) {$7$};
\node[main node] at ({cos(0*\x+\z)},{sin((0)*\x+\z)}) (8) {$8$};
\node[main node] at ({cos(1*\x+\z)},{sin((1)*\x+\z)}) (1) {$1$};
\node[main node] at ({cos((2)*\x+\z)},{sin((2)*\x+\z)}) (2) {$2$};
\node[main node] at ({cos(3*\x+\z)},{sin((3)*\x+\z)}) (3) {$3$};
\node[main node] at ({cos(4*\x+\z)},{sin((4)*\x+\z)}) (4) {$4$};
\node[main node] at ({cos((5)*\x+\z)},{sin((5)*\x+\z)}) (5) {$5$};
\node[main node] at ({cos((6)*\x+\z)},{sin((6)*\x+\z)}) (6) {$6$};

  \path[every node/.style={font=\sffamily}]
(1) edge (2)
(1) edge (3)
(2) edge (3)
(2) edge (4)
(3) edge (4)
(3) edge (5)
(4) edge (5)
(4) edge (6)
(5) edge (6)
(5) edge (7)
(6) edge (7)
(6) edge (8)
(7) edge (8)
(7) edge (1)
(8) edge (1)
(8) edge (2);
\end{tikzpicture}
&

\def\z{-360/26}
\def\x{360/13}
\begin{tikzpicture}
[scale=1.8, thick,main node/.style={circle, minimum size=3.8mm, inner sep=0.1mm,draw,font=\tiny\sffamily}]
\node[main node] at ({ cos((0)*\x+\z)},{ sin((0)*\x+\z)}) (13) {$13$};
\node[main node] at ({ cos(1*\x+\z)},{ sin((1)*\x+\z)}) (1) {$1$};
\node[main node] at ({ cos(2*\x+\z)},{ sin((2)*\x+\z)}) (2) {$2$};
\node[main node] at ({ cos((3)*\x+\z)},{ sin((3)*\x+\z)}) (3) {$3$};
\node[main node] at ({ cos(4*\x+\z)},{ sin((4)*\x+\z)}) (4) {$4$};
\node[main node] at ({ cos(5*\x+\z)},{ sin((5)*\x+\z)}) (5) {$5$};
\node[main node] at ({ cos((6)*\x+\z)},{ sin((6)*\x+\z)}) (6) {$6$};
\node[main node] at ({ cos((7)*\x+\z)},{ sin((7)*\x+\z)}) (7) {$7$};
\node[main node] at ({ cos((8)*\x+\z)},{ sin((8)*\x+\z)}) (8) {$8$};
\node[main node] at ({ cos((9)*\x+\z)},{ sin((9)*\x+\z)}) (9) {$9$};
\node[main node] at ({ cos((10)*\x+\z)},{ sin((10)*\x+\z)}) (10) {$10$};
\node[main node] at ({ cos((11)*\x+\z)},{ sin((11)*\x+\z)}) (11) {$11$};
\node[main node] at ({ cos((12)*\x+\z)},{ sin((12)*\x+\z)}) (12) {$12$};

  \path[every node/.style={font=\sffamily}]
(1) edge (2)
(2) edge (3)
(3) edge (4)
(4) edge (5)
(5) edge (6)
(6) edge (7)
(7) edge (8)
(8) edge (9)
(9) edge (10)
(10) edge (11)
(11) edge (12)
(12) edge (13)
(13) edge (1)
(1) edge (6)
(2) edge (7)
(3) edge (8)
(4) edge (9)
(5) edge (10)
(6) edge (11)
(7) edge (12)
(8) edge (13)
(9) edge (1)
(10) edge (2)
(11) edge (3)
(12) edge (4)
(13) edge (5);
\end{tikzpicture}
&

\def\x{360/8}
\def\z{360/16}
\def\k{1.3}
\begin{tikzpicture}
[scale=1.8, thick,main node/.style={circle, minimum size=3.8mm, inner sep=0.1mm,draw,font=\tiny\sffamily}]
\node[main node] at ({\k*cos((0)*\x+\z)},{\k*sin((0)*\x+\z)}) (1a) {$1$};
\node[main node] at ({\k*cos(1*\x+\z)},{\k*sin((1)*\x+\z)}) (2a) {$2$};
\node[main node] at ({\k*cos(2*\x+\z)},{\k*sin((2)*\x+\z)}) (3a) {$3$};
\node[main node] at ({\k*cos((3)*\x+\z)},{\k*sin((3)*\x+\z)}) (4a) {$4$};
\node[main node] at ({\k*cos(4*\x+\z)},{\k*sin((4)*\x+\z)}) (5a) {$5$};
\node[main node] at ({\k*cos(5*\x+\z)},{\k*sin((5)*\x+\z)}) (6a) {$6$};
\node[main node] at ({\k*cos((6)*\x+\z)},{\k*sin((6)*\x+\z)}) (7a) {$7$};
\node[main node] at ({\k*cos((7)*\x+\z)},{\k*sin((7)*\x+\z)}) (8a) {$8$};

\node[main node] at ({cos((0.5)*\x+\z)},{sin((0.5)*\x+\z)}) (1b) {$9$};
\node[main node] at ({cos(1.5*\x+\z)},{sin((1.5)*\x+\z)}) (2b) {$10$};
\node[main node] at ({cos(2.5*\x+\z)},{sin((2.5)*\x+\z)}) (3b) {$11$};
\node[main node] at ({cos((3.5)*\x+\z)},{sin((3.5)*\x+\z)}) (4b) {$12$};
\node[main node] at ({cos(4.5*\x+\z)},{sin((4.5*\x+\z)}) (5b) {$13$};
\node[main node] at ({cos(5.5*\x+\z)},{sin((5.5)*\x+\z)}) (6b) {$14$};
\node[main node] at ({cos((6.5)*\x+\z)},{sin((6.5)*\x+\z)}) (7b) {$15$};
\node[main node] at ({cos((7.5)*\x+\z)},{sin((7.5)*\x+\z)}) (8b) {$16$};

  \path[every node/.style={font=\sffamily}]
(1a) edge (2a)
(2a) edge (3a)
(3a) edge (4a)
(4a) edge (5a)
(5a) edge (6a)
(6a) edge (7a)
(7a) edge (8a)
(8a) edge (1a)
(1a) edge (5a)
(2a) edge (6a)
(3a) edge (7a)
(4a) edge (8a)
(1b) edge (4b)
(4b) edge (7b)
(7b) edge (2b)
(2b) edge (5b)
(5b) edge (8b)
(8b) edge (3b)
(3b) edge (6b)
(6b) edge (1b)
(1b) edge (5b)
(2b) edge (6b)
(3b) edge (7b)
(4b) edge (8b)
(1a) edge (1b)
(2a) edge (2b)
(3a) edge (3b)
(4a) edge (4b)
(5a) edge (5b)
(6a) edge (6b)
(7a) edge (7b)
(8a) edge (8b);
\end{tikzpicture}\\
$G_{\ref{figVertexTransitive234},1}$ & $G_{\ref{figVertexTransitive234},2}$ & $G_{\ref{figVertexTransitive234},3}$ 
\end{tabular}
\caption{The smallest vertex-transitive graphs with $\LS_+$-rank $2$ (left), $3$ (centre), and $4$ (right).}\label{figVertexTransitive234}
\end{figure}

A significant portion of our work in this section involves proving $\LS_+$-rank upper bounds for vertex-transitive graphs up to a certain size, which is largely aided by the fact that vertex-transitive graphs on up to 47 vertices have been exhaustively listed (see~\cite{McKayR90, Skiena, RoyleH20}). 

We now determine $\overline{n}_+(2)$.

\begin{proposition}\label{propVT2}
$\overline{n}_+(2) = 8$. Moreover, the only vertex-transitive graph $G$ where $|V(G)| \leq 8$ and $r_+(G) = 2$ is $G_{\ref{figVertexTransitive234},1} \ce C_8^{\set{1,2}}$ (Figure~\ref{figVertexTransitive234}, left).
\end{proposition}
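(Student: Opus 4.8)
The plan is to combine the general lower bound on the number of vertices with the degree restriction of Lemma~\ref{lemnkl} to cut the problem down to a short, explicit list of candidate graphs on $8$ vertices, and then to dispatch each candidate either by a structural rank bound or by an explicit certificate. First I would record the easy endpoint: since $\left|V\left(G_{\ref{figVertexTransitive234},1}\right)\right| = 8$, Theorem~\ref{thmLiptakT031} already gives $r_+\left(C_8^{\set{1,2}}\right) \leq 2$, and the same theorem shows that any graph of $\LS_+$-rank $2$ has at least $6$ vertices. So it suffices to examine vertex-transitive graphs on $6$, $7$, and $8$ vertices. Because every vertex-transitive graph is regular, Lemma~\ref{lemnkl} applies with $\ell = 2$: a $k$-regular vertex-transitive graph on $n$ vertices of rank $2$ must satisfy $3 \leq k \leq n-4$. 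For $n = 6$ this interval is empty, and for $n = 7$ it forces $k = 3$, which is impossible since a $3$-regular graph on $7$ vertices would have $\tfrac{21}{2}$ edges. Hence no vertex-transitive graph on fewer than $8$ vertices has rank $2$, and on $8$ vertices the only admissible degrees are $k \in \set{3,4}$.

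Next I would enumerate the vertex-transitive graphs on $8$ vertices of degree $3$ and $4$, relying on the published classifications of small vertex-transitive graphs (cf.~\cite{McKayR90, Skiena, RoyleH20}). Each candidate falls into one of three types. If $G$ is bipartite, then $r_+(G) = 0$; this covers the cube $Q_3$, the crown graph $K_{4,4}$ minus a perfect matching, and $C_8^{\set{1,3}} = K_{4,4}$. If $G$ is perfect but not bipartite, then $r_+(G) \leq 1$ by the result of Lov\'asz and Schrijver~\cite{LovaszS91}; the remaining non-bipartite candidates are all complements of bipartite graphs, namely $C_8^{\set{2,4}} = 2K_4 = \overline{K_{4,4}}$ and the prism over $K_4$ (two copies of $K_4$ joined by a perfect matching, which equals $\overline{Q_3}$), and complements of perfect graphs are perfect. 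After discarding these, and recording the isomorphism $C_8^{\set{2,3}} \cong C_8^{\set{1,2}}$ induced by the multiplier $x \mapsto 3x \bmod 8$, the only graphs still to be analyzed are the two imperfect M\"obius-type circulants $C_8^{\set{1,4}}$ (the Wagner graph) and $C_8^{\set{3,4}}$, together with $C_8^{\set{1,2}}$ itself.

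For the two imperfect circulants I would invoke the destruction bound Theorem~\ref{thmDeleteDestroy}(i): by vertex-transitivity $G \ominus i$ is the same graph for every $i$, so $r_+(G) \leq r_+(G \ominus i) + 1$. Each of these graphs has degree $3$, so $G \ominus i$ retains only $8 - 4 = 4$ vertices, and a direct check shows that in both $C_8^{\set{1,4}}$ and $C_8^{\set{3,4}}$ the four surviving vertices induce a path; hence $G \ominus i$ is bipartite and $r_+(G) \leq 1$. This isolates $C_8^{\set{1,2}}$ as the unique surviving candidate, and I would finish by establishing $r_+\left(C_8^{\set{1,2}}\right) = 2$. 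The bound $\leq 2$ is already in hand, so for $\geq 2$ I would exhibit, via Proposition~\ref{prop:LS_+-certificate}, an explicit $\LS_+$ certificate package for a symmetric point $\bar{x} = t\bar{e}$ with $t > \tfrac14$, which then violates the valid inequality $\bar{e}^{\top}x \leq 2 = \a\left(C_8^{\set{1,2}}\right)$ and certifies a point of $\LS_+\left(C_8^{\set{1,2}}\right) \setminus \STAB\left(C_8^{\set{1,2}}\right)$.

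The main obstacle is the middle step: guaranteeing that the candidate list is genuinely exhaustive and that every graph on it other than $C_8^{\set{1,2}}$ really has rank at most $1$. The delicate sub-cases are the imperfect graphs, where perfection is unavailable and one must lean on the destruction argument; that argument works here precisely because the leftover graphs are small and bipartite. A $4$-regular imperfect candidate whose destruction left a triangle would require a separate treatment (note that destroying a vertex of $C_8^{\set{1,2}}$ itself leaves a $K_3$, consistent with its being the rank-$2$ graph), so the care lies in verifying, from the classification, that $C_8^{\set{1,2}}$ is the only $8$-vertex vertex-transitive graph escaping both the bipartite and the perfect-or-destructible buckets.
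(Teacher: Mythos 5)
Your proposal is correct and follows essentially the same route as the paper's proof: reduce via Lemma~\ref{lemnkl} to connected $8$-vertex vertex-transitive graphs of degree $3$ or $4$, dispatch the cube, $K_{4,4}$, the $K_4$-prism, and $C_8^{\set{1,4}}$ by bipartiteness, perfection, and the destruction argument, and certify $r_+\left(C_8^{\set{1,2}}\right) \geq 2$ with an $\LS_+$ certificate package for a point $t\bar{e}$ with $t > \frac14$ (the paper's certificate uses $t = \frac{4}{15}$). The only cosmetic differences are that your candidate list carries isomorphic duplicates (the crown graph \emph{is} the cube, and $C_8^{\set{3,4}} \cong C_8^{\set{1,4}}$ via the multiplier $3$) plus the disconnected $2K_4$, and that the paper also records a certificate-free lower bound you could have used instead, namely that $C_8^{\set{1,2}} - \set{1,3}$ is isomorphic to the $2$-minimal graph $G_{\ref{figKnownEG},2}$.
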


\begin{proof}
Suppose $G$ is a $k$-regular vertex-transitive graph on $n$ vertices, and $r_+(G) =2$. To find the smallest such graph, we may assume that $G$ is connected (otherwise $G$ contains a proper subgraph with the same $\LS_+$-rank). From Lemma~\ref{lemnkl}, we know that $3 \leq k \leq n-4$, and so $n \geq 7$. If $n=7$, then $k=3$, and no such graph exists. If $n=8$, then $3 \leq k \leq 4$. There are a total of 5 such graphs~\cite{McKayR90, Skiena}, as listed in Figure~\ref{figVT2}.

\def\sc{1.2}
\def\x{-22.5}
\def\z{360/8}
\def\placev{
\node[main node] at ({cos(\x+(1)*\z)},{sin(\x+(1)*\z)}) (1) {$1$}; 
\node[main node] at ({cos(\x+(2)*\z)},{sin(\x+(2)*\z)}) (2) {$2$}; 
\node[main node] at ({cos(\x+(3)*\z)},{sin(\x+(3)*\z)}) (3) {$3$}; 
\node[main node] at ({cos(\x+(4)*\z)},{sin(\x+(4)*\z)}) (4) {$4$}; 
\node[main node] at ({cos(\x+(5)*\z)},{sin(\x+(5)*\z)}) (5) {$5$}; 
\node[main node] at ({cos(\x+(6)*\z)},{sin(\x+(6)*\z)}) (6) {$6$}; 
\node[main node] at ({cos(\x+(7)*\z)},{sin(\x+(7)*\z)}) (7) {$7$}; 
\node[main node] at ({cos(\x+(8)*\z)},{sin(\x+(8)*\z)}) (8) {$8$}; 
}
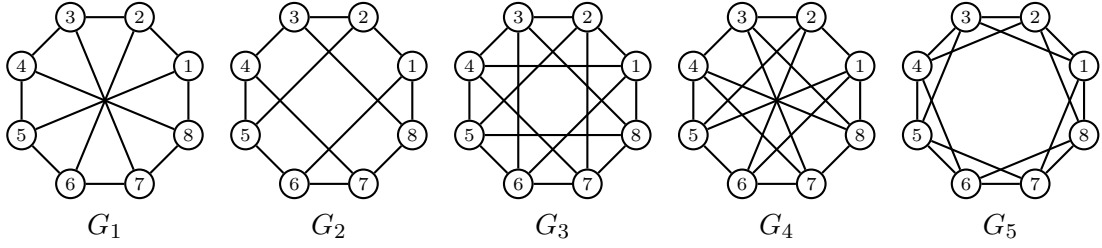
\begin{figure}[htbp]
\centering
\begin{tabular}{ccccc}
\begin{tikzpicture}[scale=\sc, thick,main node/.style={circle,  minimum size=3.8mm,  inner sep=0.1mm,draw,font=\tiny\sffamily}] \placev
\path[every node/.style={font=\sffamily}]
(1) edge (2) (2) edge (3) (3) edge (4) (4) edge (5) (5) edge (6) (6) edge (7) (7) edge (8) (8) edge (1)
 (4) edge (8) (5) edge (1) (6) edge (2) (7) edge (3);
\end{tikzpicture}
&
\begin{tikzpicture}[scale=\sc, thick,main node/.style={circle,  minimum size=3.8mm,  inner sep=0.1mm,draw,font=\tiny\sffamily}] \placev\path[every node/.style={font=\sffamily}]
(1) edge (2) (2) edge (3) (3) edge (4) (4) edge (5) (5) edge (6) (6) edge (7) (7) edge (8) (8) edge (1)
 (3) edge (8) (5) edge (2) (6) edge (1) (7) edge (4);
\end{tikzpicture}
&
\begin{tikzpicture}[scale=\sc, thick,main node/.style={circle,  minimum size=3.8mm,  inner sep=0.1mm,draw,font=\tiny\sffamily}] \placev\path[every node/.style={font=\sffamily}]
(1) edge (2) (2) edge (3) (3) edge (4) (4) edge (5) (5) edge (6) (6) edge (7) (7) edge (8) (8) edge (1)
 (3) edge (8) (5) edge (2) (6) edge (1) (7) edge (4)  (3) edge (6) (5) edge (8) (4) edge (1) (7) edge (2);
\end{tikzpicture}
&
\begin{tikzpicture}[scale=\sc, thick,main node/.style={circle,  minimum size=3.8mm,  inner sep=0.1mm,draw,font=\tiny\sffamily}] \placev\path[every node/.style={font=\sffamily}]
(1) edge (2) (2) edge (3) (3) edge (4) (4) edge (5) (5) edge (6) (6) edge (7) (7) edge (8) (8) edge (1)
 (3) edge (8) (5) edge (2) (6) edge (1) (7) edge (4)  (3) edge (7) (5) edge (1) (4) edge (8) (6) edge (2);
\end{tikzpicture}
&
\begin{tikzpicture}[scale=\sc, thick,main node/.style={circle,  minimum size=3.8mm,  inner sep=0.1mm,draw,font=\tiny\sffamily}] \placev\path[every node/.style={font=\sffamily}]
(1) edge (2) (2) edge (3) (3) edge (4) (4) edge (5) (5) edge (6) (6) edge (7) (7) edge (8) (8) edge (1)
 (1) edge (3) (2) edge (4) (3) edge (5) (4) edge (6)  (5) edge (7) (6) edge (8) (7) edge (1) (8) edge (2);
 \end{tikzpicture}
\\
$G_1$ & $G_2$ & $G_3$ & $G_4$ & $G_5$ 
\end{tabular}
\caption{The list of connected $k$-regular vertex-transitive graphs on 8 vertices with $3 \leq k \leq 4$.}
\label{figVT2}
\end{figure}

Observe that $G_1 = C_8^{\set{1,4}}$, and destroying any vertex yields a bipartite graph, and so $r_+(G_1)=1$. Next, $G_2$ (the 3-cube) and $G_3$ ($K_{4,4}$) are both bipartite and thus have $\LS_+$-rank 0. $G_4$ is  obtained from joining two disjoint copies of $K_4$ with a perfect matching, and thus is a perfect graph, which implies that $r_+(G_4) = 1$. This leaves us with $G_5$, which is the graph $G \ce G_{\ref{figVertexTransitive234},1}$ from Figure~\ref{figVertexTransitive234}. There are several ways to show that $r_+(G) \geq 2$:
\begin{itemize}
\item
An $\LS_+$ certificate package~\cite{AuT25data} shows that $\frac{4}{15} \bar{e} \in \LS_+(G)$, which implies that $\a_{\LS_+}(G) \geq \frac{32}{15} > \a(G) = 2$. Thus, $r_+(G) \geq 2$.
\item
Let $A(\overline{G})$ be the adjacency matrix of the complement graph of $G$, let $d \ce \frac{\sqrt{2}+4}{8(\sqrt{2}+1)}$, and let $Y \ce \begin{bmatrix} 1 & d \bar{e}^{\top} \\ d \bar{e} & d I_8 + \frac{d}{\sqrt{2}+1} A(\overline{G}) \end{bmatrix}$. Then one can check that $Y \in \widehat{\LS}_+(G)$ (see, for instance,~\cite[Proposition 4]{AuLT26} for a more detailed analysis for the $\LS_+$ certificates for regular graphs and some vertex-transitive graphs). This shows that $\a_{\LS_+}(G) \geq 8d \approx 2.24 > \a(G)$. 
\item
Observe that $G - \set{1,3}$ is isomorphic to $G_{\ref{figKnownEG},2}$. Since $G$ contains an induced subgraph with $\LS_+$-rank $2$, $r_+(G) \geq 2$.
\end{itemize}
Finally, since $|V(G)| = 8, r_+(G) \leq 2$ (by Theorem~\ref{thmLiptakT031}). It thus follows that $r_+(G) =2$.
\end{proof}

We remark that, for $n=9$, there is also exactly one vertex-transitive graph with $\LS_+$-rank $2$: $C_{9}^{\set{1,2}}$ (which contains $G_{\ref{figKnownEG},1}$ as an induced subgraph). The other two graphs which satisfy the criterion in Lemma~\ref{lemnkl} are $C_{9}^{\set{1,3}}$ and the Paley graph on 9 vertices, both of which would result in a bipartite graph upon the destruction of any vertex.

Next, we remark that while $\a_{\LS_+^{\ell}}(G) > \a(G)$ implies $r_+(G) > \ell$, the converse is not true. Before we describe such an example, we first prove a more general result.

\begin{proposition}\label{prop11}
Suppose $G$ is a graph with $n$ vertices and $\deg(i) \geq k$ for every vertex $i \in V(G)$. Then
\begin{itemize}
\item[(i)]
\[
\alpha_{\LS_+}(G) \leq n-k.
\]
\item[(ii)]
Moreover, if $\max\set{ \bar{e}^{\top}x : x \in \FRAC(G \ominus i)} \leq \frac{1}{2} |V(G\ominus i)|$ for every vertex $i \in V(G)$, then
\[
\alpha_{\LS_+}(G) \leq \frac{n-k+1}{2}.
\]
\end{itemize}
\end{proposition}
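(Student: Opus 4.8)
The plan is to treat both parts through a single mechanism, since (ii) is just (i) with a sharper per-vertex estimate. Fix any $x \in \LS_+(G)$ with certificate $Y \in \widehat{\LS}_+(\FRAC(G))$ satisfying $Ye_0 = (1, x^{\top})^{\top}$, and write $s \ce \bar{e}^{\top}x$; bounding $s$ for an arbitrary such $x$ bounds $\alpha_{\LS_+}(G)$. Throughout I would use the standard consequences of $Y \in \widehat{\LS}_+(\FRAC(G))$: $Y_{00}=1$, $Y_{ii}=Y_{0i}=x_i$, and $Ye_i \in \cone(\FRAC(G))$ with zeroth coordinate $x_i$.

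First I would extract, for each vertex $i$ with $x_i>0$, the point $p^{(i)} \ce \frac{1}{x_i}Ye_i \in \FRAC(G)$ obtained by normalizing the $i$-th column of $Y$ by its zeroth coordinate. The key structural features are $p^{(i)}_i = Y_{ii}/x_i = 1$ and, since $p^{(i)} \in \FRAC(G)$ forces $p^{(i)}_i + p^{(i)}_j \le 1$ across every edge, $p^{(i)}_j = 0$ for all $j \in \Gamma_G(i)$. Thus the mass of $p^{(i)}$ is confined to $\{i\}$ together with the non-neighbours of $i$, which are exactly the vertices of $V(G \ominus i)$. For (i) I would then bound $\bar{e}^{\top}p^{(i)} = 1 + \sum_{j \in V(G\ominus i)} p^{(i)}_j \le 1 + |V(G\ominus i)| = 1 + (n-1-\deg(i)) \le n-k$, using $p^{(i)}_j \le 1$ and $\deg(i) \ge k$. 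For (ii) the only change is to observe that the restriction of $p^{(i)}$ to $V(G\ominus i)$ lies in $\FRAC(G\ominus i)$ (the edge constraints of an induced subgraph are inherited), so the hypothesis gives $\sum_{j \in V(G\ominus i)} p^{(i)}_j \le \tfrac{1}{2}|V(G\ominus i)|$ and hence $\bar{e}^{\top}p^{(i)} \le 1 + \tfrac{1}{2}(n-1-k) = \tfrac{n-k+1}{2}$. Writing $\tau$ for the resulting common bound ($n-k$, or $(n-k+1)/2$), I then have $\bar{e}^{\top}p^{(i)} \le \tau$ whenever $x_i > 0$.

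The next step converts this into a statement about column sums of $Y$. Since $Ye_i = x_i p^{(i)}$ on the vertex coordinates, the $i$-th vertex-column of $Y$ sums to $\sum_{j=1}^n Y_{ji} = x_i \bar{e}^{\top}p^{(i)} \le \tau x_i$; the degenerate case $x_i = 0$ forces $Ye_i = 0$ (a cone element with zeroth coordinate $0$ vanishes), so the same inequality holds trivially. Summing over $i$ yields $\sum_{i,j} Y_{ij} \le \tau s$. To close the loop I would produce a matching quadratic lower bound from positive semidefiniteness: testing $Y \succeq 0$ against $w \ce (-s, \bar{e}^{\top})^{\top}$ gives $0 \le w^{\top}Yw = \sum_{i,j}Y_{ij} - s^2$, i.e. $\sum_{i,j}Y_{ij} \ge s^2$ (equivalently, $\bar{e}^{\top}(B - xx^{\top})\bar{e} \ge 0$ for the Schur complement of the vertex block $B$). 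Chaining the two bounds gives $s^2 \le \tau s$, so $s \le \tau$ when $s>0$ and trivially otherwise.

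The conceptual crux is the first step: recognizing that the columns $Ye_i$ furnish points of $\FRAC(G)$ that are ``equal to $1$ at $i$ and $0$ on $\Gamma_G(i)$'', so that their support is forced into $V(G\ominus i)$. Once this is in hand, part (ii) costs essentially nothing beyond invoking the hypothesis on $\FRAC(G\ominus i)$, and the remainder is the routine but slightly clever pairing of the linear upper bound on $\sum_{i,j}Y_{ij}$ against the quadratic lower bound $s^2$ coming from $Y \succeq 0$. The only points demanding care are the $x_i=0$ degeneracy and confirming $n-k \ge 1$ (immediate from $k \le \deg(i) \le n-1$), which ensures the $s=0$ case is also covered.
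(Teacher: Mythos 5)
Your proof is correct and is essentially the paper's own argument in different notation: the paper writes the vertex block as $\Diag(x)+M$ and bounds $\bar{e}^{\top}M\bar{e}$ by counting the at most $n-\deg(i)-1$ positive entries of each column $Me_i$ (each at most $x_i$), which is exactly your bound $\bar{e}^{\top}p^{(i)}\leq\tau$ on the normalized columns, and it then combines this with the same Schur-complement inequality $\bar{e}^{\top}(B-xx^{\top})\bar{e}\geq 0$ that you obtain by testing $Y\succeq 0$ against $(-s,\bar{e}^{\top})^{\top}$. Your explicit handling of the $x_i=0$ and $s=0$ degeneracies is a minor tidiness improvement over the paper's write-up, but the mathematical content is identical.
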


\begin{proof}
Let $Y \ce \begin{bmatrix} 1& x^{\top} \\ x & \tn{Diag}(x) + M \end{bmatrix}$ be a certificate matrix in $\widehat{\LS}_+(G)$. Then $Y \succeq 0$, and thus the Schur complement $\Diag(x) + M - xx^{\top}$ is also positive semidefinite. Hence,
\[
\bar{e}^{\top} ( \tn{Diag}(x) + M - xx^{\top}) \bar{e} = \bar{e}^{\top}x - (\bar{e}^{\top}x)^2 + \bar{e}^{\top}M\bar{e} \geq 0.
\]
Next, notice that since $Ye_0 = \diag(Y)$, $M[i,i] = 0$ for every $i \in V(G)$. Also, $M[i,j] = 0$ for all edges $\set{i,j} \in E(G)$. Thus, for every $i \in V(G)$, $Me_i$ must contain at most $n-\deg(i)-1 \leq n-k-1$ positive entries, each being at most $x_i$. Applying this for every $i \in V(G)$ yields $\bar{e}^{\top}M\bar{e} \leq (n-k-1)\bar{e}^{\top}x$. This gives $\bar{e}^{\top}x \leq n-k$.

For (ii), the additional assumption assures that $Me_i \leq \frac{n-k-1}{2} x_i$, and so $\bar{e}^{\top}M\bar{e} \leq \frac{n-k-1}{2} \bar{e}^{\top}x$, leading to the tighter bound $\bar{e}^{\top}x \leq \frac{n-k+1}{2}$.
\end{proof}

\def\x{360/8}
\def\z{360/16}
\def\k{2}
\begin{figure}[htbp]
\centering
\begin{tabular}{cc}
\begin{tikzpicture}
[scale=1.5, thick,main node/.style={circle, minimum size=3.8mm, inner sep=0.1mm,draw,font=\tiny\sffamily}]
\node[main node] at ({\k*cos((0)*\x+\z)},{\k*sin((0)*\x+\z)}) (1a) {$1_0$};
\node[main node] at ({\k*cos(1*\x+\z)},{\k*sin((1)*\x+\z)}) (2a) {$2_0$};
\node[main node] at ({\k*cos(2*\x+\z)},{\k*sin((2)*\x+\z)}) (3a) {$3_0$};
\node[main node] at ({\k*cos((3)*\x+\z)},{\k*sin((3)*\x+\z)}) (4a) {$4_0$};
\node[main node] at ({\k*cos(4*\x+\z)},{\k*sin((4)*\x+\z)}) (5a) {$5_0$};
\node[main node] at ({\k*cos(5*\x+\z)},{\k*sin((5)*\x+\z)}) (6a) {$6_0$};
\node[main node] at ({\k*cos((6)*\x+\z)},{\k*sin((6)*\x+\z)}) (7a) {$7_0$};
\node[main node] at ({\k*cos((7)*\x+\z)},{\k*sin((7)*\x+\z)}) (8a) {$8_0$};

\node[main node] at ({cos((0)*\x+\z)},{sin((0)*\x+\z)}) (5b) {$5_1$};
\node[main node] at ({cos(1*\x+\z)},{sin((1)*\x+\z)}) (6b) {$6_1$};
\node[main node] at ({cos(2*\x+\z)},{sin((2)*\x+\z)}) (7b) {$7_1$};
\node[main node] at ({cos((3)*\x+\z)},{sin((3)*\x+\z)}) (8b) {$8_1$};
\node[main node] at ({cos(4*\x+\z)},{sin((4*\x+\z)}) (1b) {$1_1$};
\node[main node] at ({cos(5*\x+\z)},{sin((5)*\x+\z)}) (2b) {$2_1$};
\node[main node] at ({cos((6)*\x+\z)},{sin((6)*\x+\z)}) (3b) {$3_1$};
\node[main node] at ({cos((7)*\x+\z)},{sin((7)*\x+\z)}) (4b) {$4_1$};

  \path[every node/.style={font=\sffamily}]
(1a) edge (2a)
(1a) edge (3a)
(2a) edge (3a)
(2a) edge (4a)
(3a) edge (4a)
(3a) edge (5a)
(4a) edge (5a)
(4a) edge (6a)
(5a) edge (6a)
(5a) edge (7a)
(6a) edge (7a)
(6a) edge (8a)
(7a) edge (8a)
(7a) edge (1a)
(8a) edge (1a)
(8a) edge (2a)
(1b) edge (2b)
(1b) edge (3b)
(2b) edge (3b)
(2b) edge (4b)
(3b) edge (4b)
(3b) edge (5b)
(4b) edge (5b)
(4b) edge (6b)
(5b) edge (6b)
(5b) edge (7b)
(6b) edge (7b)
(6b) edge (8b)
(7b) edge (8b)
(7b) edge (1b)
(8b) edge (1b)
(8b) edge (2b)
(1a) edge (2b)
(2a) edge (3b)
(3a) edge (4b)
(4a) edge (5b)
(5a) edge (6b)
(6a) edge (7b)
(7a) edge (8b)
(8a) edge (1b)
(1a) edge (4b)
(2a) edge (5b)
(3a) edge (6b)
(4a) edge (7b)
(5a) edge (8b)
(6a) edge (1b)
(7a) edge (2b)
(8a) edge (3b)
(1a) edge (6b)
(2a) edge (7b)
(3a) edge (8b)
(4a) edge (1b)
(5a) edge (2b)
(6a) edge (3b)
(7a) edge (4b)
(8a) edge (5b)
(1a) edge (8b)
(2a) edge (1b)
(3a) edge (2b)
(4a) edge (3b)
(5a) edge (4b)
(6a) edge (5b)
(7a) edge (6b)
(8a) edge (7b)
(1a) edge (5b)
(2a) edge (6b)
(3a) edge (7b)
(4a) edge (8b)
(5a) edge (1b)
(6a) edge (2b)
(7a) edge (3b)
(8a) edge (4b);
\end{tikzpicture}
&

\begin{tikzpicture}
[scale=1.5, thick,main node/.style={circle, minimum size=3.8mm, inner sep=0.1mm,draw,font=\tiny\sffamily}]
\node[main node] at ({\k*cos((3)*\x+\z)},{\k*sin((3)*\x+\z)}) (4a) {$4_0$};
\node[main node] at ({\k*cos(4*\x+\z)},{\k*sin((4)*\x+\z)}) (5a) {$5_0$};
\node[main node] at ({\k*cos(5*\x+\z)},{\k*sin((5)*\x+\z)}) (6a) {$6_0$};

\node[main node] at ({cos(2*\x+\z)},{sin((2)*\x+\z)}) (7b) {$7_1$};
\node[main node] at ({cos(4*\x+\z)},{sin((4)*\x+\z)}) (1b) {$1_1$};
\node[main node] at ({cos((6)*\x+\z)},{sin((6)*\x+\z)}) (3b) {$3_1$};

  \path[every node/.style={font=\sffamily}]
(4a) edge (5a)
(4a) edge (6a)
(5a) edge (6a)
(7b) edge (1b)
(1b) edge (3b)
(6a) edge (7b)
(4a) edge (7b)
(6a) edge (1b)
(4a) edge (1b)
(6a) edge (3b)
(4a) edge (3b)
(5a) edge (1b);
\end{tikzpicture}
\\
$G$ & $G \ominus 1_0$ 
\end{tabular}
\caption{A vertex-transitive graph $G$ where $\a_{\LS_+}(G) = \a(G)$ and $r_+(G) > 1$}\label{figDoubleCirculant}

\end{figure}
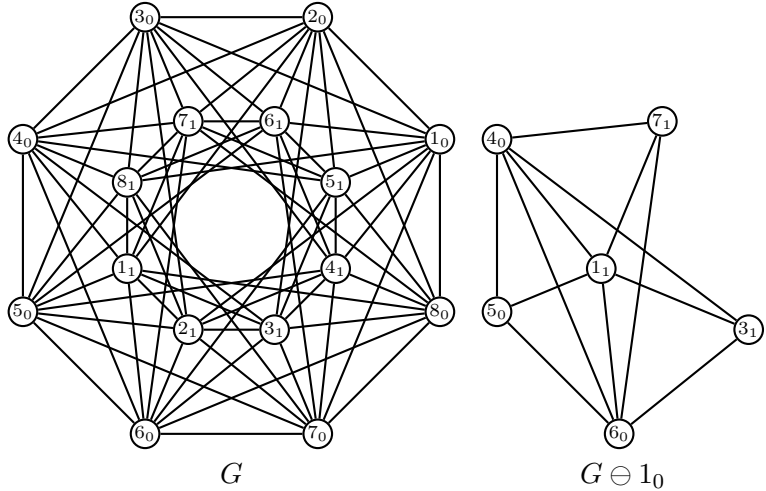

\begin{example}
Consider the graph $G$ where $V(G) \ce \set{ i_0, i_1 : i \in [8]}$ and
\[
E(G) \ce \set{ \set{i_0, j_0}, \set{i_1, j_1} : \set{i,j} \in E(C_8^{\set{1,2}})} \cup \set{ \set{i_0,j_1} : \set{i,j} \in E(C_{8}^{\set{1,3,4}}) }.
\]
(See Figure~\ref{figDoubleCirculant}, left.) Then $G$ is $k \ce 9$-regular and vertex-transitive, with $G \ominus 1_0$ being a 6-vertex graph (Figure~\ref{figDoubleCirculant}, right). Now notice that the edges $\set{4_0,7_1}$, $\set{5_0, 1_1}$, and $\set{6_0,3_1}$ form a perfect matching in $G \ominus 1_0$. Thus, $\max \set{ \bar{e}^{\top}x : x \in \FRAC(G \ominus 1_0 )} \leq 3 = \frac{1}{2} |V(G\ominus 1_0)|$. Since $G$ is vertex-transitive, the same must be true for all vertices $i \in V(G)$, and so $G$ satisfies the conditions of Proposition~\ref{prop11}(ii). Hence, $\alpha_{\LS_+}(G) \leq \frac{16-9+1}{2} = 4 = \alpha(G)$. However, since $G$ contains $C_{8}^{\set{1,2}}$ as an induced subgraph, we see that $r_+(G) \geq 2$.
\end{example}

We next determine $\overline{n}_+(3)$. 

\begin{proposition}\label{propVT3}
$\overline{n}_+(3) = 13$. Moreover, the only vertex-transitive graph $G$ where $|V(G)| \leq 13$ and $r_+(G) = 3$ is $G_{\ref{figVertexTransitive234},2} \ce C_{13}^{\set{1,5}}$ (Figure~\ref{figVertexTransitive234}, centre).
\end{proposition}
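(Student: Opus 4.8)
The plan is to mirror the proof of Proposition~\ref{propVT2}, proceeding by a constrained enumeration of regular vertex-transitive graphs. Suppose $G$ is a connected $k$-regular vertex-transitive graph on $n$ vertices with $r_+(G)=3$; as before we may assume $G$ is connected, since otherwise a component realizes the same $\LS_+$-rank. Applying Lemma~\ref{lemnkl} with $\ell=3$ gives $3 \le k \le n-7$, so that $n \ge 10$, and it therefore suffices to examine $n \in \set{10,11,12,13}$ together with the admissible degrees. Because the vertex-transitive graphs on at most $47$ vertices have been catalogued~\cite{McKayR90, Skiena, RoyleH20}, each of these $(n,k)$ slots contains only a short explicit list of graphs to analyze; for the prime orders $n=11$ and $n=13$ the relevant graphs are moreover all circulants (as holds for any vertex-transitive graph of prime order), so only a connection set $S \subseteq \set{1,\dots,6}$ needs to be recorded, and oddness of $n$ forces $k$ even, i.e.\ $k \in \set{4,6}$.

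First I would dispose of $n \le 12$ by showing that every candidate has $r_+(G) \le 2$, contradicting $r_+(G)=3$. The workhorse is the destruction bound of Theorem~\ref{thmDeleteDestroy}(i): since $G$ is vertex-transitive, $G \ominus i$ is independent of $i$ and has $n-k-1$ vertices, and whenever this induced subgraph is bipartite (hence $\LS_+$-rank $0$) or perfect (hence $\LS_+$-rank at most $1$) we obtain $r_+(G) \le 2$ at once. For instance, destroying a vertex of the Petersen graph, the pentagonal prism, or the M\"obius ladder $C_{10}^{\set{1,5}}$ each leaves a bipartite $6$-vertex graph, so every cubic graph arising at $n=10$ has rank at most $1$. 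When destruction does not immediately land on a bipartite or perfect graph, I would fall back on the deletion bound of Theorem~\ref{thmDeleteDestroy}(ii), on the degree-based semidefinite estimates of Proposition~\ref{prop11}, or on directly recognizing $G$ itself as perfect or bipartite.

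For $n=13$ the same enumeration is carried out over the circulants $C_{13}^{S}$ with $|S| \in \set{2,3}$, and every such graph except $C_{13}^{\set{1,5}}$ is shown to have $r_+ \le 2$ by the arguments above. For the single survivor $G = G_{\ref{figVertexTransitive234},2} = C_{13}^{\set{1,5}}$, the upper bound is immediate: $G$ is $4$-regular, so $G \ominus i$ has $13-5 = 8$ vertices and hence $r_+(G \ominus i) \le \lfloor 8/3 \rfloor = 2$ by Theorem~\ref{thmLiptakT031}, whence $r_+(G) \le 3$ via Theorem~\ref{thmDeleteDestroy}(i). The matching lower bound $r_+(G) \ge 3$ I would certify using Proposition~\ref{prop:LS_+^2-certificate}: exhibit an $\LS_+^2$ certificate package for a suitable multiple of $\bar{e}$ (available by vertex-transitivity) that violates the valid inequality $\bar{e}^{\top} x \le \a(G)$. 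Combining the two slots shows that $C_{13}^{\set{1,5}}$ is the unique vertex-transitive graph on at most $13$ vertices of rank exactly $3$, so $\overline{n}_+(3)=13$.

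The main obstacle is the breadth and uniformity of the case analysis rather than any single hard step. Note that since $k \le n-7$ the destroyed subgraph $G \ominus i$ always has at least $6$ vertices, so Theorem~\ref{thmLiptakT031} applied to it yields only $r_+(G \ominus i) \le 2$ and hence the weak bound $r_+(G) \le 3$; consequently, for \emph{every} non-exceptional graph one must genuinely verify that $G \ominus i$ (or $G-i$) collapses to $\LS_+$-rank at most $1$, i.e.\ is perfect or bipartite, or else that $G$ itself is perfect. Carrying out this structural verification across the entire census, while confirming that the catalogued lists are complete for each $(n,k)$ slot, is the crux; and $C_{13}^{\set{1,5}}$ is precisely the graph whose destroyed subgraph stays at rank $2$ and refuses to collapse, which is what isolates it as the unique exception.
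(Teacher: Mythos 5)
Your proposal follows essentially the same route as the paper's proof: the same Lemma~\ref{lemnkl} degree bounds with a census-based enumeration over $n \in \set{10,11,12,13}$, the same disposal of non-exceptional candidates by deletion/destruction into bipartite or perfect graphs (iterated where needed), and the same treatment of the unique survivor $C_{13}^{\set{1,5}}$ --- an $\LS_+^2$ certificate package violating $\bar{e}^{\top}x \leq \a(G) = 4$ for the lower bound, and destruction plus Theorem~\ref{thmLiptakT031} for the upper bound. The only cosmetic difference is your use of the prime-order circulant observation to organize the $n \in \set{11,13}$ slots, which the paper instead reads off directly from the catalogued list in graph6 form.
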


\begin{proof}
Suppose a vertex-transitive graph $G$ has $n$ vertices, is $k$-regular and has $\LS_+$-rank $3$. It follows from Lemma~\ref{lemnkl} that $3 \leq k \leq n-7$. Thus, we see that $n \geq 10$. As with in the proof of Proposition~\ref{propVT2}, we may assume that $G$ is connected. 

There are 38 connected vertex-transitive graphs on $n \in \set{10,11,12,13}$ vertices which satisfy the degree bounds from Lemma~\ref{lemnkl} with $\ell = 3$~\cite{RoyleH20}, which we list in Table~\ref{tabVT3}. (For compactness, we are listing these graphs in graph6 format.)

\begin{table}[htbp]
\centering
\scriptsize{
\begin{tabular}{c c c c}
\begin{tabular}{c|l}
$i$ & $G_i$\\
\hline
$1$ & \verb+Is?@WxcU?+ \\ 
$2$ & \verb+Is?HGtcU?+ \\ 
$3$ & \verb+IsP@OkWHG+ \\ 
$4$ & \verb+Js`@IStU`w?+ \\ 
$5$ & \verb+Juk?IKeTPT?+ \\ 
$6$ & \verb+Ks???wYP`KL?+ \\ 
$7$ & \verb+Ks?GOGUIQcKG+ \\ 
$8$ & \verb+Ks?GOObDRCI_+ \\ 
$9$ & \verb+Kt?GOHAOWsCg+ \\ 
$10$ & \verb+Ks_?BLMLasF_+ \\ 
\end{tabular}
&
\begin{tabular}{c|l}
$i$ & $G_i$\\
\hline
$11$ & \verb+Ks_?BLUR`wFO+ \\ 
$12$ & \verb+Ks_?BLeF_{N?+ \\ 
$13$ & \verb+Ks_GagjLASko+ \\ 
$14$ & \verb+Ksc?JLSQhSE`+ \\ 
$15$ & \verb+Ksc@ILKLAdDI+ \\ 
$16$ & \verb+Ksd@?STWY[Eo+ \\ 
$17$ & \verb+Ktk?IHBD_[kK+ \\ 
$18$ & \verb+Kuk?GLDGqhDQ+ \\ 
$19$ & \verb+K{cAGgeBQSeK+ \\ 
$20$ & \verb+Ksa?BtuZa{Fo+ \\ 
\end{tabular}
&
\begin{tabular}{c|l}
$i$ & $G_i$\\
\hline
$21$ & \verb+KsaIPKuUZkNG+ \\ 
$22$ & \verb+KseXa`JHrJLQ+ \\ 
$23$ & \verb+KseY`TIKZLKi+ \\ 
$24$ & \verb+KsiYISiDZdMI+ \\ 
$25$ & \verb+KsiZ?lEEZBnO+ \\ 
$26$ & \verb+KtaHGthYaiis+ \\ 
$27$ & \verb+KtiWBDRQqLfo+ \\ 
$28$ & \verb+KtiY@DFQYefo+ \\ 
$29$ & \verb+K{eY`dIPhRCj+ \\ 
$30$ & \verb+K{fw?DbWwuBX+ \\ 
\end{tabular}
&
\begin{tabular}{c|l}
$i$ & $G_i$\\
\hline
$31$ & \verb+K}iWBDRIol@r+ \\ 
$32$ & \verb+Ls_?GSTTPTLO\?+ \\ 
$33$ & \verb+Ls`?XGRQR@B`Kc+ \\ 
$34$ & \verb+Lts?GKEPPDHIKI+ \\ 
$35$ & \verb+L|qC@|]JakiiIj+ \\ 
$36$ & \verb+L}akqXXWomdULJ+ \\ 
$37$ & \verb+L}nDAwyBgmGfGv+ \\ 
$38$ & \verb+L~zTQgiDOT_n?~+ \\ 
& \\
& \\
\end{tabular}
\end{tabular}
}
\caption{The list of connected vertex-transitive graphs satisfying the degree bounds from Lemma~\ref{lemnkl} with $n \in \set{10,11,12,13}$ and $\ell = 3$, in graph6 format.}
\label{tabVT3}
\end{table}

Next, notice that
\begin{itemize}
\item 
For every $i \in \set{1,6,8,10,11,12,20}$, $G_i$ is bipartite, so these graphs all have $\LS_+$-rank 0.
\item 
For every $i \in \set{2,3,4, 7, 21, 26, 32,35}$, $G_i  -1$ is bipartite, so these graphs all have $\LS_+$-rank 1.
\item 
For every $i \in \set{5, 9,14,15,17,18,19,22,23,24,25,27,28,29,30,34,36,37,38}$, $G_i \ominus 1$ is perfect, so these graphs all have $\LS_+$-rank at most 2.
\item 
Notice that $(G_{13} \ominus 1) - 12$ and $(G_{16} \ominus 1) - 8$ are both bipartite, and so $r_+(G_{13}), r_+(G_{16}) \leq 2$.
\item 
Finally, notice that $G_{31} \ominus 1$ is the 5-wheel, which has $\LS_+$-rank $1$. Thus, $r_+(G_{31}) \leq 2$.
\end{itemize}

This leaves $G_{33}$, which is isomorphic to $G \ce C_{13}^{\set{1,5}}$. We provide a $\LS_+^2$ certificate package~\cite{AuT25data} showing that $\frac{17}{55} \bar{e} \in \LS_+^2(G)$, which implies that $\a_{\LS_+^2}(G) \geq \frac{221}{55} > \a(G) = 4$. This implies that $r_+(G) \geq 3$. Also, destroying a vertex in $G$ yields an 8-vertex graph, which has $\LS_+$-rank at most 2 by Theorem~\ref{thmLiptakT031}. Thus, it follows that $r_+(G) = 3$.
\end{proof}

We now determine $\overline{n}_+(4)$ using a similar analysis.

\begin{proposition}\label{propVT4}
$\overline{n}_+(4) =16$. Moreover, the only vertex-transitive graph $G$ where $|V(G)| \leq 16$ and $r_+(G) = 4$ is $G_{\ref{figVertexTransitive234},3}$ (Figure~\ref{figVertexTransitive234}, right).
\end{proposition}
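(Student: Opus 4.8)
The plan is to follow the template already used for Propositions~\ref{propVT2} and~\ref{propVT3}: reduce to a finite enumeration, eliminate all but one candidate by $\LS_+$-rank upper bounds, and then pin down the lone survivor with matching bounds. First I would let $G$ be a connected $k$-regular vertex-transitive graph on $n$ vertices with $r_+(G) = 4$ (we may assume connectivity, since a disconnected graph shares its rank with a proper subgraph). Lemma~\ref{lemnkl} with $\ell = 4$ gives $3 \le k \le n - 10$, which forces $n \ge 13$; together with $|V(G)| \le 16$ this confines the search to $n \in \set{13,14,15,16}$ with $k \le n - 10$. Invoking the exhaustive catalogue of vertex-transitive graphs on at most $47$ vertices~\cite{RoyleH20, McKayR90, Skiena}, I would assemble the finite list of all connected vertex-transitive graphs meeting these degree bounds (recorded in a table in graph6 format, as in Table~\ref{tabVT3}).

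The core of the argument is a classification showing that every graph on this list, except $G_{\ref{figVertexTransitive234},3}$, has $\LS_+$-rank at most $3$. Exploiting vertex-transitivity, it suffices to test a single representative vertex in each reduction. I would bucket the graphs by a hierarchy of certifiable witnesses, exactly as in the proof of Proposition~\ref{propVT3}: graphs that are bipartite (rank $0$); graphs for which $G - 1$ is bipartite (rank $\le 1$); graphs for which $G \ominus 1$ is perfect, or has at most $8$ vertices so that Theorem~\ref{thmLiptakT031} applies (rank $\le 2$, whence $r_+(G) \le 3$ by Theorem~\ref{thmDeleteDestroy}(i)); and the remaining stubborn cases, handled by iterating the destruction/deletion bounds of Theorem~\ref{thmDeleteDestroy} until a bipartite or perfect residual graph is reached (e.g.\ $(G \ominus 1) - j$ bipartite or $(G \ominus 1) \ominus j$ perfect). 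Each such witness is a small, explicitly checkable structural fact, so no semidefinite computation is needed to discard a non-candidate.

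For the surviving graph $G \ce G_{\ref{figVertexTransitive234},3}$, which is $4$-regular on $16$ vertices, the two bounds close as follows. For the upper bound, destroying any vertex removes $k+1 = 5$ vertices and leaves an $11$-vertex graph, so Theorem~\ref{thmLiptakT031} gives $r_+(G \ominus i) \le 3$, and Theorem~\ref{thmDeleteDestroy}(i) then yields $r_+(G) \le 4$. For the lower bound, I would present an explicit $\LS_+^3$ certificate package (as in Proposition~\ref{prop:LS_+^3-certificate}, with the data stored at~\cite{AuT25data}) exhibiting a symmetric point $\tfrac{p}{q}\bar{e} \in \LS_+^3(G)$ whose objective value $\tfrac{16p}{q}$ strictly exceeds $\a(G)$; this certifies $\a_{\LS_+^3}(G) > \a(G)$ and hence $r_+(G) \ge 4$. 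Combining the two inequalities gives $r_+(G) = 4$, and the classification shows $G$ is the unique such graph with at most $16$ vertices, so $\overline{n}_+(4) = 16$.

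I expect the main obstacle to be the upper-bound classification for the low-degree graphs, particularly the cubic ($k=3$) ones on $14$, $15$, and $16$ vertices. There a single destruction removes only $4$ vertices, leaving a residual graph of $10$--$12$ vertices that is too large for Theorem~\ref{thmLiptakT031} to force rank $\le 2$ directly; these cases must be pushed through one or more additional destruction/deletion steps, and verifying that each such chain terminates in a bipartite or perfect graph across the entire enumerated list is the most delicate and labour-intensive part. A secondary, purely computational difficulty is producing and integer-verifying the $\LS_+^3$ certificate package for $G_{\ref{figVertexTransitive234},3}$, since such a package involves on the order of $4(1 + 2n + 4n^2)$ matrices at $n = 16$.
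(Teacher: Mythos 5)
Your proposal follows the same template as the paper's proof: reduce via Lemma~\ref{lemnkl} and connectivity to a finite catalogue of vertex-transitive graphs on $14$--$16$ vertices (the paper lists $96$ of them; your $n=13$ case is vacuous since no cubic graph on an odd number of vertices exists), eliminate all but one by rank upper bounds, and certify the survivor $G_{\ref{figVertexTransitive234},3}$ with an $\LS_+^3$ certificate package giving $\tfrac{26}{81}\bar{e} \in \LS_+^3(G)$, hence $\a_{\LS_+^3}(G) > 5 = \a(G)$. Two differences are worth recording. First, your upper bound for the survivor is both valid and simpler than the paper's: since $G_{\ref{figVertexTransitive234},3}$ is $4$-regular, every $G \ominus i$ has $11$ vertices, so Theorem~\ref{thmLiptakT031} gives $r_+(G \ominus i) \leq 3$ and Theorem~\ref{thmDeleteDestroy}(i) closes the argument; the paper instead exhibits a $9$-cycle inside $(G \ominus 1) - \set{11,15}$. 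Second, your elimination toolkit --- bipartite, perfect, or iterated destruction/deletion terminating in a bipartite or perfect residual --- is slightly weaker than what the paper actually needed for a few stubborn graphs. For two of them (the paper's $G_{50}$ and $G_{82}$), the destruction of any vertex of $G \ominus 1$ leaves either a $5$-vertex graph or a $6$-vertex graph that need not be perfect or bipartite; the paper certifies rank $\leq 1$ there by checking non-isomorphism with the two $2$-minimal graphs, i.e., it invokes the classification of $2$-minimal graphs rather than perfection. For another ($G_{51}$), the paper resorts to a cut-clique decomposition via Proposition~\ref{propCliqueCut}. So, as literally stated, your scheme would stall on a handful of instances and would need to be augmented with rank-$1$ certifications beyond perfection (odd holes, the $2$-minimal classification) and with Proposition~\ref{propCliqueCut}; you correctly identified this classification step as the delicate part, but underestimated the variety of tools it requires.
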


\begin{proof}
Let $G$ be a $k$-regular vertex-transitive graph on $n$ vertices where $r_+(G) =4$. Again, we may assume that $G$ is connected. By Lemma~\ref{lemnkl}, we know that $3 \leq k \leq n-10$, so $n \geq 13$. When $n=13$, $k=3$, and no such graph exists. So we may assume $n \geq 14$.

Next, we look into the connected $k$-regular vertex-transitive graphs on $n \in \set{14,15,16}$ vertices which satisfy $3 \leq k \leq n-10$. There are 96 such graphs~\cite{RoyleH20}, which are listed in Table~\ref{tabVT4}.

\begin{table}[htbp]
\centering
\tiny{
\begin{tabular}{c c c c}
\begin{tabular}{c|l}
$i$ & $G_i$\\
\hline
$1$ & \verb+Ms???@KOpSBGHOD_?+ \\ 
$2$ & \verb+Ms??OGKB?ccKS_WO?+ \\ 
$3$ & \verb+Ms?GGSG@?bCQSGWC?+ \\ 
$4$ & \verb+Ms_???VBpeHg[_Z??+ \\ 
$5$ & \verb+Ms_??@eTPeHWJOF_?+ \\ 
$6$ & \verb+Ms_??KEXbKBKEW]??+ \\ 
$7$ & \verb+Ms_AHGHCihDaQOK__+ \\ 
$8$ & \verb+Mts?GKE@QDCIQIKD?+ \\ 
$9$ & \verb+Ns_?@DDDOTDASBL_Ho?+ \\ 
$10$ & \verb+Ns_?ACKChIIDISR_Eo?+ \\ 
$11$ & \verb+Ns_?GGAAohdWTGYOMC?+ \\ 
$12$ & \verb+Nsc?GCCGyJI_I_QHEGO+ \\ 
$13$ & \verb+Nsc?GCDP`Bi_Q`IOECG+ \\ 
$14$ & \verb+Nto?AHBHPC`PCgD?`_g+ \\ 
$15$ & \verb+Nuk?GGA@oLCIOIIPICO+ \\ 
$16$ & \verb+Os???GA?WBBAEAP_CoBO?+ \\ 
$17$ & \verb+Os?GOGEAOC?GCDGII?b?O+ \\ 
$18$ & \verb+Os?GOO??AHGQGSDCAK@D?+ \\ 
$19$ & \verb+Os?GOOD@_C?GCEGHI@B?G+ \\ 
$20$ & \verb+Os_????@zKIgIgLGHW?r?+ \\ 
$21$ & \verb+Os_????AwjKgJ_RGESAe?+ \\ 
$22$ & \verb+Os_????BWfKoTOLGHS@U?+ \\ 
$23$ & \verb+Os_??CDAGaiaIaX_HS@i?+ \\ 
$24$ & \verb+Os_??CDCGQePJCX_ISAY?+ \\ 
$25$ & \verb+Os_??DAGOZAqIGSCho@i?+ \\ 
$26$ & \verb+Os_??KE@_KKKWWEEBBBo?+ \\ 
$27$ & \verb+Os_??L@GOS`SEKT@E`BK?+ \\ 
$28$ & \verb+Os_?A?_DOhA[BWQaDQBK?+ \\ 
$29$ & \verb+Os_?GGC@?RaUSgISLGBc?+ \\ 
$30$ & \verb+Os_?IGbC`Ga_O`ISB@QPG+ \\ 
$31$ & \verb+Otk?GGB?WBgAOBD_@oJ?K+ \\ 
$32$ & \verb+Ouk?GKC?_J?YGIODDCQ_a+ \\ 
\end{tabular}
&
\begin{tabular}{c|l}
$i$ & $G_i$\\
\hline$33$ & \verb+Osa????Dw^KwXgUWFKBs?+ \\ 
$34$ & \verb+Osa????ExN@{UWXgNGBk?+ \\ 
$35$ & \verb+Osa????WyfH[PwN_Fg@x?+ \\ 
$36$ & \verb+Osa????WyuFKRWJgIs@m?+ \\ 
$37$ & \verb+Osa??CH@gqctUSXgNGBk?+ \\ 
$38$ & \verb+Osa??cQAWJIRKeZ_LgBX?+ \\ 
$39$ & \verb+Osa?AKUB`KKEWKERBHRs?+ \\ 
$40$ & \verb+Osa?OGgPGqasKsSwLGbKO+ \\ 
$41$ & \verb+OsaBA`GP@`dIHWEcas_]O+ \\ 
$42$ & \verb+OseW?CBSaPgjK_IGcgZF?+ \\ 
$43$ & \verb+OseW?CFOQ`eAHPI`h[BCo+ \\ 
$44$ & \verb+OseW?CFOQ`eAH`IPh[BCo+ \\ 
$45$ & \verb+OseW?CbOyHCaSaQQdO`aH+ \\ 
$46$ & \verb+OseW?CbOyPCaSaKPGhPSI+ \\ 
$47$ & \verb+OseW?SFGHAiOKeQBGajF?+ \\ 
$48$ & \verb+OseWOLAOYECXSLKADAHHB+ \\ 
$49$ & \verb+Oseg?D@AqcgfWwEOEOHHB+ \\ 
$50$ & \verb+Osf_?KFO`CBRSSKQdDAP`+ \\ 
$51$ & \verb+Osf_GK_O@Ba[SLQcBJ@TA+ \\ 
$52$ & \verb+OsiW??`CjCiKPPQad[?m_+ \\ 
$53$ & \verb+OsiW?CAOWNKQEQDWkUBoG+ \\ 
$54$ & \verb+OsiW?CBWieHEPRK_DCGiB+ \\ 
$55$ & \verb+OsiWA?RG_KiPWWQRAePU_+ \\ 
$56$ & \verb+OsiWA@@AgMkXSkK_DCGiB+ \\ 
$57$ & \verb+O{fw?CB?wE?XWKWKb@_oX+ \\ 
$58$ & \verb+OsaC???FzrMkYwXwJw@|?+ \\ 
$59$ & \verb+OsaC???RxnNKYw\WJs@}?+ \\ 
$60$ & \verb+OsaC???]ZrK{XwFwB{B{?+ \\ 
$61$ & \verb+OsaCB@_EWrKrXeFwB{B{?+ \\ 
$62$ & \verb+OsaKYCQGbRIjXKLWJEHqc+ \\ 
$63$ & \verb+OsaKYCcQXRBKSbLDmTBi_+ \\ 
$64$ & \verb+OsaKYDDGgdLBUELQeibr?+ \\ 
\end{tabular}
&
\begin{tabular}{c|l}
$i$ & $G_i$\\
\hline$65$ & \verb+OsaKYPDHOiDFEM[wMPRcg+ \\ 
$66$ & \verb+OsaKg?dQGid\[S[qLSQy_+ \\ 
$67$ & \verb+OsaKg?hSZEkkTIIdJWPyA+ \\ 
$68$ & \verb+OsaKiCaC`RbMYKTYLabiC+ \\ 
$69$ & \verb+OsaKiCdPPDaUBR]WNAboS+ \\ 
$70$ & \verb+OsaSWSTOhDKiXQUEfBbr?+ \\ 
$71$ & \verb+OsaSXCdOha`T[DRRNEAyC+ \\ 
$72$ & \verb+OsaSXDCSXRbKTBIdmSBY@+ \\ 
$73$ & \verb+OsaSYHBGpH`XDL]SNDBoK+ \\ 
$74$ & \verb+Osedw?DOYFHBSZW\Fg@w`+ \\ 
$75$ & \verb+Osedw?HOYFGbSZW\Fg@w`+ \\ 
$76$ & \verb+Osedw@??pJhMP\UWEjBpA+ \\ 
$77$ & \verb+OsfDw?@G`bgmW\RWFFAyA+ \\ 
$78$ & \verb+OsfDw@@GXPCZP]TSFDayA+ \\ 
$79$ & \verb+OsfLg?@WYHcZQYKXJcPuA+ \\ 
$80$ & \verb+Osqsw@@AXCclW]USEXaxA+ \\ 
$81$ & \verb+Osqsy@@GWRcdGtUEESqyA+ \\ 
$82$ & \verb+OtaCXOTHOphhRKSsKTJcK+ \\ 
$83$ & \verb+OtaLw?@OaRgn[S[Kdk?z@+ \\ 
$84$ & \verb+OtaLw?@ObBiNQ[P[fg@x@+ \\ 
$85$ & \verb+OtaLw?BOBBhNP[S[fa@y@+ \\ 
$86$ & \verb+OtaLy@@AWJg[WFSFfg@x@+ \\ 
$87$ & \verb+OtaLyD`SYQGhKBKB`eOXb+ \\ 
$88$ & \verb+Ota\W@@GYChJS]PZFc@u@+ \\ 
$89$ & \verb+Otakw?@QYbKLPLOufc@u@+ \\ 
$90$ & \verb+Otaky@@GWbhBPROlfc@u@+ \\ 
$91$ & \verb+OtrTOGBW@`hIP]G|Bg_tP+ \\ 
$92$ & \verb+Ouj\w?@?YBcMSUILIhPTI+ \\ 
$93$ & \verb+O{fL_@HKQJCZCsBW_pzp_+ \\ 
$94$ & \verb+O{fL_CCQP`GmCzB[C\Joo+ \\ 
$95$ & \verb+O}akqPPWOV@iHIDHcROcj+ \\ 
$96$ & \verb+O~aKYPDOxQBHHIGeacocj+ \\ 
\end{tabular}
\end{tabular}
}
\caption{The list of connected vertex-transtive graphs satisfying the degree bounds from Lemma~\ref{lemnkl} with $n \in \set{14,15,16}$ and $\ell = 4$, in graph6 format.}
\label{tabVT4}
\end{table}

Observe that
\begin{itemize}
\item
For every $i \in \set{1,2,4,5, 16,18,19,20,21,22,26,29,33,34,35,36,58,59,60}$, $G_i$ is bipartite, so these graphs all have $\LS_+$-rank 0.
\item
For every $i \in \set{3,6,11,17,37,38,61}$, $G_i - 1$ is bipartite, so these graphs all have $\LS_+$-rank 1.
\item
For every $i \in \{8,12,13,15, 31,32,42,45,46,53,57,65,66,67,69,73,74,75,76,77,78,79$,
$80,81,83,84,86,88,89,90,92,96 \}$, $G_i \ominus 1$ is perfect, so these graphs all have $\LS_+$-rank at most 2.
\item
The graphs
\[
\begin{array}{llll}
(G_{7} \ominus 1) - 6, &
(G_{9} \ominus 1) - 7, &
(G_{10} \ominus 1) - 7,&
(G_{23} \ominus 1) - 14, \\
(G_{24} \ominus 1) - 14, &
(G_{25} \ominus 1) - 6, & 
(G_{27} \ominus 1) - 6,&
(G_{28} \ominus 1) - 7, \\
(G_{39} \ominus 1) - 7
\end{array}
\]
are all bipartite. Thus, $r_+(G_i) \leq 2$ for every $i \in \set{7,9,10,23,24,25,27,28,39}$.
\item
The graphs
\[
(G_{14} \ominus 1) - \set{6,7},
(G_{40} \ominus 1) - \set{7,8},
(G_{52} \ominus 1) - \set{7,8}
\]
are all bipartite. Thus, $r_+(G_i) \leq 3$ for every $i \in \set{14,40,52}$.
\item
The graphs
\[
\begin{array}{llll}
(G_{43} \ominus 1) - 7, &
(G_{44} \ominus 1) - 7, &
(G_{47} \ominus 1) - 7, &
(G_{48} \ominus 1) - 7, \\
(G_{49} \ominus 1) - 7, &
(G_{54} \ominus 1) - 7, &
(G_{55} \ominus 1) - 8, &
(G_{56} \ominus 1) - 7, \\
(G_{63} \ominus 1) - 10, &
(G_{64} \ominus 1) - 8, &
(G_{68} \ominus 1) - 8, &
(G_{70} \ominus 1) - 8, \\
(G_{71} \ominus 1) - 8, &
(G_{72} \ominus 1) - 8, &
(G_{85} \ominus 1) - 8, &
(G_{87} \ominus 1) - 8, \\
(G_{93} \ominus 1) - 9, &
(G_{94} \ominus 1) - 10 &
\end{array}
\]
are all perfect. Thus, $r_+(G_i) \leq 3$ for every $i \in \{43,44,47,48,49,54,55,56,63,64,68,70$,
$71,72,85,87,93,94\}$.
\item
For every $i \in \set{62,91,95}$, observe that $G_i \ominus 1$ has exactly $9$ vertices, all of which have degree at least $3$, and thus $G_i \ominus 1$ is not 3-minimal (since every 3-minimal graph must have at least one vertex with degree 2 due to Theorem~\ref{thmAuT24b1}). Hence $r_+(G_i \ominus 1) \leq 2$, which implies that $r_+(G_i) \leq 3$.
\item
Next, for convenience, let $H \ce G_{50} \ominus 1$ (Figure~\ref{figPropVT4}, left). For every $i \in V(H)$, $H \ominus i$ is either a 5-vertex graph, or a 6-vertex graph which is not isomorphic to $G_{\ref{figKnownEG},1}$ or $G_{\ref{figKnownEG},2}$. Thus, $r_+(H \ominus i) \leq 1$ for every $i \in V(H)$, which implies that $r_+(H) \leq 2$, and thus $r_+(G_{50}) \leq 3$. The same argument applies for $H \ce G_{82} \ominus 1$ (Figure~\ref{figPropVT4}, centre). Thus, $r_+(G_{50}), r_+(G_{82}) \leq 3$.
\item
Next, consider the graph $H \ce (G_{51} \ominus 1) - 10$ (Figure~\ref{figPropVT4}, right). Notice that $\set{8,11}$, $\set{9,12}$, and $\set{7,16}$ are all cut cliques in $H$, and one can show that $r_+(H) \leq 1$ by decomposing $H$ using Proposition~\ref{propCliqueCut}. Thus implies that $r_+(G_{51}) \leq 3$.
\item
Finally, $G_{41}$ is the (5-regular) Clebsch graph, and so $G_{41} \ominus 1$ yields the Peterson graph, which has $\LS_+$-rank 1. (To see this, observe that the Peterson graph is vertex-transitive in its own right, and destroying any vertex yields a bipartite graph.) Thus, $r_+(G_{41}) \leq 2$.
\end{itemize}

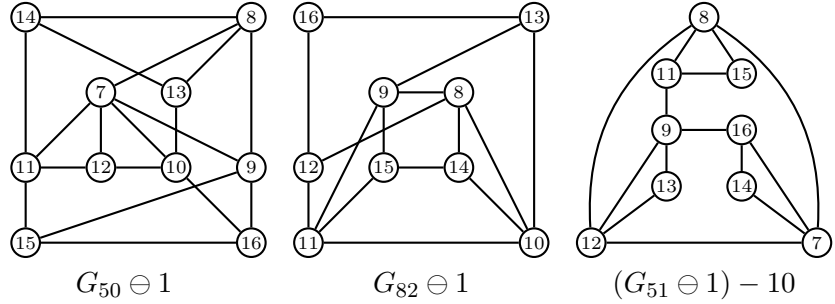
\begin{figure}[htbp]
\centering
\begin{tabular}{ccc}
\def\k{1}
\def\x{360/10}
\def\z{360/20}
\begin{tikzpicture}[scale=1, thick,main node/.style={circle, minimum size=3.8mm, inner sep=0.1mm,draw,font=\tiny\sffamily}]
\node[main node] at (1,2) (7) {$7$};
\node[main node] at (3,3) (8) {$8$};
\node[main node] at (3,1) (9) {$9$};
\node[main node] at (2,1) (10) {$10$};
\node[main node] at (0,1) (11) {$11$};
\node[main node] at (1,1) (12) {$12$};
\node[main node] at (2,2) (13) {$13$};
\node[main node] at (0,3) (14) {$14$};
\node[main node] at (0,0) (15) {$15$};
\node[main node] at (3,0) (16) {$16$};

  \path[every node/.style={font=\sffamily}]
(7) edge (8) (7) edge (9) (7) edge (10) (7) edge (11) (7) edge (12) (8) edge (9) (8) edge (13) (8) edge (14) (9) edge (15) (9) edge (16) (10) edge (12) (10) edge (13) (10) edge (16) (11) edge (12) (11) edge (14) (11) edge (15) (13) edge (14) (15) edge (16) ;
\end{tikzpicture}
&
\begin{tikzpicture}[scale=1, thick,main node/.style={circle, minimum size=3.8mm, inner sep=0.1mm,draw,font=\tiny\sffamily}]
\node[main node] at (2,2) (8) {$8$};
\node[main node] at (1,2) (9) {$9$};
\node[main node] at (3,0) (10) {$10$};
\node[main node] at (0,0) (11) {$11$};
\node[main node] at (0,1) (12) {$12$};
\node[main node] at (3,3) (13) {$13$};
\node[main node] at (2,1) (14) {$14$};
\node[main node] at (1,1) (15) {$15$};
\node[main node] at (0,3) (16) {$16$};

  \path[every node/.style={font=\sffamily}]
(8) edge (9) (8) edge (10) (8) edge (12) (8) edge (14) (9) edge (11) (9) edge (13) (9) edge (15) (10) edge (11) (10) edge (13) (10) edge (14) (11) edge (12) (11) edge (15) (12) edge (16) (13) edge (16) (14) edge (15) ;

\end{tikzpicture}
&

\begin{tikzpicture}[scale=1, thick,main node/.style={circle, minimum size=3.8mm, inner sep=0.1mm,draw,font=\tiny\sffamily}]
\node[main node] at (3,0) (7) {$7$};
\node[main node] at (1.5,3) (8) {$8$};
\node[main node] at (1,1.5) (9) {$9$};
\node[main node] at (1,2.25) (11) {$11$};
\node[main node] at (0,0) (12) {$12$};
\node[main node] at (1,0.75) (13) {$13$};
\node[main node] at (2,0.75) (14) {$14$};
\node[main node] at (2,2.25) (15) {$15$};
\node[main node] at (2,1.5) (16) {$16$};

  \path[every node/.style={font=\sffamily}]
 (7) edge[bend right] (8) (7) edge (12) (7) edge (14) (7) edge (16) (8) edge (11) (8) edge[bend right] (12) (8) edge (15) (9) edge (11) (9) edge (12) (9) edge (13) (9) edge (16) (11) edge (15) (12) edge (13) (14) edge (16) ;
\end{tikzpicture}
\\
$G_{50} \ominus 1$ & $G_{82} \ominus 1$ & $(G_{51} \ominus 1) - 10$
\end{tabular}
\caption{Illustrations for the proof of Proposition~\ref{propVT4}.}
\label{figPropVT4}
\end{figure}

This leaves $G_{30}$, which is isomorphic to $G \ce G_{\ref{figVertexTransitive234},3}$ in Figure~\ref{figVertexTransitive234}. The attached $\LS_+^3$ certificate package~\cite{AuT25data} shows that $\frac{26}{81}\bar{e} \in \LS_+^3(G)$, and thus $\a_{\LS_+^3}(G) \geq \frac{416}{81} > \a(G) =  5$, implying that $r_+(G) \geq 4$. Also, observe that $(G \ominus 1) - \set{11,15}$ is a 9-cycle ($\LS_+$-rank $1$), implying that $r_+(G) \leq 4$. 
\end{proof}

Interestingly, $G_{\ref{figVertexTransitive234},1}$, $G_{\ref{figVertexTransitive234},2}$, and $G_{\ref{figVertexTransitive234},3}$ are all 4-regular. Also, all three graphs have some stretched-clique structures embedded in them, which are highlighted in Figure~\ref{figVertexTransitive234SK}. First, as mentioned in the proof of Proposition~\ref{propVT2}, $G_{\ref{figVertexTransitive234},1} - \set{1,3}$ gives a copy of $G_{\ref{figKnownEG},2} \in \hat{\K}_{4,1}$. Also, notice that $G_{\ref{figVertexTransitive234},2} \ominus 1 \in \hat{\K}_{4,2}$ and $G_{\ref{figVertexTransitive234},3} \ominus 1 \in \hat{\K}_{5,3}$. Moreover, in both cases, one can join a new vertex to every vertex in the aforementioned stretched-clique subgraph and then 4-stretch the new vertex to obtain the full graph.  Thus, these graphs seem to share some structural similarities with $\ell$-minimal graphs, as they can also be obtained from  $K_3$ by a sequence of 1-joining and $k$-stretching operations akin to that described in Corollary~\ref{corJoinStretch}.

\def\sc{1.7}

\begin{figure}[htbp]
\centering
\begin{tabular}{ccc}
\def\x{270 - 360/4}
\def\z{360/4}
\def\y{0.7}

\begin{tikzpicture}
[scale=\sc, thick,main node/.style={circle, minimum size=3.8mm, inner sep=0.1mm,draw,font=\tiny\sffamily}]
\node[main node] at ({cos(\x+(0)*\z)},{sin(\x+(0)*\z)}) (5) {$5$};
\node[main node] at ({cos(\x+(1)*\z)},{sin(\x+(1)*\z)}) (6) {$6$};
\node[main node] at ({cos(\x+(2)*\z)},{sin(\x+(2)*\z)}) (7) {$7$};
\node[main node] at ({ \y* cos(\x+(3)*\z) + (1-\y)*cos(\x+(2)*\z)},{ \y* sin(\x+(3)*\z) + (1-\y)*sin(\x+(2)*\z)}) (8) {$8$};
\node[main node] at ({cos(\x+(3)*\z)},{sin(\x+(3)*\z)}) (2) {$2$};
\node[main node] at ({ \y* cos(\x+(3)*\z) + (1-\y)*cos(\x+(4)*\z)},{ \y* sin(\x+(3)*\z) + (1-\y)*sin(\x+(4)*\z)}) (4) {$4$};

  \path[every node/.style={font=\sffamily}]
(2) edge (4)
(4) edge (5)
(4) edge (6)
(5) edge (6)
(5) edge (7)
(6) edge (7)
(6) edge (8)
(7) edge (8)
(8) edge (2);
\end{tikzpicture}
&

\def\x{270 - 360/8}
\def\z{360/4}

\begin{tikzpicture}
[scale=\sc, thick,main node/.style={circle, minimum size=3.8mm, inner sep=0.1mm,draw,font=\tiny\sffamily}]
\node[main node] at ({cos(\x+(0)*\z)},{sin(\x+(0)*\z)}) (11) {$11$};
\node[main node] at ({cos(\x+(1)*\z)},{sin(\x+(1)*\z)}) (12) {$12$};

\node[main node] at ({ \y* cos(\x+(2)*\z) + (1-\y)*cos(\x+(1)*\z)},{ \y* sin(\x+(2)*\z) + (1-\y)*sin(\x+(1)*\z)}) (7) 
{$7$};
\node[main node] at ({cos(\x+(2)*\z)},{sin(\x+(2)*\z)}) (8) {$8$};
\node[main node] at ({ \y* cos(\x+(2)*\z) + (1-\y)*cos(\x+(3)*\z)},{ \y* sin(\x+(2)*\z) + (1-\y)*sin(\x+(3)*\z)}) (3) {$3$};

\node[main node] at ({ \y* cos(\x+(3)*\z) + (1-\y)*cos(\x+(2)*\z)},{ \y* sin(\x+(3)*\z) + (1-\y)*sin(\x+(2)*\z)}) (4) {$4$};
\node[main node] at ({cos(\x+(3)*\z)},{sin(\x+(3)*\z)}) (5) {$5$};
\node[main node] at ({ \y* cos(\x+(3)*\z) + (1-\y)*cos(\x+(4)*\z)},{ \y* sin(\x+(3)*\z) + (1-\y)*sin(\x+(4)*\z)}) (10) {$10$};

  \path[every node/.style={font=\sffamily}]
(3) edge (4)
(4) edge (5)
(7) edge (8)
(10) edge (11)
(11) edge (12)
(3) edge (8)
(5) edge (10)
(7) edge (12)
(11) edge (3)
(12) edge (4)
;
\end{tikzpicture}
&

\def\x{270 - 360/10}
\def\z{360/5}
\begin{tikzpicture}
[scale=\sc, thick,main node/.style={circle, minimum size=3.8mm, inner sep=0.1mm,draw,font=\tiny\sffamily}]
\node[main node] at ({cos(\x+(0)*\z)},{sin(\x+(0)*\z)}) (3b) {$11$};
\node[main node] at ({cos(\x+(1)*\z)},{sin(\x+(1)*\z)}) (7b) {$15$};

\node[main node] at ({ \y* cos(\x+(2)*\z) + (1-\y)*cos(\x+(1)*\z)},{ \y* sin(\x+(2)*\z) + (1-\y)*sin(\x+(1)*\z)}) (2b) 
{$10$};
\node[main node] at ({cos(\x+(2)*\z)},{sin(\x+(2)*\z)}) (5b) {$13$};
\node[main node] at ({ \y* cos(\x+(2)*\z) + (1-\y)*cos(\x+(3)*\z)},{ \y* sin(\x+(2)*\z) + (1-\y)*sin(\x+(3)*\z)}) (8b) {$16$};

\node[main node] at ({ \y* cos(\x+(3)*\z) + (1-\y)*cos(\x+(2)*\z)},{ \y* sin(\x+(3)*\z) + (1-\y)*sin(\x+(2)*\z)}) (4b) {$12$};
\node[main node] at ({cos(\x+(3)*\z)},{sin(\x+(3)*\z)}) (4a) {$4$};
\node[main node] at ({ \y* cos(\x+(3)*\z) + (1-\y)*cos(\x+(4)*\z)},{ \y* sin(\x+(3)*\z) + (1-\y)*sin(\x+(4)*\z)}) (3a) {$3$};

\node[main node] at ({ \y* cos(\x+(4)*\z) + (1-\y)*cos(\x+(3)*\z)},{ \y* sin(\x+(4)*\z) + (1-\y)*sin(\x+(3)*\z)}) (7a) {$7$};
\node[main node] at ({cos(\x+(4)*\z)},{sin(\x+(4)*\z)}) (6a) {$6$};
\node[main node] at ({ \y* cos(\x+(4)*\z) + (1-\y)*cos(\x+(5)*\z)},{ \y* sin(\x+(4)*\z) + (1-\y)*sin(\x+(5)*\z)}) (6b) {$14$};

  \path[every node/.style={font=\sffamily}]
(3a) edge (4a)
(6a) edge (7a)
(3a) edge (7a)
(4b) edge (7b)
(7b) edge (2b)
(2b) edge (5b)
(5b) edge (8b)
(8b) edge (3b)
(3b) edge (6b)
(2b) edge (6b)
(3b) edge (7b)
(4b) edge (8b)
(3a) edge (3b)
(4a) edge (4b)
(6a) edge (6b)
(7a) edge (7b)
;
\end{tikzpicture}\\
$G_{\ref{figVertexTransitive234},1} - \set{1,3} $ & $G_{\ref{figVertexTransitive234},2} \ominus 1$ & $G_{\ref{figVertexTransitive234},3}\ominus 1$ 
\end{tabular}
\caption{Notable stretch-clique induced subgraphs of $G_{\ref{figVertexTransitive234},1}$, $G_{\ref{figVertexTransitive234},2}$, and $G_{\ref{figVertexTransitive234},3}$.}
\label{figVertexTransitive234SK}
\end{figure}
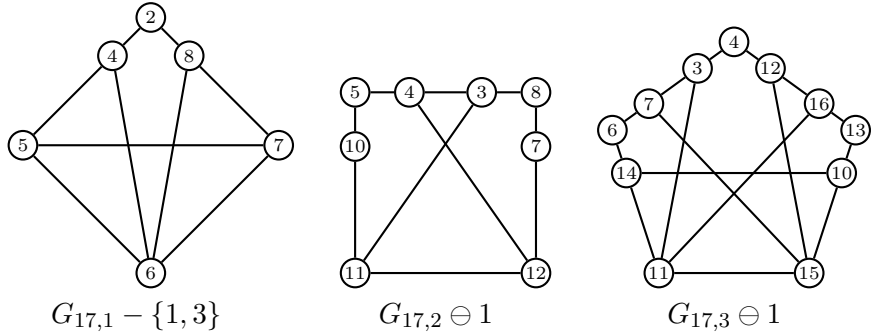

\subsection{Towards rank $5$}

We end this section by proving a lower bound on $\overline{n}_+(5)$.

\begin{proposition}\label{propVT5}
$\overline{n}_+(5) \geq 20$.
\end{proposition}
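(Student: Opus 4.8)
The plan is to argue by contradiction: suppose some vertex-transitive graph $G$ with $|V(G)| \le 19$ satisfies $r_+(G) = 5$, and derive a contradiction, which gives $\overline{n}_+(5) \ge 20$. As in the proofs of Propositions~\ref{propVT2}--\ref{propVT4}, I would first reduce to the connected case: if $G$ is disconnected, then by vertex-transitivity all of its components are isomorphic vertex-transitive graphs, each with $\LS_+$-rank equal to $r_+(G)$, contradicting the minimality of $|V(G)|$. With $G$ connected and $k$-regular, Lemma~\ref{lemnkl} (applied with $\ell = 5$) gives $3 \le k \le n - 13$, which forces $16 \le n \le 19$ and $k \le 6$. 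The handshake condition that $nk$ be even then leaves only the pairs $(n,k) \in \{(16,3),(17,4),(18,3),(18,4),(18,5),(19,4),(19,6)\}$.

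For each admissible pair $(n,k)$ I would enumerate the connected $k$-regular vertex-transitive graphs on $n$ vertices, drawing on the exhaustive lists of vertex-transitive graphs available up to $47$ vertices~\cite{McKayR90, Skiena, RoyleH20}, exactly as in Tables~\ref{tabVT3} and~\ref{tabVT4}. The goal for each candidate $G$ is now only to certify the upper bound $r_+(G) \le 4$; combined with the degree restriction above, this rules out $r_+(G) = 5$. Since $G$ is vertex-transitive, Theorem~\ref{thmDeleteDestroy}(i) reduces this to showing $r_+(G \ominus i) \le 3$ for a single representative vertex $i$, where $G \ominus i$ has $m = n - k - 1 \in \{12,13,14\}$ vertices. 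I would discharge each such residual with the toolkit used in the earlier proofs: exhibit a further vertex whose destruction or deletion leaves a bipartite graph, a perfect graph, or a graph on at most $8$ vertices (hence of $\LS_+$-rank at most $2$ by Theorem~\ref{thmLiptakT031}), or decompose $G \ominus i$ along a cut clique via Proposition~\ref{propCliqueCut}; any of these yields $r_+(G \ominus i) \le 3$, and therefore $r_+(G) \le 4$.

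The delicate cases are those in which $m = 12$, namely $(16,3),(17,4),(18,5),(19,6)$: here a residual of $\LS_+$-rank $4$ would be $4$-minimal, and the plain destruction bound would only give the inconclusive $r_+(G) \le 5$. To close these, I would invoke Theorem~\ref{thmAuT24b1}: a $4$-minimal graph arises from a proper $2$-stretching and hence must contain a vertex of degree $2$. Thus if the $12$-vertex residual $G \ominus i$ has minimum degree at least $3$, it cannot be $4$-minimal, so $r_+(G \ominus i) \le 3$; and if it does contain a low-degree vertex, that vertex furnishes a clean further destruction or deletion into a graph on at most $8$ vertices, again yielding rank at most $3$. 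Either way $r_+(G) \le 4$, contradicting $r_+(G) = 5$, and the contradiction establishes the bound.

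I expect the principal obstacle to be the size and heterogeneity of the candidate pool relative to the rank-$2$, $3$, and $4$ analyses, and in particular the case-work for the borderline $12$-vertex residuals, where distinguishing $\LS_+$-rank $3$ from genuine $4$-minimality is precisely the subtle point. The combinatorial reductions above --- destruction, deletion, clique-cut decomposition, and the degree-$2$ obstruction of Theorem~\ref{thmAuT24b1} --- should suffice for every candidate; the main risk is that a few highly symmetric residuals resist the bipartite and perfect reductions and instead require a clique-cut decomposition or a short explicit argument to certify $\LS_+$-rank at most $3$.
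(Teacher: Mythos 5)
Your overall strategy is the same as the paper's: reduce to connected $k$-regular vertex-transitive graphs via Lemma~\ref{lemnkl}, enumerate the candidates from the published catalogs, and certify $r_+(G)\le 4$ for each candidate by bounding the rank of the (single, up to isomorphism) residual $G\ominus i$. The genuine gap is in your dichotomy for the delicate $12$-vertex residuals. First, Theorem~\ref{thmDeleteDestroy} is asymmetric: the destruction bound (i) takes a \emph{maximum} over all vertices, so destroying one conveniently chosen vertex of $H \ce G\ominus i$ proves nothing about $r_+(H)$; only the deletion bound (ii), a minimum, can be applied at a single vertex. The vertex-transitivity that licenses a single representative at the top level is lost after the first destruction, and $H$ is in general irregular. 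Second, even granting a single-vertex argument, the arithmetic in your fallback branch fails: destroying a degree-$2$ vertex of a $12$-vertex graph leaves $9$ vertices (not at most $8$), and a $9$-vertex graph can have $\LS_+$-rank $3$; deleting it leaves $11$ vertices, which can also have rank $3$. Either route yields only $r_+(H)\le 4$, hence $r_+(G)\le 5$ --- exactly the inconclusive bound you set out to avoid.

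Moreover, this branch cannot be repaired by any generic argument: a $12$-vertex graph with a degree-$2$ vertex is precisely the kind of graph that \emph{can} be $4$-minimal (by Theorem~\ref{thmAuT24b1}, every $4$-minimal graph has such a vertex, and the paper exhibits over four thousand of them). So when $H$ contains a degree-$2$ vertex you must fall back on graph-specific reductions --- deletion of one, two, or three vertices producing a bipartite graph, perfectness of $H$ itself, or a cut-clique decomposition --- and whether these suffice is a fact about the particular $58$ candidates, not a consequence of your dichotomy. That is how the paper actually proceeds: the minimum-degree-$\ge 3$ observation is invoked only for the six graphs $G_{35},G_{41},G_{53},G_{54},G_{57},G_{58}$, whose residuals happen to contain no degree-$2$ vertex, and every other candidate is discharged by whole-graph bipartiteness, single-vertex deletion to a bipartite graph, a perfect residual, or explicit deletions of two or three vertices from the residual. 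Your remaining scaffolding --- the degree bounds, parity pruning, catalog enumeration, and the minimum-degree observation itself --- is sound and matches the paper; incidentally, your case $(n,k)=(16,3)$ is already settled by the proof of Proposition~\ref{propVT4}, which is why the paper starts at $n\ge 17$.
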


\begin{proof}
Again, we may focus on connected vertex-transitive graphs which satisfy the degree bounds from Lemma~\ref{lemnkl}. It follows from the proof of Proposition~\ref{propVT4} that no 16-vertex graph in our consideration has $\LS_+$-rank $5$, so we may assume $n \geq 17$ here. There is a total of 58 such graphs on $n \in \set{17,18,19}$ vertices~\cite{RoyleH20}, as listed in Table~\ref{tabVT5}. To prove our claim, it suffices to show that all of these graphs have $\LS_+$-rank at most 4.

\begin{table}[htbp]
\centering
\tiny{
\begin{tabular}{c c c}
\begin{tabular}{c|l}
$i$ & $G_i$\\
\hline
$1$ & \verb+Ps_?GCD@?C@UDQIKIHBO_[A?+ \\ 
$2$ & \verb+Ps_?GCDP?acOAHKcK_OqGQI?+ \\ 
$3$ & \verb+Ps_?GGAP@C@MCYRGEW@O_SAC+ \\ 
$4$ & \verb+Pts?GKE@OD?I?IGDG@QODK@O+ \\ 
$5$ & \verb+Qs??OGC@?O@?GgCcCG_aOOD@?g?+ \\ 
$6$ & \verb+Qs??WOC@?O??O`GHAGaA_CI?_W?+ \\ 
$7$ & \verb+Qs??WWG@?@?A?W?cA?h?SS@@_G?+ \\ 
$8$ & \verb+Qs?GGSG@?@?A?W?cA?h?SS@@_G?+ \\ 
$9$ & \verb+Qt?G?CG@?A_S?E?HG?I?DF??b??+ \\ 
$10$ & \verb+Qs_?????@E`[C[BKIQ@k?RO@c_?+ \\ 
$11$ & \verb+Qs_?????AHgUO[HoAw?]?MG?wO?+ \\ 
$12$ & \verb+Qs_?????AJCqA[CkIW@U?RC?s_?+ \\ 
$13$ & \verb+Qs_?????AK`MGsDKKgAd?Ig?\??+ \\ 
$14$ & \verb+Qs_??GB@`@gIDAA`EcAU?I?`O@G+ \\ 
$15$ & \verb+Qs_??KE@?C?oALCRDEAaOYC@oG?+ \\ 
$16$ & \verb+Qs_??KE@_K?K?WWEKB?oKE@`w??+ \\ 
$17$ & \verb+Qs_?G?C?aFCiAGCCdKAa_Q_OsA?+ \\ 
$18$ & \verb+Qsc???A?QCcKPgHS@a?h@I`@PG?+ \\ 
$19$ & \verb+Qsc?GGB@?C_oOXGFIGAP@IA?oOG+ \\ 
$20$ & \verb+Qsc?GKC@?D?IGYOdHA@`?IADOC_+ \\ 
\end{tabular}
&
\begin{tabular}{c|l}
$i$ & $G_i$\\
\hline
$21$ & \verb+Qts?GKE@OD?I?I?DC@Q?IQ?SoA_+ \\ 
$22$ & \verb+Qsa??????V_}E[HkJIBL?]O@u??+ \\ 
$23$ & \verb+Qsa??????mb[IkP[Hs@Y_[c@p_?+ \\ 
$24$ & \verb+Qsa?????@ed[D[BkKwBH_US@Y_?+ \\ 
$25$ & \verb+Qsa?????AR`yHkD[MQ@r?Ug@f??+ \\ 
$26$ & \verb+Qsa?????BEkUW[EwA{?^?VC?{O?+ \\ 
$27$ & \verb+Qsa??CA?OJ@RDXIeIkBS_\G@wO?+ \\ 
$28$ & \verb+Qsa??CCAATCjDSEgGiHTC[W@qO?+ \\ 
$29$ & \verb+Qsa??KCOObCrBAEcBUBDCWGpx??+ \\ 
$30$ & \verb+QsaG??A?QUGfHKDKDWOuA[g@pO?+ \\ 
$31$ & \verb+QsaG??A?XfIYHGQCak?s`XOPiA?+ \\ 
$32$ & \verb+QsaG??aCQdKIDHIEGkAS`HoOZA?+ \\ 
$33$ & \verb+QsaH?_`CaCgKQoPKch?kGIDGWgO+ \\ 
$34$ & \verb+Qseg??B?oE_uOoSAceAaSIHHw@?+ \\ 
$35$ & \verb+QsiW??A?PDaMSgWSGoQQBDd?hg?+ \\ 
$36$ & \verb+QsiW??B?x?aDWSSgHGQKBIR?UK?+ \\ 
$37$ & \verb+QsiW?D?OhCAIAW@qkAabCDBGig?+ \\ 
$38$ & \verb+QsnO??ACWSGHOjGOCMGl_IaPDD?+ \\ 
$39$ & \verb+QtaG?@@OgU_{W`SQdA?h?HOKY?o+ \\ 
$40$ & \verb+QtaG?C@?jEIFQcPWdO@I@ECGS_W+ \\ 
\end{tabular}
&
\begin{tabular}{c|l}
$i$ & $G_i$\\
\hline
$41$ & \verb+QtaIA@@O`E@T@UCLdA@H@EOGU?W+ \\ 
$42$ & \verb+QtiW?CB??B`T@rOKGIJ?`SCS}??+ \\ 
$43$ & \verb+QtiW?CB?GB?tArOSGBI_`WCS}??+ \\ 
$44$ & \verb+Q{fw?CB?wE?X?K?Kk@b?XE?oWBG+ \\ 
$45$ & \verb+Rs_??CD@_KGAGAAhAgQP_KQ?gCA_CG+ \\ 
$46$ & \verb+Rs_?GCD@?C?S@OAKaH`OcSB@gOB_O?+ \\ 
$47$ & \verb+Rs_?GGA?O@CRPEAOACHU?TG@GG`_a?+ \\ 
$48$ & \verb+Rts?GKE@OD?A?A?T?DP?DO@PGCH_C_+ \\ 
$49$ & \verb+RsaC??@?gI`TDTTTDTRTO\S@y_By??+ \\ 
$50$ & \verb+RsaC?GAHAC`dCtLIHYHrOU[@kaBeG?+ \\ 
$51$ & \verb+RsaKg??AGagiGiQZEMQZ_Ky@iCBgCG+ \\ 
$52$ & \verb+RsaKg??GYBg[HWSLDHYggIa`Y_Py?O+ \\ 
$53$ & \verb+RsaKg?@P@CDMSYCs@dJEcXJ@Wp@qP?+ \\ 
$54$ & \verb+RsaSW_GGqHgXHKK@h?wj@AwPogJQ__+ \\ 
$55$ & \verb+RsfLg?@?GAGNG]PTCTQPaKQGyGIw__+ \\ 
$56$ & \verb+Rtq}w?@?WB_M?YOTCDQOiKAhIDPgDO+ \\ 
$57$ & \verb+RujL_?@COP_[@WSTDDOdwBU`aDBGDG+ \\ 
$58$ & \verb+R}akq?D?wOC@CBSQkI_ZQAfGce?dcG+ \\ 
& \\
&
\end{tabular}
\end{tabular}
}
\caption{The list of connected vertex-transitive graphs on $n \in \set{17,18,19}$ vertices satisfying the degree bounds from Lemma~\ref{lemnkl} with $\ell = 5$.}
\label{tabVT5}
\end{table}

Observe that
\begin{itemize}
\item 
For every $i \in \set{5,6,7,10,11,12,13,15,22,23,24,25,26}$, $G_i$ is bipartite, so these graphs all have $\LS_+$-rank 0.
\item 
For every $i \in \set{1,8,16,27,46,49}$, $G_i - 1$ is bipartite, so these graphs all have $\LS_+$-rank 1.
\item 
For every $i \in \set{4,9,19,20,21,30,34,42,43,44,48,52,55,56}$, $G_i \ominus 1$ is perfect, so these graphs all have $\LS_+$-rank at most 2.
\item 
The graph $(G_{17} \ominus 1) - 8$ is bipartite. Thus, $r_+(G_{17}) \leq 2$.
\item 
The graphs
\[
\begin{array}{lll}
(G_{2} \ominus 1) - \set{6,8}, &
(G_{3} \ominus 1) - \set{8,9}, &
(G_{28} \ominus 1) - \set{7,9}, \\
(G_{29} \ominus 1) - \set{11,14}, &
(G_{31} \ominus 1) - \set{7,12}, &
(G_{32} \ominus 1) - \set{7,8}, \\
(G_{33} \ominus 1) - \set{7,9}, &
(G_{45} \ominus 1) - \set{10,11}, &
(G_{47} \ominus 1) - \set{11,12}, \\
(G_{50} \ominus 1) - \set{9,16}, &
(G_{51} \ominus 1) - \set{8,9} &
\end{array}
\]
are all bipartite. Thus, $r_+(G_i) \leq 3$ for every $i \in \set{2,3,28,29,31,32,33,45,47,50,51}$.
\item 
The graphs
\[
\begin{array}{lll}
(G_{14} \ominus 1) - \set{10,13,14}, &
(G_{18} \ominus 1) - \set{6,9,10}, &
(G_{36} \ominus 1) - \set{7,8,11}, \\
(G_{37} \ominus 1) - \set{7,12,13}, &
(G_{38} \ominus 1) - \set{8,9,11}, &
(G_{39} \ominus 1) - \set{8,10,12}, \\
(G_{40} \ominus 1) - \set{7,8,10} &
\end{array}
\]
are all bipartite. Thus, $r_+(G_i) \leq 4$ for every $i \in \set{14,18,36,37,38,39,40}$.
\item 
For every $i \in \set{35,41,53,54,57,58}$, observe that $G_i \ominus 1$ has exactly $12$ vertices, all of which have degree at least $3$. This implies that $G_i \ominus 1$ is not 4-minimal, as it follows from Theorem~\ref{thmAuT24b1} that every 4-minimal graph has at least one vertex with degree 2. Hence $r_+(G_i \ominus 1) \leq 3$, which implies that $r_+(G_i) \leq 4$.
\end{itemize}
Thus, there does not exist a vertex-transitive graph on at most 19 vertices with $\LS_+$-rank 5, which implies that $\overline{n}_+(5) \geq 20$.
\end{proof}

We remark that there are 267 connected vertex-transitive graphs which satisfy the degree bounds from Lemma~\ref{lemnkl} with $n=20$ and $\ell=5$. While the observations we used in the proof of Proposition~\ref{propVT5} can be used to show that the majority of them have $\LS_+$-rank at most 4, checking every graph in this set for a possible graph with $\LS_+$-rank 5 is still a non-trivial task.

\section{Concluding remarks and future work}\label{sec06}

We focused on $\ell$-minimal graphs in manuscript, discovering many new instances in the cases of $\ell = 3$ and $\ell = 4$. In particular, our computations provide strong evidence for Conjecture~\ref{conj3minimal}, which asserts that the 49 graphs identified in Section~\ref{sec03} exhaust the 3-minimal case. We also identified aspects of our findings which align with our pre-existing understanding of $\ell$-minimal graphs, such as the prevalence of the stretched cliques and the relevance of the clique number of a graph. The continuation of patterns found in the newly-discovered $\ell$-minimal graphs led to Conjectures~\ref{conj02}, ~\ref{conj02b}, and ~\ref{conj03}. On the other hand, we also saw many surprises, such as the discovery of many $\ell$-minimal graphs which are not stretched cliques, the most striking of which perhaps being a 4-minimal graph which does not contain $K_6$ as a graph minor ($G_{\ref{fig4minimal},12}$ from Figure~\ref{fig4minimal}). Taken together, these results sharpen the current structural picture of lift-and-project relaxations for the stable set polytope and provide a broader collection of extremal examples for future work on closely related problems. In addition, we also believe the framework of numerical certificates developed in this manuscript can be adapted for very reliably and rigorously verifying solutions in many other convex optimization problems. 

We conclude with several open problems motivated by this work.

\begin{problem}
Given $\ell \in \mathbb{N}$, what are the maximum and minimum possible edge densities of an $\ell$-minimal graph?
\end{problem}

Given $\ell \in \mN$, define 
\begin{align*}
d^-(\ell) &\ce \min \set{  \frac{ |E(G)|}{ \binom{ |V(G)|}{2} } : \tn{$G$ is an $\ell$-minimal graph}}, \\
d^+(\ell) &\ce \max \set{  \frac{ |E(G)|}{ \binom{ |V(G)|}{2} } : \tn{$G$ is an $\ell$-minimal graph}}.
\end{align*}
We first raised the problem of computing $d^-(\ell)$ and $d^+(\ell)$ in~\cite{AuT24b}, and have made some progress on this front since with Theorem~\ref{thmAuT251} and the $\ell$-minimal graphs discovered herein. Here is what we currently know about these two quantities for $\ell \leq 4$:
\[
{\def\arraystretch{1.2}
\begin{array}{r|rrrr}
\ell & 1 & 2 & 3 & 4 \\
\hline
d^-(\ell) & \frac{3}{3} & \frac{8}{15} & \frac{14}{36} & \leq \frac{21}{66} \\
d^+(\ell) & \frac{3}{3} & \frac{9}{15} & \geq \frac{19}{36} & \geq \frac{29}{66} 
\end{array}
}
\]
In particular, the new 3- and 4-minimal graphs discovered in this manuscript seem to suggest that $d^-(\ell)$ is attained by a sparse stretched clique for every $\ell \geq 1$, while $d^+(\ell)$ is likely not attained by a stretched clique for $\ell \geq 3$.

\begin{problem}
Can we characterize exactly when the vertex-stretching operation is $\LS_+$-rank increasing?
\end{problem}

Recall that the vertex-stretching operation is generally $\LS_+$-rank non-decreasing (Theorem~\ref{thmAuT250}). With Theorem~\ref{thmAuT251}, 
Proposition~\ref{propStretchedKk}, and the new $\ell$-minimal graphs discovered herein, we now know of a range of  situations where the vertex-stretching operation increases the $\LS_+$-rank of the underlying graph by one. It would be interesting to characterize exactly when the operation is $\LS_+$-rank increasing, as well as find out whether it is possible for a vertex-stretching operation to increase the rank of a graph by two or more.

\begin{problem}
Can we develop better tools for proving rank upper bounds?
\end{problem}

In this manuscript, we proposed the framework of $\LS_+^{\ell}$ certificate packages to help establish $\LS_+$-rank lower bounds. However, we currently do not know of an analogous tool for proving rank upper bounds. For instance, it would be helpful to develop theoretical and/or computational tools to show that the graphs we saw with $r_+(G) \lesssim \ell$ indeed satisfy $r_+(G) \leq \ell$. Progress in this direction will solidify our understanding of $\ell$-minimal graphs, and represents a step towards a combinatorial characterization of these graphs.

On a related topic, we can adopt the notions of $\LS_+^{\ell}$ certificate packages to the duals of the SDPs we have considered in this work. Doing so has the potential of generating more reliable certificates to conclude that $\gamma_{\ell}(G,a) =1$.

\bibliographystyle{alpha}
\bibliography{ref} 

\section*{Statements and Declarations}
\section*{Statements and Declarations}

\textbf{Data Availability.}
The data supporting the results of this study are publicly available in the Zenodo repository at \url{https://doi.org/10.5281/zenodo.15483991}. These files include the $\LS_+$, $\LS_+^2$, and $\LS_+^3$ certificate packages, the full list of 4-minimal graphs identified in the computational search, and MATLAB code used to verify the certificates.

\end{document}